\newcommand{\ominimal}{%
\ifthenelse{\equal\f@family{cmr}}
       {\ifthenelse{\equal\f@shape{n}}
               {o\kern0.05em-\kern-0.05emminimal}
               {\ifthenelse{\equal\f@shape{sl}}
                       {o\kern0.05em-\kern-0.05emminimal}
                       {\ifthenelse{\equal\f@shape{sc}}

{o\kern0.04em\raise0.1ex\hbox{-}\kern-0.04emminimal}
                                }}}
       {o-minimal} }
\newcommand{\ominimality}{%
\ifthenelse{\equal\f@family{cmr}}
       {\ifthenelse{\equal\f@shape{n}}
               {o\kern0.05em-\kern-0.05emminimality}
               {\ifthenelse{\equal\f@shape{sl}}
                       {o\kern0.05em-\kern-0.05emminimality}
                       {\ifthenelse{\equal\f@shape{sc}}

{o\kern0.04em\raise0.1ex\hbox{-}\kern-0.04emminimality}
                               {o-minimality}}}}
       {o-minimality} } 
\newcommand{\Ominimal}{%
\ifthenelse{\equal\f@family{cmr}}
       {\ifthenelse{\equal\f@shape{n}}
               {O-\kern-0.06em\kern0.5ptminimal}
               {\ifthenelse{\equal\f@shape{sl}}
                       {O-\kern-0.05emminimal}
                       {\ifthenelse{\equal\f@shape{sc}}
                               {O\raise0.1ex\hbox{-}\kern-0.04emminimal}
                               {O-minimal}}}}
       {O-minimal}}
\newcommand{\OMINIMAL}{%
       {O\kern0.04em\raise0.25ex\hbox{-}\kern-0.04emMINIMAL}}
\newtheorem{theo}{Theorem}[section]
\newtheorem{lem}[theo]{Lemma}
\newtheorem{prop}[theo]{Proposition}
\newtheorem{cor}[theo]{Corollary}
\newtheorem{fact}[theo]{Fact}
\theoremstyle{definition}
\newtheorem{dfn}[theo]{Definition}
\newtheorem{rem}[theo]{Remark}
\newtheorem{ex}[theo]{Example}
\newtheorem{nota}[theo]{Notation}
\newcommand{\N}{\ensuremath{\mathbb{N}}}
\newcommand{\R}{\ensuremath{\mathbb{R}}}
\newcommand{\M}{\ensuremath{\mathcal{M}}}
\newcommand{\Rs}{\ensuremath{\mathcal{R}}}
\newcommand{\g}{\ensuremath{\mathfrak{g}}}
\newcommand{\la}{\ensuremath{\mathfrak{a}}}
\newcommand{\n}{\ensuremath{\mathfrak{n}}}
\newcommand{\Rpl}{\ensuremath{M_{\oplus}}}
\newcommand{\Rx}{\ensuremath{M_{\otimes}}}
\newcommand{\vs}{\vspace{0.2cm}}
\DeclareMathOperator{\Aut}{Aut}
\DeclareMathOperator{\Ad}{Ad}
\DeclareMathOperator{\ad}{ad}
\DeclareMathOperator{\End}{End}
\DeclareMathOperator{\GL}{GL} 
\DeclareMathOperator{\SL}{SL} 
\DeclareMathOperator{\SO}{SO} 
\DeclareMathOperator{\card}{card}
\DeclareMathOperator{\Lie}{Lie}
\DeclareMathOperator{\id}{id}
\renewcommand{\geq}{\geqslant}
\newcommand{\book}[2]{{\scshape#1}, {\bf #2}}
\newcommand{\pre}[2]{{\scshape#1}, #2}
\newcommand{\publ}[6]
{{\scshape#1}, #2, {\itshape #3}, {\bf #4} (#5), pp.~#6.}
\begin{document}

\author{Annalisa Conversano} 
 
\title[Lie-like decomposition of definable groups]{Lie-like decompositions of 
groups \\ definable in \OMINIMAL\ structures}

\address{Department of Mathematics and Statistics, University of Konstanz, D-78457, Konstanz, Germany}

\email{annalisa.conversano@uni-konstanz.de}

\urladdr{http://www.math.uni-konstanz.de/~conversa/homeng.html} 

\date{\today}
\maketitle

\begin{abstract}
There are strong analogies between groups definable in \ominimal\ structures and real Lie groups.
Nevertheless, unlike the real case, not every definable group has maximal definably compact subgroups.
We study definable groups $G$ which are not definably compact showing that they
have a unique maximal normal definable torsion-free subgroup $N$; the quotient $G/N$ always has maximal definably compact subgroups, and for every such a $K$ there is a maximal definable torsion-free subgroup $H$
such that $G/N$ can be decomposed as $G/N = KH$, with $K \cap H = \{e\}$. 
Thus $G$ is definably homotopy equivalent to $K$. When $G$ is solvable then $G/N$ is already definably compact.  

In any case (even when $G$ has no maximal definably compact subgroup) we find a definable Lie-like decomposition
of $G$ where the role of maximal tori is played by maximal $0$-subgroups. 
\end{abstract}

\tableofcontents 

%%%%%%%%%%%%%%%%%%%%%%%%%%%%%%%%%%%%%%%%%%%%%%%%%%%%%%%%%%%%%%%%%%%%%%%%%%%%%%%%%%%%%%%%%%%%%%%%%%%%%%%%%%%%%%
%%%%%%%%%%%%%%%%%%%%%%%%%%%%%%%%%%%%%%%%%%%%%%%%%%%%%%%%%%%%%%%%%%%%%%%%%%%%%%%%%%%%%%%%%%%%%%%%%%%%%%%%%%%%%% 
%%%%%%%%%%%%%%%%%%%%%%%%%%%%%%%%%%%%%%%%%%%%%%%%%%%%%%%%%%%%%%%%%%%%%%%%%%%%%%%%%%%%%%%%%%%%%%%%%%%%%%%%%%%%%%
%%%%%%%%%%%%%%%%%%%%%%%%%%%%%%%%%%%%%%%%%%%%%%%%%%%%%%%%%%%%%%%%%%%%%%%%%%%%%%%%%%%%%%%%%%%%%%%%%%%%%%%%%%%%%%
\section{Introduction}

Since Pillay proved that every $n$-dimensional group definable in an \ominimal structure with universe $M$ is a topological group locally definably homeomorphic to $M^n$ (\cite{pi1}), many other analogies with real Lie groups have been shown.

Especially in the compact case the algebraic structure of the groups in the two 
categories is very similar. For instance the torsion subgroup of abelian groups is the same (\cite{eo, pet07}), every solvable definably compact definably connected group is abelian (\cite{ps00}), every definably connected definably compact group is the union of the conjugates of any maximal
definable torus (\cite{zero}, \cite{div}) and the derived subgroup of a definably connected definably compact group equals the set of its commutators (\cite{hpp2}).

Moreover, as Pillay first conjectured (\cite{pi2}), for every definably compact group $G$ there is a canonical surjective homomorphism $\pi \colon G \to G/G^{00}$ into a compact real Lie group $G/G^{00}$ (\cite{pi2, bopp})
with the same dimension (\cite{hpp, pet07}), with the same homotopy type (\cite{ba, bm}) and 
elementarily equivalent to $G$ in the group structure (\cite{hpp2}).

On the other hand, in the non-compact case some outstanding differences come out.
If a connected real Lie group is non-compact, it contains maximal compact subgroups and admits the following decomposition:

\begin{theo} \emph{(Iwasawa, \cite[Theorem 6]{iwa}).} \label{complie}
Let $L$ be a connected real Lie group. Then $L$ has maximal compact subgroups, all connected and conjugate to each other. If $K$ is one of them, there are subgroups $H_1, \dots, H_s$ of $L$, all isomorphic to $\R$, such that any $x \in L$ can be decomposed uniquely and continuously in the form

\[
x = kh_1 \cdots h_s, \qquad \qquad k \in K,\ h_i \in H_i.
\]

\vs \noindent
In particular $L$ is homeomorphic to $K \times \R^s$.
\end{theo}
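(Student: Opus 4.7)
The plan is to combine the Levi--Malcev decomposition of $L$ with the Cartan/Iwasawa decomposition of the semisimple Levi factor and an inductive structure result for the solvable radical. I would write $L = SR$ with $R$ the connected solvable radical and $S$ a semisimple Levi subgroup; since $R$ is normal and $L/R \cong S$, an induction on $\dim L$ reduces the problem to treating $R$ and $S$ separately and then assembling.

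For the semisimple part, fix a Cartan involution $\theta$ of $\mathfrak{s} = \Lie(S)$, yielding the Cartan decomposition $\mathfrak{s} = \mathfrak{k} \oplus \mathfrak{p}$. The analytic subgroup $K_S$ corresponding to $\mathfrak{k}$ is compact (the form $-B(\cdot,\theta\cdot)$ built from the Killing form $B$ is positive-definite), and the polar map $K_S \times \mathfrak{p} \to S$, $(k,X) \mapsto k\exp(X)$, is a diffeomorphism. Choosing a maximal abelian $\mathfrak{a} \subset \mathfrak{p}$ and summing positive restricted root spaces into $\mathfrak{n}$ then gives the classical Iwasawa decomposition $S = K_S A N$, with $A \cong \R^{\dim \mathfrak{a}}$ and $N$ simply connected nilpotent (so $N \cong \R^{\dim \mathfrak{n}}$ via exponential coordinates). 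Bases of $\mathfrak{a}$ and $\mathfrak{n}$ supply the one-parameter subgroups $H_i \cong \R$ contributed by $S$.

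For the solvable part, I would prove by induction on $\dim R$ that a connected solvable Lie group $R$ has a unique maximal compact subgroup $T$ (a central torus) and is diffeomorphic to $T \times \R^k$. The simply connected case uses the standard fact that a simply connected solvable Lie group is diffeomorphic to $\R^n$ via iterated exponential coordinates along a composition series with $\R$-factors; the general case follows by quotienting by a discrete central subgroup of the universal cover. To assemble: after replacing $S$ by a $\theta$-stable Levi subgroup of $L$, the subgroups $K_S$ and $T$ generate a compact subgroup $K \leq L$, while $A$, $N$, and the vector part of $R$ combine into $s = \dim A + \dim N + k$ one-parameter subgroups, producing the diffeomorphism $L \cong K \times \R^s$ and the unique continuous factorization $x = k h_1 \cdots h_s$.

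The main obstacle is the conjugacy of maximal compact subgroups, which is not purely algebraic. The standard route is to equip the coset space $L/K$ with an $L$-invariant Riemannian metric of non-positive sectional curvature and to invoke Cartan's fixed-point theorem: any other compact subgroup $K' \leq L$ acts by isometries on the complete, simply connected, non-positively curved $L/K$ and hence fixes a point, so $K'$ lies inside a conjugate of $K$; maximality of $K'$ then forces equality up to conjugation. Connectedness of each maximal compact $K$ is automatic from the homeomorphism $L \cong K \times \R^s$ and the connectedness of $L$.
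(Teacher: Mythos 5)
A preliminary remark: the paper does not prove this statement at all. It is quoted as Iwasawa's classical theorem (\cite[Theorem 6]{iwa}) and serves purely as motivation for the \ominimal\ results, so your proposal can only be measured against the classical Lie-theoretic proofs, not against an argument in the paper.

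Measured that way, your outline follows the familiar textbook strategy but breaks down at exactly the points where Iwasawa's actual proof has to work hardest. First, the analytic subgroup $K_S$ attached to $\mathfrak{k}$ is compact only when $S$ has \emph{finite center}: in the universal cover $\widetilde{\SL_2(\R)}$ the subgroup corresponding to $\mathfrak{k}$ is a line, and the maximal compact subgroup is trivial. Second, and worse, the assembly step is false as stated: maximal compact subgroups of $L$ need not be generated by maximal compact subgroups of a Levi factor and of the radical. Take $L = (\widetilde{\SL_2(\R)} \times \R)/\Gamma$, where $\Gamma$ is the infinite cyclic group generated by $(z,1)$ and $z$ generates the center of $\widetilde{\SL_2(\R)}$. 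Here the Levi subgroup and the radical are both closed and both have trivial maximal compact subgroups, yet $L$ contains a circle --- the image of the line $t \mapsto (k_t,t)$, where $\{k_t\}$ is the one-parameter subgroup with $k_1 = z$, which closes up because $(z,1) \in \Gamma$ --- and this ``diagonal'' circle is a maximal compact subgroup of $L$. No subgroup generated by $K_S$ and $T$ can produce it. Third, your conjugacy argument needs an $L$-invariant metric of non-positive curvature on $L/K$; this exists in the semisimple finite-center case (symmetric spaces) but not in general: by Milnor's theorem on curvatures of left-invariant metrics, every left-invariant metric on a non-abelian nilpotent group (e.g.\ the Heisenberg group, where $K=\{e\}$ and $L/K = L$) has planes of strictly positive curvature, so Cartan's fixed-point theorem cannot be invoked for arbitrary $L$. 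Finally, your induction treats the Levi factor as a closed subgroup, which it need not be --- a subtlety the paper itself points out in Section 7. These issues (infinite centers, skew compact subgroups, conjugacy outside the symmetric-space setting) are precisely what Iwasawa's inductive argument is built to handle; a correct writeup would have to follow such an induction on the radical, or at least lift compact subgroups through the radical with far more care than ``generate and observe compactness.''
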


\vs
 
Note that if $L$ is solvable then $K$ is abelian, so it is a maximal torus of $L$.\\

We remark that in the literature the ``Iwasawa decomposition'' in general refers to a decomposition of semisimple groups (see \ref{iwasublie}) which is an intermediate step for Theorem \ref{complie}.\\

A fundamental result in order to obtain an \ominimal analogue of \ref{complie} is the following theorem of Peterzil and Steinhorn: if a definable group is not definably compact then it contains a $1$-dimensional definable torsion-free subgroup (\cite{ps}). 

However, unlike the real case, definable groups may not have maximal definably compact subgroups. One can see for instance \cite[5.6]{ps} or Strzebonski's example \cite[5.3]{str}, which is a $2$-dimensional abelian definably connected group $G$ defined in the field of the real numbers with infinitely many torsion elements but with no infinite definably compact subgroup.

Strzebonski's group $G$ is a $0$-group, namely it has the property that for every proper definable subgroup $K$ (including $K = \{e\}$), $E(G/K) = 0$, where $E$ denotes the \ominimal Euler characteristic (see \cite{vdd} and section $3$). Since definable torsion-free groups $H$ are such that $E(H) =  \pm 1$ (\cite{str}), it follows that any definable $0$-group $G$ which is not definably compact cannot be decomposed definably as in \ref{complie}, otherwise $E(G/K) = E(H_1) \cdots E(H_s) = \pm 1$.  

Actually, we show that this is a general fact, in the sense that if a definably connected group $G$ cannot be decomposed as a product of two definable subgroups $K$ and $H$, where $K$ is definably compact and $H$ is torsion-free, then
it contains $0$-groups which are not definably compact.

More precisely, if a definably connected group $G$ has torsion, then $E(G) = 0$ (\cite{str, pps1}) and by DCC on definable subgroups it contains some $0$-subgroup (\cite{str}). Each $0$-subgroup is contained in a maximal one, called a $0$-Sylow (by analogy with $p$-Sylow subgroups in finite group theory), and all maximal $0$-subgroups are conjugate to each other (\cite{str}). Therefore
if a $0$-Sylow of a definable group $G$ is definably compact, all $0$-subgroups of $G$ are definably compact. 

We prove that in this case for every $0$-Sylow $A$ of $G$, there is a definable decomposition:
\[
G = KH \qquad K = \bigcup_{x \in K}A^x \qquad K \cap H = \{e\},
\]   

\noindent
where $K$ and $H$ are definably connected subgroups, with $K$ maximal definably compact and $H$ maximal torsion-free. Thus we have
a perfect analogy with the real case, where for every maximal torus $T$ of a connected Lie group $L$ there is a maximal compact subgroup $K$ of $G$ which is the union of the conjugates of $T$ in $K$ and there are subgroups $H_1, \dots, H_s$ as in \ref{complie}.

Otherwise, if $A$ is a $0$-Sylow of $G$ which is not definably compact, there is a definable decomposition (which generalizes the previous one)

\[
G = PH, \qquad P = \bigcup_{x \in P}A^x 
\] 

\noindent
where $P$ and $H$ are again definably connected subgroups with $H$ maximal torsion-free, but $P$ of course is not definably compact (it contains $A$ which is not definably compact)
and there is an infinite intersection $P \cap H$.  
It turns out that $P$ contains a complement $K$ of $H$ in $G$ (abstractly isomorphic to $P/(P \cap H)$ which is definably compact), but $K$ cannot be definable (since $A$ is a $0$-group).

An important tool is the existence of a unique maximal normal definable torsion-free subgroup $N$ in every definable group $G$ (\ref{unifree}). It is contained in each maximal definable torsion-free subgroup $H$ of the above Lie-like decomposition. In general $H$ is not normal in $G$ and so it contains $N$ properly. 
(Otherwise $H = N$).

If $G$ is solvable then $G/N$ is shown to be definably compact (\ref{solvquozcomp}) and the decomposition above becomes (\ref{prodsolv})
\[
G = AN,
\]  

\vs \noindent
where $A$ is any $0$-Sylow of $G$.

In general, if $N$ is the maximal normal definable torsion-free subgroup of a definably connected group $G$
and $A$ is a $0$-Sylow of $G$, then $A \cap N$ is the maximal definable torsion-free subgroup of $A$ 
and so the quotient $A/(A \cap N)$, which is definably isomorphic to the $0$-Sylow $AN/N$ of $G/N$, is 
definably compact. Then, as we saw above, all $0$-subgroups of $G/N$ are definably compact and it follows
that $G/N = \bar{K}\bar{H}$, where $\bar{K}$ is definably compact and $\bar{H}$ is torsion-free. Moreover definable torsion-free groups are definably contractible (\cite{ps05}), and therefore $\bar{K}$ is definably homotopy equivalent to $G$ (\ref{homeonocomp}).
This shows that in the \ominimal context the homotopy type of a definably connected group is equal to the homotopy type of a definably compact group, as in the real case. The difference is that in the Lie case the compact group can be found always within the group, while in the \ominimal context it can be found always
in a canonical quotient (and within the group if and only if 
all its $0$-subgroups are definably compact).
 
In the following subsection and in the last section one can find a more detailed list of the main results. 
We remark that all of them build on previous theorems proved by many authors about groups definable in \ominimal structures, such as those contained in \cite{zero, bmo, bopp, solv, div, eo, hpp, hpp2, pps1, pps2, pps02, ps00, ps05, ps, pi1, pi2, str}, and on the well-known structure of real Lie groups.\\ 

%%%%%%%%%%%%%%%%%%%%%%%%%%%%%%%%%%%%%%%%%%%%%%%%%%%%%%%%%%%%%%%%%%%%%%%%%%%%%%%%%%%%%%%%%%%%%%%%%%%%%%%%% 
\vs 
\textbf{Acknowledgments}. 
Most of the research leading to this paper was carryed out
during my Ph.D. at the University of Siena. 
I am deeply grateful to my thesis advisor Alessandro Berarducci 
for his unvaluable guidance. Together with him I thank 
Marcello Mamino, Margarita Otero, Ludovico Pernazza, Ya'acov Peterzil and
Anand Pillay for several enlightening conversations and explanation. 
I thank Marcello Mamino also for let me use his beautiful ``\ominimal \hspace{-0.16cm}'' typesetting and Margarita Otero for suggest me 
the problem to find an \ominimal analogue to the existence of 
maximal compact subgroups in Lie groups. 

Thanks to Alessandro Berarducci, Ludovico Pernazza and Margaret Thomas 
for commenting on a draft of this.

%%%%%%%%%%%%%%%%%%%%%%%%%%%%%%%%%%%%%%%%%%%%%%%%%%%%%%%%%%%%%%%%%%%%%%%%%%%%%%%%%%%%%%%%%%%%%%%%%%%%%%%%%

\subsection{The structure of the paper} In \textit{Section 2} we recall basic definitions and we give a proof of the well-known fact that every definable group has a solvable radical.\\

In \textit{Section 3} we prove that every definable group has a unique maximal normal definable torsion-free subgroup, and that every definable extension of a definable torsion-free group by a definably compact group
is definably isomorphic to their direct product.\\

In \textit{Section 4} we consider solvable groups $G$ proving that $G/N$ is definably compact, where $N$ is the maximal normal definable torsion-free subgroup of $G$.
As a consequence we get a definable Lie-like decomposition $G = AN$, where $A$ is any maximal $0$-subgroup of $G$. Moreover it turns out that
$G$ is always (abstractly) isomorphic to a definable semidirect product $N \rtimes G/N$, and the isomorphism is definable if and only if $A$ is definably compact. \\ 

In \textit{Section 5} we show a definable Iwasawa-like decomposition first for definably simple groups, and then for definably connected groups with definably compact solvable radical.\\

In \textit{Section 6} we use the above decomposition to study properties of the canonical projection
$\pi \colon G \to G/N$ by the maximal normal definable torsion-free 
subgroup $N$.

In particular it turns out that if $G$ is definably connected then
$G/N$ always has maximal definably compact 
subgroups (all definably connected and conjugate), and each of them (let us call it $K$) has a definable torsion-free complement $H$, i.e. $G/N = KH$
and $K \cap H = \{e\}$.\\

In \textit{Section 7} we show that if a definably connected group has definable Levi subgroups, then they are all conjugate to each other.\\

In \textit{Section 8} we consider definably connected subgroups $G$ of the general linear group $\GL_n(M)$.
They admit a definable decomposition $G = KH$ as a product of two definably connected subgroups $K$ and $H$, where $K$ is (maximal) definably compact and $H$ is (maximal) torsion-free.\\

In \textit{Section 9} we study definable exact sequences 

\[
1\ \longrightarrow\ N\ \longrightarrow\ G\ \longrightarrow\ K\ \longrightarrow\ 1  \\
\]

\vs
\noindent where $N$ is torsion-free and $K$ is definably connected and definably compact. 

It turns out that every
such an exact sequence splits abstractly (i.e. $G$ is abstractly isomorphic to a semidirect product $N \rtimes K$) and it splits definably if and only
if every $0$-subgroup of $G$ is definably compact. 

In particular, we find that if $K$ is semisimple then the sequence always splits definably. As a consequence of this last fact, we can show that every definable group has maximal semisimple definably connected definably compact subgroups (all conjugate).\\

In \textit{Section 10} we find a definable decomposition in the spirit of \ref{complie}, where maximal tori of
Lie groups are replaced by maximal $0$-subgroups (which are definable tori whenever they are definably compact).\\  

In \textit{Section 11} we consider the homotopy type of a definably connected group $G$ in an \ominimal expansion of a real closed field $M$, showing that $G$ is definably homeomorphic to $K \times M^s$, for every maximal definably compact subgroup $K$ of $G/N$, where $N$ is the maximal normal definable torsion-free subgroup of $G$ and $s \in \N$ is the maximal dimension of a definable torsion-free subgroup of $G$.\\

\textit{Section 12} is a brief summary of the main results of the paper, focusing on the structure of a definable group in terms of certain particular definable characteristic subgroups and their quotients. \\

%%%%%%%%%%%%%%%%%%%%%%%%%%%%%%%%%%%%%%%%%%%%%%%%%%%%%%%%%%%%%%%%%%%%
%%%%%%%%%%%%%%%%%%%%%%%%%%%%%%%%%%%%%%%%%%%%%%%%%%%%%%%%%%%%%%%%%%%%
%%%%%%%%%%%%%%%%%%%%%%%%%%%%%%%%%%%%%%%%%%%%%%%%%%%%%%%%%%%%%%%%%%%%
%%%%%%%%%%%%%%%%%%%%%%%%%%%%%%%%%%%%%%%%%%%%%%%%%%%%%%%%%%%%%%%%%%%%
\vs 
\section{Preliminaries}

We assume \M\ to be an \ominimal expansion of an ordered group. 
A basic reference for \ominimality is \cite{vdd}.
We remark that we need the assumption that the structure expands an ordered group only to use the decomposition in \ref{decomp}, so far proved in this setting by Hrushoski, Peterzil and Pillay in \cite{hpp2}. However we believe that Fact \ref{decomp} (and therefore all the results of this paper as well) holds in an arbitrary \ominimal structure.

By ``\textbf{definable}'' as usual we mean ``definable in \M\ with parameters''.

Many results about definable groups in \ominimal structures and some theorems about Lie groups are used. In order to make the reading as smooth as possible, we will recall the most important ones (naming them ``facts'') just before we first need them. One can see \cite{surv} for an overview about definable groups in \ominimal structures, and \cite{kna} for an overview about Lie groups.\\

We give now some notation and vocabulary (even though they are quite standard).\\
Let $H, K < G$ be groups, $X \subset G$ a set. For any $g \in G$, $K^g$ denotes the \textbf{conjugate} of $K$ by $g$, namely the subgroup made by the elements of the form $gxg^{-1}$, with $x \in K$.
\begin{align*}
Z(G) &= \{g \in G : gx = xg\ \forall\, x \in G\} \mbox{ is the \textbf{center} of } G.\\
C_H(X) &= \{g \in H : gx = xg\ \forall\, x \in X\} \mbox{ is the \textbf{centralizer} of } X \mbox{ in } H. \\
N_H(K) &= \{g \in H : K^g = K\} \mbox{ is the \textbf{normalizer} of } K \mbox{ in } H.\\
\end{align*}

Let $G$, $N$, $Q$ be definable groups.

A \textbf{definable extension of $Q$ by $N$} is a definable group $G$ containing $N$, together with a definable homomorphism $\pi \colon G \to Q$ such that the sequence (where $i \colon N \to G$ is the inclusion map)

\[
1\ \longrightarrow\ N\ \stackrel{i}{\longrightarrow}\ G\ \stackrel{\pi}{\longrightarrow}\ Q\ \longrightarrow\ 1
\] 

\vs \noindent

is exact, i.e. $\pi$ is a surjective definable homomorphism and $i(N) = N$ is the kernel of $\pi$.  

A \textbf{section} $s$ of a surjective homomorphism $\pi \colon G \to Q$ is a map $s \colon Q \to G$ such that $\pi \circ s =  id_Q$.
We say that a definable extension $\pi \colon G \to Q$ \textbf{splits abstractly}, if there is a section of $\pi$ which is
a homomorphism. We say that it \textbf{splits definably} if there is such a section which is moreover definable.

Thus if $N$ is a normal definable subgroup of a definable group $G$, we can always see $G$ like a definable extension of $G/N$ by $N$. The exact sequence

\[
1\ \longrightarrow\ N\ \stackrel{i}{\longrightarrow}\ G\ \stackrel{\pi}{\longrightarrow}\ G/N\ \longrightarrow\ 1  \\
\]

\vs \noindent
splits abstractly if and only if
$G$ contains a subgroup $K$ isomorphic to $G/N$ such that $N \cap K = \{e\}$ and $NK = G$, i.e. if and only if $G$ is a \textbf{semidirect product} of $N$ and $K$. In this case we will write $G = N \rtimes K$ or $G = K \ltimes N$ and we will say that $K$ is a \textbf{cofactor} of $N$. The exact sequence above splits definably if and only if there is a \textbf{definable cofactor} $K$ of $N$.\\ 

%Note that $G = N \rtimes K$ if and only if there is a (definable) homomorphism
%$\varphi \colon K \to \Aut(N)$, such that $\varphi(k)(x) = kxk^{-1}$, for every $k \in K$ and $x \in N$, 
%where by $\Aut(N)$ we mean the group of (definable) automorphisms
%%bijective homomorphisms $N \to N$ 
%under the operation of composition.\\

If $H, K < G$ are groups (no normality assumption here), we say that $H$ is a \textbf{complement} of $K$ in $G$ whenever  $G = KH$ and $K \cap H = \{e\}$.  

If all groups are definable, then we say that $H$ is a \textbf{definable complement} of $K$ in $G$.\\

As usual, we denote by $G^0$ the definably connected component of the identity in a definable group $G$.
It is the smallest definable subgroup of finite index in $G$ (\cite[2.12]{pi1}). 
We say that a group $G$ is \textbf{definably connected} if it is definable and $G = G^0$, so if $G$ has no 
proper definable subgroup of finite index.\\

A group $G$ is \textbf{definably compact} if it is definable and every definable curve in $G$ is completable in $G$. (\cite[1.1]{ps}). Saying that a group $G$ \textbf{is not definably compact}, we assume that it is definable.\\

%%%%%%%%%%%%%%%%%%%%%%%%%%%%%%%%%%%%%%%%
Let $\Rs = \langle R <, +, \cdot \rangle$ be a real closed field. 
If we say that a group is \textbf{semialgebraic over \Rs}, we mean that it is definable in the structure $\langle R, +, \cdot \rangle$.\\

If we say that a definable group $G$ is \textbf{linear}, we mean that there is a real closed field \Rs\ definable in \M\ such that $G$ is definably isomorphic to a subgroup of $\GL_n(\Rs)$ definable in \Rs, for some $n \in \N$. With an abuse of notation sometimes we will write $G < \GL_n(\M)$, as the whole structure expands a real closed field (even if we will not need this assumption), and identifying $G$ with its isomorphic image in $\GL_n(\Rs)$.\\

%%%%%%%%%%%%%%%%%%%%%%%%%%%%%%%%%%%%%%%%%%%%%%%%%%%%%%%%%%%%%%%%%%%%%%%%%%%%%%%%%%%%%%%%%%%%%%%%%%%%%%%%  
A definable group $G$ is said to be {\bf semisimple} if it is infinite with no
infinite abelian normal (definable) subgroup. 

It is \textbf{definably simple} if it is infinite and non-abelian with no proper non-trivial normal definable subgroup.

A fundamental theorem due to Peterzil, Pillay and Starchenko about semisimple definable groups is the following:

\begin{fact} \emph{(\cite[4.1]{pps1}, \cite[5.1]{pps02}).} \label{theosem}
Let $G$ be a semisimple definably connected group. Then there are definable real closed fields $\Rs_i$ such that $G/Z(G)$ is definably isomorphic to a direct product $H_1 \times \dots \times H_s$, where for every $i = 1, \dots, s$, $H_i$ is a semialgebraic $($over $\Rs_i)$ definably simple subgroup of
$\GL(n_i, \Rs_i)$. %, semialgebraic over $\Rs_i$ with parameters in $(\Rs_i)_{alg}$, the subfield of real algebraic numbers. 
\end{fact}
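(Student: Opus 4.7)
The plan is to exploit the centerless quotient together with the deep structure theorem of Peterzil–Starchenko–Pillay for definably simple groups. First I would reduce to the centerless case by passing to $\bar{G} := G/Z(G)$ and verifying that $\bar{G}$ is still semisimple. Indeed, if $\bar{N}$ were an infinite abelian normal definable subgroup of $\bar{G}$, its preimage $N \trianglelefteq G$ would satisfy $[N,N] \subseteq Z(G)$, so $N$ would be nilpotent. Its (characteristic) center $Z(N)$ is then an infinite abelian subgroup normal in $G$, contradicting semisimplicity. Thus $\bar G$ is centerless and semisimple, and it suffices to decompose $\bar G$.

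Next I would extract a minimal building block. By DCC on definable subgroups, pick a minimal infinite normal definable subgroup $H_1 \trianglelefteq \bar{G}$. Any nontrivial proper normal definable subgroup of $H_1$ would have strictly smaller dimension, hence would be characteristic in $H_1$ (by minimality of $H_1$ in $\bar G$, any $\bar G$-conjugate of it lies in $H_1$ and has the same dimension; standard arguments show it coincides with the original), and therefore normal in $\bar{G}$, contradicting minimality of $H_1$. So $H_1$ is definably simple; it is non-abelian since $\bar G$ is semisimple, and centerless since $Z(H_1)$ is characteristic.

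The crucial and hardest step is to apply the Peterzil–Pillay–Starchenko linearization of definably simple groups: every infinite definably simple group in an \ominimal structure is definably isomorphic to a semialgebraic simple subgroup of some $\GL(n_1,\Rs_1)$, over a real closed field $\Rs_1$ interpretable in $\M$. This is genuinely deep, resting on the \ominimal trichotomy (to recover the field) and on the embedding of the abstract simple group into a linear group over that field. I would cite this as the main input; it is the principal obstacle in that essentially all of the model-theoretic content of the statement is packaged here.

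Finally, I would assemble the direct-product decomposition by induction on $\dim \bar G$. Let $C = C_{\bar G}(H_1)$, a normal definable subgroup; since $H_1$ is centerless simple, $H_1 \cap C = Z(H_1) = \{e\}$, and $H_1 C = H_1 \times C$ is normal in $\bar G$. Conjugation gives an embedding $\bar G/C \hookrightarrow \Aut(H_1)$ whose image contains $\Inn(H_1) \cong H_1$; since $H_1$ is now known to be a semialgebraic semisimple group, $\mathrm{Out}(H_1)$ is finite, so $\bar G/(H_1 C)$ is finite, and definable connectedness forces $\bar G = H_1 \times C$. The factor $C$ is again centerless, semisimple and definably connected (any infinite abelian normal subgroup of $C$ would be normal in $\bar G$), of strictly smaller dimension, so the inductive hypothesis yields $C \cong H_2 \times \cdots \times H_s$ with each $H_i$ semialgebraic simple over some $\Rs_i$, completing the decomposition of $\bar G = G/Z(G)$.
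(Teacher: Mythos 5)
A preliminary remark: the paper does not prove this statement at all --- it is imported as a Fact, with the proof outsourced to \cite[4.1]{pps1} and \cite[5.1]{pps02} --- so what you are attempting is really a re-proof of those theorems. Your skeleton (pass to $G/Z(G)$, split off one definably simple factor, induct on dimension via the centralizer) is the classical Lie-theoretic route and is genuinely different from the published one, which first linearizes the centerless group over definable real closed fields and then decomposes its Lie algebra into simple ideals. However, two of your steps have real gaps, and both gaps are exactly where the Lie-algebra machinery is doing the work in the original papers. The first is the simplicity of the minimal infinite normal definable subgroup $H_1$. Your parenthetical justification --- that a nontrivial proper normal definable $K \lhd H_1$ and its $\bar{G}$-conjugates ``have the same dimension'' and hence ``coincide'' --- fails: conjugates always have equal dimension, and equal dimension never forces equality of subgroups. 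Normal subgroups of a minimal normal subgroup need not be characteristic; classically, a minimal normal subgroup is only \emph{characteristically} simple, i.e.\ possibly a direct product of several isomorphic simple factors permuted by conjugation, and ruling this out uses connectedness of $\bar{G}$ acting on ``the factors of $H_1$'' --- but knowing that $H_1$ decomposes into such factors is precisely the statement being proved, so the argument is circular. What minimality honestly gives (taking products of conjugates $K^{g}$, each of which is normal in $H_1$, so the products are definable subgroups and stabilize by dimension) is only that the normal closure of $K$ in $\bar{G}$ equals $H_1$ --- no contradiction.

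The second gap is the claim that $\mathrm{Out}(H_1)$ is finite. For abstract automorphisms this is false over a general real closed field: any nontrivial field automorphism of $\Rs_1$ (and real closed fields can have infinitely many, e.g.\ saturated ones) induces a non-inner automorphism of the semialgebraic group $H_1$. What is true, and what you need, concerns \emph{definable} automorphisms: conjugation by $g \in \bar{G}$ is $\M$-definable; definable automorphisms of $H_1$ induce $\Rs_1$-Lie-algebra automorphisms, giving a \emph{definable} homomorphism $\bar{G} \to \Aut_{\Rs_1}(\h_1)$, whose target is a semialgebraic group with finitely many definably connected components and identity component $\Inn(H_1)$; definable connectedness of $\bar{G}$ then forces its image into $\Inn(H_1)$, which yields $\bar{G} = H_1 C_{\bar{G}}(H_1)$ directly. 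You cannot bypass definability here: an abstract homomorphism from a definably connected group onto a finite group need not be trivial, so finiteness of an abstract outer automorphism group would not suffice anyway. Each ingredient of this repair (differentiability of $\M$-definable maps with respect to $\Rs_1$, definability of the adjoint-type map, finiteness of the component group of $\Aut_{\Rs_1}(\h_1)$) is again the machinery of \cite{pps1}, \cite{pps02}, not a consequence of the bare linearization theorem you quote. Two smaller patches: centerlessness of $G/Z(G)$ needs the fact that finite normal subgroups of definably connected groups are central (Fact \ref{finsubincenter}); and the infinitude of $Z(N)$ in your first step needs an argument --- e.g.\ for $x \in N$ the definable continuous map $a \mapsto [x,a]$ on the definably connected $N^0$ has image in the finite group $[N,N] \subseteq Z(G)$, hence is trivial, so $N^0 \subseteq Z(N)$.
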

%%%%%%%%%%%%%%%%
\begin{rem} \label{semisnosolv}
An infinite definable group is semisimple if and only if it has no infinite solvable normal definable subgroup.
\end{rem}

\begin{proof} \hspace{7cm}
\begin{enumerate}

\item[$(\Rightarrow)$] Let $G$ be a semisimple definable group. We can suppose $G$ definably connected, because 
$G$ is semisimple if and only if $G^0$ is semisimple. If $G$ contains an infinite normal solvable definable subgroup, then its image in $G/Z(G)$ by the canonical projection is an infinite normal solvable definable subgroup, in contradiction with Fact \ref{theosem}.\\

\item[$(\Leftarrow)$] Every abelian group is solvable.
\end{enumerate}
\end{proof}

%%%%%%%%%%%%%%%%%%%%%%%%%%%%%%%%%%%%%%%%
We end the section with a well-known fact: the existence of the solvable radical in any definable group. 
Because we could not find a precise reference and we will use it extensively, we also give a proof of it.

%%%%%%%%%%%%%%%%
\begin{fact}\label{solvrad}
Every definable group $G$ has a unique normal solvable definably connected subgroup $R$ such that $G/R$ is finite
or semisimple. The subgroup $R$ contains every normal solvable definably connected subgroup of $G$, so it 
is the unique maximal normal solvable definably connected subgroup of $G$. It is said to be \textbf{the solvable radical} of $G$.
\end{fact}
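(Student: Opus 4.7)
My plan rests on the observation that the class of normal, solvable, definably connected subgroups of $G$ is closed under finite products: if $R_1, R_2 \trianglelefteq G$ are both normal, solvable and definably connected, then $R_1R_2$ is normal (product of two normal subgroups), solvable (since $R_1R_2/R_1 \cong R_2/(R_1 \cap R_2)$ is a quotient of the solvable $R_2$, giving an extension of solvable by solvable), and definably connected (as an extension of the definably connected $R_1$ by the definably connected quotient $R_2/(R_1\cap R_2)$, itself the image of $R_2$ under a surjective definable homomorphism).

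Using that dimensions of definable subgroups are bounded by $\dim G$, I pick a normal solvable definably connected subgroup $R$ of $G$ of maximal dimension (the class is non-empty since $\{e\}$ belongs to it). Given any other such subgroup $R'$, the product $RR'$ again lies in the class and $\dim(RR') \geq \dim R$; maximality forces equality, so $R$ has finite index in $RR'$. Since $R$ is normal in $RR'$, the quotient $RR'/R$ is finite and at the same time definably connected (being the image of the definably connected $R'$), hence trivial. Therefore $R' \subseteq R$, which proves both uniqueness and the containment property asserted in the statement.

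It remains to show that $G/R$ is finite or semisimple. Suppose the contrary: $G/R$ is infinite and, by Remark \ref{semisnosolv}, it contains an infinite normal solvable definable subgroup $\bar S$. Its definably connected component $\bar S^0$ is characteristic in $\bar S$, hence still normal in $G/R$; it is non-trivial (as $\bar S$ is infinite), solvable, and definably connected. Setting $S = \pi^{-1}(\bar S^0)$ for the canonical projection $\pi\colon G \to G/R$, we get $S \trianglelefteq G$, with $S$ solvable as an extension of $R$ by $\bar S^0$. For definable connectedness, $R \subseteq S^0$ because $R$ is definably connected, so $S^0/R$ is a finite-index definable subgroup of $S/R = \bar S^0$; since $\bar S^0$ is definably connected, this forces $S^0 = S$. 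Hence $S$ is a normal solvable definably connected subgroup of $G$ strictly containing $R$, contradicting the maximality of $R$.

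The main delicate point is the bookkeeping of definable connectedness in the two constructions above. It relies on two standard observations: surjective definable homomorphisms send definably connected groups to definably connected groups (the preimage of a finite-index definable subgroup of the target is a finite-index definable subgroup of the source), and an extension of a definably connected group by a definably connected group is definably connected (the identity component of the middle term already surjects onto the definably connected quotient). With these in hand, the rest is a formal manipulation of normal subgroups together with Remark \ref{semisnosolv}.
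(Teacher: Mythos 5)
Your argument is correct in substance, but it is organized genuinely differently from the paper's proof. The paper first reduces to the definably connected case and then proves \emph{existence} by induction on $\dim G$: if $G$ is not semisimple it takes an infinite abelian normal definable subgroup $A$, applies the inductive hypothesis to $G/A$, and pulls back (passing to the identity component); only afterwards does it prove the containment property, via the product trick ($RS/R$ is a normal solvable definably connected subgroup of the semisimple group $G/R$, hence finite by \ref{semisnosolv}, hence trivial by connectedness). You invert this order: you first observe that the class of normal solvable definably connected subgroups is closed under products, pick $R$ of maximal dimension, and get the containment (hence maximality) for free from the same finite-index-plus-connectedness argument; you then verify the quotient condition by contradiction, pulling back $\bar S^0$ much as the paper does inside its inductive step. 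Your route is the classical construction of the radical familiar from finite and algebraic group theory: it avoids both the induction on dimension and the separate reduction to $G = G^0$, and treats an arbitrary definable $G$ uniformly. What it gives up is the explicit layered construction of $R$ through abelian quotients that the paper's induction provides. Both proofs ultimately rest on the same two pillars: Remark \ref{semisnosolv} and the fact that a finite definably connected group is trivial.

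One point you should patch: the Fact asserts that $R$ is the \emph{unique} normal solvable definably connected subgroup such that $G/R$ is finite or semisimple, whereas your containment argument only yields uniqueness of the maximal element of the class. The missing line is exactly the paper's closing remark: if $R' \subsetneq R$ were another member of the class, then $R/R'$ would be an infinite solvable normal definable subgroup of $G/R'$, so $G/R'$ would be infinite and, by \ref{semisnosolv}, not semisimple (and clearly not finite). Also, purely terminological: with the paper's convention an ``extension of $Q$ by $N$'' has kernel $N$ and quotient $Q$, so your descriptions of $R_1R_2$ and of $S$ as extensions have kernel and quotient swapped; the mathematics is unaffected.
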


\begin{proof}
To reduce to the definably connected case, observe that if $R$ is the solvable radical of $G^0$ then $R$ is also the solvable radical of $G$, since $G/G^0$ is finite and a finite extension of a semisimple definable group is semisimple.

First we prove that a non-solvable definably connected group $G$ has a normal solvable definably connected subgroup $R$ such that $G/R$ is semisimple. 

We proceed by induction on $n = \dim G$.
If $n = 1$, then $G$ is abelian (\cite[2.15]{pi1}). So let $n > 1$. If $G$ is not already semisimple,
let $A < G$ be an infinite abelian normal definable subgroup.   
If $G/A$ is semisimple then we set $R = A^0$. Otherwise we can apply induction on $G/A$
and find a normal solvable definably connected subgroup
$\bar{S} < G/A$ such that $(G/A)/\bar{S}$ is semisimple. 
Therefore we can take $R$ to be the definably connected component of the identity of the preimage of $\bar{S}$ in $G$.

We want now to show that every normal solvable definably connected subgroup $S$ of $G$ is contained in $R$. For this it is enough to note that $RS/R$ is a normal solvable definably connected subgroup of the semisimple definable group $G/R$. Since it cannot be infinite (\ref{semisnosolv}), it follows that $RS = R$. Hence $S \subseteq R$.

If $S \neq R$ then $\dim S < \dim R$ and $R/S$ is an infinite solvable definable subgroup of $G/S$. Therefore $G/S$ cannot be semisimple (\ref{semisnosolv}) and $R$ is the unique normal solvable definably connected subgroup of $G$ such that $G/R$ is semisimple.
\end{proof}

\begin{rem}
As we will use it in the sequel, let us observe that $Z(G)^0 \subseteq R$, 
since $Z(G)^0$ is an abelian normal definably connected subgroup of $G$.
\end{rem}

%%%%%%%%%%%%%%%%%%%%%%%%%%%%%%%%%%%%%%%%%%%%%%%%%%%%%%%%%%%%%%%%%%%%%%%%%%%%%%%%%%%%%%%%%%%%%%%%%%%%%%%%%%%%%%%%%%%%%%%%%%%%%%%%%%%%%%%%%%%%%%%%%%%%%%%%%%%%%%%%%%%%%%%%%%%%%%%%%%%%%%%%%%%%%%%%%%%%%%%%%%%%%%%%%%%%%%%%%%%%%%%%%%%%%%%%%%%%%%%%%%%%%%%%%%%%%%%%%%%%%%%%%%%%%%%%%%%%%%%%%%%%%%%%%%%%%%%%%%%%%%%%%%%%%%%%%%%%%%%%%%%%%%%%%%%%%%%%%%%
\vs
\section{The maximal normal definable torsion-free subgroup}

In this section we prove that: $(1)$ every definable group has a normal definable torsion-free subgroup (possibly trivial) which contains each normal definable torsion-free subgroup (\ref{unifree}), and $(2)$ every definable extension of a definable torsion-free group by a definably compact group is definably isomorphic to their direct product (\ref{comp_tor}). \\

We make use (here and later on) of the \ominimal Euler characteristic, an invariant by definable bijections which plays an important role in the study of definable groups. 

If $\mathcal{P}$ is a cell decomposition of a definable set $X$, \textbf{the \ominimal Euler characteristic} $E(X)$ is the integer defined as the number of even-dimensional cells in $\mathcal{P}$ minus the number of odd-dimensional cells in $\mathcal{P}$. This does not depend on $\mathcal{P}$ (see \cite[Chapter 4]{vdd}). 
When $X$ is finite then $E(X) = \card(X)$. Since 
%the \ominimal Euler characteristic is invariant by definable bijections, and 
for every $A, B$ definable sets, $E(A \times B) = E(A)E(B)$, the following holds:

\begin{fact} \emph{(\cite[2.12]{str}).} \label{euprodsub}
Let $K < H < G$ be definable groups. Then

\begin{enumerate}

\vspace{0.1cm}
\item[$(a)$] $E(G) = E(H)E(G/H)$.

\vspace{0.1cm}
\item[$(b)$] $E(G/K) = E(G/H)E(H/K)$.
\end{enumerate}
\end{fact}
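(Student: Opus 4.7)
The plan is to reduce both claims to the multiplicativity of $E$ under definable products together with its invariance under definable bijections, by exhibiting definable bijections
\[
G\ \longleftrightarrow\ (G/H) \times H \qquad \text{and} \qquad G/K\ \longleftrightarrow\ (G/H) \times (H/K).
\]
Each is just a trivialization of the obvious fibration by cosets. The one ingredient needed beyond set-theory is the existence of definable sections of the quotient maps, and this is precisely where I expect the only real work to lie: it is available because $\M$ is an \ominimal expansion of an ordered group, hence admits definable choice. No normality assumption on $H$ or $K$ plays a role; $G/H$, $G/K$ and $H/K$ are treated purely as definable coset spaces.

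For part $(a)$, I would take the definable projection $\pi \colon G \to G/H$, $g \mapsto gH$, whose fibers are the left cosets of $H$. Definable choice supplies a definable section $s \colon G/H \to G$ with $s(C) \in C$ for every coset $C$. Then I would define
\[
\phi \colon (G/H) \times H \longrightarrow G, \qquad \phi(C, h) = s(C) \cdot h,
\]
a definable map with definable two-sided inverse $g \mapsto (gH,\, s(gH)^{-1}g)$, well defined because $s(gH) \in gH$ forces $s(gH)^{-1}g \in H$. Therefore $E(G) = E\bigl((G/H) \times H\bigr) = E(G/H)\,E(H)$.

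For part $(b)$, I would apply the same idea to the well-defined definable map $\rho \colon G/K \to G/H$, $gK \mapsto gH$, whose fiber over $gH$ is $\{ghK : h \in H\}$, canonically in definable bijection with $H/K$. Using the same section $s$,
\[
\psi \colon G/K \longrightarrow (G/H) \times (H/K), \qquad gK \longmapsto \bigl(gH,\ s(gH)^{-1}g \cdot K\bigr),
\]
is a definable bijection with inverse $(C, hK) \mapsto s(C)h \cdot K$; well-definedness on $K$-cosets and the two inversion checks follow immediately from $s(C) \in C$ and $K \subseteq H$. Hence $E(G/K) = E(G/H)\,E(H/K)$. The only genuine obstacle is the production of the section $s$; once it is in hand the rest is a formal verification, and $(b)$ is in fact just $(a)$ carried out one level up in the lattice of coset spaces.
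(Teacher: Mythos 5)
Your proof is correct and takes essentially the approach the paper itself intends: the paper states this fact (citing Strzebonski) immediately after observing that $E(A \times B) = E(A)E(B)$ for definable sets $A,B$, and its accompanying remark handles the coset spaces exactly as you do, via definable choice in an o-minimal expansion of an ordered group. Your trivializations $G \leftrightarrow (G/H) \times H$ and $G/K \leftrightarrow (G/H) \times (H/K)$ by a definable section are precisely the standard argument being invoked.
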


\noindent
\begin{rem}
In order to have a quotient as a definable set, in general one assumes that the structure satisfies the definable choice property (as Strzebonski does in \cite{str}), namely that each definable equivalence relation on a definable set has a definable set of representatives (for instance, any \ominimal expansion of an ordered group satisfies the definable choice property: \cite[Chapter 6]{vdd}). In the case of a quotient of definable groups this is granted by Edmundo's theorem \cite[7.2]{solv}.
Since we always consider $E(X)$, where $X = G/H$ for some definable groups $H < G$, so far we can avoid any other assumption on the structure, but \ominimality \hspace{-0.12cm}. 
\end{rem}

\noindent
%Strzebonski proves also that F
For every definable group $G$, if $p$ is a prime dividing $|E(G)|$, then $G$ has an element of order $p$ (\cite[2.5]{str}). Therefore:

\begin{fact} \emph{(\cite{str}).} \label{eu}  
Let $G$ be a definable group. Then 

\[
E(G) = \pm 1\  \Leftrightarrow\  G \mbox{ is torsion-free}.  
\]
 
\end{fact}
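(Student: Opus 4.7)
The plan is to prove both implications by combining the multiplicativity of the \ominimal Euler characteristic (Fact \ref{euprodsub}) with the statement just cited from \cite[2.5]{str}: if a prime $p$ divides $|E(G)|$, then $G$ contains an element of order $p$.

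For the direction $(\Leftarrow)$, I would argue by contrapositive. Suppose $E(G) \neq \pm 1$, so $|E(G)| \geq 2$ and there exists a prime $p$ dividing $|E(G)|$. Then by \cite[2.5]{str}, $G$ contains an element of order $p$, so $G$ is not torsion-free. Contrapositively, if $G$ is torsion-free, no prime can divide $|E(G)|$, hence $E(G) = \pm 1$.

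For the direction $(\Rightarrow)$, suppose $G$ is not torsion-free, so there is an element $g \in G$ of finite order $n > 1$. Pick any prime divisor $p$ of $n$ and set $h = g^{n/p}$, which has order exactly $p$. Then $C = \langle h \rangle$ is a finite (hence definable) cyclic subgroup of $G$ of order $p$, and since $C$ is finite, $E(C) = \card(C) = p$. Applying Fact \ref{euprodsub}$(a)$ to $C < G$ gives
\[
E(G) \;=\; E(C)\,E(G/C) \;=\; p\cdot E(G/C),
\]
so $p$ divides $E(G)$ and in particular $E(G) \neq \pm 1$.

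There is no real obstacle here: both directions are short and essentially formal once one has Fact \ref{euprodsub} and the Cauchy-type fact from \cite[2.5]{str}. The only subtlety worth noting is that the quotient $G/C$ must be treated as a definable set in order to take its Euler characteristic; this is legitimate because $C$ is normal (being of prime order and central in $\langle g \rangle$ is not needed — we use definable choice / Edmundo's theorem on quotients, as in the remark just after Fact \ref{euprodsub}, so $G/C$ is a definable set even when $C$ is not normal).
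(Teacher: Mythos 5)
Your proof is correct and is essentially the derivation the paper intends: the Fact is quoted from \cite{str} without proof, immediately after the Cauchy-type statement \cite[2.5]{str} and the multiplicativity Fact \ref{euprodsub}, and you use exactly these two ingredients (Cauchy, in contrapositive form, for the direction from torsion-freeness to $E(G)=\pm 1$; a prime-order cyclic subgroup together with Fact \ref{euprodsub}$(a)$ for the converse). The only slip is the clause ``$E(G)\neq\pm 1$, so $|E(G)|\geq 2$'', which overlooks the case $E(G)=0$; this is harmless, since every prime divides $0$, so the prime divisor of $|E(G)|$ that you need still exists --- and indeed this convention is exactly what makes the paper's own ``Therefore'' cover the case $E(G)=0$ as well.
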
  

%%%%%%%%%%%%%%%%%%%%%
\vspace{0.1cm}
\begin{fact} \emph{(\cite{str, pps1}).} \label{euconn}
Let $G$ be a definable group. If $G$ is definably connected, then 
$E(G) \in \{\pm 1, 0\}$. In particular, if $G$ is definably connected
and it is not torsion-free, then $E(G) = 0$.
\end{fact}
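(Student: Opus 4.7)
The first conclusion follows from the second via Fact \ref{eu}: a definably connected torsion-free group already satisfies $E(G)=\pm 1$, so once one knows $E(G)=0$ whenever torsion is present, the three values $\{-1,0,1\}$ exhaust the possibilities. The real content is therefore to show that every definably connected $G$ with a non-trivial element of finite order has $E(G)=0$, which I would prove by induction on $n=\dim G$. The base case $n=1$ uses Pillay's one-dimensional classification \cite{pi1}: a $1$-dimensional definably connected group is either torsion-free or definably compact; if torsion is present only the second alternative applies, and such a group is definably homeomorphic to a circle, whose cell decomposition (one $0$-cell and one $1$-cell) gives $E(G)=0$.

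For the inductive step, assume the fact below dimension $n$ and let $G$ be definably connected of dimension $n$ with torsion. Suppose first that $G$ has a proper infinite normal definable subgroup $N$. Since $G$ is definably connected, $N$ has infinite index, so $0<\dim N^{0}<n$ and $0<\dim(G/N^{0})<n$; moreover $N^{0}$ is normal in $G$ as a characteristic subgroup of $N$. Fact \ref{euprodsub}(a) gives $E(G)=E(N^{0})\,E(G/N^{0})$ with both factors definably connected of dimension strictly less than $n$. By induction each lies in $\{-1,0,1\}$; if both were non-zero, then $E(G)=\pm 1$ and Fact \ref{eu} would force $G$ torsion-free, a contradiction. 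Hence at least one factor vanishes and $E(G)=0$.

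The remaining case is that every proper normal definable subgroup of $G$ is finite. Then $G$ cannot be solvable non-abelian (the connected component of the derived subgroup would be an infinite proper normal definable subgroup, contradicting solvability by iteration), and an abelian definably connected group of dimension $\geq 2$ has proper $1$-dimensional definably connected subgroups, which are normal and infinite; so the abelian possibility is vacuous for $n\geq 2$. Thus $G$ must be semisimple by Remark \ref{semisnosolv}. Fact \ref{theosem} then identifies $G/Z(G)$ with a product $H_{1}\times\cdots\times H_{s}$ of semialgebraic definably simple groups $H_{i}$ over real closed fields $\Rs_{i}$, and Fact \ref{euprodsub} yields $E(G)=|Z(G)|\prod_{i}E(H_{i})$; it suffices to check $E(H_{i})=0$ for each $i$. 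By Tarski transfer the Euler characteristic of a semialgebraic group agrees with that of its real locus, and each $H_{i}(\R)$ is a connected semisimple real Lie group; by the Iwasawa decomposition it is homeomorphic to a maximal compact subgroup $K$ times $\R^{k}$, and $K$ is a positive-dimensional compact connected Lie group containing a non-trivial maximal torus $T$, so the fibration $K\to K/T$ together with $E(T)=0$ forces $E(K)=0$, whence $E(H_{i})=0$.

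The main obstacle is precisely the semisimple case: justifying $E(H_{i})=0$ requires either the transfer to real Lie groups sketched above (resting on Tarski completeness of the theory of real closed fields and on Fact \ref{theosem} supplying a semialgebraic model) or a direct o-minimal argument that locates a definable positive-dimensional torus inside $H_{i}$ and applies Fact \ref{euprodsub}(b), which is the route taken in \cite{pps1}. Everything else is bookkeeping with Euler characteristics and the torsion/torsion-free dichotomy of Fact \ref{eu}.
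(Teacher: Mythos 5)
The paper never proves this statement: it is quoted as a Fact with citations to \cite{str} and \cite{pps1}, so your attempt has to stand on its own rather than be compared with an in-paper argument. Its skeleton is reasonable. The reduction of the first claim to the second via Fact \ref{eu} is correct; the inductive case where $G$ has a proper infinite normal definable subgroup $N$ is clean and correct ($E(G)=E(N^0)E(G/N^0)$, and if both factors were $\pm 1$ then Fact \ref{eu} would make $G$ torsion-free); and the semisimple case via Fact \ref{theosem} and transfer to real Lie groups is viable and is exactly the style of transfer the paper itself uses in \ref{declin} and \ref{conjmax} (to be careful one should transfer formulas over the prime model $\R_{alg}$, or, simpler, transfer the first-order statements ``there is an element of order $p^k$'' rather than the Euler characteristic itself).

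The genuine gaps are in the cases you treat as trivial. First, your dismissal of the abelian possibility rests on the unproved claim that an abelian definably connected group of dimension $\geq 2$ has a proper infinite definable subgroup. For definably compact groups this is not elementary: the natural sources are either Strzebonski's existence of $0$-subgroups (Fact \ref{maxSylconj}$(a)$), which takes $E(G)=0$ as a \emph{hypothesis} and so is circular here, or the much heavier torsion structure theory of \cite{eo}. Second, the base case invokes ``definably homeomorphic to a circle'', but the paper works over an o-minimal expansion of an ordered group, where no circle is available, and in any case $E$ is computed on the underlying definable set, not on the group manifold: the definably compact group $[0,a)$ with addition modulo $a$ has underlying set $\{0\}\cup(0,a)$, which is not a circle (its $E$ is indeed $0$, but not for your reason). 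Third, your exclusion of solvable non-abelian $G$ treats $[G,G]^0$ as a definable subgroup, while the paper explicitly remarks after Fact \ref{decomp} that derived subgroups of definable groups need not be definable (and $[G,G]$ can also be finite); this slip is harmless only because the sub-case is redundant: if $G$ is non-abelian and every proper normal definable subgroup is finite, then $G$ is semisimple directly from the definition, and Remark \ref{semisnosolv} excludes solvability. The missing idea that repairs both real gaps at once is the following: a single torsion element only yields $p\mid E(G)$, so to force $E(G)=0$ you must produce divisibility of $E(G)$ by unboundedly large integers. For $G$ abelian definably connected with an element of prime order $p$, look at the definable kernel $G[p]$ of multiplication by $p$: if $G[p]$ is proper and infinite you are back in your normal-subgroup case; if $G[p]$ is finite then $pG=G$ by connectedness, so $G$ is $p$-divisible and contains elements of order $p^k$ for every $k$; and if $G[p]=G$ then $G$ contains subgroups isomorphic to $(\Z/p\Z)^k$ for every $k$. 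In the last two cases Fact \ref{euprodsub}$(a)$ gives $p^k\mid E(G)$ for all $k$, hence $E(G)=0$; this argument also covers your one-dimensional base case, since such groups are abelian.
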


\noindent
We record now some properties about definable torsion-free groups from \cite{ps05}:

\begin{fact} \emph{(\cite[section 2]{ps05}).} \label{torfree} 
Let $H$ be a definable torsion-free group. Then:

\begin{enumerate}

\item[$(a)$]  $H$ is definably connected.  

\item[$(b)$]  $H$ is solvable.   

\item[$(c)$]  $H$ has a normal definable subgroup of codimension $1$.

\item[$(d)$]  Every definable quotient of $H$ is torsion-free.

%\item[$(e)$]  $H$ is uniquely divisible. 
 
\end{enumerate}

\end{fact}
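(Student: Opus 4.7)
The plan is to dispatch parts (d) and (a) by Euler-characteristic bookkeeping, handle (b) by combining (d) with the structure theorem for semisimple groups, and attack (c) by a reduction to the abelian case.

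Parts (d) and (a) are essentially one-line consequences of the tools already collected. For (d), given a normal definable $K \triangleleft H$, Fact~\ref{euprodsub}(a) gives $E(H) = E(K) \cdot E(H/K)$; since $E(H) = \pm 1$ by Fact~\ref{eu}, both integer factors on the right must themselves be $\pm 1$, so $H/K$ is torsion-free by Fact~\ref{eu} again. For (a), apply Fact~\ref{euprodsub}(a) with $K = H^0$: since $H^0$ is definably connected, Fact~\ref{euconn} forces $E(H^0) \in \{-1, 0, 1\}$, and then $E(H^0) \cdot [H : H^0] = \pm 1$ forces $[H : H^0] = 1$.

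For (b), I would argue by contradiction using Fact~\ref{solvrad}. Suppose $H$ is torsion-free but not solvable. Since $H$ is definably connected by (a) and the solvable radical $R$ is a proper normal solvable definably connected subgroup, $H/R$ must be infinite (any definable surjective image of a definably connected group is definably connected, hence cannot be a nontrivial finite group) and so semisimple. Applying (d) first to $R \triangleleft H$ and then to the center of $H/R$, the quotient $(H/R)/Z(H/R)$ is torsion-free. But by Fact~\ref{theosem} this quotient is a direct product of definably simple semialgebraic groups over real closed fields, and such groups always contain torsion (for instance a copy of $\PSL_2$ over a real closed field has elements of finite order coming from its compact part), a contradiction.

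Part (c) is the main obstacle. Using (b), $H$ is solvable; if $H$ is nonabelian, then $[H, H]$ is a proper normal definable subgroup, so I would take the abelian definable quotient $H/[H, H]$, which is torsion-free by (d), find a codimension-$1$ definable subgroup there, and pull back along the quotient map. The preimage is automatically normal in $H$ because it contains $[H,H]$ and $H/[H,H]$ is abelian, and the dimension formula gives codimension $1$ in $H$. This reduces matters to showing that every definable torsion-free abelian group $A$ of positive dimension has a codimension-$1$ definable subgroup. This last step is where genuine structure theory enters: one must appeal to Peterzil--Starchenko's analysis of torsion-free abelian definable groups in o-minimal expansions of ordered groups, which identifies $A$ with a suitable additive vector-like group in which hyperplane subgroups are visible. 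That structural input is the real heart of the matter and the only part not handled by pure Euler-characteristic counting.
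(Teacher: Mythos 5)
The paper never proves this statement: it is quoted wholesale from Peterzil--Starchenko \cite{ps05}, so your proposal has to be judged on its own mathematics rather than against an argument in the text. Parts $(d)$ and $(a)$ are correct and complete; they are exactly the standard Euler-characteristic arguments (for $(a)$ you do not even need \ref{euconn}: two integers whose product is $\pm 1$ are each $\pm 1$, so $[H:H^0]=1$). Part $(b)$ follows the right strategy, but its entire weight rests on the claim that \emph{every} definably simple group semialgebraic over a real closed field has torsion, and you support this only with the example of $\PSL_2$. The claim is true, but it is not free: one needs the transfer technique the paper itself uses in \ref{declin} and \ref{conjmax}. By \cite{pps1, pps2} such a group may be taken to be semialgebraic and defined over the real algebraic numbers; its real points form a linear simple Lie group whose maximal compact subgroup is infinite (a Cartan decomposition $\mathfrak{g}=\mathfrak{k}\oplus\mathfrak{p}$ with $\mathfrak{k}=0$ would force $\mathfrak{g}$ abelian), hence contains an element of order $2$; since ``there is $x\neq e$ with $x^2=e$'' is first order, it transfers back to the real closed field. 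Without some such argument, $(b)$ is unproved.

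Part $(c)$ has two genuine gaps. First, you treat $[H,H]$ as a \emph{definable} proper normal subgroup of the nonabelian solvable group $H$. Definability of derived subgroups is precisely what this paper warns about (see the remark following \ref{decomp}: $[G,G]$ is in general only a countable union of definable sets); for definably connected solvable groups it does hold, but that is a nontrivial theorem of Edmundo \cite{solv} which you would have to invoke explicitly. Second, and more seriously, you punt the abelian case to ``Peterzil--Starchenko's analysis'', i.e.\ to the very paper \cite{ps05} from which this Fact is quoted --- as a blind proof this is circular. Ironically, that step needs no structure theory at all: if $A$ is abelian, torsion-free and $\dim A\geq 1$, then $A$ is infinite and torsion-free, hence not definably compact by \ref{compnoncomp}$(b)$, hence contains a $1$-dimensional definable torsion-free subgroup $B$ by \ref{compnoncomp}$(a)$; $B$ is normal because $A$ is abelian, $A/B$ is torsion-free by your $(d)$, and induction on dimension plus pulling back a codimension-one definable subgroup of $A/B$ finishes. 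So the step you call ``the real heart of the matter'' is an easy induction, while the two steps you treated as free are the ones that need genuine outside input.
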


\noindent
Using the facts just mentioned, we can prove that every definable group has a unique maximal normal definable torsion-free
subgroup:

\begin{theo}\label{unifree}
In every definable group $G$ there is a normal definable torsion-free
 subgroup $N$ which contains every
normal definable torsion-free subgroup of $G$. It is the unique normal definable torsion-free subgroup of $G$
of maximal dimension. We will refer to it as  
{\bf the maximal normal definable torsion-free subgroup} of $G$.
\end{theo}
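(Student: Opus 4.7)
The plan is to show that the collection of normal definable torsion-free subgroups of $G$ is closed under taking products, and then take one of maximal dimension. Let $N_1, N_2 \trianglelefteq G$ be two normal definable torsion-free subgroups. Since $N_2$ is normal, the product $N_1 N_2$ is again a (definable) subgroup, and it is normal in $G$ because both factors are. The main task is to prove that $N_1 N_2$ is torsion-free.

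For this I would argue with the \ominimal Euler characteristic. The natural definable isomorphism $N_1 N_2 / N_1 \cong N_2/(N_1 \cap N_2)$ exhibits $N_1 N_2 / N_1$ as a definable quotient of the torsion-free group $N_2$, hence torsion-free by Fact \ref{torfree}$(d)$. Using Fact \ref{euprodsub}$(a)$ and Fact \ref{eu} we obtain
\[
E(N_1 N_2) \;=\; E(N_1)\cdot E(N_1 N_2 / N_1) \;=\; (\pm 1)(\pm 1) \;=\; \pm 1,
\]
and then Fact \ref{eu} again gives that $N_1 N_2$ is torsion-free.

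Once this closure property is established, I would choose $N$ to be any normal definable torsion-free subgroup of $G$ of maximal dimension (such a subgroup exists since dimensions are bounded by $\dim G$; in the worst case $N=\{e\}$). For any other normal definable torsion-free subgroup $N'$, the product $NN'$ is again normal, definable, and torsion-free by the previous step, and contains $N$. By maximality of $\dim N$ we get $\dim NN' = \dim N$. Since $NN'$ is torsion-free it is definably connected by Fact \ref{torfree}$(a)$, so a definable subgroup of the same dimension must coincide with it; therefore $N = NN'$, i.e.\ $N' \subseteq N$. This shows that $N$ contains every normal definable torsion-free subgroup, and in particular $N$ is unique.

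The subtle step is verifying that the product of two normal torsion-free definable subgroups remains torsion-free: a direct element-chasing argument is not obvious (a product of torsion-free elements could in principle be torsion), so the Euler-characteristic criterion of Fact \ref{eu}, combined with the multiplicativity in Fact \ref{euprodsub}, is essential. Everything else (definable connectedness of torsion-free groups, the isomorphism theorem for the quotient) is standard machinery already recorded above.
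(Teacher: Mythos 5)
Your proposal is correct and follows essentially the same route as the paper's proof: take $N$ of maximal dimension, show the product with any other normal definable torsion-free subgroup is still torsion-free via the Euler characteristic computation $E(HN) = E(N)E(HN/N) = \pm 1$ (using that $HN/N \cong H/(H\cap N)$ is a torsion-free quotient), and conclude by maximality of dimension together with definable connectedness of torsion-free groups. The only cosmetic difference is that you isolate the closure-under-products step as a separate lemma before invoking maximality, whereas the paper does both in one pass.
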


\begin{proof}
Let $N$ be a normal definable torsion-free subgroup of $G$ of maximal dimension and $H$ a normal definable torsion-free subgroup of $G$.
We want to show that $H \subseteq N$.

We claim that $HN$ is a normal definable torsion-free subgroup of $G$: the definable group $H/(H \cap N)$ is a quotient of definable torsion-free groups, then it is torsion-free %by \ref{torfree}(f) 
and it is definably isomorphic to $HN/N$. Thus %by \ref{euprodsub}, 
$E(HN) = E(N)E(HN/N) = \pm1$ and $HN$ is torsion-free (\ref{eu}).

But $N$ is of maximal dimension among the normal definable torsion-free subgroups of $G$, so 
$\dim(HN) = \dim(N)$. 
Since definable torsion-free groups are definably connected, it follows that $HN = N$, $H \subseteq N$ and 
$\dim H < \dim N$, unless $H = N$.
\end{proof}

%%%%%%%%%%%%%%%%%%%%%
\begin{rem}\label{Nnotmax}
In general $N$ does not contain every definable torsion-free subgroup of $G$. For instance if $\M = \langle M, <, +, \cdot \rangle$
is a real closed field and $G = \SL_2(M) = \{A \in \GL_2(M) : \det A = 1\}$, then $N = \{e\}$ but $G$ does contain definable torsion-free subgroups such as 
\begin{displaymath}
H = \left\{
\left( \begin{array}{cc}
1 & a   \\
0 & 1    
\end{array} \right) : a \in M \right\},
\end{displaymath}

\vs \noindent
which is definably isomorphic to $(M, +)$. 

But if $G/N$ is definably compact, then $N$ contains every definable torsion-free subgroup of $G$:  
\end{rem}

%%%%%%%%
%%%%%%%%
\begin{lem}
A normal definable torsion-free subgroup $N$ of a definable group $G$ contains every definable torsion-free
subgroup of $G$ if and only if $G/N$ is definably compact.  
\end{lem}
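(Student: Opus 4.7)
The plan is to prove each direction separately, with $\pi\colon G \to G/N$ the canonical projection, by studying the image $\pi(H) = HN/N$ of a given definable torsion-free subgroup $H \leq G$.

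For $(\Leftarrow)$, assume $G/N$ is definably compact and let $H$ be any definable torsion-free subgroup of $G$. Then $\pi(H) \cong H/(H\cap N)$ is a definable quotient of the torsion-free group $H$, hence torsion-free by Fact \ref{torfree}(d); and it is a definable subgroup of $G/N$, hence definably compact. The key point is then that a nontrivial definable torsion-free group can never be definably compact. Granting this, $\pi(H)$ is trivial, i.e.\ $H \subseteq N$. To justify the key point: such a group would, by Fact \ref{torfree}(a), be definably connected, and by iterating Fact \ref{torfree}(c), using Fact \ref{torfree}(d) to propagate torsion-freeness through the quotients and using that definable homomorphic images of definably compact groups are definably compact, one would reach a $1$-dimensional definably connected definably compact torsion-free quotient. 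This contradicts Pillay's classification of $1$-dimensional definably connected groups, since any such group is either of the form $(M,+)$ (and not definably compact) or a definable torus (and has torsion).

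For $(\Rightarrow)$, I argue by contrapositive. If $G/N$ is not definably compact, the Peterzil--Steinhorn theorem provides a $1$-dimensional definable torsion-free subgroup $\bar{H} \leq G/N$. Set $H := \pi^{-1}(\bar{H})$, which gives a definable extension
\[
1 \longrightarrow N \longrightarrow H \longrightarrow \bar{H} \longrightarrow 1.
\]
Since $N$ and $\bar{H}$ are both torsion-free, Fact \ref{eu} yields $E(N) = E(\bar{H}) = \pm 1$, and Fact \ref{euprodsub}(a) then gives $E(H) = E(N) \cdot E(\bar{H}) = \pm 1$. Applying Fact \ref{eu} once more, $H$ is itself torsion-free; and since $\bar{H}$ is nontrivial, $H$ strictly contains $N$. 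Thus $N$ fails to contain this definable torsion-free subgroup of $G$.

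The main obstacle is the one step in $(\Leftarrow)$ asserting that a nontrivial definable torsion-free group is never definably compact. This is not directly one of the facts recorded in the excerpt and requires either Pillay's classification of $1$-dimensional definably connected groups or the fact that every positive-dimensional definably compact definably connected group contains torsion; everything else is a bookkeeping exercise with the Euler characteristic identity \ref{euprodsub}(a) and the torsion-free stability properties already listed in Fact \ref{torfree}.
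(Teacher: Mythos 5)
Your two directions are exactly the paper's: for $(\Rightarrow)$ the paper also argues via the contrapositive, pulling back an infinite torsion-free definable subgroup of $G/N$ supplied by Peterzil--Steinhorn (Fact \ref{compnoncomp}$(a)$, i.e.\ \cite[1.2]{ps}), the preimage being torsion-free for precisely the Euler-characteristic reason you spell out (the same computation the paper performs in the proof of \ref{unifree}); for $(\Leftarrow)$ the paper likewise notes that $HN/N \cong H/(H\cap N)$ would be an infinite definable torsion-free subgroup of the definably compact group $G/N$, which is impossible. The paper closes that last step by citing Remark \ref{duacomptf}: definably compact groups have no infinite definable torsion-free subgroups, because definable subgroups are closed (hence definably compact) and every infinite definably compact group has torsion, the latter being the quoted theorem recorded as Fact \ref{compnoncomp}$(b)$. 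So the fact you flag as unavailable is actually recorded in the paper, just stated in the section following the lemma and forward-referenced in its proof.

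The one place where you strike out on your own --- the self-contained justification of the key point that a nontrivial definable torsion-free group is never definably compact --- is also the one place with a real error. The reduction to dimension one is fine: a single application of Fact \ref{torfree}$(c)$, together with \ref{torfree}$(d)$ and the (standard, but worth stating) fact that a definable quotient of a definably compact group is definably compact, produces a $1$-dimensional definably connected group that is simultaneously definably compact and torsion-free. But the classification you then invoke is misattributed and false as stated. Pillay's result in dimension one (\cite[2.15]{pi1}) says only that $1$-dimensional definably connected groups are abelian; and a torsion-free such group need not be ``of the form $(M,+)$'': in a real closed field, $(M^{>0},\cdot)$ is $1$-dimensional, definably connected and torsion-free, yet not definably isomorphic to $(M,+)$. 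What your argument actually needs is only the dichotomy that no $1$-dimensional definably connected group is both definably compact and torsion-free; this is true, but it is itself a theorem (due in the $1$-dimensional case to Razenj and to Peterzil--Steinhorn \cite{ps}, and contained in Fact \ref{compnoncomp}$(b)$), not a consequence of anything you have assembled, so as written the step rests on a false premise. The repair is immediate: either cite Fact \ref{compnoncomp}$(b)$ (equivalently Remark \ref{duacomptf}) as the paper does, which makes the dimension reduction unnecessary, or keep the reduction --- whose genuine merit is that it weakens the required input from the full torsion theorem for definably compact groups to its elementary $1$-dimensional case --- and cite that $1$-dimensional result correctly.
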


\begin{proof}\hspace{7cm}
\begin{enumerate}
\item[$(\Rightarrow)$] If $G/N$ is not definably compact then by \cite[1.2]{ps} $G/N$ contains definable infinite torsion-free subgroups, and their preimages in $G$ are definable torsion-free subgroups containing $N$ properly.

\vs
\item[$(\Leftarrow)$] Let $H$ be a definable torsion-free subgroup of $G$. We want to show that $H \subseteq N$.
If not, $H/(H \cap N)$ is an infinite definable torsion-free group definably isomorphic to $HN/N$ which is
a definable subgroup of the definably compact group $G/N$, contradiction (see \ref{duacomptf}).
\end{enumerate}
\end{proof}

\begin{cor}\label{quocomp}
If $N$ is a normal definable torsion-free subgroup of a definable group $G$ such that $G/N$ is definably compact, then $N$ is the unique maximal definable torsion-free subgroup of $G$.
\end{cor}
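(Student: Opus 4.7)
The statement is labeled as a corollary, and indeed it should follow almost immediately from the lemma just proved. My plan is as follows.

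First I would invoke the preceding lemma in the direction $(\Leftarrow)$: since $N$ is a normal definable torsion-free subgroup of $G$ and $G/N$ is definably compact by hypothesis, the lemma tells us that $N$ contains every definable torsion-free subgroup of $G$. This is the entire content we need.

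Next, to extract uniqueness of a maximal definable torsion-free subgroup, I would argue as follows. Let $H$ be any maximal definable torsion-free subgroup of $G$. By the previous step, $H \subseteq N$. But $N$ is itself a definable torsion-free subgroup of $G$, so by maximality of $H$ we must have $H = N$. Thus $N$ is \emph{a} maximal definable torsion-free subgroup, and it is moreover the only one, because any other maximal one must equal it by the same argument.

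I do not anticipate any obstacle here: the work has already been done in the lemma, and the corollary is really just a repackaging. The one small thing to keep straight in the write-up is the distinction between ``contains every definable torsion-free subgroup'' (which is what the lemma gives) and ``is the unique maximal definable torsion-free subgroup'' (which is the formally stronger-sounding conclusion of the corollary); the former immediately implies the latter because $N$ itself is definable and torsion-free.
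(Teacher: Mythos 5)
Your proof is correct and matches the paper's intent exactly: the paper states this corollary immediately after the lemma with no separate proof, precisely because it follows from the $(\Leftarrow)$ direction of the lemma in the way you describe. Your closing remark correctly identifies the only (trivial) gap to bridge, namely that ``$N$ contains every definable torsion-free subgroup'' plus ``$N$ is itself definable and torsion-free'' yields both maximality and uniqueness.
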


%%%%%%%%%%%%%
%\begin{rem}
%%Note that if $N$ is the maximal normal definable torsion-free subgroup of a definable group $G$, then 
%In general $G/N$ is not definably compact. For instance, semisimple groups do not have any infinite normal definable torsion-free subgroup (i.e. $N = \{e\}$, see \ref{semisnosolv}), but not each of them is definably compact: for example
%the above mentioned definable group $\SL_2(M)$ is not definably compact. 
%%(because $N$ is solvable (\ref{torfree}) and semisimple definable groups do not have normal infinite solvable definable subgroups, as we noticed in \ref{nosolvinsemis}).
%We will see in the next section that if $G$ is solvable then $G/N$ is always definably compact (\ref{solvquozcomp}). %and so $N$ contains every definable torsion-free subgroup of $G$ (\ref{maxtorsolvmax}).
%\end{rem}
%%%%%%%%%%%%%%%%%%%%%%%%%%%%%%%%%%%%%%%%%%%%%%%%%%%%%%%%%%%%%%%%%%%%%%%%%%%%%%%%%%%%%%%%%%

\begin{lem}\label{radgsun}
Let $N$ be the maximal normal definable torsion-free subgroup of a definable group $G$. If $R$ is the solvable radical of $G$, then $N \subseteq R$ and $R/N$ is the solvable radical of $G/N$.
\end{lem}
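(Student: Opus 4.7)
The proof splits cleanly into the two assertions.

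For the first inclusion $N \subseteq R$, I would invoke Fact \ref{torfree}: since $N$ is definable and torsion-free, parts $(a)$ and $(b)$ give that $N$ is both definably connected and solvable. But $N$ is also normal in $G$ by hypothesis, so $N$ is a normal solvable definably connected subgroup of $G$. By the characterization of the solvable radical in Fact \ref{solvrad}, $R$ contains every such subgroup, hence $N \subseteq R$.

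For the second assertion, I would verify that $R/N$ satisfies the defining properties of the solvable radical of $G/N$ listed in Fact \ref{solvrad}. First, $R/N$ is a definable subgroup of $G/N$ (quotienting a normal definable subgroup by the normal definable subgroup $N$), and it is normal in $G/N$ because $R$ is normal in $G$. Second, $R/N$ inherits solvability from $R$, and it is definably connected as the continuous surjective image of the definably connected group $R$ under the canonical projection. Third, by the third isomorphism theorem $(G/N)/(R/N) \cong G/R$, and by the defining property of the solvable radical $R$ of $G$, the quotient $G/R$ is either finite or semisimple. Therefore $R/N$ is a normal solvable definably connected subgroup of $G/N$ whose quotient in $G/N$ is finite or semisimple; by the uniqueness clause of Fact \ref{solvrad}, $R/N$ is the solvable radical of $G/N$.

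There is essentially no obstacle here: the statement is a formal consequence of the characterization of the solvable radical in Fact \ref{solvrad} together with the basic closure properties of solvability and definable connectedness under quotients. The only subtle point to keep in mind is that ``semisimple'' for the quotient $G/R$ is preserved under the identification $(G/N)/(R/N) \cong G/R$, which is immediate since this is a definable isomorphism of definable groups and semisimplicity is an isomorphism invariant.
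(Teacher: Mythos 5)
Your proof is correct and follows essentially the same route as the paper: the paper also deduces $N \subseteq R$ from the solvability and definable connectedness of $N$ given by \ref{torfree} together with the maximality clause of \ref{solvrad}, and then identifies $(G/N)/(R/N)$ with $G/R$ to conclude via the uniqueness clause of \ref{solvrad}. You merely spell out the routine verifications (normality, solvability, and definable connectedness of $R/N$) that the paper leaves implicit.
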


\begin{proof}
Because $N$ is solvable and definably connected (\ref{torfree}), $N \subseteq R$.

Since $(G/N)/(R/N)$ is definably isomorphic to $G/R$ which is semisimple or finite, it follows that
$R/N$ is the solvable radical of $G/N$ (\ref{solvrad}).
\end{proof}

In the next section we will show that $R/N$ is always definably compact (\ref{solvquozcomp}), deducing that every definable group is a definable extension of a group with definably compact solvable radical by a (maximal) definable torsion-free group (\ref{extcomprad}).

%%%%%%%%%%%

\begin{lem}\label{tormaxnor}
Let $H \lhd\, G$ be definable groups. The maximal normal definable torsion-free
subgroup $N$ of $H$ is normal in $G$ as well.
\end{lem}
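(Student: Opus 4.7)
The plan is to exploit the uniqueness statement in Theorem~\ref{unifree}: since $N$ is characterized inside $H$ as the unique maximal normal definable torsion-free subgroup, I only need to show that every conjugate $N^g$ with $g \in G$ is again a normal definable torsion-free subgroup of $H$, and then conclude $N^g = N$.

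First I would fix $g \in G$ and verify that $N^g \subseteq H$: this follows because $H$ is normal in $G$, so conjugation by $g$ restricts to an automorphism of $H$, and in particular $N^g = gNg^{-1} \subseteq gHg^{-1} = H$. The same automorphism shows that $N^g$ is a definable subgroup of $H$ and that it is torsion-free (conjugation preserves both definability and the absence of torsion).

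Next I would check that $N^g$ is normal in $H$. For any $h \in H$, normality of $H$ in $G$ gives $h' := g^{-1}hg \in H$, so $h = g h' g^{-1}$ and
\[
h N^g h^{-1} \;=\; g h' g^{-1} \cdot g N g^{-1} \cdot g h'^{-1} g^{-1} \;=\; g (h' N h'^{-1}) g^{-1} \;=\; g N g^{-1} \;=\; N^g,
\]
where I used $N \lhd H$ in the penultimate equality. Hence $N^g$ is a normal definable torsion-free subgroup of $H$.

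By Theorem~\ref{unifree} applied to $H$, $N$ contains every normal definable torsion-free subgroup of $H$, so $N^g \subseteq N$. Since conjugation is a bijection, $\dim N^g = \dim N$, and both groups are definably connected (Fact~\ref{torfree}(a)); therefore $N^g = N$. As $g \in G$ was arbitrary, $N$ is normal in $G$. There is no real obstacle here: the only thing to be careful about is the verification that $N^g$ is normal in $H$ (not merely in $H^g$), which is where the normality of $H$ in $G$ is essential.
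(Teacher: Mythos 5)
Your proof is correct and follows essentially the same route as the paper: conjugation by $g \in G$ is a definable automorphism of $H$ (since $H \lhd G$), so $N^g$ is a normal definable torsion-free subgroup of $H$ of the same dimension as $N$, and the uniqueness statement of Theorem~\ref{unifree} forces $N^g = N$. The paper's proof is just a compressed version of yours, leaving the verifications you spell out (that $N^g \subseteq H$, is normal in $H$, and is torsion-free) implicit.
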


\begin{proof}
For every $g \in G$, the definable automorphism $H \to H$ given by $x \mapsto gxg^{-1}$, maps $N$ to $N^g$, 
that is a normal definable torsion-free subgroup of $H$ of the same dimension as $N$. Therefore 
$N^g = N$ for every $g \in G$ (\ref{unifree}).
\end{proof}
%%%%%%%%%%%%

%%%%%%%%%%%%%%%%%%%%%%%%%%%%%%%%%%%%%%%%%%%%%%%%%%%%%%%%%%%%%%%%%%%%%%%%%%%%%%%%%%%%%%%%%%%%%%%%%%%%%%%%%%%%%%%%%%%%%%%%%%%%%%%%%%%%%%%%%%%%%%%%%%%%%%%%%%%%%%%%%%%%%%%%%%%%%%%%%%%%%%%%%%%%%%%%%%%%%%%%%%%%%%%%%%%%%%%%%%%%%%%%%%%%%%%%%%%%%%%%%%%%%%%%%%%%%%%%%%%%%%%%%%%%%%%%%%%%%%%%%%%%%%%%%%%%%%%%%%%%%%%%%%%%%%%%%%%%%%%%%%%%%%%%%%
%%%%%%%%%%%%%%%%%%%%%%%%%%%%%%%%%%%%%%%%%%%%%%%%%%%%%%%%%%%%%%%%%%%%%%%%%%%%%%%%%%%%%%%%%%%%%%%

%%%%%%%%%%%%%%%%%%%%%%%%%%%%%%%%%%%%%%%%%%%%%%%%%%%%%%%%%%%%%%%%%%%%%%%%%%%%%%%%%%%%%%%%%%%%%%%%%%%%%%%%%%%%%%%%%%%%%%%%%%%%%%%%%%%%%%%%%%%%%%%%%%%%%%%%%%%%%%%%%%%%%%%%%%%%%%%%%%%%%%%%%%%%%%%%%%%%%%%%%%%%%%%%%%%%%%%%%%%%%%%%%%%%%%%%%%%%%%%%%%%%%%%%%%%%%%%%%%%%%%%%%%%%%%%%%%%%%%%%%%%%%%%%%%%%%%%%%%%%%%%%%%%%%%%%%%%%%%%%%%%%%%%%%%

We show now that every definable extension of a definable torsion-free group by a definably compact
group is definably isomorphic to their direct product. 
The case where the torsion-free group is $1$-dimensional
was proved by Edmundo in \cite[5.1]{solv}. 

The specular case of
definable extensions of a definably compact group by a definable torsion-free group will be studied in section \ref{splext}.

\begin{theo}\label{comp_tor}
Let $\pi \colon G \to Q$ be a definable extension of a definable torsion-free group $Q$ by a definably compact group $K$. Then the definable exact sequence 

\[
1\ \longrightarrow\ K\ \stackrel{i}{\longrightarrow}\ G\ \stackrel{\pi}{\longrightarrow}\ Q\ \longrightarrow\ 1  \\
\]

\vs \noindent
splits definably in a direct product. Therefore the maximal normal definable torsion-free subgroup $N$ of $G$ is definably isomorphic to $Q$ and

\[
G\ =\ K \times N.\\
\]

\end{theo}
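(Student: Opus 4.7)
My plan is to proceed by induction on $\dim Q$, using the one-dimensional case (Edmundo, \cite[5.1]{solv}, cited in the text just above the statement) as the base. The inductive lever is \ref{torfree}(c): the torsion-free group $Q$ admits a codimension-one normal definable subgroup $Q_0$, itself torsion-free as a subgroup of $Q$, with $Q/Q_0$ one-dimensional and torsion-free by \ref{torfree}(d).

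For the inductive step, set $G_0 = \pi^{-1}(Q_0)$, a normal definable subgroup of $G$ containing $K$, and apply the induction hypothesis to the shorter extension $1 \to K \to G_0 \to Q_0 \to 1$. This yields $G_0 = K \times N_0$ with $N_0$ definable, torsion-free, and definably isomorphic to $Q_0$. By \ref{quocomp}, $N_0$ is the unique maximal normal definable torsion-free subgroup of $G_0$; hence by \ref{tormaxnor}, $N_0 \lhd G$. Passing to the quotient $\bar{G} := G/N_0$, we have $G_0/N_0 \cong K$ definably compact and $\bar{G}/(G_0/N_0) \cong Q/Q_0$ one-dimensional torsion-free, so the base case applies and gives $\bar{G} = (KN_0/N_0) \times \bar{N}_1$ for some definable torsion-free $\bar{N}_1$. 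Let $N$ be the preimage of $\bar{N}_1$ in $G$; it is definable, and since the sequence $1 \to N_0 \to N \to \bar{N}_1 \to 1$ has torsion-free ends, $N$ is torsion-free.

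It then remains to check the four direct-product properties. (i) $K \cap N = \{e\}$: the image of $K \cap N$ in $\bar{G}$ lies in $(KN_0/N_0) \cap \bar{N}_1 = \{e\}$, so $K \cap N \subseteq K \cap N_0 = \{e\}$ using $G_0 = K \times N_0$. (ii) $G = KN$: any $g \in G$ satisfies $gN_0 = (kN_0)(nN_0)$ for some $k \in K$, $n \in N$, hence $g \in KnN_0 \subseteq KN$. (iii) $N \lhd G$: $\bar{N}_1$ is a direct factor of $\bar{G}$, hence normal, and $N$ is its preimage. (iv) $[K,N] = \{e\}$: for $k \in K$, $n \in N$ the commutator $[k,n]$ lies in $N_0$ (images commute in $\bar{G}$) and in $K$ (normality of $K$ in $G$), so it lies in $K \cap N_0 = \{e\}$. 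A final application of \ref{quocomp}, using that $G/N \cong K$ is definably compact, identifies $N$ as the maximal normal definable torsion-free subgroup of $G$; since $G = K \times N$, this $N$ is definably isomorphic to $Q$, matching the statement.

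The main obstacle is the one-dimensional base case itself, which is imported wholesale from \cite[5.1]{solv} and carries all the hard topological content. Within the induction the most delicate verification is (iv): it combines the global normality of $K$ in $G$ with the locally-obtained direct-product structure $G_0 = K \times N_0$, and would fail if either were weakened to a merely semidirect splitting.
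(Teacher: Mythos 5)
Your proposal is correct and follows essentially the same route as the paper's own proof: induction on $\dim Q$ with Edmundo's one-dimensional case as the base, passing to the preimage $G_0$ of a codimension-one normal subgroup $Q_0$ of $Q$, using \ref{quocomp} and \ref{tormaxnor} to make the torsion-free factor $N_0$ normal in all of $G$, then splitting the quotient $G/N_0$ and pulling back. Your explicit verification of the four direct-product properties (and of torsion-freeness of $N$ via the extension with torsion-free ends) only spells out what the paper asserts in its final sentence, so there is no substantive difference.
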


\begin{proof}
We proceed by induction on $n = \dim Q$.
For $n = 1$, see \cite[5.1]{solv}.
If $n > 1$, let $Q_1 \subset Q$ be a normal definable subgroup of $Q$ of codimension $1$ (\ref{torfree}) and let $G_1 = \pi^{-1}(Q_1)$. By induction, the definable exact sequence

\[
1\ \longrightarrow\ K\ \stackrel{i}{\longrightarrow}\ G_1\ \stackrel{\pi}{\longrightarrow}\ Q_1\ \longrightarrow\ 1  \\
\]

\vs \noindent
splits definably in a direct product. So there is a definable torsion-free subgroup $H$ normal in $G_1$ such that
$G_1 = K \times H$. Since $G_1/H \cong K$ which is definably compact, it follows by \ref{quocomp} that
$H$ is the maximal normal definable torsion-free subgroup of $G_1$ and by
\ref{tormaxnor} $H$ is normal in $G$ as well.
Consider now the definable exact sequence

\[
1\ \longrightarrow\ G_1/H \ \longrightarrow\ G/H\ \longrightarrow\ (G/H)/(G_1/H)\ \longrightarrow\ 1  \\
\]

\vs \noindent
with the obvious maps.
Because $G_1/H \cong K$ which is definably compact and 
$(G/H)/(G_1/H) \cong G/G_1 \cong Q/Q_1$ which is torsion-free, we can apply induction again
and find a definable torsion-free subgroup $S$ normal in $G/H$ such that $G/H = G_1/H \times S$. 
Therefore the preimage of $S$ in $G$ is a normal definable torsion-free subgroup which
is a direct complement to $K$ in $G$.
\end{proof}

%%%%%%%%%%%%%%%%%%%%%%%%%%%%%%%%%%%%%%%%%%%%%%%%%%%%%%%%%%%%%%%%%%%%%%%%%%%%%%%%%%%%%%%%%%%%%%%%%%%%%%%%%%%%%%%%%%%%%%%%%%%%%%%%%%%%%%%%%%%%%%%%%%%%%%%%%%%%%%%%%%%%%%%%%%%%%%%%%%%%%%%%%%%%%%%%%%%%%%%%%%%%%%%%%%%%%%%%%%%%%%%%%%%%%%%%%%%%%%%%%%%%%%%%%%%%%%%%%%%%%%%%%%%%%%%%%%%%%%%%%%%%%%%%%%%%%%%%%%%%%%%%%%%%%%%%%%%%%%%%%%%%%%%%%%
\vs
\section{Solvable definable groups}

This section is devoted to the study of solvable definable groups. The main results are that the quotient of a solvable definable group by its maximal normal definable torsion-free subgroup is definably compact (\ref{solvquozcomp}) and that every solvable definably connected group can be decomposed as the product of its maximal normal definable torsion-free subgroup and any of its maximal $0$-subgroups (\ref{solvsempr}). Many consequences of these facts are deduced in this and in the subsequent sections.\\  

As we recalled in \ref{torfree}, definable torsion-free groups are solvable (\cite[2.12]{ps05}). On the opposite side there are solvable definably compact groups. Peterzil and Starchenko prove in \cite[5.4]{ps00} that they are abelian-by-finite, i.e. that every solvable definably connected definably compact group is abelian.

We study here the mixed case, where $G$ is a solvable definable group neither definably compact nor torsion-free, so it contains both definably compact and definable torsion-free subgroups with trivial intersection. Indeed: 

\begin{fact}\hspace{7cm} \label{compnoncomp}
\begin{enumerate}

\item[$(a)$] Every definable group which is not definably compact contains a definable $1$-dimensional torsion-free
subgroup \emph{(\cite[1.2]{ps})}. 

\vspace{0.1cm}
\item[$(b)$] Every definably compact group has torsion \emph {(\cite{hpp, pps1, pps02})}.

\end{enumerate}
\end{fact}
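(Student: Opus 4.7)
The plan for (a) is to follow the Peterzil--Steinhorn strategy. Non-definable-compactness of $G$ produces a definable curve $\sigma\colon(0,\varepsilon)\to G$ with no limit in $G$ as $t\to 0^+$. The idea is to construct a $1$-parameter definable subgroup near the bad endpoint by examining the definable law $(s,t)\mapsto \sigma^{-1}(\sigma(s)\cdot\sigma(t))$. Using the monotonicity theorem and cell decomposition, this law is definable and piecewise continuous near $0$, and a careful cofinal analysis extracts an associative partial group operation on an initial segment. The resulting definable local group extends, by the standard procedure for globalising \ominimal local groups, to a $1$-dimensional definable subgroup $H\leq G$. Since the underlying set of $H$ is definably homeomorphic to an interval, $H$ has no nontrivial finite subgroups, hence is torsion-free. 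The main technical obstacle is verifying that the candidate operation is well-defined and associative in an \ominimal neighbourhood of the singular point, and that the passage from local to global does not introduce torsion.

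For (b) the plan is to reduce to the abelian definably connected case and produce torsion there. Passing to $G^0$ we may assume $G$ is definably connected and definably compact; we may also assume $G$ is infinite, since otherwise the statement is trivial. The structure theory of definably compact groups developed in the cited references guarantees the existence of a nontrivial definable abelian definably connected definably compact subgroup $A\leq G$ (concretely, a maximal definable torus). It suffices to exhibit torsion in $A$. The most natural route uses the canonical surjection $\pi\colon A \to A/A^{00}$: the target is a compact connected abelian real Lie group of dimension $\dim A\geq 1$, hence isomorphic to $(\R/\Z)^{\dim A}$, which has dense torsion. Combined with divisibility of $A$ (established for definably compact definably connected abelian groups by Edmundo--Otero), every torsion element of $A/A^{00}$ lifts along $\pi$ to a torsion element of $A$, and hence of $G$. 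An alternative route computes directly, via the structure theorem for definably compact abelian groups, that the $k$-torsion of $A$ is isomorphic to $(\Z/k\Z)^{\dim A}$ for every $k\geq 2$.

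The key point to be watchful of throughout (b) is circularity: the structural results invoked---existence of a definable torus, divisibility of $A$, and the dimension statement in the Pillay conjecture---must be known without already assuming the conclusion we are trying to prove. With the cited body of work this hazard is easily avoided, since those results are established independently of the existence of torsion in $G$ itself. The combination of (a) and (b) then makes precise the dichotomy that no definable group can be simultaneously definably compact and torsion-free, except in the trivial case.
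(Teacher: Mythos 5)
First, a point of comparison: the paper gives no proof of this statement at all. It is quoted as a Fact, with $(a)$ attributed to \cite[1.2]{ps} and $(b)$ to \cite{hpp, pps1, pps02}, so your proposal is competing with the cited literature rather than with an argument in the paper. For part $(a)$ your outline has essentially the shape of Peterzil--Steinhorn's own proof (extract a local group law at the bad endpoint of a non-completable curve, then globalise), and the final torsion-freeness claim can indeed be closed by a short monotonicity argument (a fixed-point-free definable homeomorphism of finite order of an interval cannot exist). As a plan it is acceptable, with the caveat that all the substance sits inside ``careful cofinal analysis'' and ``the standard procedure for globalising''.

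Part $(b)$, however, contains a genuine circularity, and it is exactly the one you claimed is ``easily avoided''. In the \ominimal\ setting, the existence of a (maximal) definable torus inside a definably compact group comes from the Euler-characteristic machinery: a $0$-subgroup exists provided $E(G)=0$ (\ref{maxSylconj}$(a)$), and $0$-subgroups of definably compact groups are definable tori (\ref{torusis0gr}); but by \ref{eu} and \ref{euconn}, for a definably connected group the condition $E(G)=0$ is \emph{equivalent} to the existence of torsion. So the tool you invoke to begin the reduction presupposes the conclusion, and none of the references cited for $(b)$ supplies definable tori in an arbitrary definably compact group by a torsion-independent route. The reduction to the abelian case must be done differently, and the standard ways are elementary: either observe that if some $g \in G$ has infinite order then $Z(C_G(g))^0$ is an infinite abelian definably connected subgroup, definably compact because definable subgroups are closed, while if no element has infinite order then $G$ trivially has torsion; or split according to whether $G$ is semisimple --- if not, $G$ has an infinite normal abelian definable subgroup by the very definition of semisimplicity, and if so, one transfers the statement from real Lie groups via \cite{pps1, pps02}.

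Two further repairs are needed in the abelian endgame. The lifting of torsion from $A/A^{00}$ to $A$ does not follow from divisibility of $A$ alone: one needs divisibility of $A^{00}$ (pick $x$ with $xA^{00}$ of order $k$ and $y \in A^{00}$ with $y^k = x^k$, so that $xy^{-1}$ is a nontrivial $k$-torsion element), or else the argument that if $A[k]$ were trivial then $x \mapsto x^k$ would be a definable automorphism of $A$ inducing the non-injective map ``multiplication by $k$'' on $(\R/\Z)^{\dim A}$. Moreover, the dimension equality $\dim(A/A^{00}) = \dim A$ from \cite{hpp} is itself proved, in the abelian case, by counting the torsion points computed in \cite{eo}; so your ``most natural route'' silently rests on the very torsion count you present as the ``alternative route''. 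The clean statement is that \cite{eo} ($A[k] \cong (\Z/k\Z)^{\dim A}$) already \emph{is} the abelian case of $(b)$, and the detour through $A/A^{00}$ is redundant.
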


\begin{rem} \label{duacomptf}
It follows from \ref{compnoncomp}$(b)$ that a definable torsion-free group is not definably compact, and so definably compact
groups have no definable torsion-free subgroup. This is because definable subgroups are closed by \cite[2.8]{pi1}, so definable subgroups of definably compact groups are definably compact.

Therefore if $K, H < G$ are definable subgroups where $K$ is definably compact and $H$ is torsion-free, then $K \cap H = \{e\}$. It follows that if in addition
$G = KH$, then $K$ is a maximal definably compact subgroup and $H$ is a maximal torsion-free
definable subgroup.  
\end{rem}

%%%%%%%%%%%%%%%%%%%%%%%%%%%%%%%%%%%%%%%%%%%%%%%%%%%%%%%%%%%%%%%%%%%%%%%%%%%%%%%%%%%%%%%%%
\begin{theo}\label{solvquozcomp}
Let $G$ be a solvable definable group and let $N$ be its maximal normal definable torsion-free subgroup. If $G$ is not definably compact, then $N$ is infinite and $G/N$ is a definably compact group.  

In other words, every solvable definable group is a definable extension of a definably compact group by a definable torsion-free group.
\end{theo}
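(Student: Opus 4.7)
The plan is to reduce to $G$ definably connected and then prove by induction on $\dim G$ the following \emph{Claim}: every solvable definably connected group that is not definably compact contains an infinite normal definable torsion-free subgroup. Once the Claim is available, both assertions of the theorem follow by a short bootstrap. Applying the Claim to $G$ immediately gives that $N$ is infinite. If $G/N$ were not definably compact, then since $G/N$ is still solvable and definably connected, the Claim applied to $G/N$ would produce an infinite normal definable torsion-free subgroup $\bar H$. Its preimage $H\triangleleft G$ would fit in a definable exact sequence $1\to N\to H\to \bar H\to 1$ with both ends torsion-free, so by \ref{eu} and \ref{euprodsub} we would have $E(H)=E(N)E(\bar H)=\pm 1$, forcing $H$ to be torsion-free and strictly larger than $N$, contradicting the maximality of $N$.

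The reduction to the definably connected case is achieved by applying the theorem to $G^0$: the resulting maximal normal torsion-free subgroup $N_0$ of $G^0$ is normal in $G$ by Lemma \ref{tormaxnor}, $G/N_0$ is a finite extension of the definably compact group $G^0/N_0$ and hence definably compact, and $N_0$ is definably connected (because torsion-free groups are, by \ref{torfree}), so that it is contained in $G^0$ and equal to $N$.

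For the induction proving the Claim I would split into three cases. If $G$ is abelian, Fact \ref{compnoncomp}(a) supplies a $1$-dimensional definable torsion-free subgroup, which is automatically normal. If $G$ is nonabelian and $G'=[G,G]$ is not definably compact, then $G'$ is solvable, definably connected and of strictly smaller dimension, so by the inductive hypothesis its maximal normal definable torsion-free subgroup is infinite, and Lemma \ref{tormaxnor} makes it normal in $G$. The remaining subcase is when $G$ is nonabelian but $G'$ is definably compact: then $G/G'$ is abelian and definably connected, and it must fail to be definably compact, for otherwise $G$ itself, being an extension of a definably compact group by a definably compact group, would be definably compact. Applying the abelian case to $G/G'$ produces an infinite normal torsion-free subgroup $\bar N_1\triangleleft G/G'$; its preimage $N_1\triangleleft G$ is a definable extension of the torsion-free group $\bar N_1$ by the definably compact group $G'$, to which Theorem \ref{comp_tor} applies and yields a splitting $N_1=G'\times \hat H$. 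Corollary \ref{quocomp} identifies $\hat H$ as the maximal normal torsion-free subgroup of $N_1$, and Lemma \ref{tormaxnor} then promotes $\hat H$ to a subgroup normal in $G$, supplying the required subgroup.

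The main obstacle I expect is this last subcase, where merely knowing that the abelian quotient $G/G'$ contains torsion-free subgroups is not enough: their lifts to $G$ need not be torsion-free, and the torsion-free subgroups produced directly inside $G$ by Peterzil--Steinhorn need not be normal. The combination of the splitting Theorem \ref{comp_tor} with the naturality statement of Lemma \ref{tormaxnor} is precisely what isolates a normal torsion-free piece of $G$ out of the abelian quotient; everything else in the argument is a routine induction.
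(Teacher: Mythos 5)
Your proof is correct, and it is built from the same ingredients as the paper's own proof --- reduction to the definably connected case, induction on dimension, a case split on whether a derived-type subgroup is definably compact, \ref{compnoncomp}$(a)$ in the abelian case, and the combination of \ref{comp_tor}, \ref{quocomp} and \ref{tormaxnor} in the compact case --- but it organizes them in a genuinely different and arguably cleaner way. The paper's induction carries the full conclusion ($N$ infinite \emph{and} $G/N$ definably compact), and this forces extra work inside the induction: in the case where the distinguished normal subgroup $S$ is definably compact, a ``$3 \times 3$ lemma'' diagram chase is needed to see that $G/N$ is definably compact, and in the case where $S$ is not definably compact the inductive hypothesis is invoked twice (once for $S$, then again for $G/N_1$). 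Your induction carries only the weaker Claim that a solvable definably connected non--definably-compact group has an infinite \emph{normal} definable torsion-free subgroup, and definable compactness of $G/N$ is recovered once and for all by the maximality bootstrap $E(H)=E(N)E(\bar H)=\pm 1$. That bootstrap is exactly the paper's abelian-case argument promoted to arbitrary solvable $G$; the reason the paper cannot use it in this generality is that \ref{compnoncomp}$(a)$ by itself produces a torsion-free subgroup of $G/N$ that need not be normal, and your Claim is precisely what restores normality. The net effect is that you avoid both the diagram chase and the nested induction, at the price of a two-stage argument (Claim, then bootstrap) instead of a single induction.

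Two steps you should make explicit, though both are equally implicit in the paper's proof. First, your case split treats $G'=[G,G]$ as a \emph{definable}, definably connected subgroup of strictly smaller dimension; for a definably connected solvable group this is a theorem of Edmundo \cite{solv}, not a formality --- the paper itself remarks that derived subgroups of definable groups need not be definable, and its own proof hedges by invoking ``a normal solvable definable subgroup $S<G$ such that $G/S$ is abelian and infinite'', which rests on the same result. Second, your assertion that an extension of a definably compact group by a definably compact group is definably compact deserves a line of proof: if $G$ were not definably compact, \ref{compnoncomp}$(a)$ would give a one-dimensional definable torsion-free subgroup $H<G$; since the only proper definable subgroup of such an $H$ is trivial and $G'$ has no infinite definable torsion-free subgroup (\ref{duacomptf}), we get $H \cap G' = \{e\}$, so $\pi(H)$ would be an infinite definable torsion-free subgroup of the definably compact group $G/G'$, contradicting \ref{duacomptf}.
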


\begin{proof}
Because $N \subseteq G^0$ (\ref{torfree}$(a)$) and $G/G^0$ is finite (so definably compact), we can suppose $G$ definably connected.
We proceed by induction on $n = \dim G$. The case $n = 1$ is obvious, so let $\dim G = n > 1$.\\

If $G$ is \underline{abelian}, the theorem can be extracted from \cite[2.6]{ps05}. The argument is that if $G/N$ is not definably compact, then by \cite{ps} there is a definable 1-dimensional torsion-free subgroup $H$ in $G/N$,
and its preimage in $G$ would be a definable torsion-free subgroup of $G$, in contradiction with the maximality of $N$. \\

Let $G$ be now \underline{non-abelian}.
Since $G$ is solvable and definably connected there is a normal solvable definable subgroup $S < G$
such that $G/S$ is abelian and infinite. We distinguish the cases where
$S$ is definably compact and where $S$ is not definably compact.

\begin{itemize}

\item  If $S$ is definably compact then $G/S$ is not. By the abelian case
the maximal normal definable torsion-free subgroup $N_1$ of $G/S$ is infinite
and $(G/S)/N_1$ is definably compact. If $\pi \colon G \to G/S$ is the canonical projection, 
let $N^{\prime} = \pi^{-1}(N_1)$. By \ref{comp_tor} the definable exact sequence
\[
1\ \longrightarrow\ S \ \stackrel{i}{\longrightarrow}\ N^{\prime} \stackrel{\pi}{\longrightarrow}\ N_1\ \longrightarrow\ 1  
\]

\vs \noindent
splits definably in a direct product, thus $G$ contains a definable subgroup $N$ definably isomorphic to $N_1$ such that $N^{\prime} = S \times N$. Since $S \cong N^{\prime}/N$ which is definably compact, it follows that $N$ is the maximal normal definable torsion-free subgroup of $N^{\prime}$ (\ref{quocomp}), and  it is normal in $G$ as well (\ref{tormaxnor}). 

We claim that $G/N$ is definably compact. To prove it, it is enough to provide a normal
definable subgroup which is definably compact, such that quotienting by it
we obtain a definably compact group. One such a subgroup is $N^{\prime}/N$ which is definably isomorphic
to $S$, and the quotient  
%This is because $N^{\prime}/N \cong S$ and so it is a definably compact subgroup of $G/N$, and the quotient 
%$(G/N)/(N^{\prime}/N) \cong G/N^{\prime}$ which is definably compact as well: it 
$(G/N)/(N^{\prime}/N)$ is definably isomorphic to $(G/S)/N_1$, as the following diagram shows by ``the $3 \times 3$ lemma''.%: see Mac Lane homology.

\[
\xymatrix{
& & 1 \ar[d] & 1 \ar[d] & \\
%1 \ar[r] & S \ar[r] & \mathop{N^{\prime}}\limits^{pp} \ar[r] \ar[d]& N_1 \ar[r] \ar[d] & 1\\
1 \ar[r] & S \ar[r] & N^{\prime} \ar[r] \ar[d]& N_1 \ar[r] \ar[d] & 1\\
1 \ar[r] & S \ar[r] \ar@{<->}[u] & G \ar[r] \ar[d] & G/S \ar[r] \ar[d] & 1\\
& & G/N^{\prime} \ar@{<->}[r] \ar[d] & (G/S)/N_1 \ar[d] & \\
& & 1 & 1 & 
 }
\]

\vs
Thus $N$ is the maximal normal definable torsion-free subgroup of $G$ (\ref{quocomp}), it is infinite and the theorem is proved for the case where $S$ is definably compact.  \\

%If $S^{\prime}$ is the image of $S$ in $G/N$ by the canonical projection, then $(G/N)/S^{\prime}$ is definably isomorphic to $(G/S)/N_1$ definably compact and therefore, again by \ref{quocomp}, $N$ is the maximal normal definable torsion-free subgroup of $G$ and $G/N$ is definably compact because $S^{\prime}$ is definably isomorphic to $S$ which is definably compact and $(G/N)/S^{\prime}$ is definably compact.\\

\item If $S$ is not definably compact, then by induction the maximal
normal definable torsion-free subgroup $N_1$ of $S$ is infinite
(possibly $N_1 = S$) and $S/N_1$ is definably compact. 
$N_1$ is normal in $G$ as we showed in \ref{tormaxnor}. 
If $G/N_1$ is definably compact then $N_1$ is the maximal normal definable torsion-free subgroup of $G$
and we are done. Otherwise, again by induction, its infinite maximal normal definable torsion-free subgroup
$N_2$ is such that $(G/N_1)/N_2$ is
definably compact. 

If $\pi \colon G \to G/N_1$ is the canonical
projection, then $N = \pi^{-1}(N_2) < G$ is torsion-free by
\ref{torfree} and $G/N$ is definably isomorphic to $(G/N_1)/N_2$ which is definably compact.
Again by \ref{quocomp} $N$ is the maximal normal definable torsion-free subgroup
of $G$ and it is infinite, since it contains $N_1$.
\end{itemize} 
\end{proof}

%%%%%%%%%
Edmundo proves in \cite{solv} that every solvable definable group $G$ has a normal definable subgroup $K_1 \times H_1$, where $H_1$ is torsion-free and $K_1$ is the maximal normal definably compact definably connected subgroup of $G$, such that $G/(K_1 \times H_1)$ is definably compact (\cite[5.8]{solv}). The theorem above shows that his $H_1$ corresponds to the maximal normal definable torsion-free subgroup of $G$.\\

\noindent
Several corollaries follow from Theorem \ref{solvquozcomp}:

%%%%%%%%%%%%%%%%%%%%%%%%%%%%%%%%%%%%%%%%%%%%%%%%%%%%%%%%%%%%
%\noindent
%Recall that a \textbf{torus} is an abelian, definably compact, definably connected, definable group (\ref{torusis0gr}).
%
%\begin{cor}\label{quoztor}
%If $G$ is a solvable definably connected group and $H$ is its maximal normal definable torsion-free subgroup, then $G/H$ is a torus. Thus every solvable definably connected group is a definable extension of a torus by a definable torsion-free group.
%%Let $G$ be a definably connected solvable definable group. If $H$
%%is the maximal normal definable torsion-free subgroup of
%%$G$, then $G/H$ is a torus (\ref{torus}).
%\end{cor}
%
%\begin{proof}
%It is because solvable definably compact groups are abelian-by-finite by
%\cite[5.4]{ps00}.
%\end{proof}
%%%%%%%%%%%%%%%%%%%%%%%%%%%%%%
\vs
\begin{cor}\label{maxtorsolvmax}
The maximal normal definable torsion-free subgroup of a solvable definable group contains any definable torsion-free subgroup of $G$, so it is the unique maximal definable torsion-free subgroup of $G$.
\end{cor}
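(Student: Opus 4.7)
The claim is essentially a direct consequence of Theorem \ref{solvquozcomp} together with the characterization established earlier (the lemma just before Corollary \ref{quocomp}) stating that a normal definable torsion-free subgroup $N$ of a definable group contains every definable torsion-free subgroup if and only if $G/N$ is definably compact. So the plan is simply to split on whether $G$ is definably compact and invoke these two results.

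First I would dispose of the trivial case. If $G$ is itself definably compact, then by \ref{duacomptf} the group $G$ has no nontrivial definable torsion-free subgroup at all; in particular $N = \{e\}$ and it vacuously contains every definable torsion-free subgroup of $G$. If instead $G$ is not definably compact, then Theorem \ref{solvquozcomp} applies to give that $G/N$ is definably compact. At this point the lemma preceding \ref{quocomp} yields immediately that $N$ contains every definable torsion-free subgroup of $G$.

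Having established containment, uniqueness of $N$ as a maximal definable torsion-free subgroup is automatic: any definable torsion-free subgroup $H \subseteq G$ satisfies $H \subseteq N$, and $N$ itself is definable and torsion-free, so $N$ is the unique maximal one (and equals the union of all definable torsion-free subgroups of $G$).

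I do not foresee any real obstacle, since both ingredients are already in hand; the only thing to be mindful of is the $G$ definably compact case, which is not covered directly by the hypothesis of \ref{solvquozcomp} and must be handled separately via \ref{duacomptf}.
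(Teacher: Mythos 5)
Your proof is correct and follows essentially the same route as the paper, which simply cites \ref{solvquozcomp} and \ref{quocomp}. Your explicit treatment of the definably compact case (where $N=\{e\}$ by \ref{duacomptf}) is a careful touch that the paper leaves implicit in the ``in other words'' reformulation of \ref{solvquozcomp}, but it is the same argument.
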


\begin{proof}
It follows by \ref{solvquozcomp} and \ref{quocomp}.
\end{proof}
%%%%%%%%%%%%%%%%%%%%%%%%%%%%%

Since $N$ is torsion-free, we can see that $G/N$ is the maximal quotient which is definably compact:

\begin{cor}
Let $H$ be a normal definable subgroup of a solvable definable group $G$. Then $G/H$ is definably compact if and only if $H \supseteq N$, where $N$ is the maximal normal definable torsion-free subgroup of $G$.
\end{cor}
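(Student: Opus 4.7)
My plan is to handle the two directions separately, with both following quickly from results already established in the paper.

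For the direction $(\Leftarrow)$, assume $H \supseteq N$. Then by the third isomorphism theorem $G/H$ is definably isomorphic to $(G/N)/(H/N)$. Theorem \ref{solvquozcomp} gives that $G/N$ is definably compact (if $G$ itself is definably compact then so is $G/H$ directly, and in fact $N$ is trivial in this case by \ref{duacomptf}, so the statement is still consistent). A definable quotient of a definably compact group is definably compact, so $G/H$ is definably compact.

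For the direction $(\Rightarrow)$, assume $G/H$ is definably compact, and consider the subgroup $NH/H \leq G/H$. By one of the standard isomorphism theorems, $NH/H$ is definably isomorphic to $N/(N \cap H)$, which is a definable quotient of the torsion-free group $N$; by Fact \ref{torfree}$(d)$, $N/(N \cap H)$ is torsion-free, and hence so is $NH/H$. On the other hand, $NH/H$ is a definable subgroup of the definably compact group $G/H$, hence is itself definably compact (definable subgroups of definably compact groups are definably compact, as recalled in Remark \ref{duacomptf}). But a definably compact torsion-free group must be trivial: by Fact \ref{compnoncomp}$(b)$ every infinite definably compact group has torsion. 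Therefore $NH/H = \{e\}$, i.e.\ $N \subseteq H$, as required.

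There is no real obstacle here: the key inputs are Theorem \ref{solvquozcomp} (for $(\Leftarrow)$) and the combination of Fact \ref{torfree}$(d)$ with Fact \ref{compnoncomp}$(b)$ (for $(\Rightarrow)$). The slight subtlety worth flagging is that we must take the image $NH/H$ rather than $N$ itself, in order to use the hypothesis that \emph{the quotient} $G/H$ is definably compact; the torsion-freeness of this image is then guaranteed because torsion-freeness passes to definable quotients of $N$.
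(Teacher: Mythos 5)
Your proof is correct and follows essentially the same route as the paper: for $(\Leftarrow)$ the isomorphism $G/H \cong (G/N)/(H/N)$ together with Theorem \ref{solvquozcomp}, and for $(\Rightarrow)$ the observation that $NH/H \cong N/(N\cap H)$ is a torsion-free definable subgroup of the definably compact group $G/H$, hence trivial. The extra care you take with the edge case where $G$ is definably compact and with spelling out why a definably compact torsion-free group is trivial is fine but not a departure from the paper's argument.
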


\begin{proof} \hspace{7cm}
\begin{enumerate}
\item[$(\Rightarrow)$] If not, $HN/H$ is an infinite definable torsion-free group, since it is definably isomorphic to $N/(H \cap N)$. This is in contradiction with the fact that $G/H$ is definably compact.   

\vspace{0.1cm}
\item[$(\Leftarrow)$] If $H \supseteq N$, then $G/H$ is definably isomorphic to the quotient of $G/N$ by $H/N$, so it is definably compact by \ref{solvquozcomp}.
\end{enumerate}
\end{proof}
%%%%%%%%%%%%%%%%%%%%%%%%%%%%%%

\begin{cor}\label{compinsolvab}
Every definably compact subgroup of a solvable definably connected group is abelian.
\end{cor}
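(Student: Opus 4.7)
The plan is to push $K$ into the quotient $G/N$, where $N$ is the maximal normal definable torsion-free subgroup of $G$, and then apply the Peterzil--Starchenko theorem (recalled at the beginning of the section) that every solvable definably connected definably compact group is abelian.

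First I would verify that $G/N$ is abelian. By the just-proved Theorem \ref{solvquozcomp}, $G/N$ is definably compact; being a quotient of the solvable definably connected group $G$ by a definable normal subgroup, it is also solvable and definably connected. The Peterzil--Starchenko result \cite[5.4]{ps00} then yields that $G/N$ is abelian.

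Next I would show that the restriction $\pi|_K$ of the canonical projection $\pi \colon G \to G/N$ is injective, i.e.\ that $K \cap N = \{e\}$. The intersection $K \cap N$ is definably compact, as a definable subgroup of the definably compact group $K$, and torsion-free, as a definable subgroup of $N$. But by Fact \ref{compnoncomp}$(b)$, no infinite definably compact group is torsion-free, so $K \cap N$ is finite; a finite torsion-free group is trivial, so $K \cap N = \{e\}$. (This is really just an instance of Remark \ref{duacomptf}.) Therefore $K$ is definably isomorphic to its image $\pi(K) \leq G/N$, which is abelian, and hence $K$ is abelian.

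I do not anticipate any real obstacle: all of the inputs — Theorem \ref{solvquozcomp}, Remark \ref{duacomptf}/Fact \ref{compnoncomp}$(b)$, and the Peterzil--Starchenko theorem — are already on hand, and the argument just assembles them. The mildly delicate point is to notice that $K$ itself need not be definably connected or normal in $G$ for this argument to run; the only properties of $K$ actually used are that it is a definable subgroup and definably compact, so the embedding into the abelian group $G/N$ goes through unchanged.
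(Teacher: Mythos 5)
Your proof is correct and follows essentially the same route as the paper: the paper likewise observes that $K \cap N = \{e\}$ (via Remark \ref{duacomptf}), so $\pi$ embeds $K$ into $G/N$, which is abelian by \cite[5.4]{ps00} since by Theorem \ref{solvquozcomp} it is a solvable definably connected definably compact group. Your write-up merely spells out the intermediate steps (injectivity of $\pi|_K$ and the hypotheses needed for the Peterzil--Starchenko theorem) that the paper leaves implicit.
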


\begin{proof}
Let $K$ be a definably compact subgroup of a solvable definably connected group $G$. 
Let $N$ be the maximal normal definable torsion-free subgroup of $G$. Since $K \cap N = \{e\}$ (see \ref{duacomptf}), it follows that $K$ is definably isomorphic (by the canonical projection $\pi \colon G \to G/N$) to a definable subgroup of $G/N$ which is abelian by \cite[5.4]{ps00} (it is a solvable definably connected definably compact group).
\end{proof}

%%%%%%%%%%%%%%%%%%%%%%%%%%%%%%%
\begin{cor}\label{derinh}

The derived subgroup of a solvable definably connected group 
is contained in its maximal normal definable torsion-free subgroup.
\end{cor}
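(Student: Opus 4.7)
The plan is to reduce the statement to a direct consequence of Theorem \ref{solvquozcomp} combined with the Peterzil--Starchenko abelianness result for solvable definably connected definably compact groups (cited as \cite[5.4]{ps00} and already invoked in the proof of \ref{compinsolvab}). Let $G$ be solvable and definably connected, and let $N$ be its maximal normal definable torsion-free subgroup.

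First I would form the quotient $G/N$. Since the class of solvable definably connected groups is closed under definable quotients, $G/N$ is still solvable and definably connected. By Theorem \ref{solvquozcomp} it is moreover definably compact. At this point the Peterzil--Starchenko theorem applies and forces $G/N$ to be abelian.

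From the abelianness of $G/N$ it follows immediately that $[G,G] \subseteq N$, since the image of $[G,G]$ under the canonical projection $\pi \colon G \to G/N$ lies in $[G/N, G/N] = \{e\}$. This gives the corollary.

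The argument has no real obstacle: all the structural work has already been done in \ref{solvquozcomp}, and the only extra ingredient is the quoted fact that solvable definably connected definably compact groups are abelian. One could alternatively phrase the proof by noting that $N$ contains the (unique maximal) definable torsion-free subgroup of $G$ by \ref{maxtorsolvmax}, and then using that any commutator subgroup of a group whose definably compact quotient is abelian lands in the kernel; but this is just a rewording of the same one-line argument.
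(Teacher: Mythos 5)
Your proof is correct and is essentially the paper's own argument: the paper likewise observes that $G/N$ is abelian (implicitly via \ref{solvquozcomp} together with the Peterzil--Starchenko result that solvable definably connected definably compact groups are abelian) and concludes $[G,G]\subseteq N$ since the derived subgroup is the smallest normal subgroup with abelian quotient. Your write-up simply makes explicit the intermediate steps that the paper leaves implicit.
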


\begin{proof}
If $N$ is the maximal normal definable torsion-free subgroup of a solvable definably connected group $G$, then $G/N$ is abelian and so the derived subgroup $[G, G] \subseteq N$, because it is the smallest normal subgroup with abelian quotient.
\end{proof}

%%%%%%%%%%%%%%%%%%%%%%%%%%%%%%%%%%
\begin{cor} \label{extcomprad}
Every definable group is a definable extension of a definable group with definably compact solvable radical by a definable torsion-free group. 
\end{cor}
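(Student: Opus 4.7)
The plan is to take $N$ to be the maximal normal definable torsion-free subgroup of $G$ (which exists by Theorem \ref{unifree}) and show that the canonical exact sequence
\[
1\ \longrightarrow\ N\ \longrightarrow\ G\ \longrightarrow\ G/N\ \longrightarrow\ 1
\]
is the desired extension, i.e. that the solvable radical of $G/N$ is definably compact. By Lemma \ref{radgsun}, we already know that $N\subseteq R$ (where $R$ is the solvable radical of $G$) and that $R/N$ is the solvable radical of $G/N$. So the whole problem reduces to showing that $R/N$ is definably compact.

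The key step is to identify $N$ with the maximal normal definable torsion-free subgroup of the solvable group $R$. First, $N$ is normal in $R$, definable, and torsion-free, so it is contained in the maximal normal definable torsion-free subgroup $N_R$ of $R$. Conversely, $N_R$ is normal in $R$ by definition, but Lemma \ref{tormaxnor} applied to the situation $R\lhd G$ yields that $N_R$ is also normal in $G$; being a normal definable torsion-free subgroup of $G$, the maximality of $N$ forces $N_R\subseteq N$. Hence $N=N_R$.

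Now I apply Theorem \ref{solvquozcomp} to the solvable definable group $R$: its quotient $R/N_R=R/N$ is definably compact (if $R$ itself is already definably compact then $N$ is trivial and the statement is immediate). Combining this with $R/N$ being the solvable radical of $G/N$ from Lemma \ref{radgsun}, we conclude that $G/N$ is a definable group whose solvable radical is definably compact, while $N$ is a definable torsion-free group.

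There is no real obstacle here; the only point to be careful about is the symmetric use of Lemma \ref{tormaxnor} and Lemma \ref{radgsun} to ensure that the maximal normal definable torsion-free subgroup of $G$ coincides with that of $R$, so that Theorem \ref{solvquozcomp} can be applied inside $R$ and directly gives information about the quotient appearing in $G/N$.
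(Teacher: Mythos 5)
Your proof is correct and follows essentially the same route as the paper: take $N$ to be the maximal normal definable torsion-free subgroup of $G$, use Lemma \ref{radgsun} to identify $R/N$ as the solvable radical of $G/N$, and apply Theorem \ref{solvquozcomp} to the solvable group $R$ to conclude that $R/N$ is definably compact. The only difference is that you explicitly verify (via Lemma \ref{tormaxnor} and Theorem \ref{unifree}) that $N$ coincides with the maximal normal definable torsion-free subgroup of $R$ --- a step the paper's proof leaves implicit when invoking \ref{solvquozcomp} --- so your write-up is, if anything, slightly more complete.
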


\begin{proof}
Let $G$ be a definable group with solvable radical $R$ and maximal normal definable torsion-free subgroup $N$.
Then $G$ can be viewed as a definable extension of $G/N$ by $N$. By \ref{radgsun} $R/N$ is the solvable radical of $G/N$, and by \ref{solvquozcomp} $R/N$ is definably compact.
\end{proof}
%%%%%%%%%%%%%%%%%%%%%%%%%%%%%%%%%%%%%%%%%%%%%%%%%%%%%%%%%%%%%%%%%%%%%%%%%%%%%%%
%%%%%%%%%%%%%%%%%%%%%%%%%%%%%%%%%%%%%%%%%%%%%%%%%%%%%%%%%%%%%%%%%%%%%%%%%%%%%%% 
%%%%%%%%%%%%%%%
\subsection{A Lie-like decomposition of solvable groups}
 
In analogy with finite groups, Strzebonski develops in \cite{str} a theory of definable $p$-groups, proving corresponding ``Sylow's theorems'', where the cardinality of finite $p$-groups is replaced by the \ominimal Euler characteristic of definable $p$-groups.
In addition he introduces $0$-groups, which are further investigated by Berarducci in \cite{zero}.

After reviewing facts about $0$-groups, we will show that every solvable definably connected group can be
decomposed as the product of its maximal normal definable torsion-free subgroup and any of its maximal $0$-subgroups (\ref{prodsolv}).

As a consequence we get that the maximal normal definable torsion-free subgroup of a solvable definably connected group always has cofactors,
and it has definable cofactors if and only if every $0$-subgroup is definably compact (\ref{solvsempr}).

\begin{dfn}(\cite{str}).  \label{dfn0gr}

\begin{enumerate} 
%\vspace{-0.2cm}
\item[-] A \textbf{$0$-group} is a definable group $G$ such that for every proper definable subgroup $H$, $E(G/H) = 0$ (in particular, $E(G) = 0$).  

\item[-] A \textbf{$0$-subgroup} is a definable subgroup which is a $0$-group.

\item[-] A \textbf{$0$-Sylow} is a maximal $0$-subgroup.

%\item[-] A \textbf{definable torus} is a 0-group such that every definably connected subgroup of it is a 0-group.

\end{enumerate}

\end{dfn}

\begin{fact} \emph{(\cite[2.9, 5.17, 2.21]{str}).}  \label{maxSylconj}  

\begin{enumerate}%\hspace{7cm} \vspace{-0.2cm}

\item[$(a)$] Every definable group $G$ with $E(G) = 0$ contains a $0$-subgroup.

\item[$(b)$] Every $0$-group is abelian and definably connected. 

\item[$(c)$] Every $0$-subgroup is contained in a $0$-Sylow.

\item[$(d)$] Any two $0$-Sylow are conjugate.

\item[$(e)$] If $H$ is a $0$-subgroup of a definable group $G$, then

\[
H \mbox{ is a $0$-Sylow } \Leftrightarrow\ E(G/H) \neq 0.
\]

\end{enumerate}
\end{fact}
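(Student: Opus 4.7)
My plan is to address the five parts in the order $(a)$, $(b)$, $(c)$, $(e)$, $(d)$, since the Sylow-type conjugacy in $(d)$ is the hardest and should leverage the earlier items. The two main tools throughout are the multiplicativity of the Euler characteristic on coset spaces (Fact~\ref{euprodsub}) and the descending chain condition on definable subgroups, available because dimension is bounded and every definable group has a definably connected component of finite index.

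For $(a)$, choose via DCC a definable subgroup $H \leq G$ minimal among those with $E(H) = 0$; one exists because $G$ itself qualifies. For every proper definable $K < H$, minimality forces $E(K) \neq 0$, and the identity $E(H) = E(K)\,E(H/K)$ then forces $E(H/K) = 0$, so $H$ is a $0$-group. For $(b)$, definable connectedness is immediate: if a $0$-group $G$ had $G^0 \neq G$, then $G^0$ would be a proper definable subgroup with $E(G/G^0) = \card(G/G^0) \neq 0$, contradicting the defining property. Abelianness I would establish by induction on $\dim G$; since $E(G) = 0$ and $G$ is definably connected, $G$ has torsion (Facts~\ref{eu} and~\ref{euconn}), hence a non-trivial torsion element $x$. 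If $x \notin Z(G)$, its centraliser $C_G(x)$ is a proper infinite definable subgroup, and a variant of the $(a)$-argument passed through $C_G(x)$ should produce a non-trivial central torsion element; then one applies induction to a suitable quotient by a central subgroup to propagate commutativity. This is the step I expect to require the most care, because it is not automatic that passing to $C_G(x)$ or to $G/Z(G)^0$ preserves the $0$-group property. For $(c)$, any ascending chain of $0$-subgroups strictly increases in dimension (since each member is definably connected) until it stabilises, so every $0$-subgroup is contained in a maximal one.

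For $(e)$, the direction $(\Leftarrow)$ is elementary: if $H$ were properly contained in a $0$-subgroup $K$, then $E(K/H) = 0$ by the $0$-group property of $K$, and multiplicativity gives $E(G/H) = E(G/K)\,E(K/H) = 0$. For $(\Rightarrow)$, assuming $H$ is a $0$-Sylow with $E(G/H) = 0$, I would rerun the $(a)$-style minimality argument, now on definable subgroups $K$ with $H \subseteq K \leq G$ and $E(G/K) = 0$: a minimal such $K$ strictly contains $H$ and, using $E(G/K') = E(G/K)\,E(K/K')$ on intermediate subgroups, is shown to be a $0$-subgroup, contradicting the maximality of $H$.

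The main obstacle is $(d)$. Given two $0$-Sylows $A$ and $B$, let $A$ act on the coset space $G/B$ by left translation. By $(e)$, $E(G/B) \neq 0$. Each $A$-orbit is definably isomorphic to $A/(A \cap gBg^{-1})$ for some $g \in G$; since $A$ is a $0$-group, such a coset space has Euler characteristic zero unless $A \cap gBg^{-1} = A$, i.e.\ unless the orbit is a single fixed point with $A \subseteq gBg^{-1}$. Additivity of $E$ over the (definable) orbit decomposition, combined with $E(G/B) \neq 0$, then forces at least one such fixed point to exist, yielding $A \subseteq gBg^{-1}$; maximality of $A$ together with the observation that $gBg^{-1}$ is again a $0$-Sylow gives $A = gBg^{-1}$. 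The delicate point is making the orbit-counting rigorous in the \ominimal setting: it requires that $G/B$ and each orbit are genuinely definable sets with well-defined Euler characteristic, and that a definable transversal for the $A$-action exists. Both are granted by the definable choice property noted in the remark after Fact~\ref{euprodsub}, so the argument goes through in our setting.
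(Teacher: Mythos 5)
A preliminary remark on the comparison itself: the paper does \emph{not} prove this statement --- it is imported as a Fact from Strzebonski \cite[2.9, 5.17, 2.21]{str} --- so there is no in-paper argument to measure you against, and your proposal must be judged on its own merits as a reconstruction of Strzebonski's results. Your arguments for $(a)$, $(c)$ and the $(\Leftarrow)$ direction of $(e)$ are correct, and your orbit-counting scheme for $(d)$ is the right Sylow-style idea: off the fixed-point set, every $A$-orbit in $G/B$ has the form $A/(A \cap B^g)$ with proper stabiliser, hence Euler characteristic $0$ because $A$ is a $0$-group, and the fibration formula over a definable transversal (legitimately available, since the structure expands an ordered group and so has definable choice) forces the fixed-point set to carry all of $E(G/B) \neq 0$.

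The first genuine gap is the $(\Rightarrow)$ direction of $(e)$, and it infects $(d)$, which needs precisely this direction to know $E(G/B) \neq 0$ for the $0$-Sylow $B$. Your minimality argument is broken: the family $\{K \mbox{ definable} : H \subseteq K \leq G,\ E(G/K) = 0\}$ contains $H$ itself, and every member contains $H$, so its minimal element \emph{is} $H$ --- it does not ``strictly contain $H$'', and no contradiction appears. If you instead restrict to $K \supsetneq H$, then nonemptiness of the family is exactly what has to be proved, so DCC gives nothing; and the algebra fails too, since from $E(G/K') = E(G/K)\,E(K/K')$ with $E(G/K) = 0$ one cannot extract $E(K/K') = 0$. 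The standard repair (essentially Strzebonski's) is a second fixed-point count: let $H$ act on $G/H$ by left translation; the fixed points form $N_G(H)/H$ (a coset $gH$ is fixed iff $H^{g^{-1}} \subseteq H$, which forces $H^{g^{-1}} = H$ because $H$ is definably connected), the non-fixed orbits all have Euler characteristic $0$, hence $E(N_G(H)/H) = E(G/H) = 0$; by $(a)$ the definable group $N_G(H)/H$ contains a $0$-subgroup, and its preimage in $N_G(H)$ is a $0$-subgroup strictly containing $H$, contradicting maximality. This last step needs the lemma that a definable extension of a $0$-group by a $0$-group is a $0$-group, which follows from $E(K/L) = E(K/HL)\,E(H/(H \cap L))$ by splitting into the cases $H \subseteq L$ and $H \cap L \subsetneq H$.

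The second gap is abelianness in $(b)$, which you flag but do not close, and which is the genuinely hard content of the Fact. Even granting a non-trivial central definable subgroup $Z$ (itself not yet produced by your sketch), note that $G/Z$ is again a $0$-group, so induction would indeed give $G/Z$ abelian; but that only yields $[G,G] \subseteq Z \subseteq Z(G)$, i.e.\ $G$ nilpotent of class at most $2$, and ``propagating commutativity'' from there is exactly the problem: a central extension of an abelian group by an abelian group need not be abelian. The Heisenberg group is the standard obstruction, and it is even divisible, so the divisibility of $0$-groups does not rescue the step by itself. Some further, essential use of the Euler-characteristic hypothesis is required here, and the proposal does not indicate what it would be.
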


\vs
\begin{dfn} (\cite[5.1]{str}) 
A group $G$ is said to be a \textbf{definable torus} if it is a $0$-group and every definably connected subgroup of $G$ is a 0-group.
\end{dfn}

\noindent
First Strzebonski (\cite[5.3]{str}) and then Peterzil and Steinhorn (\cite[5.6]{ps}) provided examples of $0$-groups which are not definable tori, and no one of them is definably compact. This is not accidental:

\begin{fact} \emph{(\cite[5.9]{zero}).} \label{torusis0gr}
A definable group is a definable torus if and only if it is abelian, definably connected and definably compact.
\end{fact}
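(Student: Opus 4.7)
The plan is to prove the two directions separately, and in each one to exploit the Euler characteristic to bridge the algebraic hypothesis and the geometric conclusion.

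For $(\Leftarrow)$, suppose $G$ is abelian, definably connected, and definably compact. Let $H < G$ be a proper definable subgroup. Since $G$ is abelian, $H$ is normal and $G/H$ is a definable group; since $G = G^0$ is definably connected, $H$ (being definable and proper) has infinite index, so $G/H$ is infinite. As a quotient of a definably compact group, $G/H$ is definably compact, and as a quotient of a definably connected group it is definably connected. By Fact \ref{compnoncomp}$(b)$, $G/H$ has torsion; hence by Fact \ref{euconn}, $E(G/H) = 0$. This shows $G$ is a $0$-group. Any definably connected definable subgroup $G'$ of $G$ is again abelian (since $G$ is), definably connected (by assumption), and definably compact (definable subgroups are closed by \cite[2.8]{pi1}, see \ref{duacomptf}), so by the same argument $G'$ is a $0$-group. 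Therefore $G$ is a definable torus.

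For $(\Rightarrow)$, suppose $G$ is a definable torus. Since in particular $G$ is a $0$-group, Fact \ref{maxSylconj}$(b)$ gives that $G$ is abelian and definably connected. It remains to show $G$ is definably compact. Assume for contradiction that it is not. By the Peterzil-Steinhorn theorem (Fact \ref{compnoncomp}$(a)$), $G$ contains a $1$-dimensional definable torsion-free subgroup $H$. By Fact \ref{torfree}$(a)$, $H$ is definably connected, so by the definition of a definable torus, $H$ is itself a $0$-group. In particular $E(H) = 0$ (taking the trivial subgroup in the definition of $0$-group). But Fact \ref{eu} gives $E(H) = \pm 1$ since $H$ is torsion-free. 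This contradiction shows $G$ is definably compact.

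The only slightly delicate point is checking that the definition of "$0$-group" applied to proper subgroups really gives $E(G/H) = 0$ in the $(\Leftarrow)$ direction; this reduces to verifying that $G/H$ is infinite, definably connected, and definably compact, so that Facts \ref{compnoncomp}$(b)$ and \ref{euconn} combine to force vanishing Euler characteristic. Everything else is a direct appeal to the facts already assembled in the preceding subsection, so no serious obstacle is expected.
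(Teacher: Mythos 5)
Your proof is correct. Note that the paper does not actually prove this statement itself: it is quoted as a Fact from Berarducci \cite[5.9]{zero}, followed only by a remark that the proof extends to arbitrary o-minimal structures once one knows that $E(G)=0$ for every definably connected, definably compact group, which the paper says follows from \ref{eu}, \ref{euconn} and \ref{compnoncomp}$(b)$ --- and your argument is exactly this route (those three facts, plus the standard observations that quotients of definably compact, respectively definably connected, groups are again definably compact, respectively definably connected, and the Peterzil--Steinhorn subgroup theorem for the converse), so it coincides with the proof the paper has in mind.
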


\noindent
In Berarducci's paper the theorem above is stated for groups defined in an \ominimal expansion of a real closed field. Nevertheless the same proof holds in every \ominimal structure, once one has obtained that $E(G) = 0$ for every $G$ definably connected and definably compact (which follows from \ref{eu}, \ref{euconn} and \ref{compnoncomp}$(b)$).

\begin{lem}\label{charsyl}
Let $G$ be a definable group with $E(G) = 0$. If $H < G$ is a definable subgroup such that $E(G/H) \neq 0$, then
every $0$-Sylow of $H$ is a $0$-Sylow of $G$.
\end{lem}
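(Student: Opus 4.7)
The plan is to characterize $0$-Sylow subgroups via the \ominimal Euler characteristic (Fact \ref{maxSylconj}$(e)$) and then exploit the multiplicativity of $E$ on chains of definable subgroups (Fact \ref{euprodsub}$(b)$).

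Let $A$ be a $0$-Sylow of $H$. First I would observe that $A$ is a $0$-subgroup of $G$: being a $0$-group is an intrinsic property of the definable group $A$ itself (it asks that $E(A/K) = 0$ for every proper definable subgroup $K$ of $A$), so it does not depend on the ambient group, and $A$ remains a definable subgroup of $G$.

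Next, by Fact \ref{maxSylconj}$(e)$ applied inside $G$, to conclude that $A$ is a $0$-Sylow of $G$ it suffices to show $E(G/A) \neq 0$. Here I would use Fact \ref{euprodsub}$(b)$ on the chain $A < H < G$, which gives
\[
E(G/A)\ =\ E(G/H)\, E(H/A).
\]
The first factor is nonzero by hypothesis. For the second factor, since $A$ is a $0$-Sylow of $H$, Fact \ref{maxSylconj}$(e)$ applied inside $H$ gives $E(H/A) \neq 0$. Hence $E(G/A) \neq 0$, and one more application of Fact \ref{maxSylconj}$(e)$ (this time inside $G$) yields that $A$ is a $0$-Sylow of $G$.

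There is essentially no obstacle here; the only subtlety worth flagging is the observation that the property of being a $0$-group is intrinsic, so that a $0$-Sylow of a definable subgroup is automatically a $0$-subgroup of the ambient group, which is what allows Fact \ref{maxSylconj}$(e)$ to be applied at the level of $G$.
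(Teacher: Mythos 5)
Your proof is correct and is essentially the paper's own argument: both compute $E(G/A) = E(G/H)E(H/A) \neq 0$ via multiplicativity and conclude by the characterization of $0$-Sylows through $E(G/A) \neq 0$ (Fact \ref{maxSylconj}$(e)$). Your explicit remark that being a $0$-group is intrinsic to $A$ is a worthwhile clarification the paper leaves implicit, but it does not change the route.
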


\begin{proof}
Let $A$ be a $0$-Sylow of $H$. Then $E(G/A) = E(G/H)E(H/A) \neq 0$ and by \ref{maxSylconj}$(e)$ $A$ is a $0$-Sylow of $G$ as well.
\end{proof}

%%%%%%%%%%%%%%%%%
\begin{lem}\label{chara0syl}
Let $H$ be a definably connected subgroup of a definable group $G$ with $E(G) = 0$.
Then $H$ is a $0$-Sylow of $G$ if and only if $H$ is
of minimal dimension among the definable subgroups $P$ of $G$ such that $E(G/P) \neq 0$. 
\end{lem}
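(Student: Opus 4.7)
The plan is to prove both implications by combining Lemma \ref{charsyl}, the multiplicativity of the Euler characteristic (Fact \ref{euprodsub}(a)), and the conjugacy/connectedness properties of $0$-Sylows (Fact \ref{maxSylconj}). The pivotal observation in both directions is that whenever $P < G$ is a definable subgroup with $E(G/P) \neq 0$, the identity $E(G) = E(P)E(G/P) = 0$ forces $E(P) = 0$, so $P$ itself contains a $0$-Sylow by Fact \ref{maxSylconj}(a),(c). This is exactly what is needed to feed $P$ into Lemma \ref{charsyl}.

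For the direction $(\Rightarrow)$, I would assume $H$ is a $0$-Sylow of $G$. By Fact \ref{maxSylconj}(e) we already know $E(G/H) \neq 0$, so $H$ is a candidate for the minimum. Given any competing definable subgroup $P$ with $E(G/P) \neq 0$, the observation above gives a $0$-Sylow $A$ of $P$; Lemma \ref{charsyl} then promotes $A$ to a $0$-Sylow of $G$. Since all $0$-Sylows of $G$ are conjugate (Fact \ref{maxSylconj}(d)) they share a common dimension, so $\dim P \geq \dim A = \dim H$.

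For the direction $(\Leftarrow)$, suppose $H$ is definably connected and achieves the minimum. From $E(G) = E(H)E(G/H) = 0$ and $E(G/H) \neq 0$ we get $E(H) = 0$, so $H$ contains a $0$-Sylow $A$ of itself. Lemma \ref{charsyl} again gives that $A$ is a $0$-Sylow of $G$, hence $E(G/A) \neq 0$ by Fact \ref{maxSylconj}(e); minimality of $\dim H$ then forces $\dim A = \dim H$. The finisher uses that $A$ is definably connected (Fact \ref{maxSylconj}(b)) and contained in $H$ with the same dimension, so $[H:A]$ is finite; definable connectedness of $H$ then yields $A = H$, so $H$ itself is a $0$-Sylow.

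There is no real obstacle beyond setting up these pieces in the right order. The only delicate point is remembering that Lemma \ref{charsyl} requires the subgroup in question to already contain a $0$-Sylow, which is not part of its hypothesis; in both directions this is supplied for free by the multiplicativity of $E$ together with $E(G) = 0$.
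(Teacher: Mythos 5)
Your proposal is correct. The forward direction is exactly the paper's argument (stated directly rather than by contradiction): from $E(G)=0$ and $E(G/P)\neq 0$ deduce $E(P)=0$, extract a $0$-Sylow of $P$, promote it to a $0$-Sylow of $G$ via Lemma \ref{charsyl}, and invoke conjugacy. Where you diverge is in the $(\Leftarrow)$ direction. The paper verifies the definition of $0$-group head-on: for every proper definable $P<H$, connectedness of $H$ forces $\dim P<\dim H$, so minimality gives $E(G/P)=0$, and then $E(G/P)=E(G/H)E(H/P)$ with $E(G/H)\neq 0$ yields $E(H/P)=0$; Fact \ref{maxSylconj}$(e)$ then upgrades $H$ to a $0$-Sylow. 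You instead run \ref{charsyl} a second time: extract a $0$-Sylow $A$ of $H$, promote it to a $0$-Sylow of $G$, use $E(G/A)\neq 0$ plus minimality to get $\dim A=\dim H$, and conclude $A=H$ from connectedness. Both finishes rest on the same underlying fact about definably connected groups (a definable subgroup of full dimension has finite index, equivalently proper definable subgroups have strictly smaller dimension, since $\dim(H/A)=0$ makes $H/A$ finite), so neither needs anything beyond the paper's toolkit. What your version buys is symmetry --- both implications funnel through \ref{charsyl} and the conjugacy/dimension rigidity of $0$-Sylows --- at the cost of invoking existence of $0$-Sylows inside $H$ (Fact \ref{maxSylconj}$(a)$,$(c)$); the paper's version is slightly more self-contained in that it never needs to produce a $0$-subgroup of $H$, only to test $H$ against the definition.
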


\begin{proof}
\hspace{7cm}
\begin{enumerate}
\item[$(\Rightarrow)$]
Suppose $P$ is a definable subgroup of $G$ such that $E(G/P) \neq 0$ and $\dim P < \dim H$. 
Because $E(G) = 0$ and $E(G/P) \neq 0$, $E(P) = 0$.
Let $A$ be a $0$-Sylow
of $P$. Then $A$ is a $0$-Sylow of $G$ (\ref{charsyl}) with $\dim A < \dim H$, in contradiction with the fact that any two $0$-Sylows are conjugate (\ref{maxSylconj}$(d)$).\\

\item[$(\Leftarrow)$]
Let $H$ be a definably connected subgroup of $G$ of minimal dimension among the definable subgroups $P$ of $G$ such that $E(G/P) \neq~0$. To see that $H$ is a $0$-Sylow, it is enough to check that it is a $0$-group (\ref{maxSylconj}$(e)$). 

For every definable proper $P < H$, $E(G/H)E(H/P) = E(G/P) = 0$, because of minimality and connectedness of $H$. Since $E(G/H) \neq 0$, it follows that $E(H/P) = 0$ and thus $H$ is a $0$-group.
\end{enumerate}
\end{proof}

%%%%%%%%%%%%%%%%%
We can show now the existence of a definable Lie-like decomposition (see \ref{complie}) for solvable definably connected groups, where the role of a maximal torus is played by a maximal $0$-subgroup (which is a maximal definable torus whenever it is definably compact: see \ref{torusis0gr}). 
 
\begin{theo}\label{prodsolv}
Let $G$ be a solvable definably connected group and let $N$ be its maximal normal definable torsion-free subgroup. Then for every $0$-Sylow $A$ of $G$
\[
G = AN.
\]

Moreover if $A_1 < G$ is a $0$-subgroup such that $G = A_1N$, then $A_1$ is a $0$-Sylow as well $($and therefore a conjugate of $A)$.
\end{theo}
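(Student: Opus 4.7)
The plan is to exploit the fact that the quotient $G/N$ is itself a $0$-group. Indeed, by Theorem \ref{solvquozcomp}, $G/N$ is definably compact; it is also solvable and definably connected (as a quotient of the definably connected $G$), hence abelian by \cite[5.4]{ps00}. Being abelian, definably connected and definably compact, $G/N$ is a definable torus, and therefore a $0$-group (Fact \ref{torusis0gr}). In particular, every proper definable subgroup $H < G/N$ satisfies $E((G/N)/H) = 0$. Note that if $G$ were torsion-free then no $0$-Sylow would exist, so both assertions are vacuous; hence I may assume $G$ has torsion and $G/N$ is infinite.

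For the first assertion, I would argue by contradiction: suppose $AN \neq G$. Then $AN/N$ is a proper definable subgroup of the $0$-group $G/N$, so $E((G/N)/(AN/N)) = 0$. The obvious map identifies the coset space $G/AN$ definably with $(G/N)/(AN/N)$, hence $E(G/AN) = 0$. Applying multiplicativity of the Euler characteristic along the chain $A < AN < G$ (Fact \ref{euprodsub}$(b)$) gives $E(G/A) = E(G/AN)\cdot E(AN/A) = 0$, contradicting Fact \ref{maxSylconj}$(e)$ since $A$ is assumed to be a $0$-Sylow.

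For the second assertion, given a $0$-subgroup $A_1$ with $G = A_1N$, by Fact \ref{maxSylconj}$(e)$ it suffices to show $E(G/A_1) \neq 0$. The assignment $n(A_1 \cap N) \mapsto nA_1$ is a well-defined definable bijection $N/(A_1\cap N) \to G/A_1$ (using that $N \trianglelefteq G$ and $G = A_1N$), so both sets share the same Euler characteristic. Since $N$ is torsion-free and $A_1 \cap N$ is a definable subgroup of $N$, both are torsion-free and so $E(N) = E(A_1\cap N) = \pm 1$ by Fact \ref{eu}; multiplicativity (Fact \ref{euprodsub}$(a)$) then forces $E(N/(A_1\cap N)) = \pm 1$, giving $E(G/A_1) = \pm 1 \neq 0$ as required.

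I do not anticipate a serious obstacle: each ingredient is already in place in the excerpt. The only slightly delicate observation is the initial one that $G/N$ is itself a $0$-group, which chains together Theorem \ref{solvquozcomp}, the Peterzil--Starchenko abelianness result, and Fact \ref{torusis0gr}; once this is in hand, the rest amounts to routine bookkeeping with multiplicativity of the Euler characteristic, the characterization of $0$-Sylows via Fact \ref{maxSylconj}$(e)$, and the persistence of torsion-freeness under passage to subgroups.
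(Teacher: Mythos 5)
Your proof is correct and follows essentially the same route as the paper: the first part is the paper's argument in contrapositive form, resting on the observation that $G/N$ is abelian, definably connected and definably compact (via \ref{solvquozcomp} and \cite[5.4]{ps00}), hence a definable torus and thus a $0$-group (\ref{torusis0gr}), combined with multiplicativity of $E$ and the characterization \ref{maxSylconj}$(e)$ of $0$-Sylows. For the second part you compute $E(G/A_1) = E(N/(A_1\cap N)) = \pm 1$ directly and invoke \ref{maxSylconj}$(e)$, whereas the paper embeds $A_1$ in a $0$-Sylow $S_1$ and forces $S_1 = A_1$ from $E(S_1/A_1) = \pm 1$; this is the same Euler-characteristic idea in slightly more direct packaging, so no substantive difference.
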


\begin{proof}

If $G$ is torsion-free then $G = N$ and there is nothing to prove. So suppose $E(G) = 0$ (\ref{euconn}) and let $A$ be a $0$-Sylow of $G$.
We want to show that $G = AN$. Consider the following diagram:

\[
\xymatrix{
& G \ar@/_1pc/@{-}[ddl]_{E \neq 0} \ar@{-}[d]^{E = 0}_{E \neq 0} \ar@/^1pc/@{-}[ddr]^{\mbox{0-group} } \\
& AN \ar@{-}[dl] \ar@{-}[dr] \\
A & & N
}
\]

\vs \noindent
Since $A$ is a $0$-Sylow, it follows that $E(G/A) \neq 0$ (\ref{maxSylconj}) and thus $E(G/AN) \neq 0$ as well, since $E(G/AN)E(AN/A) = E(G/A)$.  

On the other hand, $AN/N$ is a definable subgroup of the abelian, definably connected, definably compact group $G/N$ (\ref{solvquozcomp}) which is in particular a $0$-group (\ref{torusis0gr}), so $E(G/AN) = E((G/N)/(AN/N)) = 0$, unless $G/N = AN/N$. Therefore $G = AN$.

Let $A_1$ be a $0$-subgroup of $G$ such that $G = A_1N$, and let $S_1$ be a $0$-Sylow containing $A_1$. Thus  $S_1/A_1$ is definably isomorphic to $(S_1 \cap N)/(A_1 \cap N)$, and then $E(S_1/A_1) = \pm 1$ (\ref{torfree}$(d)$ and \ref{eu}). But $S_1$ is a $0$-group, so $S_1 = A_1$. 
\end{proof}
%%%%%%%%%%%%%%%%%%

%%%%%%%%%%%%%%%%%%%%
\begin{cor}\label{abconndec}
Let $G$ be an abelian definably connected group. Then there are unique definable subgroups $N$ and $A$ with $N$ torsion-free and $A$ $0$-group such that $G = AN$. $A$ is the only $0$-Sylow of $G$ and $N$ is the maximal definable torsion-free subgroup of $G$. 
\end{cor}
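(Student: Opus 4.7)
My plan is to deduce the corollary from its three immediate predecessors---Theorem \ref{prodsolv}, Fact \ref{maxSylconj}(d), and Corollary \ref{maxtorsolvmax}---all of which apply since an abelian definably connected group is in particular solvable and definably connected.

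First I would invoke Theorem \ref{prodsolv} directly: it already furnishes the decomposition $G = AN$ where $A$ is any $0$-Sylow of $G$ and $N$ is its maximal normal definable torsion-free subgroup. This handles existence, leaving only the two uniqueness statements.

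For uniqueness of $A$, the key observation is that $G$ is abelian. Fact \ref{maxSylconj}(d) states that any two $0$-Sylows of $G$ are conjugate, and in an abelian group conjugation is trivial, so the conjugacy class of a $0$-Sylow reduces to a single subgroup; hence $A$ is the only $0$-Sylow of $G$. For uniqueness of $N$, since $G$ is solvable, Corollary \ref{maxtorsolvmax} applies and tells us that $N$ contains every definable torsion-free subgroup of $G$; it is therefore the unique maximal definable torsion-free subgroup.

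I expect no real obstacle here: the corollary is an essentially formal abelian specialization of \ref{prodsolv}, and the complication ``many conjugate $0$-Sylows'' present in the general solvable case disappears because conjugation is trivial. The only mildly delicate point is matching the phrasing ``unique $A$ and $N$ such that $G = AN$'' with the content of the argument, namely that the canonical choices---the $0$-Sylow and the maximal definable torsion-free subgroup---are each intrinsically unique in $G$ and together realize the claimed decomposition. Should one want uniqueness of the pair in the stronger sense ``any decomposition $G=A'N'$ with $A'$ a $0$-group and $N'$ torsion-free forces $A'=A$'', one can note that $N'\subseteq N$ by \ref{maxtorsolvmax}, so $G=A'N'\subseteq A'N\subseteq G$ gives $G=A'N$, and then the last sentence of \ref{prodsolv} forces $A'$ to be a $0$-Sylow, hence $A'=A$ by the uniqueness already established.
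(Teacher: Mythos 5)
Your proposal is correct and follows exactly the paper's (implicit) route: the corollary appears in the paper with no written proof, precisely because it is the immediate specialization of Theorem \ref{prodsolv} that you give, with uniqueness of $A$ from conjugacy of $0$-Sylows (\ref{maxSylconj}$(d)$) made trivial by commutativity, and uniqueness of $N$ from Corollary \ref{maxtorsolvmax}. Your closing observation is also the right reading of the statement: only $A' = A$ can be forced in an arbitrary decomposition $G = A'N'$ (a non--definably-compact abelian $0$-group such as Strzebonski's admits $G = A\cdot\{e\}$ as well as $G = AN$), so $N$ is unique only in its canonical role as the maximal definable torsion-free subgroup, which is what you prove.
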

%%%%%%%%%%%%%%%
%%%%%%%%%%%%%%%%%%%%%%%
\begin{rem} \label{factor}
If $A$ is an abelian definably connected group and $H < A$ is a torsion-free definable subgroup, then they are both divisible (\cite{str}) and therefore $A$ contains a (possibly non-definable) direct cofactor of $H$ (see for instance \cite[10.24]{rot}). Thus
\[
A\ \cong\ H\ \times\ A/H.
\]

\vs
In particular this is the case whenever $A$ is a $0$-Sylow of a definable group $G$ (not necessarily solvable)
and $H$ is the maximal definable torsion-free subgroup of $A$.
\end{rem}

%%%%%%%%%%%%%%%%%%%%%%%%
\begin{prop}\label{solvsempr}
Let $G$ be a solvable definably connected group and let $N$ be its maximal normal definable torsion-free subgroup. Then 
\begin{enumerate}
\item[$(a)$] $G$ is abstractly isomorphic to a semidirect product $N \rtimes G/N$.

\vspace{0.1cm}
\item[$(b)$] $N$ has definable cofactors in $G$  
if and only if every $0$-subgroup of $G$ is definably compact.
\end{enumerate}
\end{prop}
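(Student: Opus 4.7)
The plan is to leverage the decomposition $G=AN$ from Theorem \ref{prodsolv} together with the fact that any $0$-Sylow $A$ is abelian and divisible (hence so is its maximal torsion-free subgroup), and then apply Remark \ref{factor}.

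First I fix a $0$-Sylow $A$ of $G$. By Theorem \ref{prodsolv} we have $G=AN$. The $0$-Sylow $A$ is abelian and definably connected by Fact \ref{maxSylconj}(b), and from the definable isomorphism $A/(A\cap N)\cong AN/N=G/N$ combined with the fact that $G/N$ is definably compact (\ref{solvquozcomp}), Corollary \ref{quocomp} tells us that $H:=A\cap N$ is the maximal definable torsion-free subgroup of $A$. Remark \ref{factor} applied to $A$ and $H$ supplies an (abstract, abelian) direct complement $K$ to $H$ in $A$, so $A=K\oplus H$ as abstract groups. Then $G=AN=KHN=KN$, and $K\cap N\subseteq K\cap(A\cap N)=K\cap H=\{e\}$. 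Since $N\lhd G$, this exhibits $G$ as an abstract semidirect product $N\rtimes K$, and $K\cong G/N$ via the canonical projection, proving part (a).

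For the $(\Leftarrow)$ direction of (b), assume every $0$-subgroup of $G$ is definably compact. Then the $0$-Sylow $A$ above is definably compact, so its definable torsion-free subgroup $H=A\cap N$ must be trivial by Remark \ref{duacomptf}. Combined with $G=AN$ this says precisely that $A$ is a definable cofactor of $N$ in $G$.

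For $(\Rightarrow)$, assume $N$ admits a definable cofactor $K$ in $G$. Then $K$ is definably isomorphic to $G/N$, which by Theorem \ref{solvquozcomp} and \cite[5.4]{ps00} is abelian, definably connected and definably compact; hence $K$ is a definable torus (\ref{torusis0gr}) and in particular a $0$-group. From $G=KN$ and the uniqueness clause in Theorem \ref{prodsolv}, $K$ is a $0$-Sylow of $G$. By Fact \ref{maxSylconj}(d) every $0$-Sylow of $G$ is conjugate to $K$, hence definably compact, and since $0$-Sylows are abelian (\ref{maxSylconj}(b)) every $0$-subgroup, being a definable subgroup of some definably compact $0$-Sylow, is itself definably compact. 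The only real subtlety is the need to use divisibility of torsion-free abelian definable groups in order to split $A$ abstractly; this is exactly what Remark \ref{factor} provides, so no further obstacle arises.
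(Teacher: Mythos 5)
Your proof is correct and takes essentially the same route as the paper's: the decomposition $G=AN$ from \ref{prodsolv}, the abstract splitting $A=(A\cap N)\times K$ supplied by \ref{factor}, and, for part $(b)$, the observation that any definable cofactor is a definable torus which by the uniqueness clause of \ref{prodsolv} must be a $0$-Sylow, so conjugacy of $0$-Sylows forces all $0$-subgroups to be definably compact. The only differences are cosmetic: you treat $(a)$ uniformly instead of splitting into the cases $A$ definably compact or not, and you argue $(b)(\Rightarrow)$ directly rather than by contradiction.
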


\begin{proof}
Let $A$ be a $0$-Sylow of $G$. Then by \ref{prodsolv} $G = AN$.

If $A$ is definably compact, then $A \cap N = \{e\}$ and $A$ is a definable cofactor of $N$ in $G$.  
Otherwise, $A \cap N$ is the maximal definable torsion-free subgroup of $A$ (because $N$ contains every torsion-free definable subgroup of $G$: \ref{quocomp} and \ref{solvquozcomp}) and it is infinite. 
If $T$ is a cofactor of $A \cap N$ in $A$ (see \ref{factor}), i.e. $A = (A \cap N) \times T$, then by \ref{prodsolv}
$T$ is a cofactor of $N$ in $G$. 

If $T$ were definable, then $E(A/T) = E(A \cap N) = \pm 1$, in contradiction with the fact that $A$ is a $0$-group. Then $T$ (and any other cofactor of $A \cap N$ in $A$) cannot be definable.

Suppose now by contradiction that $H$ is a definable cofactor of $N$ in $G$. Hence $H$ is a definable torus (it is definably isomorphic to $G/N$), so a $0$-group. Since $G = HN$,
by \ref{prodsolv} $H$ is a $0$-Sylow, so it is a conjugate of $A$ which is not definably compact, contradiction. 
\end{proof}
 
 %%%%%%%%%%%%%%%%%%%%%%%%%%%%%%%%%%%%%%%%%%%%%%%%%%%%%%%%%%%%%%%%%%%%%%%%%%%%%%%%%%%%%%%%%%%%%%%%%%%%%%%%%%%%%%%%%%%%%%%%%%%%%%%%%%%%%%%%%%%%%%%%%%%%%%%%%%%%%%%%%%%%%%%%%%%%%%%%%%%%%%%%%%%%%%%%%%%%%%%%%%%%%%%%%%%%%%%%%%%%%%%%%%%%%%%%%%%%%%%%%%%%%%%%%%%%%%%%%%%%%%%%%%%%%%%%%%%%%%%%%%%%%%%%%%%%%%%%%%%%%%%%%%%%%%%%%%%%%%%%%%%%%%%%%%
\vs
\section{A definable Iwasawa-like decomposition}

In this section we find an \ominimal analogue  
of the following decomposition of semisimple Lie groups:

\begin{fact}\label{iwasublie}
\emph{ (Iwasawa decomposition of semisimple Lie algebras and Lie groups) (\cite{iwa}, \cite[Chapter 6]{kna}).} 
For every semisimple Lie algebra \g\ over $\mathbb{C}$ there esists a basis $\{X_i\}$ of \g\ and 
subspaces $\mathfrak{k}, \la, \n$  
such that $\g$ is a direct sum $\g = \mathfrak{k} \oplus \la \oplus \n$, and
the matrices representing $\ad(\g)$ with respect to $\{X_i\}$ have the following properties:

\begin{enumerate}
\item[-] the matrices of $\ad(\mathfrak{k})$ are skew-symmetric,

\item[-] the matrices of $\ad(\la)$ are diagonal with real entries,

\item[-] the matrices of $\ad(\n)$ are upper triangular with $0$'s on the diagonal.
\end{enumerate}

\noindent
Let $G$ be a semisimple connected Lie group with finite center and Lie algebra $\g = \mathfrak{k} \oplus \la \oplus \n$ as above. If $K$, $A$ and $N$ are connected analytic subgroups of $G$ with Lie algebras $\mathfrak{k}$, \la\ and \n\ respectively, then: 

\begin{enumerate}
\item[$(a)$]the multiplication map

\begin{align*}
K \times A \times N\ &\longrightarrow\ G \\
(\, k\ ,\ a\ ,\ n\, )\ &\ \mapsto\ kan
\end{align*}

\noindent
is a surjective diffeomorphism;

\vspace{0.1cm}
\item[$(b)$] the group $K$ is a maximal compact subgroup of $G$ and any maximal compact subgroup of $G$ is a conjugate of $K$;

\vspace{0.1cm}
\item[$(c)$] $AN = NA$, i.e. $AN$ is a subgroup.
\end{enumerate}
\end{fact}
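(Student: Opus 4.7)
The plan is to prove the decomposition first at the Lie algebra level and then exponentiate to the group. The starting point is a Cartan involution $\theta$ of $\g$, whose $+1$ and $-1$ eigenspaces give the Cartan decomposition $\g = \mathfrak{k} \oplus \mathfrak{p}$, with $\mathfrak{k}$ a maximal compactly embedded subalgebra and $\mathfrak{p}$ the subspace on which the bilinear form $B_\theta(X,Y) = -B(X,\theta Y)$ (where $B$ is the Killing form) restricts to the Killing form itself. Taking $\la \subset \mathfrak{p}$ to be a maximal abelian subspace, the elements of $\ad(\la)$ are simultaneously diagonalizable over $\R$ with real eigenvalues, since they are symmetric with respect to $B_\theta$. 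This yields the restricted root space decomposition $\g = \g_0 \oplus \bigoplus_{\lambda \in \Sigma} \g_\lambda$, where $\g_0$ is the centralizer of $\la$ in $\g$. Choosing a positive system $\Sigma^+$ and setting $\n = \bigoplus_{\lambda \in \Sigma^+} \g_\lambda$, I would then verify $\g = \mathfrak{k} \oplus \la \oplus \n$ as a direct sum of vector spaces; directness uses $\theta(\g_\lambda) = \g_{-\lambda}$ (so no nontrivial element of $\mathfrak{k}$ lies in $\la \oplus \n$), and equality of dimensions follows from $\g_0 = \la \oplus (\g_0 \cap \mathfrak{k})$.

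To produce the basis $\{X_i\}$ with the stated matrix properties, I would take a $B_\theta$-orthonormal basis obtained by concatenating bases of $\mathfrak{k}$, $\la$, and $\n$, with the basis of $\n$ consisting of root vectors ordered by increasing root height. The skew-symmetry of the matrices of $\ad(\mathfrak{k})$ is then the infinitesimal form of $B_\theta$-orthogonality of $\Ad(K)$; the diagonality of $\ad(\la)$ is exactly the restricted root space decomposition; the strict upper-triangularity of $\ad(\n)$ follows because bracketing by a positive root vector strictly raises root height and $\n$ is nilpotent.

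At the group level, let $K, A, N$ be the analytic subgroups of $G$ with Lie algebras $\mathfrak{k}, \la, \n$. The differential at the identity of the multiplication map $K \times A \times N \to G$ is the Iwasawa sum $\mathfrak{k} \oplus \la \oplus \n \to \g$, a linear isomorphism, so the map is a local diffeomorphism at the identity. To globalize, I would use that $A$ is simply connected and abelian (so $A \cong \R^{\dim \la}$ via $\exp$) and $N$ is simply connected nilpotent (since $\ad|_\n$ is nilpotent), while $K$ is closed and compact. The closedness and compactness of $K$ come from integrating $\theta$ to a global Cartan involution $\Theta$ of $G$ (this is where the finite-center hypothesis enters) and identifying $K$ with the fixed-point set of $\Theta$; an adjoint-representation argument shows $\Ad(K)$ lies in the $B_\theta$-orthogonal group, yielding compactness. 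Bijectivity of the multiplication map is then obtained by a polar decomposition $G = K \exp(\mathfrak{p})$ combined with a secondary argument that every $\exp(X)$ with $X \in \mathfrak{p}$ can be written as $an$ with $a \in A$, $n \in N$ (using simple transitivity of $AN$ on the horospherical model of $G/K$). Part $(b)$ follows from the Cartan fixed point theorem applied to the action of any compact subgroup of $G$ on the complete nonpositively curved symmetric space $G/K$: every compact subgroup fixes a point, so is conjugate into $K$. Part $(c)$ is immediate from $[\la, \n] \subseteq \n$, which says $A$ normalizes $N$, hence $AN = NA$ is a subgroup.

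The hard part is the global bijectivity of $K \times A \times N \to G$ and the closedness/compactness of $K$; the local diffeomorphism property and the algebraic identities on $\mathfrak{k}, \la, \n$ are more or less bookkeeping once the Cartan decomposition and restricted roots are in hand. The key analytic input is the integration of $\theta$ to $\Theta$ on $G$, which requires finite center, and the resulting Cartan-type polar decomposition $G = K \exp(\mathfrak{p})$. Given these, the maximality and conjugacy statements, and $AN = NA$, are formal consequences.
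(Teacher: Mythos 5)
The paper does not actually prove this statement: it is quoted as classical background, with the proof deferred to Iwasawa's paper and to \cite[Chapter 6]{kna}, so there is no competing argument in the paper to compare against; your outline follows the route of the cited source (Cartan involution, restricted roots, then globalization). Judged on its own terms, however, your write-up has one step that is genuinely wrong: the construction of the basis $\{X_i\}$. You cannot take ``a $B_\theta$-orthonormal basis obtained by concatenating bases of $\mathfrak{k}$, $\la$, and $\n$''. No such basis exists, because the Iwasawa summands are not $B_\theta$-orthogonal: for a restricted root vector $X \in \g_\lambda$, $\lambda > 0$, the element $X + \theta X$ lies in $\mathfrak{k}$ and $B_\theta(X + \theta X, X) = B_\theta(X,X) \neq 0$, since $B_\theta(\theta X, X) = -B(X,X) = 0$ for a root vector. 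Worse, $\ad(\la)$ is not diagonal in \emph{any} basis adapted to $\mathfrak{k} \oplus \la \oplus \n$, because $\mathfrak{k}$ is not $\ad(\la)$-invariant: in $\mathfrak{sl}_2(\R)$, with $H = \mathrm{diag}(1,-1)$, $X = E_{12}$, $Y = E_{21}$, one has $\mathfrak{k} = \mathrm{span}(X-Y)$, $\la = \mathrm{span}(H)$, $\n = \mathrm{span}(X)$, and $[H, X-Y] = 2X + 2Y = -2(X-Y) + 4X$, which has a nonzero component along $\n$. The correct construction (Knapp, Lemma 6.45) takes a $B_\theta$-orthonormal basis adapted not to the Iwasawa decomposition but to the restricted root space decomposition $\g = \g_0 \oplus \bigoplus_{\lambda \in \Sigma} \g_\lambda$ (with $\g_0$ split as $(\g_0 \cap \mathfrak{k}) \oplus \la$), ordered so that vectors in $\g_\lambda$ precede those in $\g_\mu$ whenever $\lambda > \mu$; diagonality of $\ad(\la)$ and strict upper triangularity of $\ad(\n)$ then follow from $[\g_\lambda,\g_\mu] \subseteq \g_{\lambda+\mu}$, and skew-symmetry of $\ad(\mathfrak{k})$ from $(\ad Z)^{*} = -\ad(\theta Z)$.

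There is also a false claim in your globalization: ``every $\exp(X)$ with $X \in \mathfrak{p}$ can be written as $an$'' fails already in $\SL_2(\R)$, where $AN$ consists of upper triangular matrices and $\exp(\mathfrak{p})$ of positive symmetric ones, so $\exp(\mathfrak{p}) \cap AN = A$. What you need is the coset statement $\exp(X)K \in AN\cdot(eK)$, i.e. $G = ANK$, which gives $G = KAN$ by inversion together with $AN = NA$. But note that simple transitivity of $AN$ on $G/K$, which you invoke for this, is essentially the statement being proved, so as written there is a circle to be broken. The standard non-circular route (Knapp, Theorem 6.46) first establishes the decomposition for $\Ad(G) < \GL(\g)$ by intersecting with the global Gram--Schmidt (QR) decomposition of $\GL(\g)$ written in the basis above, and then lifts it to $G$; incidentally, this matrix-level picture, with $K$, $A$, $N$ realized as intersections with the orthogonal, positive-diagonal and unipotent upper-triangular groups, is exactly what the paper transfers to real closed fields in Theorem \ref{declin}, so repairing your proof along these lines would also align it with how the Fact is actually used.
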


%%%%%%%%%%

We recall now briefly some theory of Lie algebras for definable groups developed by Peterzil,
Pillay and Starchenko:

\begin{rem} (see \cite{pps1} and \cite{pps02}). \label{linliealg}
If $G$ is a definable group in an \ominimal expansion of a real closed field \Rs, denoted by $a(g) \colon G \to G$ the conjugation map given by $x \mapsto gxg^{-1}$, 
we can consider: $(1)$ $T_eG$, the tangent space of $G$ in the identity $e$, $(2)$ the \textbf{adjoint map}:  

\begin{align*}
\Ad \colon &G\ \longrightarrow\ \GL(T_eG)\\
&g\quad \mapsto\quad d_e(a(g))
\end{align*}

\vs \noindent
and $(3)$ the map $\ad \colon T_eG  \to \End(T_eG)$, $\ad = d_e(\Ad)$, where $d_e$ denotes the differential at the identity. 

The vector space $T_eG$ with the binary operation given by $[\xi, \zeta] = \ad(\xi)(\zeta)$ is a Lie algebra, it is definable, and  
it is called the \textbf{Lie algebra of $G$}.  

If we denote it by $\g = \Lie(G)$, then $\End_{\Rs}(\g)$, $\Aut_{\Rs}(\g)$ and the maps $\Ad \colon G \to \Aut_{\Rs}(\g)$, $\ad \colon \g \to \End_{\Rs}(\g)$ are all definable. 
Fixing a basis of $\g$, we may assume that $\g = \Rs^m$ and $\Ad \colon G \to \GL_m(\Rs)$, for some $m \in \N$.  
 
In particular, 
when $G = \GL_n(\Rs)$, we can identify $\Lie(G)$ with $M_n(\Rs)$, the set of all matrices $n \times n$, with bracket $[A, B] = AB - BA = \ad(A)(B)$, for every $A, B \in M_n(\Rs)$. With this convention
the adjoint map $\Ad \colon \GL_n(\Rs) \to \GL_{n^2}(\Rs)$ is given by $\Ad(A)(X) = A^{-1}XA$. 
\end{rem}

%%%%%%%%%%%%%%%%%%%%%
\subsection{Definably simple groups} Peterzil, Pillay and Starchenko deeply analyse definably simple groups
in \cite{pps1, pps2, pps02}.  
The following theorem (which builds on their papers) clarifies the structure
of a definably simple group in terms of definably compact and definable torsion-free subgroups, and it will be useful later on to understand the structure of any definable group.

\begin{nota}
Let $m \in \N$.

\begin{enumerate}
\item[-] $O_{m}(\Rs) = \{[x_{ij}] \in GL_m(\Rs) : [x_{ij}][x_{ji}] = I\}$ is the orthogonal group,

\vspace{0.1cm}
\item[-]
$T_m^+(\Rs) = \{ [x_{ij}] \in GL_m(\Rs) : x_{ij} = 0\ \forall \, i < j \mbox{ and } x_{ii} > 0\ \forall \, i\}$ is the group of upper triangular matrices with positive elements on the diagonal,

\vspace{0.1cm}
\item[-]
$UT_{m}(\Rs) = \{ [x_{ij}] \in GL_m(\Rs) : x_{ij} = 0\ \forall \, i < j \mbox{ and } x_{ii} = 1\ \forall \, i\}$ is the group of unipotent upper triangular matrices,

\vspace{0.1cm}
\item[-]
$D^+_{m}(\Rs) = \{[x_{ij}] \in GL_m(\Rs) : x_{ij} = 0\ \forall \, i \neq j,\ x_{ii} > 0\ \forall \, i\}$ is the group of diagonal matrices with positive elements on the diagonal.

\end{enumerate}

\end{nota}
 
%%%%%%%%%%%%%%%%%%%%%%%%%%%%%%%%%%%%%%%%%%%%%%%%%%%%%%%%%%%%%%%%%%%%%%%%%%%%%%%%%%%%%%%%%%%%%%%%%%%%%%%%%%%%%%%
\vs
\begin{theo}\label{declin}
Let $G$ be a definably simple group. Then there is a definable real closed field \Rs\ and some $m \in \N$ such that $G$ is definably isomorphic to a group $G_1 < \GL_{m}(\Rs)$ definable in \Rs, with the following properties:

\begin{enumerate}
\vspace{0.1cm}
\item[-] $G_1 = KH$, with $K = G_1 \cap O_{m}(\Rs)$ and $H = G_1 \cap T^+_{m}(\Rs)$,  
 
\vspace{0.1cm}
\item[-] $H = AN$, with $A = G_1 \cap D^+_{m}(\Rs)$ and $N = G_1 \cap UT_{m}(\Rs)$.

\end{enumerate}
\end{theo}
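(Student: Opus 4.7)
The plan is to reduce $G$ to a semialgebraic matrix group over a real closed field and then transfer the classical Iwasawa decomposition from the real Lie group case.

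First, since $G$ is definably simple, the center $Z(G)$ is a normal definable subgroup, hence either trivial or all of $G$; as $G$ is non-abelian, $Z(G) = \{e\}$. Thus Fact \ref{theosem} applies to $G$ itself and realises $G$ as a direct product $H_1 \times \cdots \times H_s$ of semialgebraic definably simple groups $H_i < \GL(n_i, \Rs_i)$. Definable simplicity of $G$ forces $s = 1$, so $G$ is definably isomorphic to a semialgebraic definably simple subgroup $G' < \GL_n(\Rs)$ for some definable real closed field $\Rs$ and some $n \in \N$.

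Next, consider the Lie algebra $\g = \Lie(G')$, which is definable over $\Rs$ by Remark \ref{linliealg}. I want to produce a basis of $\g$ realising the decomposition $\g = \mathfrak{k} \oplus \la \oplus \n$ of Fact \ref{iwasublie}. The classical construction uses a Cartan involution $\theta$, a maximal $\theta$-stable split abelian subalgebra $\la$, and the sum $\n$ of positive root spaces; each of these objects is characterised by finitely many polynomial conditions on the structure constants of $\g$ (existence of an involution with prescribed signature, simultaneous diagonalisability of $\ad(\la)$, positivity of eigenvalues on $\n$). Hence the existence of a basis $\{X_i\}$ of $\g$ in which the matrices of $\ad(\mathfrak{k})$, $\ad(\la)$, $\ad(\n)$ are respectively skew-symmetric, diagonal with real entries, and strictly upper-triangular is a first-order statement in the theory of $\Rs$ with parameters naming $\g$ and its bracket. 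Because all real closed fields are elementarily equivalent, Tarski--Seidenberg transfer from the real case (where Fact \ref{iwasublie} holds) yields such a basis over $\Rs$. Set $m = \dim \g$ and take the adjoint embedding $\Ad \colon G' \to \GL_m(\Rs)$; since $Z(G') = \{e\}$, this is a definable isomorphism onto its image $G_1$.

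In this basis the connected definable subgroups $K, A, N$ of $G'$ with Lie algebras $\mathfrak{k}, \la, \n$ are mapped by $\Ad$ into $O_m(\Rs)$, $D_m^+(\Rs)$ and $UT_m(\Rs)$ respectively, because the exponential of a skew-symmetric matrix is orthogonal, the exponential of a real diagonal matrix is positive diagonal, and the exponential of a strictly upper-triangular matrix is unipotent upper-triangular (all identities valid over $\Rs$ by transfer, since they reduce to polynomial identities on finite truncations of the exponential series applied to nilpotent or simultaneously diagonalisable matrices). The multiplication map $K \times A \times N \to G'$ of Fact \ref{iwasublie}(a) is a definable bijection by another transfer of the Iwasawa statement, so $G_1 = \Ad(K)\Ad(A)\Ad(N)$; setting $H = \Ad(A)\Ad(N)$ (a subgroup by \ref{iwasublie}(c)) gives $G_1 = KH$ with $H = AN$ after identifying $K, A, N$ with their images. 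Finally, to upgrade the inclusions $\Ad(K) \subseteq G_1 \cap O_m(\Rs)$, etc., to equalities, use uniqueness of the $KAN$-factorisation (also transferred): an element of $G_1$ lying in $O_m(\Rs)$ must have trivial $A$- and $N$-components, so it belongs to $\Ad(K)$, and analogously for the other two intersections.

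The main obstacle is justifying the Lie-algebra Iwasawa decomposition over an arbitrary real closed field rather than only over $\R$. One must isolate the piece of the real construction (Cartan involution, root space decomposition, positivity of roots) that can be cast as a first-order scheme in the structure constants of $\g$, so that Tarski--Seidenberg transfer applies; once this is granted, all subsequent steps, including the matrix normal forms and the uniqueness that pins down the equalities $K = G_1 \cap O_m(\Rs)$, $A = G_1 \cap D_m^+(\Rs)$, $N = G_1 \cap UT_m(\Rs)$, follow routinely.
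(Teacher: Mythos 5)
Your overall strategy (reduce to a semialgebraic linear group over a definable real closed field and transfer the Iwasawa decomposition from $\R$) is the same as the paper's, but two of your transfer steps are invalid as written, and they are precisely where the real work lies. First, you transfer the existence of an Iwasawa basis for $\g$ as ``a first-order statement with parameters naming $\g$ and its bracket'', invoking elementary equivalence of real closed fields. Elementary equivalence transfers \emph{sentences}, not formulas with parameters: the structure constants of $\g$ live in $\Rs$ and have no counterpart in $\R$, so there is nothing in $\R$ to which Fact \ref{iwasublie} can be applied. This could be repaired by universally quantifying over all structure constants defining a simple Lie algebra of the given dimension, but the paper repairs it differently and for a reason: following the proof of Theorem 5.1 in \cite{pps02}, it writes a formula asserting that there are finitely many simple subalgebras $\g_1, \dots, \g_r$ of $M_n(\Rs)$, defined over the real algebraic numbers, such that every simple subalgebra is isomorphic to one of them and each admits an Iwasawa-type decomposition. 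Anchoring everything over $\R_{alg}$ means that the group $G_1 = \Aut(\g_1)^0$ is defined by one and the same formula in $\R$ and in $\Rs$; this is what legitimizes the final group-level transfer. Your version never secures such common parameters, so the concluding transfer of the factorization of $G_1$ suffers from exactly the same parameter problem as the Lie-algebra step.

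Second, your construction of $K$, $A$, $N$ over $\Rs$ as ``connected definable subgroups with Lie algebras $\mathfrak{k}, \la, \n$'', mapped into $O_m$, $D^+_m$, $UT_m$ via the exponential, does not make sense over a general real closed field. There is no exponential map definable in $\Rs$ (a pure real closed field is polynomially bounded), and $\exp$ of a diagonal or skew-symmetric matrix is genuinely transcendental; only the nilpotent case reduces to a finite sum, so your claim that these identities ``reduce to polynomial identities on finite truncations of the exponential series'' fails for $\mathfrak{k}$ and $\la$. More fundamentally, there is no general theorem allowing you to integrate a Lie subalgebra to a definable subgroup over $\Rs$. The paper avoids this entirely: the analytic subgroups $K$, $A$, $N$ are constructed only over $\R$, where genuine Lie theory applies, and are then identified with the purely algebraic intersections $G_1(\R) \cap O_m(\R)$, $G_1(\R) \cap D^+_m(\R)$, $G_1(\R) \cap UT_m(\R)$; what is transferred to $\Rs$ is only the first-order sentence (over $\R_{alg}$) that every element of $G_1$ factors uniquely as a product of elements of these three intersections. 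Over $\Rs$ the subgroups are then \emph{defined} as those intersections; nothing is ever exponentiated on the $\Rs$ side.
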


\begin{proof}
By \cite[4.1]{pps1}, there is a definable real closed field $\Rs$,  
such that we can suppose $G$ definable in $\Rs$  
and contained in $\GL_{n}(\Rs)$, for some $n \in \N$. 

Let $\g$ be the Lie algebra of $G$. By Theorem 2.36 of \cite{pps1}, \g\ is a simple Lie algebra over \Rs. As noticed in the proof of Theorem 5.1 in \cite{pps02}, there is a first order formula which says that there are  finitely many simple Lie subalgebras $\g_1, \dots, \g_r$ of $M_n(\Rs)$, such that any simple subalgebra of $M_n(\Rs)$ is isomorphic to one of the $\g_i$ (we suppose to know $r$). In addition we can require (in the same formula) that for every $i = 1, \dots, r$ there are subspaces $\mathfrak{k_i}, \la_i, \n_i$ of $\g_i$, with $\g_i = \mathfrak{k_i} \oplus \la_i \oplus \n_i$, such that the matrices representing $\ad(\g_i)$ have the following
properties:

\begin{enumerate}
\item[-] the matrices of $\ad(\mathfrak{k_i})$ are skew-symmetric,

\vspace{0.1cm}
\item[-] the matrices of $\ad(\la_i)$ are diagonal,

\vspace{0.1cm}
\item[-] the matrices of $\ad(\n_i)$ are upper triangular with $0$'s on the diagonal.
\end{enumerate}

All these properties are first order (see \ref{linliealg}). By the Iwasawa decomposition of semisimple Lie algebras (\ref{iwasublie}), this formula is true in $\R$ (with an abuse of notation we denote by $\R$ both
the set and the structure of ordered field $\langle \R, <, +, \cdot \rangle$ on it) 
and therefore it is true in \Rs\ as well. Hence
\g\ is isomorphic to a Lie algebra $\g_i \in \{\g_1, \dots, \g_r\}$ %defined over $\Rs_{alg}$ 
with the above mentioned properties. Say $\g_i = \g_1$.

By the proof of Theorem 2.37 in \cite{pps1}, $G$ is definably isomorphic to $\Aut_{\Rs}(\g)^0$, the definably connected component of the identity in $\Aut_{\Rs}(\g)$. Therefore $G$ is definably isomorphic also to $\Aut_{\Rs}(\g_1)^0 = G_1(\Rs)$, a semialgebraic linear group defined over $\Rs_{alg}$. So it makes sense to consider the group $G_1(\R)$ defined in $\R$
by the same formula over $\R_{alg}$ defining $G_1(\Rs)$ in $\Rs$.   
It is a simple Lie group equal to $\Aut_{\R}(\g_1)^0$.  
If $K$, $A$, $N$ are connected subgroups of $G_1(\R)$ corresponding to the Lie subalgebras $\mathfrak{k_1}$, $\la_1$ and $\n_1$, then by \ref{iwasublie} $AN = NA$ and $G_1(\R) = KAN$. Moreover if $m = n^2$,

\begin{enumerate}
\item[-] the matrices of $\ad(\mathfrak{k_1})$ are skew-symmetric $ \Rightarrow\ K \subseteq O_{m}(\R)$, 

\vspace{0.1cm}
\item[-] the matrices of $\ad(\la_1)$ are diagonal $ \Rightarrow\ A \subseteq D^+_{m}(\R)$,

\vspace{0.1cm}
\item[-] the matrices of $\ad(\n_1)$ are upper triangular with $0$'s on the diagonal $ \Rightarrow\ N \subseteq UT_{m}(\R)$.
\end{enumerate}

\vs \noindent
Since $D^+_{m}(\R) \cap UT_{m}(\R) = \{I\} = O_{m}(\R) \cap T^+_{m}(\R)$, we get that

\vspace{0.1cm}
\begin{enumerate}
\item[-] $K = G_1(\R) \cap O_{m}(\R)$,

\vspace{0.1cm}
\item[-] $A = G_1(\R) \cap D^+_{m}(\R)$,

\vspace{0.1cm}
\item[-] $N = G_1(\R) \cap UT_{m}(\R)$.
\end{enumerate}

Thus the first order formula in the language of ordered fields which says that every element $g \in G_1$ can be written in a unique way as a product $g = kan$, with $k \in G_1 \cap O_{m}$, $a \in G_1 \cap D^+_{m}$, $n \in G_1 \cap UT_{m}$ and $an = na\ \forall\, a \in G_1 \cap D^+_{m}, \forall\, n \in G_1 \cap UT_{m}$ is true in $\R$  
and therefore it is true in $\Rs$  
as well.
\end{proof}

%%%%%%%%%%%%%%%%%%%%%%
\begin{cor} \label{conjmax}
Any definably simple group has maximal definably compact subgroups, all definably connected and conjugate.
Moreover every such a maximal definably compact subgroup $K$ has a definable torsion-free complement $H$.
\end{cor}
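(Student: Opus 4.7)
The plan is to read everything off Theorem~\ref{declin}: that theorem provides a definable isomorphism identifying $G$ with a semialgebraic group $G_1 < \GL_{m}(\Rs)$ together with a decomposition $G_1 = KH$, where $K = G_1 \cap O_{m}(\Rs)$ and $H = G_1 \cap T_{m}^+(\Rs)$. The first step is to check that this pair $(K,H)$ satisfies the hypotheses of Remark~\ref{duacomptf}, whereupon $K$ becomes a maximal definably compact subgroup of $G_1$ and $H$ becomes its maximal definable torsion-free complement; the remaining work is then to promote this to all maximal definably compact subgroups.

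I would verify the three required properties directly. $K$ is definably compact because $O_m(\Rs)$ is a closed and bounded definable set. $H$ is torsion-free: an element $x = [x_{ij}] \in T_m^+(\Rs)$ of finite order $n$ has diagonal entries $x_{ii} > 0$ with $x_{ii}^n = 1$, forcing $x_{ii} = 1$, so $x = I + Y$ is unipotent with $Y$ strictly upper triangular (hence nilpotent), and expanding $(I+Y)^n = I$ in characteristic zero forces $Y = 0$. Finally $K \cap H = \{e\}$: an orthogonal upper triangular matrix satisfies $x^{-1} = x^T$, which is simultaneously upper and lower triangular, so $x$ is diagonal with $x_{ii}^2 = 1$ and $x_{ii} > 0$, hence $x = I$. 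Combined with $G_1 = KH$, Remark~\ref{duacomptf} already yields the existence of one maximal definably compact subgroup of $G_1$ with a definable torsion-free complement.

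To upgrade this to the statement about \emph{all} maximal definably compact subgroups of $G$ being conjugate and definably connected (from which the existence of a complement for each follows by conjugating $H$), my plan is a transfer argument. The group $G_1$ is semialgebraic over $\Rs_{alg}$, so the ``twin'' Lie group $G_1(\R) = \Aut_{\R}(\g_1)^0$ is defined by the same formula over $\R_{alg}$, and classically (Fact~\ref{iwasublie}) $G_1(\R) \cap O_m(\R)$ is a connected maximal compact subgroup of $G_1(\R)$, unique up to conjugacy. For any formula $\phi(x,\bar t)$ in the language of ordered fields, the sentence stating that whenever $\phi(\cdot,\bar t)$ defines a closed bounded subgroup of $G_1$ there exists $g$ with $\phi(\Rs,\bar t) \subseteq gKg^{-1}$ is first-order, holds in $\R$, and transfers to $\Rs$ by elementary equivalence of real closed fields. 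Applying this to a formula defining an arbitrary maximal definably compact subgroup $K_1$ and invoking its maximality upgrades the inclusion to $K_1 = gKg^{-1}$; definable connectedness of $K_1$ follows from that of $K$, which itself transfers from the connectedness of the classical Iwasawa $K$ in $G_1(\R)$.

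The main obstacle is precisely the first-order formulation of the transfer step: one cannot quantify uniformly over ``maximal definably compact subgroups'' inside the field, so the conjugacy must be established one definable family at a time, using that every specific maximal definably compact subgroup is captured by some fixed formula with parameters and that definable compactness coincides with ``closed and bounded'' in the semialgebraic setting. A secondary subtlety is aligning the classical conjugacy theorem, phrased for compact subgroups of a Lie group, with its semialgebraic counterpart over $\R$, but this identification is routine once the Iwasawa decomposition has been put in the form used in Theorem~\ref{declin}.
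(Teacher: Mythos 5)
Your construction of the pair $(K,H)$ is sound: the verifications that $K = G_1 \cap O_m(\Rs)$ is definably compact, that $H = G_1 \cap T_m^+(\Rs)$ is torsion-free, and that $K \cap H = \{e\}$ are all correct (the paper leaves these to the reader via \ref{duacomptf}), and your transfer schema --- quantify over the parameters of a fixed formula rather than over subgroups --- is exactly the mechanism the paper's proof uses. But there is a genuine gap at the step where you apply the transferred sentence ``to a formula defining an arbitrary maximal definably compact subgroup $K_1$''. Your sentence is first-order in the language of ordered fields, so it only speaks about subgroups of $G_1$ that are \emph{semialgebraic over} $\Rs$. An arbitrary definably compact subgroup of $G$ (equivalently, of $G_1$) is a priori only definable in the ambient \ominimal structure $\M$, which may properly expand the field $\Rs$; such a subgroup need not be defined by any formula in the language of ordered fields, and an $\M$-formula cannot be interpreted in $\R$, so completeness of RCF says nothing about it. Your closing remark that ``every specific maximal definably compact subgroup is captured by some fixed formula with parameters'' is precisely the assertion that needs proof, and it is not routine.

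The paper closes this gap by invoking a nontrivial theorem of Peterzil--Pillay--Starchenko (\cite[4.6]{pps02}): every definably compact subgroup of $\GL_m(\Rs)$ is semialgebraic. Granting this, an arbitrary definably compact $C < G_1$ is defined by an ordered-field formula $C(y)$, and the sentence
\[
\forall y\ [\,C(y) \mbox{ is a closed and bounded subgroup of } G_1\ \Rightarrow\ \exists x \in G_1\ (C(y) \subseteq K^x)\,]
\]
transfers from $\R$ to $\Rs$, showing that \emph{every} definably compact subgroup of $G_1$ --- not just the semialgebraic ones --- lies in a conjugate of $K$; maximality then gives equality, and conjugating $H$ gives each such subgroup its torsion-free complement. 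As written, your argument establishes conjugacy only within the class of semialgebraic maximal definably compact subgroups. Adding the citation of \cite[4.6]{pps02} (or an independent proof of that semialgebraicity statement) repairs the proof, after which your remaining steps, including deducing definable connectedness of the conjugates from that of $K$ (the paper instead deduces connectedness of $K$ from that of $G$ and $H$, a minor variation), go through essentially as in the paper.
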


\begin{proof}
Let $G_1 < \GL_m(\Rs)$ be a definable group definably isomorphic to $G$, as in Theorem \ref{declin}.
As we noticed in \ref{duacomptf}, $K = G_1 \cap O_m(\Rs)$ is a maximal definably compact subgroup of $G_1$.

If $C$ is any definably compact subgroup of $G_1$, we want to show that $C$ is contained in a conjugate of $K$.
Since every definably compact subgroup of $\GL_m(\Rs)$ is semialgebraic (\cite[4.6]{pps02}),
the fact that $C$ is a definably compact (i.e. closed and bounded by \cite{ps}) definable subgroup of $\GL_m(\Rs)$
can be expressed by a first order formula in the language of ordered fields. 
Suppose that now $C = C(y)$ is defined over a set of parameters $y = (y_1, \dots, y_n)$. 
Since every compact (again closed and bounded) subgroup of $G_1(\R)$ is contained in a conjugate of $K(\R) = G_1(\R) \cap O_m(\R)$, the following formula  
\[
\forall y\ [C(y) \mbox{ is a closed and bounded subgroup of } G_1 \Rightarrow \exists x \in G_1\  (C(y) \subset K^x)] 
\]

\vs \noindent
is true in $\R$,  
so it is true in $\Rs$ as well.

Because $G$ and $H$ are definably connected, also $K$ (and every conjugate of it) is definably connected. 

Note that $G = K^xH^x$ for every $x \in G$, and then $H^x$ is a definable torsion-free complement of $K^x$.
\end{proof}

%%%%%%%%%%%%%%%%%%
\subsection{Groups with definably compact solvable radical} Using results of section $3$ and $4$ together with \ref{declin} and \ref{conjmax}, we can prove
an analogous definable decomposition for definably connected groups with definably compact solvable radical:

\begin{prop}\label{maxcomprad}
Let $G$ be a definably connected group and let $R$ be its solvable radical. If $R$ is definably compact then 

\begin{enumerate}
\vspace{0.1cm}
\item $G$ has maximal definably compact subgroups and they are all definably connected and conjugate to each other.

\vspace{0.1cm}
\item Every such a maximal definably compact subgroup $K$ has a definable torsion-free complement $H < G$ $($which is a maximal definable torsion-free subgroup of $G)$ and $H = AN$, where $A$ is abelian and $N$ is nilpotent, and both are definable subgroups. 
\end{enumerate}
\end{prop}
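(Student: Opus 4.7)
The plan is to reduce to the semisimple quotient $\bar G := G/R$, build an Iwasawa-like decomposition there by combining the structure theorem \ref{theosem} with the definably simple case \ref{conjmax}, and lift back to $G$ through the definably compact extension by $R$ via \ref{comp_tor}. If $G$ is solvable then $G = R$ is already definably compact, so $K = G$ and $H = A = N = \{e\}$ work; assume henceforth $G$ is non-solvable, so by \ref{solvrad} the quotient $\bar G$ is semisimple and definably connected.

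To decompose $\bar G$, first note that $Z := Z(\bar G)$ is finite: otherwise $Z^0$ would be an infinite abelian normal definably connected subgroup of $\bar G$, contradicting semisimplicity. By \ref{theosem}, $\bar G/Z \cong H_1 \times \cdots \times H_s$ with each $H_i$ definably simple, and applying \ref{declin} and \ref{conjmax} to each $H_i$ gives $H_i = \tilde K_i \tilde A_i \tilde N_i$ with $\tilde K_i$ maximal definably compact, $\tilde A_i$ abelian torsion-free, and $\tilde N_i$ nilpotent torsion-free. Identifying $\bar G/Z$ with the product, set $K' := \tilde K_1 \times \cdots \times \tilde K_s$, $A' := \tilde A_1 \times \cdots \times \tilde A_s$, $N' := \tilde N_1 \times \cdots \times \tilde N_s$ and $H' := A'N'$, so $\bar G/Z = K'H'$ with $K' \cap H' = \{e\}$, $K'$ definably compact definably connected, and $H'$ torsion-free. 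To lift through $Z$, apply \ref{comp_tor} to $\rho^{-1}(H')$ (a definable extension of the torsion-free $H'$ by the finite, hence definably compact, $Z$), obtaining $\rho^{-1}(H') = Z \times \bar H$ with $\bar H$ torsion-free projecting isomorphically onto $H'$. Let $\bar A, \bar N \subseteq \bar H$ correspond via this isomorphism to $A', N'$, giving $\bar H = \bar A \bar N$ with $\bar A$ abelian and $\bar N$ nilpotent. Take $\bar K$ to be the definably connected component of $\rho^{-1}(K')$: since $Z$ is finite and central it is absorbed into the connected component containing $e$ (as in the Lie case), so $\bar G = \bar K \bar H$ and $\bar K \cap \bar H \subseteq Z \cap \bar H = \{e\}$.

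Next I lift through $R$. By \ref{compinsolvab} the definably compact solvable radical $R$ is abelian. The preimage $\pi^{-1}(\bar H)$ under $\pi \colon G \to \bar G$ is a definable extension of the torsion-free $\bar H$ by the definably compact $R$, so by \ref{comp_tor} it splits as $R \times H$ with $H$ torsion-free, projecting isomorphically onto $\bar H$; let $A, N \subseteq H$ correspond to $\bar A, \bar N$, giving $H = AN$ with $A$ abelian and $N$ nilpotent. Set $K := \pi^{-1}(\bar K)$: this is a definable extension of the definably compact $\bar K$ by the definably compact $R$, and is itself definably compact (by lifting any definable curve in $\bar K$ through a local definable section of $\pi$ and appealing to definable compactness of the $R$-fibre). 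Since $R \subseteq K$, the identity $\bar G = \bar K \bar H$ pulls back to $G = KH$, and $K \cap H = \{e\}$ follows from $\bar K \cap \bar H = \{e\}$ together with $R \cap H = \{e\}$. A dimension count $\dim K + \dim H = \dim R + \dim \bar G = \dim G$ combined with trivial intersection forces maximality of both $K$ (as definably compact) and $H$ (as definable torsion-free); definable connectedness of $K$ follows from that of $\bar K$ and $R$, and conjugacy of all maximal definably compact subgroups follows by tracing conjugacy through \ref{conjmax}, $\rho$ and $\pi$.

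The main obstacle is the two lifting steps: through $Z$ one must verify that the finite central $Z$ is contained in the chosen definably connected maximal definably compact subgroup $\bar K$ of $\bar G$ (so that $\bar G = \bar K \bar H$ holds on the nose, not merely modulo $Z$); and through $R$ one must argue that the extension $K = \pi^{-1}(\bar K)$ of definably compact groups is itself definably compact, an o-minimal analogue of the classical Lie-theoretic fact that compact extensions of compact groups are compact, here requiring a definable local section of $\pi$.
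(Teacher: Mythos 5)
Your overall route (decompose the semisimple quotient via \ref{theosem} and the definably simple case \ref{declin}, \ref{conjmax}, then lift back through definably compact kernels using \ref{comp_tor}) is essentially the paper's, but your treatment of the finite center $Z = Z(G/R)$ leaves a genuine, load-bearing gap. You set $\bar K := (\rho^{-1}(K'))^0$ and assert that $Z$ is ``absorbed into the connected component containing $e$ (as in the Lie case)''. That assertion is exactly the claim that the preimage $\rho^{-1}(K')$ is definably connected (equivalently $Z \subseteq \bar K$), and nothing in your argument proves it; you flag it as the main obstacle but never discharge it. It is not a cosmetic point: if $Z \not\subseteq \bar K$, then $\bar K\bar H$ is a proper subset of $\bar G$ (you only get $\bar G = Z\bar K\bar H$), the lift $K = \pi^{-1}(\bar K)$ is not maximal definably compact (it sits properly inside the definably compact subgroup $\pi^{-1}(\rho^{-1}(K'))$), and your conjugacy argument --- pushing a definably compact $C$ to the simple factors and pulling back --- only places $C$ inside a conjugate of the \emph{full} preimage $\pi^{-1}(\rho^{-1}(K'))$, not of your $K$. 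So the decomposition, maximality and conjugacy all hinge on the unproved absorption claim. In the Lie case that claim is proved by covering-space theory (the inclusion $K' \hookrightarrow \bar G/Z$ is a homotopy equivalence, so the finite cover $\bar G \to \bar G/Z$ restricts to a connected cover of $K'$); no such argument is available off the shelf in the o-minimal setting, and proving the o-minimal analogue is essentially as hard as the connectedness assertion in the proposition itself.

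The paper sidesteps the issue with a small but crucial trick: it replaces $R$ by the preimage in $G$ of $Z(G/R)$, which is still a normal solvable definably compact subgroup --- the only three properties of $R$ the proof ever uses --- so that the quotient becomes a \emph{centerless} semisimple group, hence a genuine direct product of definably simple groups, and there is no center left to absorb. Then $K$ is simply the full preimage of $\bar K = K_1 \times \dots \times K_s$; the identities $G = KH$ and $K \cap H = \{e\}$, maximality (via \ref{duacomptf}) and the conjugacy statement hold unconditionally, and definable connectedness of $K$ is observed only at the very end (from $G = KH$ with $G$ and $H$ definably connected), where nothing else depends on it. You could repair your proof the same way: either enlarge the radical as the paper does, or keep $\bar K := \rho^{-1}(K')$ without passing to connected components, establish the product, maximality and conjugacy first, and deduce connectedness last. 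Your other flagged obstacle --- that $\pi^{-1}(\bar K)$, a definable extension of a definably compact group by a definably compact group, is definably compact --- is not a real divergence: the paper asserts exactly this, and your curve-lifting sketch is the standard justification.
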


\begin{proof}
We suppose that the definably connected semisimple group $G/R$ is centerless. Even if it is not, the center of $G/R$ is finite, so its preimage in $G$ is a normal solvable definably compact subgroup, which is the only assumption we will make on $R$.

By \cite[4.1]{pps1}, $G/R$ is a direct product of definably simple groups $G_i$, $i = 1, \dots, s$. By the definably simple case (\ref{declin}), for every $i = 1, \dots, s$, $G_i = K_iH_i$, where $K_i$ is definably compact and $H_i$ is torsion-free, and every definably compact subgroup of $G_i$ is contained in a conjugate (in $G_i$) of $K_i$ (\ref{conjmax}). Let $\bar{K} = \{(k_1, \dots, k_s) : k_i \in K_i\}$ and $\bar{H} =\{(h_i, \ldots, h_s) : h_i \in H_i\}$ in $G/R$. Again by the simple case, for every $i = 1, \dots, s$, $H_i = A_iN_i$, with $A_i$ abelian and $N_i$ nilpotent definable subgroups. Thus $\bar{H} = \bar{A}\bar{N}$, with $\bar{A}$ the product of the $A_i$'s and $\bar{N}$ the product of the $N_i$'s.
Note that $\bar{K}$ is definably compact, $\bar{H}$ is torsion-free and $G/R = \bar{K}\bar{H}$.

Let $\pi \colon G \to G/R$ be the canonical projection. Since $R$ is definably compact, $K = \pi^{-1}(\bar{K}) \subseteq G$ is definably compact. Define $\bar{H}_1 = \pi^{-1}(\bar{H}) \subseteq G$. 
The group $\bar{H}_1$ is a definable extension of the definable torsion-free group $\bar{H}$ by the solvable definably compact group $R$. Hence by \ref{comp_tor} the definable exact sequence

\[
1\ \longrightarrow\ R\ \stackrel{i}{\longrightarrow}\ \bar{H}_1\ \stackrel{\pi}{\longrightarrow}\ \bar{H}\ \longrightarrow\ 1  \\
\]

\vs \noindent
splits definably in a direct product $\bar{H}_1 = R \times H$, for some definable torsion-free subgroup $H < G$
definably isomorphic to $\bar{H}$. It is immediate that $G = KH$. 

As we noticed in \ref{duacomptf}, $K$ is a maximal definably compact subgroup of $G$ and $H$ is a maximal definable torsion-free subgroup of $G$. Therefore for every $g \in G$, $K^g$ is a maximal definably compact subgroup of $G = K^gH^g$. We want to show that they are the only ones,
which is equivalent to say that for every definably compact subgroup $C$ of $G$, there is some $g \in G$ such that $C \subset K^g$.  

For every $i = 1, \dots, s$, let $\pi_i \colon G \to G_i$ be the composition of the canonical projections, first on $G/R$ and then from $G/R$ to $G_i$. By the simple case (\ref{declin}) we know that $\pi_i(C) \subset K_i^{g_i}$ for some $g_i \in G_i$ and so $\pi(C) \subset \bar{K}^{\bar{g}}$ with $\bar{g} = (g_1, \dots, g_s) \in G/R$. Therefore $C \subset K^g$, for every $g \in \pi^{-1}(\bar{g})$.

Finally, because $G$ and $H$ are definably connected, also $K$ (and every conjugate of it) is definably connected. 
\end{proof}

%%%%%%%%%%%%%%%%%%%%%%%%
\begin{ex}
Let \Rs\ be a real closed field. Every element of the semisimple definable group $G = \SL_2(\Rs) = \{A \in \GL_2(\Rs) : \det A = 1 \}$  
can be written as a product

\begin{displaymath}
\left( \begin{array}{cc}
a & -b  \\
b & a   
\end{array} \right) 
\left( \begin{array}{cc}
c & 0  \\
0 & 1/c   
\end{array} \right) 
\left( \begin{array}{cc}
1 & d  \\
0 & 1   
\end{array} \right), 
\end{displaymath} 

\vs \noindent
for unique $a,\ b,\ c,\ d \in \Rs$ such that $a^2 + b^2 = 1$ and $c > 0$.

\begin{align*}
%\begin{displaymath}
\left(\ \begin{matrix} 
a & -b  \\
b & a   
\end{matrix}\ \right)\ \, & \in\  \SO_2(\Rs) = G \cap O_2(\Rs) = K, \\
%\end{displaymath}
%\begin{displaymath}
 \left(\ \begin{matrix}  
c & 0  \\
0 & 1/c   
\end{matrix}\ \right)\  & \in\  G \cap D^+_2(\Rs) = A,\\ 
%\end{displaymath}
%\begin{displaymath}
 \left(\  \begin{matrix} 
1 & d  \\
0 & 1   
\end{matrix}\  \right) \ \: \: & \in \: \:  G \cap  UT_2(\Rs) = N. \\
%\end{displaymath}
\end{align*}

%\begin{displaymath}
%\left( \begin{array}{cc}
%a & -b  \\
%b & a   
%\end{array} \right)\ \in \SO_2(\Rs) = G \cap O_2(\Rs) = K, \\
%\end{displaymath}

%\begin{displaymath}
%\left( \begin{array} {cc}
%c & 0  \\
%0 & 1/c   
%\end{array} \right)\ \in G \cap D^+_2(\Rs) = A, 
%\end{displaymath}
%
%\begin{displaymath}
%\left( \begin{array}{cc}
%1 & d  \\
%0 & 1   
%\end{array} \right)\ \in G \cap UT_2(\Rs) = N. \\
%\end{displaymath}
\end{ex}

%%%%%%%%%%%%%%%%%%%%%%%%%%%%%%%%%%%%%%%%%%%%%%%%%%%%%%%%%%%%%%%%%%%%%%%%%%%%%%%%%%%%%%%%%%%%%%%%%%%%%%%%%%%%%%%%%%%%%%%%%%%%%%%%%%%%%%%%%%%%%%%%%%%%%%%%%%%%%%%%%%%%%%%%%%%%%%%%%%%%%%%%%%%%%%%%%%%%%%%%%%%%%%%%%%%%%%%%%%%%%%%%%%%%%%%%%%%%%%%%%%%%%%%%%%%%%%%%%%%%%%%%%%%%%%%%%%%%%%%%%%%%%%%%%%%%%%%%%%%%%%%%%%%%%%%%%%%%%%%%%%%%%%%%%%

\vs
\section{The maximal quotient with definably compact solvable radical}

In this section we point out some properties of the projection $\pi \colon G \to G/N$ by the maximal normal definable torsion-free subgroup $N$ of a definable group $G$. 

First we show that the group $G/N$ is the maximal quotient of $G$ with definably compact solvable radical: 

\begin{lem}
Let $N$ be the maximal normal definable torsion-free subgroup of a definable group $G$. If $H$ is a normal definable subgroup of $G$ such that the solvable radical of $G/H$ is definably compact, then $N \subseteq H$.
\end{lem}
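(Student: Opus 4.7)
The plan is to push $N$ through the projection $\pi\colon G \to G/H$ and show that its image must be trivial.

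First I would set $\bar N := \pi(N) = NH/H$, which is a normal definable subgroup of $G/H$. Since $\bar N$ is a definable quotient of the torsion-free group $N$, Fact \ref{torfree}$(d)$ tells me $\bar N$ is torsion-free; by \ref{torfree}$(a)$--$(b)$ it is moreover definably connected and solvable. Being a normal solvable definably connected subgroup of $G/H$, it is therefore contained in the solvable radical $R'$ of $G/H$ by Fact \ref{solvrad}.

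Next, using the hypothesis that $R'$ is definably compact and the observation in Remark \ref{duacomptf} that definable subgroups of definably compact groups are again definably compact, I would deduce that $\bar N$ is itself a definably compact group. But $\bar N$ is torsion-free, and by \ref{compnoncomp}$(b)$ every infinite definably compact group has torsion, while a finite torsion-free group is trivial. Hence $\bar N = \{e\}$, which is exactly the statement $N \subseteq H$.

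The argument is essentially routine once one has Fact \ref{torfree} and the ``torsion vs.\ compactness'' dichotomy of \ref{compnoncomp}/\ref{duacomptf}; the only delicate point is to notice that the hypothesis is tailored so that $\pi(N)$ lands inside the solvable radical of $G/H$, rather than needing to control $G/H$ itself. So there is no real obstacle — the proof is a short chase through the definable-torsion-free toolkit already assembled in Sections 2 and 3.
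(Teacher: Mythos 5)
Your proof is correct and takes essentially the same route as the paper's: both push $N$ into the quotient, observe that $NH/H \cong N/(N \cap H)$ is torsion-free (\ref{torfree}$(d)$) and lands inside the definably compact solvable radical of $G/H$, and conclude it is trivial because a definably compact group has no nontrivial definable torsion-free subgroup (\ref{duacomptf}). The only cosmetic difference is that the paper obtains the containment in the radical from $N \subseteq R$ (the solvable radical of $G$), whereas you verify directly that $\pi(N)$ is normal, solvable and definably connected and invoke \ref{solvrad}; the substance is identical.
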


\begin{proof}
Since $N$ is contained in the solvable radical of $G$, it follows that $NH/H$ is contained in the definably compact solvable radical $\bar{R}$ of $G/H$. But $NH/H$ is definably isomorphic to $N/(N \cap H)$ which is torsion-free (\ref{torfree}$(d)$), so $NH = H$ and $N \subseteq H$.
\end{proof}
%%%%%%%
\begin{cor}
If $N$ is the maximal normal definable torsion-free subgroup of a definable group $G$, then $G/N$ is the maximal quotient of $G$ with definably compact solvable radical.
\end{cor}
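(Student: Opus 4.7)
The plan is to combine the lemma just proved with the fact that $G/N$ itself has definably compact solvable radical. The corollary amounts to two assertions: (i) the quotient $G/N$ has definably compact solvable radical, and (ii) any other quotient of $G$ with this property factors through $G/N$. Claim (ii) is immediate from the preceding lemma: if $G/H$ has definably compact solvable radical, then $N\subseteq H$, and hence the canonical projection $G\to G/H$ factors through $G\to G/N$, making $G/H$ a quotient of $G/N$.

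The nontrivial step is (i). Let $R$ denote the solvable radical of $G$. By Lemma \ref{radgsun}, $N\subseteq R$ and $R/N$ is the solvable radical of $G/N$. So I want to show that $R/N$ is definably compact. To do this, I would first verify that $N$ coincides with the maximal normal definable torsion-free subgroup of $R$: on the one hand $N$ is a normal definable torsion-free subgroup of $R$ (being contained in $R$ and normal in the larger group $G$); on the other hand, by Lemma \ref{tormaxnor} the maximal normal definable torsion-free subgroup of $R$ is also normal in $G$, hence contained in $N$ by the maximality of $N$. The two subgroups therefore agree.

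Having identified $N$ as the maximal normal definable torsion-free subgroup of the solvable group $R$, I can now apply Theorem \ref{solvquozcomp} to $R$, which says that $R/N$ is definably compact. Combined with the identification $R/N$ as the solvable radical of $G/N$ from Lemma \ref{radgsun}, this gives (i).

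I do not expect a real obstacle here: the corollary is essentially bookkeeping, packaging together the earlier lemma (for maximality) with the structural result \ref{solvquozcomp} (for existence). The only subtlety worth being careful about is the identification $N=N_R$, so that Theorem \ref{solvquozcomp} can be applied to $R$ with the correct torsion-free subgroup, and this is handled cleanly via Lemma \ref{tormaxnor}.
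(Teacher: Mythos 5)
Your proposal is correct and takes essentially the same route as the paper: Lemma \ref{radgsun} together with Theorem \ref{solvquozcomp} yields that the solvable radical $R/N$ of $G/N$ is definably compact, and maximality follows from the preceding lemma. The one point you spell out that the paper leaves implicit is the identification of $N$ with the maximal normal definable torsion-free subgroup of $R$ (via \ref{tormaxnor}), which is indeed what licenses applying \ref{solvquozcomp} to $R$.
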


\begin{proof}
The radical of $G/N$ is definably compact by \ref{radgsun} and \ref{solvquozcomp}. Maximality follows by our previous lemma. 
\end{proof}

Applying \ref{maxcomprad} to $G/N$, we get the following theorem:

%%%%%%%%%%%%%%%%%%%%%%

\begin{theo}\label{Iwagisuacca}
Let $G$ be a definably connected group and $N$ its maximal normal definable torsion-free subgroup. Then the definable group $G/N$ has maximal definably compact subgroups and they are all definably connected and conjugate to each other. Moreover, every such a maximal definably compact subgroup $K$ has a definable torsion-free complement $H$ in  $G/N$,
i.e. 

\[
G/N = KH \qquad \mbox{and} \qquad K \cap H = \{e\}.
\]

\end{theo}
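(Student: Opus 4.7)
The plan is to apply Proposition \ref{maxcomprad} directly to the quotient group $G/N$, so the whole task reduces to checking that this quotient satisfies the hypothesis of \ref{maxcomprad}, namely that it is a definably connected group whose solvable radical is definably compact.

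First, I would observe that $G/N$ is definably connected because it is the continuous image of the definably connected group $G$ under the canonical projection $\pi\colon G\to G/N$. So the only non-trivial thing to verify is that the solvable radical of $G/N$ is definably compact. Lemma \ref{radgsun} already identifies this radical: if $R$ denotes the solvable radical of $G$, then $R/N$ is the solvable radical of $G/N$. So the real content is showing that $R/N$ is definably compact.

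For this, I would apply Theorem \ref{solvquozcomp} to the solvable definable group $R$. That theorem tells us that $R/M$ is definably compact, where $M$ is the maximal normal definable torsion-free subgroup of $R$. The missing step is the identification $M=N$. On the one hand, $N\subseteq R$ by \ref{radgsun}, and $N$ is torsion-free and normal in $R$, so by the maximality of $M$ inside $R$ we get $N\subseteq M$. On the other hand, since $R\lhd G$, Lemma \ref{tormaxnor} promotes $M$ to a normal definable torsion-free subgroup of $G$, and then the maximality of $N$ in $G$ forces $M\subseteq N$. Hence $M=N$ and $R/N$ is definably compact.

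Once the hypothesis is verified, \ref{maxcomprad} applied to $G/N$ immediately yields all conclusions: the existence of maximal definably compact subgroups of $G/N$, the fact that they are all definably connected and conjugate, and the existence of a definable torsion-free complement $H$ for any such $K$, giving $G/N=KH$ with $K\cap H=\{e\}$. There is no real obstacle here beyond the bookkeeping in identifying $M$ with $N$; all the hard work (the existence and properties of $N$, the solvable case $R/N$, and the Iwasawa-like decomposition when the radical is definably compact) has been carried out in the previous sections.
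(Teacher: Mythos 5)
Your proposal is correct and follows essentially the same route as the paper: the paper also deduces the theorem by noting (via \ref{radgsun} and \ref{solvquozcomp}) that the solvable radical $R/N$ of $G/N$ is definably compact and then applying \ref{maxcomprad} to $G/N$. Your explicit verification that the maximal normal definable torsion-free subgroup of $R$ coincides with $N$ (using \ref{tormaxnor} and the maximality of $N$) is a detail the paper leaves implicit, and it is exactly the right justification.
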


%%%%%%%%%%%%%%%%
\vs
\begin{cor}\label{maxtorfreenon}
Every definable group has maximal definable torsion-free subgroups. %(all conjugate?).
\end{cor}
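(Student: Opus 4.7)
The plan is to reduce to the definably connected case and then lift the maximal torsion-free subgroup of the quotient $G/N$ given by Theorem \ref{Iwagisuacca} back to $G$ via the projection.

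First I would observe that since every definable torsion-free subgroup is definably connected (\ref{torfree}$(a)$), any definable torsion-free subgroup of $G$ is contained in $G^0$. Hence the maximal definable torsion-free subgroups of $G$ coincide with those of $G^0$, and we may assume $G$ is definably connected.

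Let $N$ be the maximal normal definable torsion-free subgroup of $G$, given by Theorem \ref{unifree}, and let $\pi \colon G \to G/N$ be the canonical projection. By Theorem \ref{Iwagisuacca} we can write $G/N = KH$ where $K$ is a maximal definably compact subgroup and $H$ is a definable torsion-free complement. By Remark \ref{duacomptf}, $H$ is then a maximal definable torsion-free subgroup of $G/N$. My candidate for a maximal definable torsion-free subgroup of $G$ is then $H^* := \pi^{-1}(H)$, which is clearly definable.

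I would then verify the two required properties. For torsion-freeness: if $g \in H^*$ has finite order, then $\pi(g) \in H$ has finite order, hence $\pi(g)=e$ since $H$ is torsion-free; thus $g \in N$, and $N$ being torsion-free forces $g = e$. For maximality: suppose $H' \supseteq H^*$ is a definable torsion-free subgroup of $G$. Then $\pi(H') = H'/(H'\cap N)$ is a definable quotient of $H'$, hence torsion-free by \ref{torfree}$(d)$, and it contains $\pi(H^*) = H$. By maximality of $H$ in $G/N$, we conclude $\pi(H') = H$, and thus $H' \subseteq \pi^{-1}(H) = H^*$, giving $H' = H^*$. No serious obstacle is expected: the only subtle point is the torsion-freeness of $\pi(H')$, which is exactly what \ref{torfree}$(d)$ provides.
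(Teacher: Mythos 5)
Your proposal is correct and takes essentially the same route as the paper: both pass to the maximal normal definable torsion-free subgroup $N$, invoke Theorem \ref{Iwagisuacca} to write $G^0/N = KH$, and exhibit $\pi^{-1}(H)$ as the desired maximal definable torsion-free subgroup. The only difference is cosmetic, namely the maximality check: the paper argues by dimension (a definable torsion-free $\bar{H} \supsetneq \pi^{-1}(H)$ would force $\dim \pi(\bar{H}) > \dim H$ and hence an infinite intersection $K \cap \pi(\bar{H})$), while you transfer the maximality of $H$ inside $G/N$ back through $\pi$; both checks ultimately rest on the same fact \ref{duacomptf}.
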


\begin{proof}
Let $N$ be the maximal normal definable torsion-free subgroup of a definable group $G$ and let $\pi \colon G \to G/N$ be the canonical projection. By the theorem above
$G^0/N = KH$, where $K$ is a definably compact subgroup and $H$ is a definable torsion-free subgroup.  
We claim that $\pi^{-1}(H)$ is a maximal definable torsion-free subgroup of $G$. 

If not, let $\bar{H} \supsetneq \pi^{-1}(H)$ be a definable torsion-free subgroup of $G$. Since definable torsion-free groups are definably connected, it follows that $\dim \bar{H} > \dim \pi^{-1}(H) = \dim N + \dim H$. So by dimension issues, $K \cap \pi(\bar{H}) \neq \{e\}$, contradiction (see \ref{duacomptf}).

\end{proof}
%%%%%%%%%%%%%%
\begin{rem}
Since every $0$-group is abelian and definably connected (\cite{str}), and every definable subgroup of a definably compact group is definably compact, it follows that every $0$-subgroup of a definably compact group is a definable torus (\cite{zero}, see \ref{torusis0gr}). 

Therefore a definable subgroup of a definably compact group is a $0$-Sylow if and only if it is a maximal definable torus. 
\end{rem}

\begin{lem} \label{0subctf}
Let $G$ be a definable group such that
\[
G^0 = KH,
\]

\noindent
for some definably compact subgroup $K$ and some definable torsion-free subgroup $H$. Then every $0$-subgroup of $G$ is definably compact $($hence a definable torus$)$.
\end{lem}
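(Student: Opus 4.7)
I would reduce the lemma to exhibiting a single definably compact $0$-Sylow of $G$. This is enough because all $0$-Sylows are conjugate (\ref{maxSylconj}(d)), every $0$-subgroup is contained in some $0$-Sylow (\ref{maxSylconj}(c)), and definable subgroups of definably compact groups are themselves definably compact (since definable subgroups are closed, \cite[2.8]{pi1}).

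Since every $0$-group is definably connected (\ref{maxSylconj}(b)), I may assume $G=G^0$. If $K$ is finite then $H$ has finite index in $G$, so $G^0\subseteq H$; combined with $G=G^0$ this forces $G=H$ and hence $K\subseteq K\cap H=\{e\}$, leaving $G$ torsion-free with no nontrivial $0$-subgroup. Otherwise $K^0$ is a nontrivial definably compact definably connected group; by \ref{compnoncomp}(b) it has torsion, so by \ref{euconn} its Euler characteristic is $0$, and by \ref{maxSylconj}(a),(c) it contains a $0$-Sylow $T$ of $K$. As $T$ is a $0$-subgroup of the definably compact group $K$, Theorem \ref{torusis0gr} gives that $T$ is a definable torus, in particular definably compact.

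The crux is to show that $T$ is still a $0$-Sylow after passing from $K$ to $G$. I would use the multiplicativity of the Euler characteristic for chains of definable subgroups (\ref{euprodsub}(b)):
\[
E(G/T)\ =\ E(G/K)\,E(K/T).
\]
The hypothesis $G=KH$ with $K\cap H=\{e\}$ makes the inclusion $H\hookrightarrow G$ descend to a definable bijection $H\to G/K$, so $E(G/K)=E(H)=\pm 1$ by \ref{eu}. Since $T$ is a $0$-Sylow of $K$, the direction $(\Rightarrow)$ of \ref{maxSylconj}(e) gives $E(K/T)\neq 0$. Combining these yields $E(G/T)\neq 0$, and the direction $(\Leftarrow)$ of the same fact, applied to $T$ as a $0$-subgroup of $G$, shows that $T$ is a $0$-Sylow of $G$.

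The main obstacle (and the reason the hypothesis is used) is the computation of $E(G/K)$: it is precisely the product decomposition $G=KH$ with $H$ torsion-free that forces $E(G/K)=\pm 1$, which is what allows the nonvanishing of $E(K/T)$ to propagate from $K$ up to $G$. Without such a decomposition, a strictly larger $0$-subgroup of $G$ could sit above $T$ and destroy its maximality, exactly as happens in Strzebonski's example.
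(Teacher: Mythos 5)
Your proposal is correct and takes essentially the same route as the paper: both arguments pick a $0$-Sylow (maximal definable torus) $T$ of $K$ and use multiplicativity of the Euler characteristic together with $E(G/K)=E(H)=\pm 1$ to get $E(G/T)\neq 0$, so that $T$ is a $0$-Sylow of $G$ by \ref{maxSylconj}$(e)$, whence all $0$-subgroups are definably compact by conjugacy. Your additional bookkeeping --- reducing to $G=G^0$ rather than carrying the factor $E(G/G^0)$, treating the finite-$K$ case, and using $K\cap H=\{e\}$ (which is not a stated hypothesis but is automatic by \ref{duacomptf}) --- only makes explicit what the paper leaves implicit.
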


\begin{proof}
Let $T$ be a maximal definable torus of $K$. Then by \ref{euprodsub}, \ref{eu} and \ref{maxSylconj}$(e)$

\[
E(G/T) = E(G/G^0)E(G^0/T) = E(G/G^0) E(K/T) E(H) \neq 0,
\]

\vs \noindent
and again by \ref{maxSylconj}$(e)$ $T$ is a $0$-Sylow of $G$.
\end{proof}

\begin{cor} \label{0subgn}
Let $G$ be a definable group and $N$ its maximal normal definable torsion-free subgroup. Then all $0$-subgroups
of $G/N$ are definably compact.
\end{cor}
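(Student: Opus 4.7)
The strategy is to reduce to Lemma \ref{0subctf} applied to the group $G/N$. For this I need a decomposition of the form $(G/N)^0 = KH$ with $K$ definably compact and $H$ definable torsion-free, and Theorem \ref{Iwagisuacca} already produces exactly such a decomposition, provided I apply it to a definably connected group.

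First I would pass to $G^0$. Since $N$ is definably connected (it is torsion-free, see \ref{torfree}$(a)$), we have $N \subseteq G^0$, and I claim that $N$ is also the maximal normal definable torsion-free subgroup of $G^0$. Indeed, let $N'$ be the maximal normal definable torsion-free subgroup of $G^0$. By Lemma \ref{tormaxnor}, applied to the normal subgroup $G^0 \lhd G$, the subgroup $N'$ is normal in $G$ as well, so by maximality of $N$ we get $N' \subseteq N$; conversely $N \subseteq N'$ because $N$ is a normal definable torsion-free subgroup of $G^0$. Hence $N = N'$.

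Next, Theorem \ref{Iwagisuacca} applied to the definably connected group $G^0$ (with maximal normal definable torsion-free subgroup $N$) yields
\[
G^0/N \;=\; KH, \qquad K \cap H = \{e\},
\]
where $K$ is definably compact and $H$ is definable torsion-free. Now $G^0/N$ is definably connected and of finite index in $G/N$ (being the image of $G^0$ under the canonical projection), so $(G/N)^0 = G^0/N = KH$.

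Finally, I apply Lemma \ref{0subctf} to the definable group $G/N$: the hypothesis $(G/N)^0 = KH$ is satisfied, and therefore every $0$-subgroup of $G/N$ is definably compact. The only slight subtlety is the verification that $N$ remains maximal when restricted from $G$ to $G^0$, handled above by Lemma \ref{tormaxnor}; everything else is a direct quotation of the two results.
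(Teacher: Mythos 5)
Your proof is correct and follows exactly the route the paper takes: its proof of this corollary is simply "See \ref{Iwagisuacca} and \ref{0subctf}," and your argument fills in the same two citations together with the (routine but worth stating) reduction to $G^0$ via \ref{tormaxnor} and the identification $(G/N)^0 = G^0/N$.
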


\begin{proof}
See \ref{Iwagisuacca} and \ref{0subctf}.
\end{proof}
%%%%%%%%%%%%%
\begin{cor} \label{maxtfA}
Let $G$ be a definable group, $N$ its maximal normal definable torsion-free subgroup, and $A$ a $0$-Sylow of $G$. Then $A \cap N$ is the maximal $($normal$)$ definable torsion-free subgroup of $A$.
\end{cor}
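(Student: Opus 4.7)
The plan is to exploit the canonical projection $\pi\colon G\to G/N$ and show that $\pi(A)=AN/N$ is definably compact; then Corollary \ref{quocomp}, applied to the abelian group $A$, will identify $A\cap N$ as its unique maximal definable torsion-free subgroup. Note first that since $A$ is a $0$-Sylow, Fact \ref{maxSylconj}$(b)$ gives that $A$ is abelian and definably connected, so every subgroup of $A$ is normal and the parenthetical ``normal'' in the statement is automatic. Moreover $A\cap N$ is a definable subgroup of $A$ contained in the torsion-free group $N$, hence torsion-free, so it is contained in every maximal definable torsion-free subgroup of $A$.

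The key step is to show that $\pi(A)\cong A/(A\cap N)$ is a $0$-subgroup of $G/N$. Since $A$ is a $0$-group, $E(A)=0$, so by Fact \ref{eu} the group $A$ is not torsion-free; in particular $A\not\subseteq N$, which means $A/(A\cap N)$ is nontrivial. Now for any proper definable subgroup $\bar{K}\subsetneq A/(A\cap N)$, the correspondence theorem writes $\bar{K}=K/(A\cap N)$ with $A\cap N\subseteq K\subsetneq A$ definable, and Fact \ref{euprodsub}$(b)$ gives
\[
E\bigl((A/(A\cap N))/\bar{K}\bigr)=E(A/K)=0,
\]
because $A$ is a $0$-group. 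Thus $A/(A\cap N)$, equivalently $AN/N$, is a $0$-subgroup of $G/N$.

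Now invoke Corollary \ref{0subgn}: every $0$-subgroup of $G/N$ is definably compact. Therefore $A/(A\cap N)$ is definably compact. Since $A\cap N$ is normal in the abelian group $A$ and the quotient $A/(A\cap N)$ is definably compact, Corollary \ref{quocomp} applied to $A$ tells us that $A\cap N$ is the unique maximal definable torsion-free subgroup of $A$, which is what we wanted. The only real ``work'' is the routine verification that a non-trivial definable quotient of a $0$-group by a definable subgroup is again a $0$-group; once that is in hand, the statement is immediate from the previously established Corollaries \ref{quocomp} and \ref{0subgn}.
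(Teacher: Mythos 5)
Your proof is correct and follows essentially the same route as the paper: the paper's own proof is the one-line observation that $A/(A\cap N)$ is definably isomorphic to $AN/N$, which is definably compact by Corollary \ref{0subgn}, whence Corollary \ref{quocomp} applies. The only difference is that you explicitly verify that $AN/N$ is a $0$-subgroup of $G/N$ (via the correspondence theorem and Fact \ref{euprodsub}), a step the paper leaves implicit when invoking \ref{0subgn}; this is a worthwhile detail but not a different method.
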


\begin{proof}
Since $A/(A \cap N)$ is definably isomorphic to $AN/N$ which is definably compact (\ref{0subgn}), it follows that $A \cap N$ is the maximal (normal) definable torsion-free subgroup of $A$ (\ref{quocomp}).
\end{proof}

\begin{rem}
If $H$ is a definable subgroup of $G$, then $H \cap N$ may be strictly contained in the maximal normal definable torsion-free subgroup of $H$. For instance, one can take a non-normal
maximal definable torsion-free subgroup $H$ of $G$ (see \ref{maxtorfreenon}).
\end{rem}
%%%%%%%%%%%%%%%

\begin{lem} \label{syltor}
Let $N$ be the maximal normal definable torsion-free subgroup of a definable group $G$ and $\pi \colon G \to G/N$ the canonical projection. Then 
\begin{enumerate}
\item[$(a)$] $A$ is a $0$-Sylow of $G\ \Rightarrow\ \pi(A)$ is a maximal torus of $G/N$;

\vspace{0.1cm}
\item[$(b)$] $T$ is a maximal torus of $G/N\ \Rightarrow\ $ there is a $0$-Sylow $A$ of $G$ such that $\pi(A) = T$.  
\end{enumerate}

\end{lem}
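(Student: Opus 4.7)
The plan is to reduce both parts to a single principal fact: if $A$ is a $0$-Sylow of $G$, then $\pi(A) = AN/N$ is itself a $0$-Sylow of $G/N$, and in the target group $G/N$ the $0$-Sylows coincide with the maximal definable tori.

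First I would verify the coincidence of $0$-Sylows and maximal tori in $G/N$. By \ref{0subgn}, every $0$-Sylow of $G/N$ is definably compact; being a $0$-group it is also abelian and definably connected (\ref{maxSylconj}$(b)$), hence a definable torus by \ref{torusis0gr}. Conversely any definable torus is a $0$-group, so a maximal definable torus sits inside some $0$-Sylow, which is itself a torus by the previous sentence. Maximality then forces equality.

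Next, for part $(a)$, I would show $\pi(A)$ is both a definable torus and a $0$-Sylow of $G/N$. Since $A \cap N$ is the maximal definable torsion-free subgroup of $A$ (\ref{maxtfA}), the quotient $A/(A\cap N) \cong AN/N = \pi(A)$ is definably compact; it is also abelian and definably connected as a quotient of such, so by \ref{torusis0gr} it is a definable torus and in particular a $0$-group. To upgrade it to a $0$-Sylow of $G/N$, I would run the Euler characteristic identity
\[
E(G/A) \,=\, E(G/AN)\, E(AN/A).
\]
Here $AN/A \cong N/(A \cap N)$ is torsion-free (as a definable quotient of the torsion-free group $N$, using \ref{torfree}$(d)$), so $E(AN/A) = \pm 1$ by \ref{eu}. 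Since $A$ is a $0$-Sylow of $G$, $E(G/A) \neq 0$ by \ref{maxSylconj}$(e)$, and therefore $E((G/N)/\pi(A)) = E(G/AN) \neq 0$. Applying \ref{maxSylconj}$(e)$ inside $G/N$ to the $0$-group $\pi(A)$ then gives that $\pi(A)$ is a $0$-Sylow of $G/N$, and by the coincidence above it is a maximal torus.

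For part $(b)$, I would take any $0$-Sylow $A$ of $G$; by part $(a)$, $\pi(A)$ is a maximal torus, hence a $0$-Sylow, of $G/N$. Since $T$ is also a $0$-Sylow of $G/N$ (by the coincidence), the conjugacy of $0$-Sylows (\ref{maxSylconj}$(d)$) provides some $\bar g \in G/N$ with $\pi(A)^{\bar g} = T$. Lifting $\bar g$ to $g \in G$ and replacing $A$ by its conjugate $A^g$ (still a $0$-Sylow of $G$) yields $\pi(A^g) = T$, as required. No step here looks particularly resistant; the only mildly subtle point is the Euler-characteristic passage from ``$A$ is a $0$-Sylow of $G$'' to ``$\pi(A)$ is a $0$-Sylow of $G/N$'', which relies crucially on the torsion-freeness of $N$ to contribute trivially to the Euler characteristic.
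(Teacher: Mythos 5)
Your proposal is correct, and it diverges from the paper's proof in instructive ways. In part $(a)$ both arguments turn on the same computation, $E(G/A)=E(G/AN)\,E(AN/A)\neq 0$, hence $E((G/N)/\pi(A))=E(G/AN)\neq 0$; but the paper, not knowing yet that $\pi(A)$ is a $0$-group, must invoke the minimal-dimension characterization of $0$-Sylows (\ref{chara0syl}) and run a pullback, dimension-counting contradiction. You instead first show that $\pi(A)$ is a definable torus, hence a $0$-group, so that the cheap criterion \ref{maxSylconj}$(e)$ applies directly; you also make explicit the identification of $0$-Sylows with maximal definable tori in $G/N$, which the paper states only for definably compact ambient groups and then uses tacitly (note $G/N$ need not be definably compact, so your appeal to \ref{0subgn} here is the right fix). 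In part $(b)$ the routes genuinely differ: the paper works inside the solvable preimage $G_1=\pi^{-1}(T)$, writes $G_1=A_1N$ for any $0$-Sylow $A_1$ of $G_1$ (\ref{prodsolv}), and promotes $A_1$ to a $0$-Sylow of $G$ with $\pi(A_1)=T$ via \ref{charsyl}; this construction is self-contained, independent of $(a)$. You instead derive $(b)$ from $(a)$ together with conjugacy of $0$-Sylows (\ref{maxSylconj}$(d)$) and a lift of the conjugating element, which is shorter but uses $(a)$ as input.

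Two steps need patching, both one-liners. First, in $(a)$, the inference ``$A\cap N$ is maximal torsion-free in $A$, hence $A/(A\cap N)$ is definably compact'' is not a consequence of \ref{maxtfA} alone (for general groups the quotient by the maximal normal torsion-free subgroup need not be definably compact): you need that $A$ is abelian (\ref{maxSylconj}$(b)$), hence solvable, so that \ref{solvquozcomp} applies --- or, alternatively, observe that $\pi(A)\cong A/(A\cap N)$ is a definable quotient of a $0$-group, hence a $0$-subgroup of $G/N$, hence definably compact by \ref{0subgn}. Second, in $(b)$ you take ``any $0$-Sylow $A$ of $G$'' without knowing one exists; since $T$ is a $0$-group, $E(G/N)=0$, so $E(G)=E(N)E(G/N)=0$ by \ref{euprodsub} and \ref{eu}, and then \ref{maxSylconj}$(a)$ and $(c)$ supply a $0$-Sylow of $G$. (The paper's route never faces this point, because the existence argument is carried inside \ref{prodsolv}.) With those two lines added, your argument is complete.
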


\begin{proof} \hspace{7cm}
\begin{enumerate}

\item[$(a)$]
Define $\bar{A} := \pi(A)$. By \ref{chara0syl}, $\bar{A}$ is a $0$-Sylow of $\bar{G}: = G/N$ if and only if it is of minimal dimension among the definable subgroup $\bar{H}$ of $\bar{G}$ such that $E(\bar{G}/\bar{H}) \neq 0$.

First note that 
\[
E(\bar{G}/\bar{A}) = E((G/N)/(AN/N)) = E(G/AN) \neq 0,
\]

\vs \noindent
since $E(G/A) \neq 0$ and $E(G/A) = E(G/AN) E(AN/A)$ (\ref{maxSylconj} and \ref{euprodsub}).

For a contradiction, suppose  $\bar{H}$ is a definable subgroup of $\bar{G}$ such that $\dim \bar{H} < \dim \bar{A}$ and $E(\bar{G}/\bar{H}) \neq 0$. 

Define $H : = \pi^{-1}(\bar{H})$. Then $\dim H = \dim \bar{H} + \dim N < \dim \bar{A} + \dim N = \dim A$. Then $E(G/H) = 0$ by \ref{chara0syl}. But also
\[
E(G/H) = E(G/HN) E(HN/H) = E(\bar{G}/\bar{H}) E(N/(H \cap N)) \neq 0,
\]

\noindent
contradiction.\\

\item[$(b)$]
Let $T$ be now a maximal torus of $G/N$ and define $G_1: = \pi^{-1}(T)$. 

Because $G_1$ is solvable, by \ref{prodsolv} $G_1 = A_1N$, for any $0$-Sylow $A_1$ of $G_1$.
Since 
\[
E(G/A_1) = E(G/G_1)E(G_1/A_1) = E(\bar{G}/T)E(G_1/A_1) \neq 0,
\]

\vs \noindent
it follows by \ref{charsyl} that $A_1$ is a $0$-Sylow of $G$, and $\pi(A_1) = T$.
\end{enumerate}   
\end{proof}

%%%%%%%%%%%%%%%%%%%%%%%%%%%%%%%%%%%%%%%%%%%%%%%%%%%%%%%%%%%%%%%%%%%%%%%%%%%%%%%%%%%%%%%%%%%%%%%%%%%%%%%%%%%%%%%%%%%%%%%%%%%%%%%%%%%%%%%%%%%%%%%%%%%%%%%%%%%%%%%%%%%%%%%%%%%%%%%%%%%%%%%%%%%%%%%%%%%%%%%%%%%%%%%%%%%%%%%%%%%%%%%%%%%%%%%%%%%%%%%%%%%%%%%%%%%%%%%%%%%%%%%%%%%%%%%%%%%%%%%%%%%%%%%%%%%%%%%%%%%%%%%%%%%%%%%%%%%%%%%%%%%%%%%%%%%%%%%%%%%%%%%%%%%%%%%%%%%%%%%%%%%%%%%%%%%%%%%%%%%%%%%%%%%%%%%%%%%%%%%%%%%%%%%%%%%%%%%%%%%%%%%%%%%%%%%%%%%%%%%%%%%
\vs
\section{Definable Levi subgroups}

It is well-known that connected Lie groups admit the following decomposition:

\begin{fact} \emph{(\cite[Cap 1, 4]{oni3}).}
Let $L$ be a connected Lie group. If $R$ is the solvable radical of $L$, then there is a connected semisimple subgroup $S$ of $L$ such that 
\[
L = RS \qquad \mbox{ and } \qquad \dim(R \cap S) = 0.
\]

\end{fact}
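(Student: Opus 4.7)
The plan is to reduce the group-level statement to the classical Levi--Malcev theorem at the Lie algebra level, and then integrate back up using the connectedness of $L$. Let $\mathfrak{l} = \Lie(L)$ and let $\mathfrak{r} = \Lie(R)$, which is the solvable radical of $\mathfrak{l}$ (since $R$ is the connected normal solvable subgroup maximal with these properties, and the Lie functor translates this to the algebra level).

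First I would invoke (or sketch, via Whitehead's first lemma $H^{1}(\mathfrak{s},V)=0$ for finite-dimensional modules $V$ over a semisimple Lie algebra $\mathfrak{s}$) the Levi--Malcev decomposition for finite-dimensional real Lie algebras: there exists a semisimple subalgebra $\mathfrak{s}\subset\mathfrak{l}$ with $\mathfrak{l}=\mathfrak{r}\oplus\mathfrak{s}$ as vector spaces. The cohomological argument proceeds by induction on the derived length of $\mathfrak{r}$, splitting off one abelian layer at a time and using vanishing of $H^1$ and $H^2$ of $\mathfrak{s}$ with coefficients in each layer to lift sections. This is the genuinely hard input; everything afterwards is formal.

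Next I would integrate $\mathfrak{s}$ to a connected Lie subgroup $S\subseteq L$: by the integrability theorem for Lie subalgebras (the classical theorem on analytic subgroups), every subalgebra of $\mathfrak{l}$ is the Lie algebra of a unique connected Lie subgroup. Note $S$ need not be closed, but it is an immersed Lie subgroup with $\Lie(S)=\mathfrak{s}$, and it is semisimple because $\mathfrak{s}$ is.

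Finally I would check the two conclusions. For $L=RS$: the multiplication map $R\times S\to L$ has differential at $(e,e)$ equal to addition $\mathfrak{r}\oplus\mathfrak{s}\to\mathfrak{l}$, which is a linear isomorphism by the Levi decomposition at the algebra level; hence $RS$ contains an open neighbourhood of $e$. Since $R$ is normal, $RS$ is a subgroup; a subgroup with non-empty interior is open, and an open subgroup of a connected group is the whole group, so $L=RS$. For $\dim(R\cap S)=0$: the Lie algebra of the immersed Lie subgroup $R\cap S$ is $\mathfrak{r}\cap\mathfrak{s}=0$ (using that the decomposition $\mathfrak{l}=\mathfrak{r}\oplus\mathfrak{s}$ is direct), so $R\cap S$ is zero-dimensional.

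The main obstacle is the Lie-algebra Levi--Malcev theorem itself; the passage from algebras to groups is a routine application of the subgroup--subalgebra correspondence and the open-subgroup argument for connected groups.
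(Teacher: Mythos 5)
Your argument is correct, and it is the classical Levi--Mal'cev proof: split $\Lie(L)=\mathfrak{r}\oplus\mathfrak{s}$ cohomologically, integrate $\mathfrak{s}$ to an analytic (possibly non-closed) subgroup $S$, and then use that $RS$ is a subgroup (normality of $R$) containing a neighbourhood of $e$ (inverse function theorem applied to the multiplication map), hence open, hence equal to the connected group $L$. Note that the paper does not prove this statement at all --- it is quoted as a Fact with a citation to Gorbatsevich--Onishchik--Vinberg --- so your proposal is essentially a reconstruction of the cited classical argument rather than an alternative to anything in the paper. The one step you leave implicit that deserves a word is that $R$ is closed in $L$: the closure of a connected normal solvable subgroup is again connected, normal and solvable, so maximality forces $R=\bar{R}$; closedness is what guarantees both that $\Lie(R)$ is the radical of $\Lie(L)$ under the usual correspondence and that $R\cap S$ is a closed subgroup of the immersed Lie group $S$, hence itself a Lie group, whose Lie algebra is $\mathfrak{r}\cap\mathfrak{s}=0$, yielding $\dim(R\cap S)=0$.
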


\vs \noindent
A subgroup $S < L$ with such a property is said to be a \textbf{Levi subgroup} of $L$ and the decomposition above is said to be a \textbf{Levi decomposition} of $L$. Levi subgroups are all conjugate to each other (see \cite[4.3 Cor.3]{oni3}).

A Levi subgroup needs not be a Lie subgroup, it may be not closed.  
%%%%%%%%%%%%%%%%%%%%%%
But if $L$ is compact, then the derived subgroup $[L, L]$   
is a closed Levi subgroup of $L$ (\cite[9.24]{hm}).

Hrushovski, Peterzil and Pillay prove in \cite{hpp2} the corresponding result for the \ominimal context: 
%%%%%%%%%%%%%%%%%%
\begin{fact} \emph{(\cite[6.4]{hpp2}).} \label{decomp}
Let $G$ be a definably connected definably compact group. Then the derived subgroup $[G, G]$ of $G$ is a semisimple definable subgroup and  

\[
G = Z(G)^0 [G, G].
\]

\end{fact}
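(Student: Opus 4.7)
The plan is to establish $Z(G)^0 = R$ (the solvable radical of $G$) and then extract both claims from the structure theory of semisimple definably compact groups. Let $R$ be the solvable radical of $G$ (Fact \ref{solvrad}). As a definable subgroup of the definably compact group $G$, $R$ is definably compact; being also solvable and definably connected, \cite[5.4]{ps00} gives that $R$ is abelian. The inclusion $Z(G)^0 \subseteq R$ is already recorded. For the reverse inclusion I would study the conjugation action $\alpha \colon G \to \Aut(R)$, which factors through $G/R$ because $R$ is abelian. For each $k \in \N$, $\alpha$ restricts to an action on the finite group $R[k] \cong (\Z/k\Z)^{\dim R}$ (using the description of torsion in abelian definably compact groups from \cite{eo}), so the induced homomorphism $G \to \Aut(R[k])$ has finite image; since $G$ is definably connected with no proper definable subgroup of finite index, this map is trivial. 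Hence $G$ fixes the torsion of $R$ pointwise.

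The main obstacle is passing from \emph{``$G$ fixes the torsion of $R$''} to \emph{``$G$ centralizes $R$''}. For each $r \in R$, the commutator map $\varphi_r \colon G \to R$, $g \mapsto grg^{-1}r^{-1}$, is definable and identically $e$ whenever $r$ is torsion. Since the torsion of $R$ is dense in $R$ in the Pillay-conjecture sense (cf.\ \cite{hpp, pet07}), I would descend $\alpha$ to an action of the connected compact Lie group $G/G^{00}$ on the torus $R/R^{00}$; such an action must be trivial because $\Aut(\T^{\dim R}) = \GL_{\dim R}(\Z)$ is discrete. Lifting and using continuity of $\varphi_r$ yields $\varphi_r \equiv e$ for all $r \in R$, hence $R \subseteq Z(G)$ and so $R = Z(G)^0$.

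With $R = Z(G)^0$, the quotient $\bar G := G/Z(G)^0$ is semisimple, definably connected and definably compact. By Fact \ref{theosem}, $\bar G / Z(\bar G)$ is a direct product of definably simple groups $H_1, \dots, H_s$; for each $i$, $[H_i, H_i]$ is a non-trivial normal subgroup of the non-abelian definably simple $H_i$, so $[H_i, H_i] = H_i$. Thus commutators generate $\bar G$, whence $[G, G] \cdot Z(G)^0 = G$. Semisimplicity of $[G, G]$ will follow once it is shown to be definable, since it then surjects onto the semisimple $\bar G$ with central kernel $[G,G] \cap Z(G)^0$. Definability of $[G, G]$ is the deepest point and the technical heart of \cite{hpp2}: by DCC on definable subgroups let $C$ be the smallest definable subgroup containing every commutator, so $[G, G] \subseteq C$; one needs a uniform bound $N = N(G)$ on commutator length in $[G, G]$ to conclude $C = [G, G]$. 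I expect to obtain such a bound by transferring the corresponding fact for compact connected Lie groups (where every element of the derived subgroup is a bounded product of commutators) to $G/G^{00}$ via Pillay's conjecture, together with a standard-part argument to bring the bound back to $G$.
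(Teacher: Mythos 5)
First, a framing point: the paper does not prove this statement at all --- it imports it as a Fact, citing \cite[6.4]{hpp2}, and all of Section 7 builds on it as a black box. So there is no internal proof to compare against; your proposal is an attempt to reprove a theorem of Hrushovski--Peterzil--Pillay from scratch, and it has genuine gaps at exactly the points where the machinery of \cite{hpp2} is needed. The first gap is the passage from ``$G$ fixes $\Tor(R)$ pointwise'' (your finite-image argument for that part is fine) to ``$R\subseteq Z(G)$''. Torsion points are \emph{not} dense in $R$ in the o-minimal topology over a non-archimedean model: e.g.\ the torsion of $\SO_2(\Rs)$ has real algebraic coordinates, so an infinitesimal neighbourhood of a generic point contains no torsion at all. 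Likewise, triviality of the induced action on $R/R^{00}$ does not ``lift by continuity'': the kernel $R^{00}$ is a huge non-definable subgroup, and a priori conjugation could move it while fixing all torsion and acting trivially downstairs. What rescues this step is definability, not density: for fixed $g$ the map $\psi_g\colon R\to R$, $r\mapsto grg^{-1}r^{-1}$, is a definable endomorphism of the abelian group $R$, so its kernel $C_R(g)$ is a \emph{definable} subgroup containing $\Tor(R)$; one then shows no proper definable subgroup $D$ of a definably connected definably compact abelian group contains all the torsion (for a prime $p$ not dividing $|D/D^0|$ one has $D[p]=D^0[p]\cong(\Z/p\Z)^{\dim D}$ while $R[p]\cong(\Z/p\Z)^{\dim R}$ by \cite{eo}, forcing $\dim D=\dim R$ and hence $D=R$). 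That repair is elementary, but it is not the argument you wrote.

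The second and third gaps are in the last paragraph. From Fact \ref{theosem} you get that $\bar G/Z(\bar G)$ is perfect, where $\bar G=G/Z(G)^0$; but perfectness does not climb up a central extension, even a finite one: if $H$ is perfect and $A$ is finite abelian, then $H\times A$ is a central extension of the perfect group $H/Z(H)$ (indeed of $H$) and is not perfect. So ``commutators generate $\bar G$'', and with it $G=Z(G)^0[G,G]$, does not follow from definable simplicity of the factors of $\bar G/Z(\bar G)$; what is needed is that semisimple definably connected groups are themselves perfect, which is Fact \ref{semisperf} --- again a theorem of \cite{hpp2}, proved there by the same central-extension analysis you are trying to bypass. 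Finally, for definability of $[G,G]$ you only state an expectation: transfer a commutator-width bound from the compact Lie group $G/G^{00}$ and ``bring it back by a standard-part argument''. This does not close: if $xG^{00}$ is a product of $N$ commutators in $G/G^{00}$, you learn only that $x$ lies in (a product of $N$ commutators)$\cdot\,G^{00}$, and you have no control over elements of the non-definable group $G^{00}$ --- there is no reason they should be products of boundedly many commutators. Taming precisely this infinitesimal central part is the technical heart of \cite{hpp2}; as written, your final step is a citation of hope rather than a proof.
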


\vspace{0.1cm}
\begin{rem}
In general the derived subgroup $[G, G]$ of a definable group $G$ is a countable union of definable sets, and it is not necessarily definable.
\end{rem}

%%%%%%%%%%%%%%%%%%%%%%%%%
\begin{dfn}
We say that a definably connected group $G$ has a \textbf{definable Levi decomposition} if $G$ contains a semisimple definably connected subgroup $S$ such that $G = RS$, where $R$ is the solvable radical of $G$. We call such an $S$ a \textbf{definable Levi subgroup} of $G$.
\end{dfn}

As we noticed in \ref{semisnosolv}, semisimple definable groups have no infinite solvable normal definable subgroup. Therefore if $R$ and $S$ are as above, then $R \cap S$ is finite and $G$ is an almost semidirect product of $R$ and $S$. \\

Peterzil, Pillay and Starchenko prove in \cite{pps02} that definably connected linear groups have a definable Levi decomposition:

\begin{fact} \emph{(\cite[4.5]{pps02}).} \label{levilin}
Let $G$ be a definably connected linear group. Then $G$ is an almost semidirect product $G = RS$ of the solvable radical $R$ and a semisimple definably connected subgroup $S$.
\end{fact}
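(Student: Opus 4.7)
The plan is to descend to the Lie algebra, apply the classical Levi decomposition there, and then integrate back up using linearity. First I would realize $G$ as a definable subgroup of $\GL_n(\Rs)$ for some real closed field $\Rs$ definable in $\M$, and form its definable Lie algebra $\g = \Lie(G)$ as in \ref{linliealg}. The solvable radical $R$ of $G$ is definably connected and normal, so its Lie algebra $\mathfrak{r} \subseteq \g$ is a definable solvable ideal; in fact $\mathfrak{r}$ is the solvable radical of $\g$ (if there were a strictly larger definable solvable ideal, it would integrate inside the linear group to a larger normal solvable definably connected subgroup, contradicting the maximality of $R$ given by \ref{solvrad}).

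Second, I would invoke the classical Levi theorem for Lie algebras in characteristic $0$: there exists a semisimple Lie subalgebra $\mathfrak{s} \subseteq \g$ with $\g = \mathfrak{r} \oplus \mathfrak{s}$ as vector spaces. The statement ``there exist $\dim \g - \dim \mathfrak{r}$ vectors spanning a subspace which is closed under the bracket, intersects $\mathfrak{r}$ trivially, and whose Killing form is non-degenerate'' is a first-order sentence in the language of ordered fields with parameters for structure constants of $\g$ and $\mathfrak{r}$. Since this sentence holds in $\R$ by the real Levi theorem, it transfers to $\Rs$ and produces a definable semisimple Lie subalgebra $\mathfrak{s}$ of $\g$ complementary to $\mathfrak{r}$.

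Third, and this is the main obstacle, one must integrate $\mathfrak{s}$ to a definable subgroup $S$ of $G$. Using linearity, I would define $S$ to be the subgroup of $G$ generated by the image of $\exp \colon \mathfrak{s} \to G$ (the matrix exponential, which is definable on a neighborhood of $0$ and defined everywhere since $G$ is linear and we can work with the algebraic exponential via Ado's embedding for $\mathfrak{s}$). The delicate point is to show that $S$ is actually definable and not merely a countable union of definable sets. For this one argues by transfer again: over $\R$, the analytic subgroup with Lie algebra $\mathfrak{s}$ is an algebraic (in particular semialgebraic) subgroup of the corresponding real algebraic group, and in fact a uniform number $N$ of exponentials suffices to cover it. The first-order statement ``every element of the connected subgroup generated by $\exp(\mathfrak{s})$ is a product of at most $N$ exponentials'' then passes from $\R$ to $\Rs$, and this presentation makes $S$ definable.

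Finally, the verification that $G = RS$ and $R \cap S$ is finite is routine: since $\mathfrak{r} \cap \mathfrak{s} = 0$, the intersection $R \cap S$ has trivial Lie algebra, hence dimension $0$, hence is finite; and since $\dim(RS) = \dim R + \dim S = \dim \g = \dim G$ and $G$ is definably connected while $RS$ contains the definably connected set $RS$ of full dimension, we conclude $G = RS$. The subgroup $S$ is semisimple because $\mathfrak{s}$ is, and definably connected by construction as the image of a definably connected set under multiplication.
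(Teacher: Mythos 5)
This statement is a \emph{Fact} in the paper, quoted from \cite[4.5]{pps02} without proof, so your attempt has to stand on its own rather than be compared line-by-line with anything in the text. Your first two steps are in the right spirit and match the transfer technique the paper does use elsewhere (e.g.\ in the proof of \ref{declin}): pass to $\g = \Lie(G)$ and transfer the classical Levi theorem from $\R$ to $\Rs$. One repair is needed even there: a sentence containing parameters from $\Rs$ (the structure constants of $\g$) cannot be ``true in $\R$''; you must transfer the parameter-free sentence obtained by quantifying universally over all tuples of structure constants satisfying the Jacobi identity, exactly as the paper does in \ref{declin}.

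The genuine gap is your third step. In an \ominimal\ expansion of a real closed field the matrix exponential simply does not exist as a definable map: over a non-archimedean $\Rs$ the series $\sum_k X^k/k!$ does not converge (there is no completeness to appeal to), and even over $\R$ the map $\exp$ is transcendental, hence not semialgebraic, hence not definable in the field structure in which your $G < \GL_n(\Rs)$ lives --- it is definable only on nilpotent matrices, where it is polynomial, not on a semisimple subalgebra $\mathfrak{s}$. Consequently ``the subgroup generated by $\exp(\mathfrak{s})$'' is not a definable object, and the rescue by transfer fails at an even earlier point: the statement ``every element of $S$ is a product of at most $N$ exponentials'' is not a formula in the language of ordered fields, so there is nothing to transfer. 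This is precisely where the definable category diverges from the Lie category: a subalgebra of $\Lie(G)$ need not integrate to a definable subgroup at all. The standard repair, and essentially what Peterzil, Pillay and Starchenko do, is algebraic-group-theoretic rather than exponential: a semisimple subalgebra $\mathfrak{s} \subseteq M_n(\Rs)$ is perfect, hence by Chevalley's theorem it is the Lie algebra of an algebraic (so semialgebraic) subgroup $H < \GL_n(\Rs)$, and one takes $S$ to be the definably connected component of $H$; one must then still prove $S \subseteq G$ and reduce an arbitrary definable linear $G$ to the semialgebraic case via the sandwich $G_1 \lhd G \lhd G_2$ with $G_2/G_1$ abelian of \cite[4.1]{pps02}. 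Note also that your second step has a milder instance of the same disease: the claim that a solvable ideal strictly containing $\mathfrak{r}$ ``would integrate to a larger normal solvable definably connected subgroup'' presupposes exactly the subalgebra-to-subgroup correspondence that is in question, and needs the results of \cite{pps1} and \cite{pps02} rather than being automatic.
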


%%%%%%%%%%%%%%
As a corollary of results in \cite{hpp2},
we can find a definable Levi decomposition also for definably connected groups $G$ such that the quotient by their center $Z(G)$ is definably compact:

\begin{prop} \emph{(\cite[6.4, 6.5]{hpp2}).} \label{leviquozcomp}
Let $G$ be a definably connected group such that $G/Z(G)$ is definably compact. If $R$ is the solvable radical of $G$, then $G = RS$ for some semisimple definably connected subgroup $S$ of $G$.
\end{prop}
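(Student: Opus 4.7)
The plan is to lift the corresponding decomposition inside $\bar G := G/Z(G)$ up to $G$. Since $G$ is definably connected, so is $\bar G$, and by hypothesis $\bar G$ is definably compact. Applying Fact \ref{decomp} to $\bar G$ yields $\bar G = Z(\bar G)^0 \, \bar S$ with $\bar S := [\bar G, \bar G]$ semisimple definable. Because $Z(\bar G)^0$ is an abelian definably connected normal subgroup of $\bar G$, it is contained in the solvable radical $\bar R$ of $\bar G$, and so $\bar G = \bar R \, \bar S$.

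Let $\pi : G \to \bar G$ denote the projection. I next identify $\pi(R) = \bar R$: the image $\pi(R)$ is a normal solvable definable subgroup of $\bar G$, so $\pi(R) \subseteq \bar R$; conversely $\pi^{-1}(\bar R)$ is a normal solvable definable subgroup of $G$ (a solvable extension of $\bar R$ by the abelian $Z(G)$), so its definably connected component is contained in $R$, while $\bar R$ itself is definably connected, forcing $\bar R = \pi(R)$. Since $\ker \pi = Z(G)$ is contained in $\pi^{-1}(\bar S)$, this gives
\[
G \,=\, \pi^{-1}(\bar R) \cdot \pi^{-1}(\bar S) \,=\, R \cdot Z(G) \cdot H \,=\, R \cdot H,
\]
where $H := \pi^{-1}(\bar S) \supseteq Z(G)$.

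The crucial step is to produce a semisimple definably connected subgroup $S \subseteq H$ with $\pi(S) = \bar S$. A natural first attempt is $S := H^0$, which is definably connected and, because $\bar S$ is definably connected, satisfies $\pi(S) = \bar S$. The solvable radical $R_S$ of $S$ projects to a normal solvable definable subgroup of $\bar S$, which is finite by Fact \ref{theosem}; being definably connected, $R_S$ then lies in $Z(G)^0$. The main obstacle is that $Z(G)^0$ may be infinite, in which case $R_S$ can be nontrivial and $H^0$ is not semisimple. To get around this I would invoke the structure theorem \ref{theosem}: $\bar S / Z(\bar S)$ is a direct product of definably simple semialgebraic groups and $Z(\bar S)$ is finite, so $\bar S$ is, up to a finite central cover, linear. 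This linearity transfers to an appropriate quotient of $H$, and the definable Levi decomposition for linear groups (Fact \ref{levilin}) then produces a semisimple definably connected complement $S$ of the central $Z(G)^0$ inside $H$.

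With such an $S$ in hand, the equality $G = RS$ follows by a definable-connectedness argument. For any $g \in G$ there exist $r \in R$ and $s \in S$ with $\pi(g) = \pi(rs)$, so $g \in rs \cdot Z(G)$. As $R$ is normal, $RS$ is a definable subgroup of $G$; it contains $Z(G)^0 \subseteq R$ and all of $S$, and since the quotient $Z(G)/Z(G)^0$ is finite, $[G : RS]$ is finite. Definable connectedness of $G$ then forces $G = RS$. The hardest point is the construction of $S$: the naive choice $H^0$ fails precisely when $Z(G)^0$ is infinite, and one must use the linear Levi machinery of Fact \ref{levilin} to break the central obstruction.
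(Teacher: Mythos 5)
Your reduction framework is sound: the identification $\pi(R)=\bar R$, the passage to $H=\pi^{-1}(\bar S)$, and the closing connectedness argument showing $G=RS$ once a suitable $S$ exists are all correct (and you avoid the paper's induction on $\dim G$ by splitting off $\bar R$ in one step; the paper instead inducts and only faces the hard case when $G/Z(G)$ is semisimple). You also correctly locate the real difficulty: producing a semisimple definably connected $S < H$ with $\pi(S)=\bar S$, i.e.\ splitting, up to finite intersection, the central extension
\[
1\ \longrightarrow\ Z(G)\cap H^0\ \longrightarrow\ H^0\ \longrightarrow\ \bar S\ \longrightarrow\ 1 .
\]

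However, your proposed resolution of that step does not work, and this is a genuine gap. Fact \ref{levilin} applies to groups that are \emph{themselves} linear; here only the base is linear --- $\bar S/Z(\bar S)$ by Fact \ref{theosem}, equivalently $H^0/Z(H^0)$ via Fact \ref{centerl} --- not $H^0$. Linearity does not pass from a quotient to a central extension: $H^0$ contains $Z(G)^0$, which can be an arbitrary abelian definably connected group (it need not even be definably compact), and in general a central extension of a linear group is not linear. Moreover, applying Fact \ref{levilin} to ``an appropriate quotient of $H$'' only yields a Levi subgroup of that quotient; to obtain $S$ inside $H$ you must then lift a semisimple subgroup back through the central extension, which is precisely the problem you set out to solve, so the argument is circular. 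This is exactly the point where the paper invokes the Hrushovski--Peterzil--Pillay central-extension theorem \cite[6.5]{hpp2} (note the attribution in the proposition's statement): for a definably connected group whose quotient by its center is semisimple and definably compact, the derived subgroup is a \emph{definable} semisimple subgroup and the group equals $Z(\cdot)^0$ times it. That theorem is the substantive input here and cannot be recovered from the linear Levi decomposition; without it (or an equivalent argument), your crucial step remains unproved.
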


\begin{proof}
We proceed by induction on $n = \dim G$. If $n = 1$ then $G$ is abelian, so suppose $n > 1$.

By \cite[6.4]{hpp2} $G/Z(G)$ has a definable Levi decomposition $G/Z(G) = R_1S_1$, where $R_1$ is the definably connected component of the identity in the center of $G/Z(G)$ and $S_1$ is the derived subgroup of $G/Z(G)$. Let $\pi \colon G \to G/Z(G)$ be the canonical projection and let
$H = \pi^{-1}(S_1)^0$. 

If $R_1$ is infinite (i.e. $G/Z(G)$ is not semisimple) then $\dim H < \dim G$ and by induction there exists a semisimple definably connected subgroup $S$ of $G$ such that $H = R_HS$, where $R_H$ is the solvable radical of $H$. Therefore $G = RS$.

If $G/Z(G)$ is semisimple then Hrushovski, Peterzil and Pillay prove in \cite[6.5]{hpp2} that $[G, G]$ is a semisimple definable subgroup of $G$ such that $G = Z(G)^0[G, G]$.
\end{proof}

%%%%%%%%%%%%%%%%%%%%%%%%%%%%%%%%%%%%%%%%%%%%%%
In \ref{levigsuncomp} we find a definable Levi decomposition also when $G/N$ (or equivalently $G/R$) is definably compact.

The general problem to characterize definable groups with a definable Levi decomposition will be considered in another paper where
the proof that every definable group has $\bigvee$-definable (not always definable) ``Levi subgroups'' will be also given. 

We want to show now that (when they exist) definable Levi subgroups are all conjugate to each other. In the proof we consider several cases. For centerless groups we use the following theorem due to Peterzil, Pillay and Starchenko which allow us to reduce to the linear case:

\begin{fact} \cite[3.1, 3.2]{pps1} \label{centerl}
Let $G$ be a centerless definably connected group. Then there are definable real closed fields $\Rs_i$, $i = 1, \dots, s$, and definable groups $H_i < \GL(n_i, \Rs_i)$, $H_i$ definable in $\Rs_i$, such that 
$G$ is definably isomorphic to the direct product $H_1 \times \dots \times H_s$. 
\end{fact}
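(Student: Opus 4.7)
The plan is to embed $G$ definably into a product of general linear groups via the adjoint representation, leveraging centerlessness for injectivity and mutual orthogonality of definable real closed fields for the direct product structure.

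First I would construct the definable Lie algebra $\g$ and the adjoint homomorphism $\Ad\colon G \to \GL(\g)$, as in \ref{linliealg} when $\M$ expands a real closed field. In the more general ordered-group setting one invokes the local analytic structure of definable groups from earlier work of Peterzil and Starchenko to produce a Lie algebra structure on the tangent space at the identity. A finite collection of definable real closed fields $\Rs_1, \dots, \Rs_s$ arises in this local linearization, yielding a definable decomposition $\g = \g_1 \oplus \cdots \oplus \g_s$ with each $\g_i$ an $\Rs_i$-Lie algebra of some dimension $n_i$. The standard identity $\ker \Ad = Z(G)$ persists in this context, so centerlessness of $G$ makes $\Ad$ a definable embedding.

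Next I would invoke the Peterzil--Starchenko trichotomy: distinct definable real closed fields in an o-minimal structure are orthogonal, in the sense that no infinite definable bijection exists between them. Hence the adjoint action must preserve each summand of $\g = \bigoplus \g_i$, producing
\[
\Ad(G) \hookrightarrow \GL(\g_1) \times \cdots \times \GL(\g_s) \cong \prod_{i=1}^{s} \GL(n_i, \Rs_i),
\]
with each projection $H_i$ of $\Ad(G)$ onto the $i$-th factor definable in $\Rs_i$ by the same orthogonality argument. Applying orthogonality a second time rules out any proper subdirect position: a nontrivial identification between two factors $H_i$ and $H_j$ would descend to an infinite definable correspondence between $\Rs_i$ and $\Rs_j$, a contradiction. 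One therefore concludes that $\Ad(G) = H_1 \times \cdots \times H_s$, and composing with $\Ad^{-1}$ delivers the required definable isomorphism.

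The main obstacle is the second step: passing from the purely local Lie algebra picture to a genuinely global decomposition of $\g$ along the definable fields $\Rs_i$, and verifying that $\Ad(G)$ respects this decomposition. This requires the full strength of the Peterzil--Starchenko trichotomy together with the fine structure theory of one-dimensional definable groups, and it is really the technical heart of the statement. Once orthogonality is in hand, the upgrade from a subdirect embedding to a genuine direct product using centerlessness is essentially formal.
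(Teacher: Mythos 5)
The paper does not actually prove this statement: it is quoted as a Fact from Peterzil--Pillay--Starchenko \cite[Thms.~3.1, 3.2]{pps1}, so your proposal has to be measured against the proof given there, and it does not survive the comparison. The decisive flaw is your ``orthogonality principle''. It is false that distinct definable real closed fields in an \ominimal\ structure admit no infinite definable bijection between them: transporting the field structure of $\R$ onto $(0,1)$ by a semialgebraic bijection already gives two distinct definable real closed fields with a definable bijection between them, and in any \ominimal\ expansion of a real closed field \emph{every} definable real closed field is definably isomorphic to the ambient one (Otero--Peterzil--Pillay). The trichotomy theorem says nothing of the kind; at best the fields that arise fall into definable-isomorphism classes, with orthogonality only possibly \emph{between} classes. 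Consequently both of your invocations collapse. First, definability of $H_i$ in $\Rs_i$ (i.e.\ semialgebraicity over $\Rs_i$) has nothing to do with orthogonality: when $s=1$ there are no other fields to be orthogonal to, yet the assertion that $\Ad(G)$ is semialgebraic over $\Rs_1$ is exactly the hard content of \cite[\S 2]{pps1} (via $\Aut_{\Rs}(\g)$ being semialgebraic and dimension/connectedness arguments). Second, the ``subdirect implies direct'' step is not formal and is simply wrong when the fields coincide: the diagonal copy of $\PSL_2(\R)$ inside $\PSL_2(\R)\times\PSL_2(\R)$ is a centerless, definably connected, subdirect subgroup which is not the direct product, and no contradiction arises because both factors live over the same field. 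In the actual proof the product decomposition is obtained group-theoretically --- from minimal normal definable subgroups, their centralizers, and (in the abelian case) a field interpreted from the conjugation action --- not from any orthogonality formality.

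The second gap is that your starting point presupposes what is to be proved. In the stated generality the structure need not expand a field, so there is no tangent space, no differential, and no adjoint representation until one has already produced the definable fields and shown that the relevant pieces of $G$ live in their world; producing them from the trichotomy theorem (and, for abelian minimal normal subgroups, from a Zilber-type field-interpretation argument) is the technical core of \cite[\S\S 2--3]{pps1}, not a quotable preliminary. Your deferral of the decomposition $\g = \g_1 \oplus \cdots \oplus \g_s$ to ``the local analytic structure of definable groups from earlier work'' is therefore circular: the earlier work that yields such a decomposition adapted to definable real closed fields is the very theorem being proved. Relatedly, even over a field the identity $\ker \Ad = Z(G)$ is not automatic in the \ominimal\ category --- the Lie-theoretic proof uses the exponential map, which is unavailable --- and requires its own argument. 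So the proposal has the right objects in view (definable fields, $\Ad$, faithfulness from centerlessness), but the two steps you describe as routine are, respectively, based on a false premise and equivalent to the theorem itself.
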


\begin{fact}\label{finsubincenter}
Every finite normal subgroup of a definably connected group is contained in its center.
\end{fact}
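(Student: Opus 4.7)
My plan is to use the standard centralizer argument. Let $F$ be a finite normal subgroup of a definably connected group $G$, and fix $x \in F$. I want to show that $x \in Z(G)$, i.e.\ that $C_G(x) = G$.

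First I would observe that the centralizer $C_G(x) = \{g \in G : gx = xg\}$ is a definable subgroup of $G$, since it is cut out by a first-order formula. Next, since $F$ is normal in $G$, the conjugacy class $x^G = \{gxg^{-1} : g \in G\}$ is contained in $F$, and in particular it is finite, of cardinality at most $|F|$. The map $g \mapsto gxg^{-1}$ factors through $G/C_G(x)$ and induces a bijection between $G/C_G(x)$ and $x^G$, so the index $[G : C_G(x)]$ is at most $|F|$, hence finite.

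Now I invoke the key property recalled in the preliminaries: the definably connected component $G^0$ is the smallest definable subgroup of finite index in $G$, and $G$ being definably connected means $G = G^0$. Therefore $G$ has no proper definable subgroup of finite index, which forces $C_G(x) = G$; that is, $x$ commutes with every element of $G$ and thus lies in $Z(G)$. Since $x \in F$ was arbitrary, $F \subseteq Z(G)$.

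The argument is very short and the only potential obstacle is making sure that the centralizer of a single element is genuinely definable (it is, by a quantifier-free formula with $x$ as a parameter) and that finite index forces equality for a definably connected group (which is immediate from the characterization of $G^0$ already cited from \cite{pi1}). No further o-minimal machinery is required.
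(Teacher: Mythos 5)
Your proof is correct, but it follows a genuinely different route from the paper's. The paper argues topologically: for each $x \in F$ it considers the definable continuous map $G \to F$, $g \mapsto gxg^{-1}$ (continuity coming from the group topology of \cite{pi1}), notes that the image of the definably connected group $G$ is a definably connected subset of the finite set $F$, hence a single point, and concludes $gxg^{-1} = x$ for all $g$. You instead argue via index: the centralizer $C_G(x)$ is a definable subgroup, orbit--stabilizer bounds its index by $|x^G| \leq |F|$, and a definably connected group has no proper definable subgroup of finite index, so $C_G(x) = G$. The two arguments exploit the same hypothesis (definable connectedness applied to the conjugation action) through different mechanisms. Your version is arguably the more economical one in terms of machinery: it uses only the definability of centralizers and the characterization of $G^0$ as the smallest definable finite-index subgroup (\cite[2.12]{pi1}), which is exactly how the paper defines definable connectedness, and it makes no appeal to the topological group structure or to the preservation of definable connectedness under definable continuous images. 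The paper's version is shorter on the page and more geometric, but it silently relies on those two topological facts. Both are complete proofs; there is no gap in yours.
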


\begin{proof}
Let $G$ be a definably connected definable group and $F$ a normal finite subgroup. We want to show that $F \subseteq Z(G)$. For every $x \in F$, the image of the continuous map
\begin{align*}
G\ &\longrightarrow\ F \\
g\ &\mapsto\ gxg^{-1}
\end{align*}

\vs \noindent
is definably connected, so it has to be constant. Therefore $gxg^{-1} = x$ for every $g \in G$ and $x \in Z(G)$.
\end{proof}
%%%%%%%%%%%%%
We recall that a group $G$ is said to be \textbf{perfect} if $G = [G, G]$.

\begin{fact} \emph{(\cite{hpp2}).} \label{semisperf}
Every semisimple definably connected group is~perfect.
\end{fact}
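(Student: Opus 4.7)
The plan is to reduce, via Fact~\ref{theosem}, to the linear semialgebraic case and then transfer the classical fact that connected simple Lie groups are perfect. First I would observe that the centre $Z(G)$ of a semisimple definably connected $G$ is necessarily finite: it is a definable abelian normal subgroup, so by Remark~\ref{semisnosolv} it cannot be infinite. Fact~\ref{theosem} then presents $G/Z(G)$ as a direct product $H_1 \times \cdots \times H_s$ of definably simple linear semialgebraic groups, each $H_i$ sitting inside $\GL(n_i,\Rs_i)$ for some real closed field $\Rs_i$.

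Next I would show that each $H_i$ is perfect. By Fact~\ref{declin} and the argument in its proof, $H_i$ is defined by formulas over the algebraic real numbers of $\Rs_i$, and its real realisation $H_i(\R)$ is a centreless connected simple Lie group. Classically, such a Lie group is perfect, and in fact every element is a single commutator (by a theorem of Got\^o). This is a first-order property in the language of ordered fields, and so transfers from $\R$ to $\Rs_i$ by Tarski--Seidenberg. Hence $G/Z(G)$ is perfect, with commutator width at most $s$. Lifting any expression $\pi(g) = [\bar x_1,\bar y_1]\cdots[\bar x_s,\bar y_s]$ to $G$ yields $g = z\cdot\prod_{i=1}^{s}[x_i,y_i]$ for some $z \in Z(G)$, so $G = Z(G)\cdot[G,G]$, and $G/[G,G]$ is a finite abelian quotient of the finite group $Z(G)$.

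It remains to prove $Z(G) \subseteq [G,G]$, equivalently that $G$ itself has bounded commutator width --- for then $[G,G]$ is a definable subgroup of finite index in the definably connected $G$, and must equal $G$. This is where the main obstacle lies: one cannot transfer the statement directly from a real Lie group $G(\R)$, because $G$ itself need not be linear (it could be a non-algebraic finite central cover, analogous to the metaplectic group). A plausible route combines Fact~\ref{maxcomprad} with Fact~\ref{decomp}. Since the solvable radical of the semisimple $G$ is trivial (hence definably compact), Fact~\ref{maxcomprad} gives $G = KH$ with $K$ maximal definably compact and definably connected, and $H$ a definable torsion-free complement. Every torsion element of $G$ is contained in some conjugate of $K$ (arguing as in Fact~\ref{conjmax}), so every central element $z \in Z(G)$, being fixed under conjugation, already lies in $K$; thus $Z(G) \subseteq K$. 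Fact~\ref{decomp} then expresses $K = Z(K)^0[K,K]$, and a commutator computation in $G$ using elements drawn from both $K$ and $H$ --- needed in order to absorb the $Z(K)^0$ contribution, which may be nontrivial (as happens already for $K = \SO_2$ inside $\SL_2$) --- realises each $z \in Z(G)$ as a product of commutators in $G$, and completes the proof.
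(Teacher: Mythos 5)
You should first note that the paper does not prove this statement at all: it is quoted as a Fact from \cite{hpp2}, so your proposal can only be judged on its own merits, as a derivation from the paper's other quoted facts. Most of your reductions are sound. The finiteness of $Z(G)$, the passage to $G/Z(G)\cong H_1\times\cdots\times H_s$ via Fact~\ref{theosem}, and the transfer of perfectness of each $H_i$ from $\R$ are all fine, provided the transferred statement is a \emph{bounded commutator width} statement (perfectness alone is not first-order); note however that Got\^o's theorem is the \emph{compact} case, so for the noncompact adjoint factors $H_i(\R)$ you need the corresponding classical result for noncompact semisimple Lie groups -- available, but the citation must be repaired. This correctly yields $G=Z(G)\cdot[G,G]$, so $G/[G,G]$ is a finite abelian group, and your argument that $Z(G)\subseteq K$ for a maximal definably compact subgroup $K$ (a finite subgroup is definably compact, hence lies in a conjugate of $K$ by the proof of \ref{maxcomprad}, and centrality removes the conjugation) is also correct. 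One caveat: Fact~\ref{decomp} is itself a theorem of \cite{hpp2} whose proof there rests on the very perfectness results in question, so your argument is a derivation inside this paper's system of black boxes, not an independent proof of the \cite{hpp2} theorem.

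The genuine gap is the last step. The sentence ``a commutator computation in $G$ using elements drawn from both $K$ and $H$ realises each $z\in Z(G)$ as a product of commutators'' is not an argument: that $z$ is a product of commutators is exactly the assertion $Z(G)\subseteq[G,G]$ still to be proved, and no mechanism for the computation is given. Nor can it be a direct elementwise computation: in your own example $G=\SL_2(\R)$, $K=\SO_2(\R)$ is abelian, so $K=Z(K)^0$ and $[K,K]=\{e\}$ and the decomposition of $K$ hands you nothing to compute with; moreover $-I$ is not even a commutator there (if $-I=[A,B]$ then $ABA^{-1}=-B$ forces $\mathrm{tr}(B)=0$, and a trace-zero element of $\SL_2(\R)$ is never $\SL_2(\R)$-conjugate to its negative, since every matrix conjugating a rotation by $\pm\pi/2$ to its inverse has negative determinant). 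The missing idea is global, not computational, and is available from facts already in the paper: $Z(K)^0$ is abelian and definably connected, hence divisible (\ref{factor}), and a divisible group admits only the trivial homomorphism into the finite group $G/[G,G]$; therefore $Z(K)^0\subseteq[G,G]$, whence $K=Z(K)^0[K,K]\subseteq[G,G]$, so $Z(G)\subseteq K\subseteq[G,G]$ and finally $G=Z(G)\,[G,G]=[G,G]$. With that paragraph in place of the hand-waved ``computation'' your outline closes (and the complement $H$ plays no role and can be dropped); as written, the proposal assumes the theorem at its crucial point.
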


%%%%%%%%%%%%%%%%%%%%%%%%
\begin{rem} \label{decder}
If $H < G$ are groups such that $G = Z(G)H$, then $[G, G] = [H, H]$.
\end{rem}

\begin{proof}
Every $g \in G$ can be written as $g = zh$ with $z \in Z(G)$ and $h \in H$. Therefore for every $g_1, g_2 \in G$,

\[
[g_1, g_2] = [z_1h_1, z_2h_2] = [h_1, h_2].
\]
\end{proof}

%%%%%%%%%%%%%%%%%%%%%%%%%
\begin{prop} \label{leviconj}
If a definably connected group has a definable Levi decomposition, then all definable Levi subgroups are conjugate.
\end{prop}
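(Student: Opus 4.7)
I propose to induct on $\dim G$, reducing to the centerless case by quotienting by central subgroups and then exploiting the perfectness of semisimple definably connected groups (Fact~\ref{semisperf}) together with Remark~\ref{decder}. Let $S_1, S_2$ be two definable Levi subgroups of $G$, so that $G = RS_1 = RS_2$ with $R$ the solvable radical and each $R \cap S_i$ finite.

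Suppose first that $Z := Z(G)^0$ is nontrivial. Then $Z$ is a definably connected abelian normal subgroup of $G$, contained in $R$ by the remark after Fact~\ref{solvrad}. The images $\pi(S_i)$ under $\pi \colon G \to G/Z$ are definable Levi subgroups of $G/Z$: they are definably connected, $(R/Z)\pi(S_i) = G/Z$, and each is semisimple because $S_i \cap Z \subseteq R \cap S_i$ is finite and hence central in $S_i$ by Fact~\ref{finsubincenter}, so $\pi(S_i) \cong S_i/(S_i \cap Z)$ is the quotient of a semisimple group by a finite central subgroup. Since $\dim(G/Z) < \dim G$, induction provides $\bar g \in G/Z$ with $\bar g^{-1}\pi(S_1)\bar g = \pi(S_2)$; lifting to $g \in G$ gives $g^{-1}S_1 g \cdot Z = S_2 \cdot Z$. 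Because $Z$ is central, Remark~\ref{decder} combined with Fact~\ref{semisperf} yields
\[
g^{-1}S_1 g \;=\; [g^{-1}S_1 g \cdot Z,\, g^{-1}S_1 g \cdot Z] \;=\; [S_2 \cdot Z,\, S_2 \cdot Z] \;=\; S_2.
\]
If instead $Z(G)$ is finite but nontrivial, the same perfectness lift applied to the canonical projection $G \to G/Z(G)$ reduces the problem to the centerless case.

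It remains to handle the case when $G$ is centerless. By Fact~\ref{centerl} I write $G \cong H_1 \times \cdots \times H_s$ with each $H_i$ a definable linear group over a real closed field $\mathcal{R}_i$; each $H_i$ is automatically centerless (since $Z(G) = \prod_i Z(H_i)$). For any Levi subgroup $S$ of $G$, its projection $\pi_i(S)$ onto $H_i$ is semisimple, definably connected, and satisfies $H_i = R_i\,\pi_i(S)$ (with $R_i$ the solvable radical of $H_i$), so $\pi_i(S)$ is a Levi subgroup of $H_i$. A dimension count gives $\dim S = \dim(G/R) = \sum_i \dim(H_i/R_i) = \sum_i \dim \pi_i(S)$, and since $S$ is definably connected and contained in the definably connected product $\pi_1(S) \times \cdots \times \pi_s(S)$ of the same dimension, $S$ equals that product. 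Thus conjugacy of $S_1$ and $S_2$ in $G$ reduces to conjugating $\pi_i(S_1)$ to $\pi_i(S_2)$ inside each linear factor $H_i$ separately, and then assembling the conjugators componentwise.

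I expect the main obstacle to be uniqueness of the conjugacy class of Levi subgroups in each linear definable group $H_i < \GL(n_i, \mathcal{R}_i)$. My plan here is to pass to the Lie algebra via Remark~\ref{linliealg}: a definable Levi subgroup of $H_i$ corresponds to a Levi subalgebra of $\Lie(H_i) \subseteq \mathfrak{gl}(n_i, \mathcal{R}_i)$, and classical Malcev--Harish-Chandra theory over $\R$ says that any two Levi subalgebras of a fixed Lie subalgebra of $\mathfrak{gl}_n$ are conjugate by $\exp(\ad X)$ for some $X$ in the nilradical. Since $\ad X$ is nilpotent, the exponential is polynomial in $X$, so the entire assertion---that any two semisimple complements to the solvable radical in a fixed Lie subalgebra are conjugate by such a truncated exponential---is first-order in the language of ordered fields, and hence transfers from $\R$ to $\mathcal{R}_i$ by Tarski--Seidenberg. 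The delicate points I anticipate are the first-order formulation (so that the conjugating element genuinely lies in $H_i$) and the verification that the definable Levi subgroup of $H_i$ is recovered from its Lie algebra, so that Lie-algebraic conjugation lifts to group-level conjugation of the Levi subgroups themselves.
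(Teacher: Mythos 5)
Your outer reductions are essentially correct and track the paper's own argument, in places more cleanly: the paper treats the infinite-centre case by splitting into $Z(G)^0 = R$ and $Z(G)^0 \subsetneq R$ and re-applying induction inside $Z(G)^0S^g$, whereas your single perfectness lift (Fact \ref{semisperf} plus Remark \ref{decder}, applied to $S_2\,Z$ and $g^{-1}S_1g\,Z$) handles both at once; and your factor-by-factor identification $S = \pi_1(S)\times\cdots\times\pi_s(S)$ makes explicit what the paper compresses into one line. One small slip: when $Z(G)$ is finite and non-trivial, $G/Z(G)$ need not be centerless --- by Fact \ref{finsubincenter} its centre is either trivial or infinite --- so your reduction lands either in the centerless case or back in the infinite-centre case; the induction still closes, since that case only invokes the hypothesis in strictly smaller dimension, but the claim ``reduces to the centerless case'' should be corrected.

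The genuine gap is the linear case, which is where all the content of this proposition lives, and your proposal leaves precisely that part as a plan. Malcev--Harish-Chandra (which in fact holds over any field of characteristic zero, so no transfer is even needed for the Lie-algebra statement) gives an automorphism $e^{\ad X}$ of $\g_i = \Lie(H_i)$ carrying $\Lie(\pi_i(S_1))$ to $\Lie(\pi_i(S_2))$, with $X$ in the nilradical. But the proposition asserts conjugacy by an element of $H_i$, and over a non-archimedean real closed field there is no exponential map, so $e^{\ad X}$ is not known to be inner: you cannot simply set $g = \exp(X)$ unless (a) $X$ is nilpotent \emph{as a matrix} --- which requires the sharper form of Malcev's theorem placing $X \in [\g_i,\mathfrak{r}_i]$, together with the classical fact that $[\g_i,\mathfrak{r}_i]$ acts nilpotently in every finite-dimensional representation, so that $\exp(X)$ is polynomial, hence definable, and $\Ad(\exp X) = e^{\ad X}$ --- and (b) $\exp(X) \in H_i$, which is a further transfer argument (available here only because Fact \ref{centerl} delivers $H_i$ definable in $\Rs_i$, i.e.\ semialgebraic, so the real Lie fact that nilpotent elements of the Lie algebra exponentiate into the group is first-order in the parameters of $H_i$). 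You would also need your second ``delicate point'': that a definably connected definable subgroup is determined by its Lie algebra (true, since two such subgroups with equal algebras intersect in full dimension and hence coincide by connectedness). None of these steps is carried out, and they \emph{are} the proof. The paper closes the same difficulty by a different mechanism: it stays entirely at the group level, quoting \cite{pps1} and \cite{pps02} to the effect that the definable Levi subgroups themselves are semialgebraic over $\Rs_{alg}$, so that the group-level conjugacy statement is a first-order sentence in the language of ordered fields transferable from $\R$, and it handles non-semialgebraic linear groups by the sandwich of \cite[4.1]{pps02} together with perfectness. Your Lie-algebra route is viable --- and would even avoid needing semialgebraicity of the Levi subgroups --- but as written it stops exactly short of producing the conjugating group element.
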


\begin{proof}

Let $G$ be a definably connected group and let $R$ be its solvable radical.  
Suppose $G = RS$ is a definable Levi decomposition of $G$. We want to show that every definable Levi subgroup $\hat{S}$ of $G$ is a conjugate of $S$. Note that $\dim S = \dim \hat{S} = \dim G - \dim R$.

We proceed by induction on $n = \dim G$. If $n = 1$ then $G$ is abelian, so $G = R$.
Let $n > 1$. Our plan is to transfer from the field of the reals the property that Levi subgroups are all conjugate.  
We can do it directly in the semialgebraic linear case by results in \cite{pps1} and \cite{pps02}, then we can reduce from the non-semialgebraic to the semialgebraic case by \cite[4.1]{pps02}, and from the non-linear to the linear case by \cite[3.1, 3.2]{pps1} and induction. In detail:\\

$(1)$. Suppose first that $G$ is \textsc{linear}: $G < \GL_n(\Rs)$ for some definable real closed field $\Rs$ and some $n \in \N$. Then by \cite{pps1} and \cite{pps02}, $S$ and $\hat{S}$ are semialgebraic over $\Rs$ with parameters in $\Rs_{alg}$.\\

Let $G$ be \underline{semial}g\underline{ebraic}, say $G = G(x)$, with $x = (x_1, \dots, x_s)$ the parameters over which $G$ is defined.  
By \cite{pps1} and \cite{pps02} (see \ref{linliealg}), the formula  

\[
\forall\, x\ [\, S, \hat{S} < G(x)\  \Rightarrow\ \exists\, g \in G(x)\ (\hat{S} \subseteq S^g) \, ]
\]

\vs \noindent
is a first ordered sentence in the language of ordered fields and it is true in the field of the real numbers, therefore it is true in \Rs\ as well.\\ 

If $G$ is \underline{non-semial}g\underline{ebraic}, then by \cite[4.1]{pps02} there are definably connected semialgebraic $G_1 \lhd G_2 < \GL_n(\Rs)$ such 
that $G_1 \lhd G \lhd G_2$ and $G_2/G_1$ is abelian. 
By \cite{hpp2} semisimple definably connected group are perfect, so they have no abelian proper quotient.
Therefore $S$ and $\hat{S}$ are both contained in $G_1$ and we can reduce to the semialgebraic case. \\

$(2)$. Let $G$ be now \textsc{not necessarily linear}. We consider the possible cases for the center of $G$:\\

If $Z(G)$ is \underline{infinite} then $Z(G)^0 \subseteq R$ is infinite too. 

\begin{enumerate}
\item[$(a)$] Let $Z(G)^0 = R$. Then $S = [S, S] = [G, G] = [\hat{S}, \hat{S}] = \hat{S}$ (see \ref{semisperf} and \ref{decder}).\\

\item[$(b)$] Let $Z(G)^0 \subsetneq R$ and let $\pi \colon G \to G/Z(G)^0$ be the canonical projection. Then
$\pi(S)$ and $\pi(\hat{S})$ are definable Levi subgroups of $G/Z(G)^0$. By induction they are conjugate in $G/Z(G)^0$, so $Z(G)^0\hat{S} = Z(G)^0S^g = G_1$, for some $g \in G$. Since $\hat{S}$ and $S^g$ are definable Levi subgroups of $G_1$, by induction
it follows that they are conjugate in $G_1$, and therefore $\hat{S}$ and $S$ are conjugate in $G$.\\

\end{enumerate}

If $G$ is \underline{centerless} then we can reduce to the linear case by \cite[3.1, 3.2]{pps1} (\ref{centerl}) and \cite[4.5]{pps02} (\ref{levilin}).\\

Let $Z(G)$ be \underline{finite} but \underline{non-trivial} and let $\pi \colon G \to G/Z(G)$ be the canonical projection. 

Since every finite normal subgroup of $G$ is contained in its center (\ref{finsubincenter}), it follows that the center of $G/Z(G)$ cannot be finite and non-trivial. So we have to check the two cases where $\bar{G}:= G/Z(G)$ is centerless or with infinite center:\\

\begin{enumerate}
\item[-] If $\bar{G}$ is centerless then $\pi(S)$ and
$\pi(\hat{S})$ are conjugate in $\bar{G}$ by the linear case. Thus again $Z(G)\hat{S} = Z(G)S^g = G_1$ for some 
$g \in G$. It follows that $G_1^0 = \hat{S} = S^g$.\\

\item[-] If the center of $\bar{G}$ is infinite then $\pi(S)$ and $\pi(\hat{S})$ are conjugate in $\bar{G}$ for 
the case with infinite center. Again $Z(G)\hat{S} = Z(G)S^g = G_1$ for some 
$g \in G$ and $G_1^0 = \hat{S} = S^g$. 
\end{enumerate}
\end{proof}

%%%%%%%%%%%%%%%%%%%%%%%%%%%%%%%%%%%%%%%%%%%%%%%%%%%%%%%%%%%%%%%%%%%%%%%%%%%%%%%%%%%%%%%%%%%%%%%%%%%%%%%%%%%%%%%%%%%%%%%%%%%%%%%%%%%%%%%%%%%%%%%%%%%%%%%%%%%%%%%%%%%%%%%%%%%%%%%%%%%%%%%%%%%%%%%%%%%%%%%%%%%%%%%%%%%%%%%%%%%%%%%%%%%%%%%%%%%%%%%%%%%%%%%%%%%%%%%%%%%%%%%%%%%%%%%%%%%%%%%%%%%%%%%%%%%%%%%%%%%%%%%%%%%%%%%%%%%%%%%%%%%%%%%%%%%%%%%%%%%%%%%%%%%%%%%%%%%%%%%%%%%%%%%%%%%%%%%%%%%%%%%%%%%%%%%%%%%%%%%%%%%%%%%%%%%%%%%%%%%%%%%%%%%%%%%%%%%%%%%%%%%
\vs
\section{Definable groups of matrices}

The main result of this section is that every definably connected linear group $G$ can be decomposed as
$G = KH$, for some definably compact subgroup $K$ and some definable torsion-free subgroup $H$ (\ref{ctflin}). As we already observed in \ref{duacomptf}, since $K \cap H = \{e\}$, it follows that $K$ is a maximal definably compact subgroup and $H$
is a maximal definable torsion-free subgroup. \\

Definable linear groups are studied by Peterzil, Pillay and Starchenko in \cite{pps02}.
Among their results, together with \cite[1.41]{pps02} (see \ref{levilin}) we will use in an essential way the following:

\begin{fact} \emph{(\cite{pps02}).} \label{linab} 
Every abelian definably connected linear group is definably isomorphic to
\[
\SO_2(M)^k\ \times\ (\Rpl)^s\ \times (\Rx)^p
\]

\vs \noindent
for some integers $k, s, p \geq 0$, where \Rpl\ denotes the additive group $(M, +)$ and  \Rx\ the multiplicative group $(M^{>0}, \cdot)$. \\
\end{fact}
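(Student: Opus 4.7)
My plan is to reduce to an ambient real closed field, put the group in a normal form compatible with the Jordan decomposition, and read off the three factors from the compact, split, and unipotent parts.

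First I would fix, using the linearity hypothesis, a real closed field $\Rs$ definable in $\M$ and identify $G$ with its definable image inside $\GL_n(\Rs)$ for some $n$. Everything from now on takes place in this semialgebraic setting. Since $G$ is abelian, for every $g \in G$ the multiplicative Jordan decomposition $g = g_s g_u$ (semisimple times unipotent, commuting) makes sense and is unique; because $G$ is abelian, the assignments $g \mapsto g_s$ and $g \mapsto g_u$ are group homomorphisms. I would argue that their images $G_s$ and $G_u$ are definable subgroups (the Jordan components are defined by a first-order condition on the characteristic polynomial and eigenspace structure, which transfers from $\R$), that they commute, and that $G = G_s G_u$ with $G_s \cap G_u = \{e\}$, so $G \cong G_s \times G_u$ definably. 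Because $G$ is definably connected, so are $G_s$ and $G_u$.

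For $G_u$, I would invoke the standard fact that a connected commutative unipotent matrix group over a field of characteristic zero is isomorphic, via the truncated logarithm/exponential (which is a polynomial map on the nilpotent matrices $g - I$), to its Lie algebra. After definably conjugating into upper triangular unipotent form (possible because commuting unipotents can be simultaneously triangularized, and the conjugating matrix can be read off from the common kernels of the $g - I$), the result is a definable isomorphism $G_u \cong (\Rs,+)^s$ for $s = \dim G_u$.

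For the semisimple part $G_s$, I would simultaneously ``diagonalize'' the abelian family as much as is possible over a real closed field. Over $\R$ this is the spectral theorem for commuting semisimple matrices: after a definable change of basis the group sits inside a product of $1 \times 1$ blocks (entries in $\Rs^{\times}$) and $2 \times 2$ blocks of the form $\begin{pmatrix} a & -b \\ b & a \end{pmatrix}$ (acting as $a + b\sqrt{-1}$ on $\Rs[\sqrt{-1}]^{2}$); this whole statement is first order over $\R_{alg}$ and hence transfers to $\Rs$. In each $2 \times 2$ block one writes the matrix as a positive scalar times an element of $\SO_2(\Rs)$ via the polar decomposition $(a,b) \mapsto (\sqrt{a^2+b^2},\,(a,b)/\sqrt{a^2+b^2})$, which is definable. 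Combining, $G_s$ embeds into a product of copies of $\SO_2(\Rs)$ and copies of $\Rs^{\times}$. Definable connectedness of $G_s$ rules out the $\{\pm 1\}$ component of $\Rs^{\times}$, leaving $\Rs^{>0} = \Rx$, and picks out the connected component $\SO_2(\Rs)$ on each compact factor. Collecting factors gives $G_s \cong \SO_2(\Rs)^{k} \times (\Rx)^{p}$, and together with $G_u \cong (\Rpl)^{s}$ we obtain the desired decomposition.

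The main obstacle I expect is the definability and simultaneous normal form of the Jordan/spectral decomposition across the whole family $G$: producing a single definable change of basis that works for every element of $G$, rather than pointwise. I would handle this by first working out the common eigenspace/weight space decomposition of $G_s$ (each weight space is a definable subspace, built as an intersection of kernels of polynomials in finitely many generators, using dcc), then refining within each $2$-dimensional real weight space to a $(\SO_2 \times \Rs^{>0})$-stable basis; after this preparation, the polar and log steps are routine definable bijections.
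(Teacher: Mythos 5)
The paper itself gives no proof of this statement -- it is quoted as a Fact from \cite{pps02} -- so your proposal has to be judged on its own merits. The skeleton you chose (Jordan decomposition into semisimple and unipotent parts, truncated logarithm for the unipotent part, simultaneous block-diagonalization plus polar decomposition for the semisimple part) is the standard one, but at both places where you actually produce a direct product you assert exactly the point that carries all the difficulty. The claim ``$G = G_s G_u$ with $G_s \cap H_u = \{e\}$, so $G \cong G_s \times G_u$ definably'' does not follow from the definability of $g \mapsto g_s$ and $g \mapsto g_u$: those maps only embed $G$ as a subdirect product of $G_s \times G_u$, and equality $G = G_s G_u$ is equivalent to $G$ containing the Jordan components of its elements ($G_s \subseteq G$), which is a Chevalley-type theorem for \emph{algebraic} groups and is not automatic for definable subgroups of $\GL_n$. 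By Goursat's lemma, what must be excluded is a nontrivial definable isomorphism between a quotient of $G_s$ (multiplicative or circle type) and a quotient of $G_u$ (a vector group), i.e.\ one needs that the pure real closed field admits no semialgebraic logarithm. Your argument never uses semialgebraicity at this step, and it cannot be repaired without it: in an \ominimal\ expansion of $\R$ by $\exp$, the abelian definably connected group $G=\bigl\{\bigl(\begin{smallmatrix} e^t & 0 & 0\\ 0 & 1 & t\\ 0 & 0 & 1 \end{smallmatrix}\bigr) : t\in\R\bigr\}$ satisfies every fact you invoke (definability of the Jordan maps, $G_s\cap G_u=\{e\}$, the homomorphism property), yet $G$ is one-dimensional while $G_sG_u$ is two-dimensional, so $G \neq G_sG_u$. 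This group is of course not semialgebraic, hence not ``linear'' in the paper's sense; the point is that your step makes no use of that hypothesis, so the missing lemma (triviality of definable homomorphisms between \Rx, \Rpl\ and $\SO_2(\Rs)$ in the pure field structure) is not a routine verification but the heart of the theorem.

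The same gap recurs at the end. An embedding of $G_s$ into a product of copies of $\SO_2(\Rs)$ and $\Rs^{\times}$ does not yield $G_s \cong \SO_2(\Rs)^k \times (\Rx)^p$: a definably connected subgroup of a product need not be a product of subgroups of the factors. You must again rule out graph-like subgroups (no nontrivial definable homomorphism from \Rx\ to $\SO_2(\Rs)$ or conversely, no definable isomorphism between quotients of the compact part and of the split part), and you also need that definably connected semialgebraic subgroups of $(\Rx)^a$ and of $\SO_2(\Rs)^b$ are themselves definably isomorphic to powers of \Rx, respectively of $\SO_2(\Rs)$ (``rational'' subtori -- a fact about definable endomorphisms, namely that they are $x \mapsto x^q$ with $q$ rational, resp.\ integer characters). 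A parallel remark applies to the unipotent part, where identifying $\log G_u$ with $(\Rpl)^s$ uses that definable connected subgroups of a vector group over the pure field are linear subspaces. Your closing paragraph worries about uniformity of the normal form across the family, which is indeed a point to handle, but it is a secondary one; the essential missing ingredient throughout is the splitting, i.e.\ the nonexistence of definable homomorphisms linking the three basic one-dimensional groups over a pure real closed field.
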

%%%%%%%%%%%%%%%%

%%%%%%%%%%%%%%%%%
%%%%%%%%%%%%%%%%%
\begin{cor}\label{lin0}
Every linear $0$-group is definably compact.
\end{cor}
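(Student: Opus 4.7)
The plan is to invoke the structure theorem for abelian definably connected linear groups (Fact \ref{linab}) and then use the defining property of a $0$-group to eliminate any torsion-free direct factor.

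First, I would note that by Fact \ref{maxSylconj}$(b)$ every $0$-group is abelian and definably connected, so since $G$ is assumed linear the hypotheses of Fact \ref{linab} are met. Hence $G$ is definably isomorphic to
\[
\SO_2(M)^k\ \times\ (\Rpl)^s\ \times\ (\Rx)^p
\]
for some integers $k,s,p \geq 0$. In this decomposition the factor $\SO_2(M)^k$ is definably compact (a finite product of definable circle groups), while $(\Rpl)^s \times (\Rx)^p$ is a definable torsion-free subgroup (the additive group and the multiplicative group of positive elements of a real closed field are both torsion-free).

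Next, I would let $K \cong \SO_2(M)^k$ be the definably compact direct factor inside $G$ and consider the quotient $G/K$, which is definably isomorphic to $(\Rpl)^s \times (\Rx)^p$, hence torsion-free. If $s+p \geq 1$ then $K$ is a proper definable subgroup of $G$, and Fact \ref{eu} gives $E(G/K) = \pm 1 \neq 0$, contradicting the assumption that $G$ is a $0$-group. Hence $s = p = 0$ and $G \cong \SO_2(M)^k$ is definably compact. (The degenerate case $k = 0$ is handled by the same argument with $K = \{e\}$, giving $E(G) = \pm 1$ and the same contradiction, unless $G$ is trivial, in which case the statement is immediate.)

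There is no real obstacle here: the structure theorem \ref{linab} does essentially all the work, and the $0$-group hypothesis is used in a single step, precisely to rule out any non-compact direct summand.
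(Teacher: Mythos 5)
Your proof is correct and follows essentially the same route as the paper: apply Fact \ref{maxSylconj}$(b)$ to get that $G$ is abelian and definably connected, invoke Fact \ref{linab} for the decomposition $\SO_2(M)^k \times (\Rpl)^s \times (\Rx)^p$, and use the $0$-group property to force $s = p = 0$. The paper leaves the last step implicit, whereas you spell out the witnessing subgroup $K \cong \SO_2(M)^k$ and the computation $E(G/K) = \pm 1$ via Fact \ref{eu}; this is exactly the intended argument.
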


\begin{proof}
Let $G$ be a linear $0$-group.
By \ref{maxSylconj}$(b)$ and Fact \ref{linab}, there are non-negative integers $k, s, p$ such that $G$ is definably isomorphic to $\SO_2(M)^k\ \times\ (\Rpl)^s\ \times (\Rx)^p$. Since for every definable subgroup $H$ of $G$ we have $E(G/H) = 0$, it follows that $s = p = 0$ and $G$ is definably compact.
\end{proof}
%%%%%%%%%%%%%%%%%%%%%%
\begin{prop} \label{lintorfreenoab}
Every linear definable torsion-free group is a product of definable $1$-dimensional subgroups.
\end{prop}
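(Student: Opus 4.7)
The plan is to proceed by induction on $d = \dim G$, with the base case $d = 1$ being immediate.

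For the inductive step, let $G < \GL_n(\Rs)$ be a linear definable torsion-free group of dimension $d > 1$. Fact~\ref{torfree}(c) yields a normal definable subgroup $H$ of codimension one in $G$; as a subgroup of the torsion-free group $G$ it is torsion-free, and as a subgroup of $\GL_n(\Rs)$ it is linear, so by induction $H = H_1 H_2 \cdots H_{d-1}$ for one-dimensional definable subgroups $H_i$. It then suffices to exhibit a single one-dimensional definable subgroup $K < G$ with $K \not\subseteq H$: by Fact~\ref{torfree}(a) such a $K$ is definably connected and torsion-free, so the proper closed definable subgroup $K \cap H$ of $K$ is trivial, whence $\dim(KH) = d$; because $H$ is normal, $KH$ is a definably connected definable subgroup of $G$ of full dimension, forcing $KH = G$ and therefore $G = K H_1 H_2 \cdots H_{d-1}$.

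Producing $K$ is the heart of the matter and is where the linearity hypothesis enters essentially. I would exploit the embedding $G < \GL_n(\Rs)$ and work with the Lie algebra $\g = \Lie(G) \subseteq M_n(\Rs)$ and its codimension-one ideal $\h = \Lie(H)$, picking $X \in \g \setminus \h$ and realizing an associated one-parameter subgroup as a definable one-dimensional subgroup of $G$ transverse to $H$. When $X$ is nilpotent, $t \mapsto \exp(tX)$ is polynomial in $t$ and hence definable. In general I would pass to the Zariski closure $\bar G$ of $G$ inside $\GL_n(\Rs)$, which is a connected solvable algebraic group by Fact~\ref{torfree}(b); Lie-Kolchin makes $\bar G$ triangularizable, and the torsion-freeness of $G$ forces the tori arising to be split, yielding semialgebraic one-parameter subgroups in diagonal form (copies of $\Rx$) and in unipotent form (copies of $\Rpl$), from which one extracts the required transverse $K$.

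The main obstacle is rigorously passing from a Lie-algebra element transverse to $\h$ to a genuinely definable one-dimensional subgroup of $G$, without assuming an exponential function is available in the \ominimal structure. I would handle this via a Tarski-Seidenberg transfer from the real case: the Lie-Kolchin decomposition of $\bar G$ is semialgebraic in the defining parameters, and the existence of a one-parameter subgroup transverse to a codimension-one ideal is a first-order statement about those parameters; transferring the corresponding Lie-theoretic fact that a torsion-free connected solvable linear Lie group is a product of one-parameter subgroups then yields the desired $K$ over~$\Rs$.
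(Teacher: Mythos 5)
Your inductive skeleton coincides with the paper's: induct on $d = \dim G$, take the codimension-one normal definable subgroup $H$ from \ref{torfree}$(c)$, and reduce to producing a single definable $1$-dimensional subgroup $K \not\subseteq H$; that reduction (triviality of $K \cap H$ and $KH = G$) is correct. The genuine gap is in the step you yourself call the heart of the matter, and the repair you propose does not close it. First, Lie--Kolchin is a theorem over algebraically closed fields: over a real closed field a connected solvable algebraic group need not be triangularizable ($\SO_2(\Rs)$ already fails), so it cannot be applied to the Zariski closure $\bar{G}$; what you would actually need is the splitting of a connected solvable group over a perfect field as a semidirect product of a maximal torus and its unipotent radical, plus an argument that torsion-freeness kills the anisotropic part of the torus --- and there one must mind that Zariski closure can create torsion (the closure of $(\Rs^{>0},\cdot)$ in $\GL_1(\Rs)$ is all of $\GL_1(\Rs)$, which contains $-1$). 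Second, and decisively, the transfer is not available as stated: ``there exists a one-parameter subgroup transverse to $\h$'' is not a first-order sentence in the language of ordered fields, because one-parameter subgroups of $\GL_n(\R)$ are in general not semialgebraic --- the image of $t \mapsto \mathrm{diag}(e^t, e^{\pi t})$ already is not. To transfer anything you must restrict the candidates to a fixed semialgebraic family and then prove, over $\R$, that some member of that family is transverse to $H$; the Lie-theoretic fact you invoke does not give this, precisely because the one-parameter subgroups it produces may be non-semialgebraic. Proving that semialgebraic statement over $\R$ is essentially the proposition itself, so at the crucial point the argument is circular.

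For contrast, the paper closes this step with a short trick that avoids Lie algebras, Zariski closures and transfer entirely: pick $x \in G \setminus G_1$ and set $Z := Z(C_G(x))$, the center of the centralizer of $x$. Then $Z$ is an abelian definable linear subgroup containing $x$, torsion-free because it sits inside $G$, hence definably connected by \ref{torfree}$(a)$; Fact \ref{linab} then makes $Z$ a direct product of copies of $\SO_2(M)$, $\Rpl$ and $\Rx$, and torsion-freeness rules out the $\SO_2(M)$ factors. So $Z = Z_1 \times \dots \times Z_s$ with each $Z_i$ definable and $1$-dimensional, and since $x \in Z \setminus G_1$ some $Z_i \not\subseteq G_1$: this $Z_i$ is exactly the subgroup $K$ you need, and your own concluding argument finishes the proof. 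If you insist on the algebraic-group route, the workable version runs the structure theory directly over $\Rs$ (real closed fields are perfect, so no transfer is required), but it is substantially heavier than the centralizer trick.
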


\begin{proof}
Let $\dim G = n$. We want to find 1-dimensional definable torsion-free subgroups $A_1, \ldots, A_n$ such that $G = A_1 \cdots A_n$. 

We proceed by induction on $n$. If $n = 1$ there is nothing to prove, so suppose $n > 1$. 

Let $G_1$ be a normal definable subgroup of $G$ such that $\dim G_1 = n - 1$ (\ref{torfree}$(c)$). We want to find a $1$-dimensional definable complement of $G_1$ and conclude using induction on $G_1$. 

Let $x \notin G_1$ and $Z:= Z(C_G(x))$, the center of the centralizer of $x$ in $G$. Being 
an abelian subgroup of $G$, $Z$ is by \ref{linab} a direct product of $1$-dimensional definable subgroups $Z_1, \dots, Z_s$. Since $x \in Z\ \backslash\ G_1$, it follows that $Z \not \subset G_1$ and so there is some $i \in \{1, \dots, s\}$ such that $Z_i \not \subset G_1$.  

The definable $1$-dimensional torsion-free group 
$Z_i$ does not contain any proper definable subgroup and therefore $Z_i \cap G_1 = \{e\}$. Moreover $\dim G - \dim G_1 = 1$ and $G$ is definably connected, thus $G = Z_iG_1$. By induction $G_1 = A_2 \cdots A_n$ and therefore $G = Z_iA_2 \cdots A_n$.
\end{proof}

\begin{rem}
Actually the proof of the proposition above shows the more general fact that if every abelian definable subgroup of a definable torsion-free group $G$ splits in the product of definable $1$-dimensional subgroups, so
does $G$.
\end{rem}

%%%%%%%%%%%%%%%%%%%
\begin{prop}\label{linsol}
Let $G$ be a solvable definably connected linear group 
and let $N$ be its maximal normal definable torsion-free subgroup. Then $G$ is definably isomorphic to a definable semidirect product $\SO_2(M)^k \ltimes N$. 
\end{prop}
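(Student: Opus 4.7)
The plan is to assemble the result from the earlier theorems, using linearity only at the very end to identify the shape of a $0$-Sylow.

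First I would let $A$ be a $0$-Sylow of $G$. Since $G$ is linear, $A$ is a linear definable subgroup, and by Corollary \ref{lin0} every linear $0$-group is definably compact; hence $A$ is definably compact. Every other $0$-subgroup of $G$ sits inside some $0$-Sylow (Fact \ref{maxSylconj}(c)), and all $0$-Sylows are conjugate to $A$ (Fact \ref{maxSylconj}(d)), so every $0$-subgroup of $G$ is definably compact. Proposition \ref{solvsempr}(b) then tells us that $N$ admits a definable cofactor in $G$; in fact $A$ itself is such a cofactor, because $G = AN$ by Theorem \ref{prodsolv} and $A \cap N = \{e\}$ (by Corollary \ref{maxtfA}, $A \cap N$ is the maximal definable torsion-free subgroup of the definably compact group $A$, and a definably compact group has no nontrivial torsion-free definable subgroup by Remark \ref{duacomptf}). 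This gives a definable semidirect decomposition $G = A \ltimes N$.

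It remains to identify $A$. Being a $0$-group, $A$ is abelian and definably connected (Fact \ref{maxSylconj}(b)), and it is linear. By Fact \ref{linab},
\[
A\ \cong\ \SO_2(M)^k\ \times\ (\Rpl)^s\ \times\ (\Rx)^p
\]
for some $k,s,p \geq 0$. Neither $\Rpl$ nor $\Rx$ is definably compact, so the fact that $A$ is definably compact forces $s = p = 0$, and therefore $A$ is definably isomorphic to $\SO_2(M)^k$. Combining this with the previous paragraph yields the required definable isomorphism $G \cong \SO_2(M)^k \ltimes N$.

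There is no real obstacle; the whole statement is a packaging of earlier results, and the only point where I need to be careful is to justify that $A$ is itself a definable cofactor (rather than merely invoking \ref{solvsempr}(b) to obtain some abstract cofactor and then hunting for a definable one), which follows cleanly from \ref{prodsolv} together with \ref{maxtfA} and \ref{duacomptf}.
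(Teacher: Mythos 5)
Your proof is correct and follows essentially the same route as the paper: decompose $G = AN$ via Theorem \ref{prodsolv}, identify the $0$-Sylow $A$ as definably compact and definably isomorphic to $\SO_2(M)^k$ via \ref{lin0} together with \ref{linab}, and conclude $A \cap N = \{e\}$ from definable compactness (your detour through \ref{solvsempr}$(b)$ and \ref{maxtfA} is redundant but harmless). The only omission is the degenerate case where $G$ is torsion-free and hence has no $0$-Sylow at all --- there the statement holds trivially with $k = 0$ and $G = N$, which is exactly what the opening clause of the paper's proof ("If $G$ is not torsion-free\dots") is there to dispose of.
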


\begin{proof}
If $G$ is not torsion-free, by \ref{prodsolv} $G = AN$ for every 0-Sylow $A$ of $G$. By \ref{linab} and \ref{lin0} $A$ is definably compact and definably isomorphic to $\SO_2(M)^{\dim A}$.
Therefore $A \cap N = \{e\}$ and the thesis follows.
\end{proof}
%%%%%%%%%

Proposition \ref{linsol} can be viewed as an \ominimal analogue of the fact that every solvable connected linearizable real Lie group can be decomposed into the semidirect product $T \ltimes N$ of a Lie subgroup $T$ isomorphic to $\SO_2(\R)^d$, for some $d \in \N$, and a contractible normal Lie subgroup $N$
%which splits in $1$-dimensional subgroups 
(see for example \cite[7.1]{oni3}).\\

%%%%%%%%%%%%%%%%%
Generalizing Theorem \ref{solvsempr} about solvable groups,
we will see that any definably connected extension of a definably compact group by a definable torsion-free group splits abstractly (\ref{extcomp}). In the case of linear groups we prove now that every such an extension splits definably. In both proofs we 
will use  
the following lemma which gives us a sufficient condition to find a $0$-Sylow in the normalizer of a definable subgroup:

\begin{lem} \label{binorm}
Let $N < H < G$ be definable groups, with $N, H$ both normal in $G$, $N$ torsion-free, $E(G) = 0$.
Suppose that $N$ has a definable cofactor $K$ in $H$ such that every conjugate of $K$ in $G$ is a conjugate of $K$ in $H$. Then there is some $0$-Sylow of $G$ contained in $N_G(K)$, the normalizer of $K$ in $G$.
\end{lem}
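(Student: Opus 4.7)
The plan is to run a Frattini-style argument and then finish with a direct Euler characteristic computation using Facts 3.3, 3.7, and 4.15.

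First I would establish the Frattini equation $G = H\cdot N_G(K)$. Given $g \in G$, the hypothesis produces $h \in H$ with $K^g = K^h$, so $gh^{-1} \in N_G(K)$ and $g \in N_G(K)\cdot h \subseteq N_G(K)\cdot H$. As usual this produces a definable bijection $G/N_G(K) \cong H/N_H(K)$ (second isomorphism at the level of coset spaces), where $N_H(K) = H \cap N_G(K)$.

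Next I would compute $E(G/N_G(K))$ by computing $E(H/N_H(K))$. Every $h \in H$ writes uniquely as $h = nk$ with $n \in N$, $k \in K$ (since $H = NK$, $N \cap K = \{e\}$), and because $N \lhd G$ we get $hKh^{-1} = nKn^{-1}$, so $nk \in N_H(K)$ iff $n \in N \cap N_G(K)$. Hence $N_H(K) = (N \cap N_G(K))\cdot K$ with trivial intersection, which gives definable bijections $N_H(K)/K \longleftrightarrow N \cap N_G(K)$ and $H/K \longleftrightarrow N$. Both $N$ and $N \cap N_G(K)$ are torsion-free (as definable subgroups of $N$), so by Fact 3.7 their Euler characteristics are $\pm 1$. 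Using Fact 3.3(b) twice yields
\[
E(H/N_H(K)) = \frac{E(H/K)}{E(N_H(K)/K)} = \pm 1,
\]
so $E(G/N_G(K)) = \pm 1$.

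Now I finish by Sylow-type reasoning inside $N_G(K)$. By Fact 3.3(a), $E(G) = E(G/N_G(K))\cdot E(N_G(K)) = \pm E(N_G(K))$, and since $E(G) = 0$ by hypothesis, $E(N_G(K)) = 0$. Fact 4.15(a),(c) then provides a $0$-Sylow $A$ of $N_G(K)$, and by Fact 4.15(e), $E(N_G(K)/A) \neq 0$. Therefore
\[
E(G/A) = E(G/N_G(K))\cdot E(N_G(K)/A) = \pm E(N_G(K)/A) \neq 0,
\]
and since $A$ is already a $0$-group, Fact 4.15(e) in the other direction shows $A$ is a $0$-Sylow of the ambient group $G$ — and it sits inside $N_G(K)$ by construction.

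The main obstacle, in my view, is purely notational: one has to be careful that the set-theoretic identifications used to compute $E(H/N_H(K))$ really are definable bijections rather than mere group isomorphisms (the quotient $H/N_H(K)$ is only a homogeneous space, not a group, so the identification with $N/(N \cap N_G(K))$ is not a homomorphism but a bijection of definable sets arising from the unique decomposition $h = nk$). Once that bookkeeping is done, the Euler characteristic multiplicativity gives the result with no further work.
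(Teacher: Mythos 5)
Your proof is correct and follows essentially the same route as the paper's: both use the Frattini-style observation to build a definable bijection identifying $G/N_G(K)$ with a coset space of the torsion-free group $N$ (the paper passes directly to $N/N_N(K)$ by noting $K^{nk}=K^{n}$, whereas you route through $H/N_H(K)$ and split $N_H(K)=(N\cap N_G(K))K$, computing the same value), so that $E(G/N_G(K))=\pm 1\neq 0$. Your concluding step is precisely the paper's Lemma \ref{charsyl} re-derived inline from \ref{euprodsub}, \ref{eu} and \ref{maxSylconj} (which the paper simply cites); the only blemish is that your fact numbering does not match the paper's labels.
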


\begin{proof} 
By hypothesis for every $g \in G$ there is some $x \in N$ such that $K^g = K^x$. This gives a well-defined bijective definable map  

\[
G/N_G(K)\ \longleftrightarrow\ N/N_N(K).
\]

\vs \noindent
Since $N$ is torsion-free, by \ref{torfree}$(d)$ and \ref{eu} it follows that $E(N/N_N(K)) = \pm 1 = E(G/N_G(K))$, and by \ref{charsyl} every $0$-Sylow of
$N_G(K)$ is a $0$-Sylow of $G$ as well.
\end{proof}

%%%%%%%%%%%%%%%%%
\begin{prop}\label{linsplit}
Let $G$ be a definably connected linear extension of a definably compact group $K$ by a definable torsion-free group $N$. Then the definable exact sequence
\[
1\ \longrightarrow\ N \ \stackrel{i}{\longrightarrow}\ G \stackrel{\pi}{\longrightarrow}\ K\ \longrightarrow\ 1  \\
\]

\vs \noindent
splits definably, i.e. $N$ has a definable cofactor in $G$. 
\end{prop}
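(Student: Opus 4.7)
The strategy combines three ingredients from the preceding sections: the definable Levi decomposition for linear groups (Fact \ref{levilin}), the explicit splitting of the solvable radical (Proposition \ref{linsol}), and Lemma \ref{binorm}, which locates a 0-Sylow in the normalizer of a well-chosen definable subgroup. Throughout, let $\pi \colon G \to G/N$ denote the canonical projection.

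First, by Fact \ref{levilin} write $G = RS$ where $R$ is the solvable radical and $S$ is a semisimple definably connected subgroup with $R \cap S$ finite. Because $G/N$ is definably compact, so is its definable subgroup $R/N = RN/N$; hence by Proposition \ref{linsol} we may write $R = T \ltimes N$ with $T \cong \SO_2(M)^k$ definably compact. I then check the hypothesis of Lemma \ref{binorm} with $H = R$ and $K = T$: for each $g \in G$ the conjugate $T^g$ lies in $R^g = R$ and is itself a definable cofactor of $N$ in $R$, hence by Proposition \ref{prodsolv} and Fact \ref{torusis0gr} a 0-Sylow of $R$, and all 0-Sylows of $R$ are $R$-conjugate by Fact \ref{maxSylconj}$(d)$. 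Lemma \ref{binorm} then produces a 0-Sylow $A$ of $G$ contained in $N_G(T)$; by Corollary \ref{lin0} $A$ is definably compact, so $A \cap N = \{e\}$ by \ref{duacomptf}, and by Lemma \ref{syltor}$(a)$ the image $\pi(A)$ is a maximal torus of $G/N$.

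Second, I connect $A$ with the semisimple piece. The conjugation action of $S$ on the definable torus $R/N$ is a definable group homomorphism $S \to \Aut(R/N)$; since the definable automorphism group of a torus is discrete and $S$ is definably connected, the action is trivial. Thus $[S, R] \subseteq N$, so $\pi(R) = R/N$ lies in the center of $G/N$. In particular $\pi(T) \subseteq Z(G/N)$, and since every maximal torus of a definably compact definably connected group contains its central torus, $\pi(T) \subseteq \pi(A)$. Using the conjugacy inside $R$ of all cofactors of $N$, a further conjugation allows me to assume $T \subseteq A$.

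Third, I assemble the cofactor. Because $[S, T] \subseteq N$, the subgroup $SN$ sits inside the preimage of $C_{G/N}(\pi(T))$, and the linear structure together with the conjugacy of definable Levi subgroups (Proposition \ref{leviconj}) permits me to replace $S$ by a $G$-conjugate $S_1 \subseteq C_G(T)$ that centralizes $T$ on the nose. Then $A$ and $S_1$ both lie in the definable subgroup $C_G(T)^0$, and I set $H := A S_1$. The image $\pi(H)$ contains both the maximal torus $\pi(A) \supseteq \pi(R)$ and the semisimple complement $\pi(S_1) = \pi(S)$, and since $G/N = \pi(R) \pi(S)$ one gets $\pi(H) = G/N$, i.e., $HN = G$. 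Finally $H \cap N$ is contained in the finite set $S_1 \cap R$ translated by $A \cap N = \{e\}$, hence is finite, and since $N$ is torsion-free $H \cap N = \{e\}$. Therefore $H$ is the desired definable cofactor of $N$.

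The main obstacle is the third paragraph: producing a Levi $S_1$ that centralizes $T$ strictly, not merely modulo $N$, and verifying that $AS_1$ is then a subgroup. This is precisely where the linearity hypothesis is essential, because it provides a definable Levi decomposition inside $C_G(T)^0$ and the conjugacy control of Proposition \ref{leviconj} needed to align the semisimple part of $G$ with the compact torus $T$. Without linearity the same outline would yield only an abstract, non-definable cofactor.
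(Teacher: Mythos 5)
Your first two paragraphs are sound. Since $G/N\cong K$ is definably compact, $N$ is the maximal (normal) definable torsion-free subgroup of $G$ by \ref{quocomp}, $R/N$ is a definable torus, \ref{linsol} gives $R=T\ltimes N$, and the application of Lemma \ref{binorm} with $H=R$, $K=T$ is legitimate (for $g\in G$, $T^g$ is a $0$-Sylow of $R$, hence $R$-conjugate to $T$ by \ref{maxSylconj}$(d)$), producing a definably compact $0$-Sylow $A\subseteq N_G(T)$ with $\pi(A)$ a maximal torus of $G/N$. One caveat: your justification of $[S,R]\subseteq N$ via ``the definable automorphism group of a torus is discrete'' invokes a Lie fact nowhere established in the o\kern0.05em-\kern-0.05emminimal setting; the conclusion $\pi(R)\subseteq Z(G/N)$ is true, but the clean way to get it inside this paper is \ref{decomp}: the solvable radical of the definably compact group $G/N$ is $Z(G/N)^0$.

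The genuine gap is your third paragraph, exactly where you flagged it. You need a Levi subgroup $S_1$ of $G$ (not merely of $C_G(T)^0$) lying in $C_G(T)$, and your justification is that linearity ``provides a definable Levi decomposition inside $C_G(T)^0$'' plus \ref{leviconj}. This is circular: applying \ref{levilin} to $C_G(T)^0$ produces a semisimple subgroup of $C_G(T)^0$, but nothing shows it has dimension $\dim[K,K]$ --- that $C_G(T)^0$ is big enough to contain a full semisimple complement is precisely the assertion being proved, and \ref{leviconj} only says that Levi subgroups of a fixed group are conjugate \emph{when they exist}; it cannot place one inside a prescribed centralizer. A second defect: even granting $S_1\subseteq C_G(T)$, your $H=AS_1$ is never shown to be a subgroup; two subgroups lying in a common definable subgroup need not have a product that is a group. (This one is easy to repair: take $H=TS_1$, which is a group because $S_1$ centralizes $T$; then $\pi(S_1)=[K,K]$ by \ref{semisperf} and \ref{decder}, so $\pi(TS_1)=Z(K)^0[K,K]=K$, and $TS_1\cap N$ is definably compact and torsion-free, hence trivial by \ref{duacomptf}.) The first defect is the real missing idea, and note the asymmetry in the paper's Lemma \ref{binorm}: it hands you a \emph{torus normalizing a given semisimple cofactor}, never a \emph{semisimple cofactor normalizing a given torus}, which is what your outline requires. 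The workable repair is to run the construction in the paper's direction: set $G_2:=NS=\pi^{-1}([K,K])$, note that cofactors of $N$ in $G_2$ are exactly the definable Levi subgroups of $G_2$ (conjugate by \ref{leviconj}), apply \ref{binorm} to the pair $(G_2,S)$ to get a $0$-Sylow $A'\subseteq N_G(S)$, and put $T':=(\pi_{|_{A'}})^{-1}(Z(K)^0)$; then $T'$ is a cofactor of $N$ in $R$ normalizing $S$, so $T'=T^r$ for some $r\in R$ by conjugacy of $0$-Sylows of $R$, and $S_1:=S^{r^{-1}}$ is a Levi subgroup of $G$ normalized by $T$; finally $[T,S_1]\subseteq R\cap S_1$ is finite, so definable connectedness forces $[T,S_1]=\{e\}$. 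With that inserted (and $H=TS_1$), your argument closes, but as written the centralizing Levi is asserted, not obtained.
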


\begin{proof}
First note that $N$ is the maximal normal definable torsion-free subgroup of $G$ (\ref{quocomp}).
We consider the possible cases for $K$:\\

If \underline{$K$ is abelian}, then $G$ is solvable and we have already proved in \ref{linsol} that the maximal normal definable torsion-free subgroup of $G$ has a definable cofactor. \\  

If \underline{$K$ is semisim}p\underline{le}, then $N$ is the solvable radical of $G$ (\ref{solvrad}) and by \ref{levilin} 
$G$ contains a semisimple definably connected subgroup $S$ such that $G = NS$ and $N \cap S$ is finite. 
Since $N$ is torsion-free, it follows that $N \cap S = \{e\}$ and therefore $S$ is a definable cofactor of $N$.\\

If \underline{$K$ is neither abelian nor semisim}p\underline{le}, by \cite[6.4]{hpp2} the derived subgroup $[K, K]$ of $K$ is a semisimple definably connected subgroup and $K = Z(K)^0 [K, K]$. 

Let $G_2 = \pi^{-1}([K,K])$. It is definably connected because $[K, K]$ and $N$ are definably connected. By the semisimple case, the definable exact sequence

\[
1\ \longrightarrow\ N \ \stackrel{i}{\longrightarrow}\ G_2 \stackrel{\pi_{|_{G_2}}}{\longrightarrow}\ [K, K]\ \longrightarrow\ 1  \\
\]

\vs \noindent
splits definably. Let $S$ be a definable cofactor of $N$ in $G_2$, i.e. a definable subgroup of $G_2$ definably isomorphic to $[K, K]$.
We need now a definable torus $T_1$ of $G$ definably isomorphic to $Z(K)^0$
such that $T_1S$ is a subgroup. We can find such a $T_1$ in $N_G(S)$, the normalizer of $S$ in $G$.
This is because $G_2$ is a normal subgroup of $G$ (note that $[K, K]$ is normal in $K$), and any two definable cofactors
of $N$ in $G_2$ are definable Levi subgroups of $G_2$ and they are conjugate (\ref{leviconj}), therefore by Lemma \ref{binorm} the normalizer $N_G(S)$ of $S$ in $G$
 contains some $0$-Sylow of $G$. Let $A$ be one of them.  
Consider the following diagram:

\[
\begin{array}{ccc}
G & \stackrel{\pi}{\longrightarrow}\  & K\\
\cup && \cup\\
A  & \stackrel{\pi_{|_A}}{\longleftrightarrow}   & T\\
\cup && \cup\\
T_1 & \stackrel{\pi_{|_{T_1}}}{\longleftrightarrow}\  & Z(K)^0
\end{array}
\]

%\vspace{3cm}

%A_Z & \xrightarrow{\hspace{4pt} \pi_{|_{A_Z}} \hspace{4pt}}  & Z(K)^0

%\begin{align*}
%G\: \ \ \stackrel{\pi}{\longrightarrow}\  & \quad K\\
%\cup\ \quad \qquad  & \quad \cup \\
%A\: \  \ \stackrel{\pi_{|_A}}{\longleftrightarrow}\ & \quad T \\
%\cup\ \quad \qquad  & \quad \cup \\
%A_Z\: \ \stackrel{\pi_{|_{A_Z}}}{\longleftrightarrow}\ & Z(K)^0
%\end{align*}

\vs \noindent
Because $A$ is definably compact (\ref{lin0}), the restriction of $\pi$ to $A$ is a definable isomorphism between $A$ and a maximal definable torus $T$ of $K$ (\ref{syltor}).  

We claim that $T$ contains $Z(K)^0$. Since $Z(K)^0$ is a normal definable torus and all maximal tori
are conjugate (\ref{maxSylconj}$(d)$), it follows that $T$ (and any other maximal torus) contains $Z(K)^0$.

Define $T_1 = (\pi_{|_A})^{-1}(Z(K)^0)$. Then 
$T_1S$ is a definable subgroup of $G$ (because $T_1 \subset A \subset N_G(S)$) and it is definably isomorphic to $K$ by the canonical projection $\pi \colon G \to G/N$.

Since $N$ is torsion-free and $G/N$ is definably compact, it follows that every definable subgroup of $G$
which is definably isomorphic to $G/N$ is a definable cofactor of $N$.
\end{proof}
%%%%%%%%%%%%
 
\begin{theo} \label{ctflin}
Every definably connected linear group $G$ can be decomposed as 

\[
G = KH \qquad \mbox{with} \qquad K \cap H = \{e\},
\]

\vs \noindent
for some maximal definably compact subgroup $K$ and some maximal definable torsion-free subgroup $H$. 

\end{theo}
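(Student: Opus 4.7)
The plan is to reduce to two pieces already handled in the paper: the decomposition of $G/N$ furnished by Theorem~\ref{Iwagisuacca}, and the splitting of torsion-free extensions of definably compact groups in the linear case given by Proposition~\ref{linsplit}. Throughout, let $N$ denote the maximal normal definable torsion-free subgroup of $G$ and let $\pi\colon G\to G/N$ be the canonical projection.

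First, I would apply Theorem~\ref{Iwagisuacca} to the definably connected group $G/N$: there exist a maximal definably compact (definably connected) subgroup $\bar{K}<G/N$ and a definable torsion-free complement $\bar{H}<G/N$ with $G/N=\bar{K}\bar{H}$ and $\bar{K}\cap\bar{H}=\{e\}$. Next, I would set $G_1:=\pi^{-1}(\bar{K})$ and $H:=\pi^{-1}(\bar{H})$. Both are definable and, since $G<\GL_n(\Rs)$, both are linear. The group $G_1$ is a definable extension
\[
1\ \longrightarrow\ N\ \longrightarrow\ G_1\ \stackrel{\pi_{|_{G_1}}}{\longrightarrow}\ \bar{K}\ \longrightarrow\ 1
\]
of the definably compact group $\bar{K}$ by the definable torsion-free group $N$. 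By Proposition~\ref{linsplit} this sequence splits definably, so there is a definable subgroup $K<G_1$ with $G_1=N\rtimes K$ and $K$ definably isomorphic to $\bar{K}$; in particular $K$ is definably compact.

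Second, I would observe that $H$ is torsion-free: any element $x\in H$ of finite order would satisfy $\pi(x)\in\bar{H}$ with finite order, forcing $\pi(x)=e$ (as $\bar{H}$ is torsion-free) and hence $x\in N$, but $N$ is torsion-free. So $H$ is a definable torsion-free subgroup of $G$ containing $N$. The plan is then to verify $G=KH$ directly: for any $g\in G$ write $\pi(g)=\bar{k}\bar{h}$ with $\bar{k}\in\bar{K}$, $\bar{h}\in\bar{H}$; choose $k_0\in K$ with $\pi(k_0)=\bar{k}$ (possible since $\pi_{|_K}\colon K\to\bar{K}$ is a definable isomorphism) and any $h_0\in H$ with $\pi(h_0)=\bar{h}$. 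Then $\pi(k_0^{-1}g h_0^{-1})=e$, so $k_0^{-1}gh_0^{-1}\in N\subseteq H$ and hence $g\in KH$. For the intersection, if $x\in K\cap H$ then $\pi(x)\in\bar{K}\cap\bar{H}=\{e\}$ forces $x\in K\cap N=\{e\}$, since $K$ is a definable cofactor of $N$ in $G_1$.

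Having established $G=KH$ with $K\cap H=\{e\}$, $K$ definably compact and $H$ definable torsion-free, maximality is automatic by Remark~\ref{duacomptf}. There is no serious obstacle in this plan: the two key ingredients (the decomposition downstairs and the splitting of the compact-by-torsion-free extension in the linear setting) are already in hand; the only delicate point is ensuring that the lifted $H$ is torsion-free, which follows immediately from the torsion-freeness of both $N$ and $\bar{H}$.
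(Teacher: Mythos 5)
Your proposal is correct and follows essentially the same route as the paper's own proof: apply Theorem~\ref{Iwagisuacca} to $G/N$, pull back the compact factor, split the resulting extension via Proposition~\ref{linsplit}, and take $H$ to be the preimage of the torsion-free factor. The only differences are cosmetic (you verify torsion-freeness of $H$ and the product decomposition by direct elementary arguments, where the paper cites Fact~\ref{eu} and Remark~\ref{duacomptf}), so there is nothing substantive to add.
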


\begin{proof}
Let $N$ be the maximal normal definable torsion-free subgroup of $G$ and let $\pi \colon G \to G/N$ be the canonical projection. By \ref{Iwagisuacca} $G/N = K_1H_1$, for some definably compact subgroup $K_1$ and some definable torsion-free subgroup $H_1$.
 
Let $G_1 = \pi^{-1}(K_1)$. By the previous proposition, the definable exact sequence

\[
1\ \longrightarrow\ N \ \stackrel{i}{\longrightarrow}\ G_1 \stackrel{\pi}{\longrightarrow}\ K_1\ \longrightarrow\ 1  \\
\]

\vs \noindent
splits definably. Let $K$ be a definable cofactor of $N$ in $G_1$ (so definably isomorphic to $K_1$) and let $H = \pi^{-1}(H_1)$. Then $H$ is torsion-free (\ref{eu}) and $G = KH$. Maximality and trivial intersection follow (see \ref{duacomptf}).
\end{proof}

\begin{rem}
For $H$ as above,
by \ref{lintorfreenoab} if $\dim H = s$ there are definable $1$-dimensional subgroups $H_1, \dots, H_s$ such that $H = H_1 \cdots H_s$. So the previous theorem gives us a definable decomposition of definably connected linear groups similar to the decomposition of connected real Lie groups we recalled in \ref{complie}.  
\end{rem}

%%%%%%%%%%%%%%%%%%%%%%%%%%%%%%%%%%%%%%%%%%%%%%%%%%%%%%%%%%%%%%%%%%%%%%%%%%%%%%%%%%%%%%%%%%%%%%%%%%%%%%%%%%%%%
%%%%%%%%%%%%%%%%%%%%%%%%%%%%%%%%%%%%%%%%%%%%%%%%%%%%%%%%%%%%%%%%%%%%%%%%%%%%%%%%%%%%%%%%%%%%%%%%%%%%%%%%%%%%%
%%%%%%%%%%%%%%%%%%%%%%%%%%%%%%%%%%%%%%%%%%%%%%%%%%%%%%%%%%%%%%%%%%%%%%%%%%%%%%%%%%%%%%%%%%%%%%%%%%%%%%%%%%%%%
\vs
\section{Splitting extensions} \label{splext}

We saw in \ref{comp_tor} that every definable extension of a definable torsion-free group by a definably compact group splits definably in a direct product. 

In this section we analyse
the specular case of a definable extension of a definably compact definably connected group by a definable torsion-free group. 
We see that every such a sequence splits abstractly, and it splits definably if and only if
every $0$-subgroup ($\Leftrightarrow$ some $0$-Sylow) is definably compact (\ref{extcomp}). 

To prove it we first show that when the definably compact group is semisimple, the extension splits definably
(\ref{linexsplit}). As a consequence we also find maximal semisimple definably connected definably compact subgroups in every definable group (\ref{maxsemcomp}).
 
%%%%%%%%%%%%%%%%%%%%%%%%%%%%%%%%%%%%%%%%%%%%%%%%%%%%%%%%%%%%%%%%%%%%%%%%%%%%%%%%%%%%%%%%%%%%%%%%%%%%%%%%%%%%
\vs
\subsection{Definably connected extensions of a semisimple definably compact group by a definable torsion-free
group}

\begin{lem}
If the solvable radical $R$ of a definable group $G$ is definably compact, then there is no normal 
definable subgroup $H$ of $G$ such that $G/H$ is $1$-dimensional and torsion-free.
\end{lem}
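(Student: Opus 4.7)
The plan is to assume such a normal definable $H \lhd G$ exists with $\dim(G/H) = 1$ and $G/H$ torsion-free, and derive a contradiction. The strategy has two parts: first show that the definable compactness of $R$ forces $R \subseteq H$, and then show that $G/R$ admits no one-dimensional abelian quotient.

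For the first part, I would look at $RH/H$, the image of $R$ under the canonical projection $G \to G/H$. As a continuous definable image of $R$ it is definably compact; as a definable subgroup of the torsion-free group $G/H$ it is torsion-free. But by Fact \ref{torfree}(a) a torsion-free definable group is definably connected, so if it were nontrivial it would be infinite, and then Fact \ref{compnoncomp}(b) would force it to contain torsion, a contradiction. Hence $RH/H = \{e\}$, that is, $R \subseteq H$.

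Passing to $\bar{G} = G/R$ and $\bar{H} = H/R$, we have $\bar{G}/\bar{H} \cong G/H$ one-dimensional torsion-free, hence abelian. By Fact \ref{solvrad}, $\bar{G}$ is either finite (immediately contradicting $\dim \bar{G}/\bar{H} = 1$) or semisimple. In the semisimple case, using that $R$ is also the solvable radical of $G^0$ (as recorded in the proof of \ref{solvrad}), the definably connected component $\bar{G}^0$ coincides with $G^0/R$ and is a semisimple definably connected group, hence perfect by Fact \ref{semisperf}. Then $\bar{G}^0 \bar{H}/\bar{H}$ is at once a homomorphic image of the perfect group $\bar{G}^0$ and a subgroup of the abelian group $\bar{G}/\bar{H}$, so it is trivial; this gives $\bar{G}^0 \subseteq \bar{H}$, and since $\bar{G}^0$ has finite index in $\bar{G}$, the quotient $\bar{G}/\bar{H}$ is finite, again contradicting $\dim(G/H) = 1$.

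The only delicate point is keeping track of connected components: Fact \ref{semisperf} supplies perfectness of the semisimple definably connected $\bar{G}^0$, not of $\bar{G}$ itself, so the argument is routed through $\bar{G}^0 \bar{H}/\bar{H}$ rather than $\bar{G}/\bar{H}$ directly. Everything else is a direct application of the catalogue of facts on torsion-free groups, definably compact groups, and the solvable radical recalled earlier in the paper.
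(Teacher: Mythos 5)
Your proof is correct and takes essentially the same route as the paper: first force $R \subseteq H$ by playing definable compactness of $R$ against torsion-freeness of $G/H$, then contradict perfectness of the semisimple quotient of $G$ by its radical (Fact \ref{semisperf}), since $G/H$ is a nontrivial abelian quotient. The only difference is bookkeeping: the paper reduces to $G$ definably connected at the outset, while you keep $G$ arbitrary and route the perfectness argument through $\bar{G}^0$ at the end; the two handle connectedness equivalently.
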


\begin{proof}
We can suppose $G$ definably connected.

Let $H$ be a normal definable subgroup of $G$ such that
$G/H$ is $1$-dimensional and torsion-free. We claim that $R \subseteq H$.

If not, $R/(R \cap H)$ is an infinite torsion-free definable group (because it is definably isomorphic to $RH/H < G/H$), in contradiction with the fact that $R$ is definably compact. 

Thus $G/H$ is definably isomorphic to $(G/R)/(H/R)$. But the semisimple definably connected group $G/R$ is perfect (\cite[3.1$(v)$]{hpp2}), so $G/R$ does not have proper abelian quotients, 
in contradiction with the fact that definable $1$-dimensional torsion-free groups are abelian. 
\end{proof}

\begin{cor}\label{quoztor1}
Let $G$ be a definable group and let $N$ be its maximal normal definable torsion-free subgroup.
Then there is no normal definable subgroup $\bar{H}$ of $\bar{G} := G/N$ such that $\bar{G}/\bar{H}$ is $1$-dimensional and torsion-free.  
\end{cor}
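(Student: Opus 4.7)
The plan is to deduce the corollary from the preceding lemma by applying it to $\bar{G} = G/N$; for that it suffices to verify that the solvable radical of $\bar{G}$ is definably compact.

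First I would identify the solvable radical of $\bar{G}$. By Lemma \ref{radgsun}, if $R$ denotes the solvable radical of $G$, then $N \subseteq R$ and $R/N$ is the solvable radical of $\bar{G} = G/N$. So the task reduces to showing that $R/N$ is definably compact.

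Next I would verify that $N$ is exactly the maximal normal definable torsion-free subgroup of $R$ (a priori $N$ is only known to be a normal torsion-free subgroup of $R$). Let $N_R$ denote the maximal normal definable torsion-free subgroup of $R$. Since $R \lhd G$, Lemma \ref{tormaxnor} tells us that $N_R$ is normal in $G$ as well. Being a normal definable torsion-free subgroup of $G$, maximality of $N$ forces $N_R \subseteq N$, and the reverse inclusion is immediate because $N$ is itself a normal torsion-free subgroup of $R$. Hence $N = N_R$, and Theorem \ref{solvquozcomp} applied to the solvable group $R$ yields that $R/N$ is definably compact.

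Finally, with $\bar{G} = G/N$ being a definable group whose solvable radical $R/N$ is definably compact, the previous lemma applies directly to $\bar{G}$ and rules out the existence of a normal definable subgroup $\bar{H} \lhd \bar{G}$ with $\bar{G}/\bar{H}$ one-dimensional and torsion-free. There is no real obstacle here; the whole argument is a short assembly of results already in the paper (\ref{radgsun}, \ref{tormaxnor}, \ref{solvquozcomp}) followed by quoting the preceding lemma.
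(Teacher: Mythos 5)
Your proposal is correct and follows essentially the same route as the paper, whose entire proof is the one-line observation that \ref{radgsun} and \ref{solvquozcomp} make the solvable radical of $G/N$ definably compact, so that the preceding lemma applies. The only difference is that you explicitly verify (via \ref{tormaxnor}) that $N$ coincides with the maximal normal definable torsion-free subgroup of $R$ before invoking \ref{solvquozcomp} --- a step the paper leaves implicit --- which is a worthwhile piece of added care rather than a different argument.
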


\begin{proof}
By \ref{radgsun} and \ref{solvquozcomp} the solvable radical of $G/N$ is definably compact.
\end{proof}

%%%%%%%%%%%%%%
\begin{prop}\label{maxtorfree1}
Let $G$ be a definably connected group and let $N$ be its maximal normal definable torsion-free subgroup. If $\dim N = 1$ then $N \subseteq Z(G)$.
\end{prop}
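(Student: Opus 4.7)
The plan is to study the conjugation action of $G$ on $N$ and to derive a contradiction from the assumption that $N \not\subseteq Z(G)$, by showing this would force the existence of a $1$-dimensional torsion-free quotient of $G/N$, in violation of Corollary \ref{quoztor1}. First observe that $N$ is abelian, being $1$-dimensional (\cite{pi1}, 2.15), so $N \subseteq C_G(N)$. Setting $C = C_G(N)$ and $Q = G/C$, the group $Q$ is definably connected (as a quotient of the definably connected group $G$), acts faithfully on $N$ by the induced conjugation action, and, since $C \supseteq N$, is naturally a quotient of $G/N$. The claim $N \subseteq Z(G)$ is equivalent to $Q = \{e\}$, so assume for contradiction that $Q$ is nontrivial.

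The next step is to pin down the dimension of $Q$. Fix $n_0 \in N \setminus \{e\}$. For any $q \in \mathrm{Stab}_Q(n_0)$, the induced automorphism $c_q$ of $N$ fixes $n_0$, so its fixed subgroup is a definable subgroup of $N$ containing a nontrivial element. Since $N$ is $1$-dimensional, definably connected and torsion-free, its only definable subgroups are $\{e\}$ and $N$, which forces $c_q = \mathrm{id}$ and hence $q = e$ by faithfulness. Thus $\mathrm{Stab}_Q(n_0) = \{e\}$, and $\dim Q = \dim(Q \cdot n_0) \leq \dim N = 1$; since $Q$ is nontrivial, $\dim Q = 1$.

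For torsion-freeness I plan to use the order structure of $N$. Being $1$-dimensional, definably connected, torsion-free (and so non-compact), $N$ is ``line-like'': $N \setminus \{e\}$ has exactly two definably connected components $N^+$ and $N^-$, giving a definable linear group order on $N$ with positive cone $N^+$. The continuous map $q \mapsto c_q(n_0)$ from the definably connected $Q$ into $N \setminus \{e\}$ lands in one component, namely $N^+$ (it does so at $q = e$), and a straightforward connectedness argument then shows $c_q(N^+) = N^+$ for every $q \in Q$, so each $c_q$ is order-preserving. A nontrivial element $q$ of finite order $k > 1$ would give an order-preserving $c_q \neq \mathrm{id}$ with $c_q^k = \mathrm{id}$; picking $n \in N$ with $c_q(n) \neq n$ and, if necessary, replacing $q$ by $q^{-1}$ to arrange $c_q(n) > n$, iteration gives $c_q^k(n) > c_q^{k-1}(n) > \cdots > n$, contradicting $c_q^k(n) = n$. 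Hence $Q$ is torsion-free, and so $Q$ is a $1$-dimensional torsion-free quotient of $G/N$, contradicting Corollary \ref{quoztor1}.

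The main obstacle is the structural input on $N$: that a $1$-dimensional torsion-free definably connected group is ``line-like'' so that $N \setminus \{e\}$ has exactly two definably connected components and $N$ admits a definable linear group order making every definable continuous automorphism monotone. This relies on the classification of $1$-dimensional definable groups in an o-minimal expansion of an ordered group, together with the fact that torsion-free definable groups cannot be definably compact (Fact \ref{compnoncomp}). With these structural facts in hand, the two-step argument above is routine.
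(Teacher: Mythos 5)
Your proof is correct, and it ends exactly where the paper's does: both arguments study the conjugation action on $N$, pass to the quotient $Q = G/C_G(N)$ (which in the paper appears as $\varphi(G/N) \cong (G/N)/\ker\varphi$), and contradict Corollary \ref{quoztor1} by exhibiting $Q$ as a $1$-dimensional torsion-free definable quotient of $G/N$. The difference lies in how the two properties of $Q$ are established. The paper invokes \cite[4.3]{ps00}: since $N$ has no proper nontrivial definable subgroups, it carries a definable field structure, and the group of definable automorphisms of $N$ is then definable and definably isomorphic to the multiplicative group $(N \setminus \{0\}, \cdot)$; one-dimensionality and torsion-freeness of the image of $G/N$ come for free from that identification. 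You instead prove both facts by hand: triviality of point stabilizers (from the absence of proper nontrivial definable subgroups of $N$) plus orbit--stabilizer gives $\dim Q = 1$, and monotonicity of the induced automorphisms with respect to a definable group order on $N$ rules out torsion in $Q$. Your route is more elementary on the group-theoretic side and never needs $\Aut(N)$ to be a definable group; its one nontrivial input is the ``line-like'' structure of $N$ (exactly two definably connected components of $N \setminus \{e\}$, with a compatible definable linear order). Be aware that the standard way to justify precisely that input is the very same result \cite[4.3]{ps00} the paper uses -- the ordering of the real closed field whose additive group is $N$ -- so the two proofs ultimately rest on the same structural fact, yours just using less of it; alternatively one can cite the classification of $1$-dimensional definably connected groups. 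Do pin that step down with an explicit reference, since ``line-like'' is exactly the point where the circle-like (torsion) case is excluded, i.e.\ where the torsion-freeness hypothesis on $N$ actually enters.
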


\begin{proof}
We use additive notation $+$ for the commutative group operation of $N$.
Let $s \colon G/N \to G$ be a definable section of the canonical projection. For every $g \in G$ there is a unique couple $(a, x)$, with $a \in N$ and $x \in G/N$, such that $g = s(x)a$. So there is a definable homomorphism

\begin{align*}
\varphi \colon G/N\ &\longrightarrow\ \Aut(N) \\
x\quad &\mapsto\ (a \mapsto s(x)as(x)^{-1}). 
\end{align*}

\vs 
This is because for every $g \in G$, the conjugation map $f_g \colon N \to N$ mapping $a \mapsto gag^{-1}$ is a definable  automorphism of $N$, so there is a homomorphism $\Phi \colon G \to \Aut(N)$, given by $g \mapsto f_g$,
and $N \subseteq \ker \Phi$, being $N$ abelian. Thus $\Phi$ induces the above definable homomorphism $\varphi \colon G/N \to \Aut(N)$ which does not depend on the choice of the section.

We want to show that $\varphi(G/N) = \{e\}$, and so $N \subseteq Z(G)$. If not trivial, $\varphi(G/N) < \Aut(N)$ is an infinite definable subgroup, since $G$ is definably connected. 

Because $N$ does not have any proper definable subgroup, by \cite[4.3]{ps00} there is a definable operation $\cdot$ over $N$, such that $(N, +, \cdot)$ is a field. 

We claim that $\Aut(N)$ is a definable group (in general it is an $\bigvee$-definable group) definably isomorphic to $(N \backslash \{0\}, \cdot)$.
Let $f \in \Aut(N)$ and let $f(1) = a \in N \backslash \{0\}$. The set $\{x \in N : f(x) = a \cdot x\}$ is a definable subgroup of $(N, +)$ containing $0$ and $1$; but $(N, +)$ does not have any proper definable subgroups, so $f(x) = a \cdot x$ for every $x \in N$. On the other hand, every definable function $N \to N$ of the form $f(x) = a \cdot x$, with $a \in N \backslash \{0\}$, is a definable automorphism of $(N, +)$, so we have done. 

Therefore, if not trivial, $\varphi(G/N)$ is definably isomorphic to $(G/N)/\ker \varphi$ which is a $1$-dimensional definable torsion-free group, contradicting our previous corollary. 
\end{proof}

%%%%%%%%%%%%%%%%%%%%%%%%%
\begin{rem}\label{erre2so2}
If $\dim N > 1$, the proposition above may no longer be true, even when $N$ is abelian. For $\dim N = 2$, take for instance $G = \R^2 \rtimes_{\varphi} \SO_2(\R)$, with $\varphi(A)(x) = Ax$, for every $A \in \SO_2(\R)$ and $x \in \R^2$. Then $G$ is a centerless definable group.
\end{rem}

\begin{theo} \label{linexsplit}
Let $G$ be a definably connected extension of a semisimple definably compact group $K$ by a definable torsion-free group $N$. Then the definable exact sequence 
\[
1\ \longrightarrow\ N \ \stackrel{i}{\longrightarrow}\ G \stackrel{\pi}{\longrightarrow}\ K\ \longrightarrow\ 1  
\]

\vs \noindent splits definably. Moreover any two definable cofactors of $N$ in $G$ are conjugate.
\end{theo}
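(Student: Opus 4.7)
First I would observe that $N$ coincides with the solvable radical $R$ of $G$: Fact \ref{torfree}(b) gives $N \subseteq R$, and $R/N$ is a normal definably connected solvable subgroup of the semisimple quotient $K = G/N$, which Remark \ref{semisnosolv} forces to be trivial. Consequently, any definable subgroup $S$ of $G$ satisfying $G = NS$ and $N \cap S = \{e\}$ is automatically a definable Levi subgroup of $G$, so the conjugacy of any two such $S$ will follow at once from Proposition \ref{leviconj}; only existence requires real work.

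The plan for existence is induction on $d = \dim N$. The case $d = 0$ is trivial. The decisive base case is $d = 1$: here Proposition \ref{maxtorfree1} gives $N \subseteq Z(G)$, so $G/Z(G)$ is a quotient of the definably compact group $K$ and hence is itself definably compact. Proposition \ref{leviquozcomp} then yields a semisimple definably connected subgroup $S$ of $G$ with $G = NS$, and $N \cap S$ is finite (as a normal solvable subgroup of the semisimple $S$) inside the torsion-free group $N$, hence trivial.

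For $d > 1$ I would locate a normal definable subgroup $N_1$ of $G$ with $N_1 \subseteq N$ and $0 < \dim N_1 < d$, and then apply the inductive hypothesis twice. First, to the definable extension $1 \to N/N_1 \to G/N_1 \to K \to 1$, obtaining a definable subgroup $S_1$ of $G$ containing $N_1$ with $S_1 \cap N = N_1$ and $S_1 N = G$. Second, to $1 \to N_1 \to S_1 \to S_1/N_1 \cong K \to 1$, obtaining a definable cofactor $S \subseteq S_1$ of $N_1$ in $S_1$. A short verification yields $SN = G$ and $S \cap N = S \cap N_1 = \{e\}$, so $S$ is the desired definable cofactor of $N$ in $G$.

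The main obstacle is producing the intermediate subgroup $N_1$. When $N$ is nonabelian, a natural candidate is (the definably connected component of) the last nontrivial term of the derived series of $N$: this is abelian and characteristic in $N$, and in the present setting the derived subgroup of a definable solvable group is itself definable, so each term of the series is a proper definable subgroup of the previous; being characteristic in $N$, it is normal in $G$. When $N$ is abelian, the conjugation action of $K$ on $N$ factors through a definable homomorphism $K \to \Aut(N)$, and a Maschke-type complete reducibility argument for the definably compact semisimple image ought to provide a proper invariant definable subgroup of $N$. The delicate remaining situation is when this action is irreducible; here I would try to embed the situation into a linear one, using that $\Aut(N)$ has a definable linear structure for abelian torsion-free $N$, and then invoke Proposition \ref{linsplit} on a suitably linearized extension to pull a definable splitting back to $G$.
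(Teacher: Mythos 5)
Your reduction of the conjugacy statement is exactly the paper's: since $K$ is semisimple, $N$ is the solvable radical (\ref{solvrad}, \ref{semisnosolv}), so definable cofactors are definable Levi subgroups and \ref{leviconj} applies. Your base case $d=1$ is also sound (\ref{maxtorfree1} plus \ref{leviquozcomp}, with $N\cap S$ finite hence trivial), and the double application of the inductive hypothesis, once $N_1$ is in hand, checks out. The genuine gap is in producing $N_1$, and it occurs at precisely the two points where the theorem's real content lies. First, for nonabelian $N$ you assert that the derived subgroup of a definable solvable group is definable; nothing in the paper provides this --- on the contrary, the paper explicitly remarks that $[G,G]$ is in general only a countable union of definable sets --- and definability of derived subgroups of definably connected solvable groups is a later, nontrivial theorem (Baro--Jaligot--Otero), not something available from this paper's toolkit. (Fact \ref{torfree}$(c)$ gives proper definable subgroups of $N$ of codimension $1$, but only normal \emph{in $N$}, which is not enough for your quotient $G/N_1$.) Second, for abelian $N$ with irreducible action you have no argument: a Maschke-type averaging has no o-minimal justification over an arbitrary real closed field (there is no invariant integration, and $N$ need not even carry a definable vector-space structure, since the ambient structure is only assumed to expand an ordered group); and the proposed fix --- linearize the action and invoke \ref{linsplit}, then ``pull back'' --- is circular as stated, because a definable linear representation of $K$ on $N$ does not give a definable linear realization of the extension $G$ itself; producing one is essentially equivalent to the splitting you are trying to prove. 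This irreducible case is not a corner case: it contains all centerless $G$, e.g.\ groups of the form $M^3 \rtimes \SO_3(M)$.

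The paper avoids both obstacles by running the dichotomy on $Z(G)$ rather than on the structure of $N$. If $Z(G)$ is infinite, then $N_1 = Z(G)^0$ is an intermediate subgroup of exactly the kind you need --- definable for free, normal in $G$, nontrivial, and contained in $N$ because $N$ is the solvable radical --- and the induction on $\dim N$ proceeds essentially as you set it up, with the central layer $Z(G)^0 = N$ handled by the Hrushovski--Peterzil--Pillay central extension theorem (\cite[5.6]{hpp2}). If $Z(G)$ is finite, the paper shows $G/Z(G)$ is centerless (using \ref{finsubincenter} and \ref{comp_tor}), then invokes Fact \ref{centerl} --- centerless definably connected groups are definably isomorphic to linear ones --- so that Proposition \ref{linsplit} applies, and finally pulls the resulting cofactor back through the finite center. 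Fact \ref{centerl} is the tool your irreducible case is groping for; without it (or some substitute for the definability of $N_1$), your induction for $d>1$ does not go through.
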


\begin{proof}
Since $K$ is semisimple, it follows that $N$ is the solvable radical of $G$ (\ref{solvrad}). Thus any definable cofactor
of $N$ in $G$ is a definable Levi subgroup and they are all conjugate by \ref{leviconj}.

To find a definable cofactor of $N$ we distinguish the cases where $Z(G)$ is finite and $Z(G)$ is infinite.\\

\noindent
First suppose $Z(G)$ is \underline{finite}. We claim that $G/Z(G)$ is centerless. Otherwise: \\
\begin{enumerate}

\item[-] If the center of $G/Z(G)$ is finite and non-trivial, then its preimage in $G$ would be a normal finite subgroup and being $G$ definably connected it should be contained in $Z(G)$ (\ref{finsubincenter}), contradiction.\\

\item[-] If the center $Z$ of $G/Z(G)$ is infinite, then let $Z_1$ be the preimage in $G$ of $Z^0$
by the canonical projection $p \colon G \to G/Z(G)$.

Since $Z(G)$ is definably compact and $Z_1/Z(G) \cong Z^0$ which is torsion-free, it follows by \ref{comp_tor}
that $Z_1 = Z(G) \times N_1$, where $N_1$ is the maximal normal definable torsion-free subgroup of $Z_1$.

 Let us see that $N_1$ should be contained in $Z(G)$. For every $g \in N_1$ and $x \in G$, $gx = xgz$ for some $z \in Z(G)$. By \ref{tormaxnor}, $N_1$ is normal in $G$ as well, hence $g^{-1}x^{-1}gx \in Z(G) \cap N_1 = \{e\}$. Therefore $N_1 \subseteq Z(G)$, contradiction. \\
 
\end{enumerate}

\noindent
So we have proved that if $Z(G)$ is finite then $G/Z(G)$ is centerless.  \\

By \ref{centerl} and \ref{linsplit}, we can reduce then to the linear case. Therefore $G/Z(G) = N^{\prime} \rtimes K^{\prime}$, where $N^{\prime}$ is definably isomorphic to $N$ ($Z(G)$ finite $\Rightarrow N \cap Z(G) = \{e\}$) and $K^{\prime}$ is definably isomorphic to $K/Z(G)$. Hence the preimage of $K^{\prime}$ in $G$ is a definable cofactor of
$N$.\\

\noindent 
If $Z(G)$ is \underline{infinite}, we proceed by induction on $n = \dim N$. 
Note that since $N$ is the solvable radical of $G$, it follows that it contains $Z(G)^0$ which is infinite
as well.

\begin{itemize}

\item $n = 1$. By \ref{maxtorfree1} $N \subseteq Z(G)$ and so $N = Z(G)^0$.
Thus by \cite[5.6]{hpp2}, $[G, G]$ is a definable subgroup of $G$ such that
$G = Z(G)^0[G, G]$ and $Z(G)^0 \cap [G, G]$ is finite. Therefore $Z(G)^0 \cap [G, G] = \{e\}$ ($Z(G)^0$ is
torsion-free) and $[G, G]$ is definably isomorphic to $K$. In this case $[G, G]$ is the unique definable cofactor of $N$ in $G$ (see \ref{decder}).\\ 

\item $m < n \Rightarrow n$.  
If $Z(G)^0 = N$, see above. If $Z(G)^0 \subsetneq N$, by induction
the definable exact sequence

\[
1\ \longrightarrow\ N/Z(G)^0 \ \longrightarrow\ G/Z(G)^0 \longrightarrow\ K\ \longrightarrow\ 1  \\
\]

\vs \noindent 
(with the obvious maps)
splits definably. Let $K_1$ be a definable subgroup of $G/Z(G)^0$ definably isomorphic to $K$ and let $G_1$ be its preimage in $G$. The definable exact sequence

\[
1\ \longrightarrow\ Z(G)^0 \ \stackrel{i}{\longrightarrow}\ G_1 \stackrel{\pi}{\longrightarrow}\ K_1\ \longrightarrow\ 1  \\
\]  
 
\vs \noindent
splits definably again by induction (or by \cite[5.6]{hpp2}) and $G_1$ (so $G$) contains a definable subgroup definably isomorphic to $K_1$ (and so to $K$). 
\end{itemize}
\end{proof}

\begin{rem}
While every definable extension of a definable torsion-free group by a definably compact group is definably isomorphic to their direct product (\ref{comp_tor}),  
in general the proposition above yields a definable semidirect product that might be not direct: take for instance $\R^3 \rtimes_{\varphi} \SO_3(\R)$ with an analogous $\varphi$ as in \ref{erre2so2}.  
\end{rem}
%%%%%%%%%%%%
\begin{cor} \label{0subextsem}
If $G$ is a definable extension of a semisimple definably compact group by a definable torsion-free group, then
all $0$-subgroups of $G$ are definably compact.
\end{cor}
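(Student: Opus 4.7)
The plan is to combine Theorem \ref{linexsplit} with Lemma \ref{0subctf}. Since Theorem \ref{linexsplit} requires the extension to be \emph{definably connected}, the first step is to pass to $G^0$. Because $N$ is torsion-free it is definably connected (Fact \ref{torfree}$(a)$), so $N \subseteq G^0$, and hence $G^0/N$ equals $(G/N)^0 = K^0$. This $K^0$ is definably compact (being a definable subgroup of the definably compact group $K$) and is itself semisimple: any infinite abelian normal definable subgroup $A$ of $K^0$ would produce one in $K$ by taking the intersection of the finitely many $K$-conjugates of $A$, which remains infinite by dimension and abelian since it lies in $A$.

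Thus the definable exact sequence
\[
1 \longrightarrow N \longrightarrow G^0 \longrightarrow K^0 \longrightarrow 1
\]
satisfies the hypotheses of Theorem \ref{linexsplit}, so it splits definably. Writing $K'$ for a definable cofactor of $N$ in $G^0$, we have $K' \cong K^0$ definably, hence $K'$ is definably compact, and
\[
G^0 = K' N,
\]
with $K'$ definably compact and $N$ definable torsion-free.

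Now Lemma \ref{0subctf}, applied to the ambient group $G$ with this decomposition of $G^0$, yields at once that every $0$-subgroup of $G$ is definably compact. Honestly there is no serious obstacle here; the only point that requires a line of verification is that the hypothesis ``semisimple definably compact'' descends from $K$ to $K^0$, and this is the routine conjugate-intersection argument noted above. Everything else is a direct invocation of the two previous results.
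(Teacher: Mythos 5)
Your overall route is exactly the paper's: pass to $G^0$, apply Theorem \ref{linexsplit} to the sequence $1 \to N \to G^0 \to G^0/N \to 1$ to obtain $G^0 = K'N$ with $K'$ definably compact and $N$ torsion-free, then conclude by Lemma \ref{0subctf}. (The paper's proof is exactly these two citations, with the passage to $G^0$ left implicit.) The problem is that the one substantive verification you supply --- that semisimplicity descends from $K$ to $K^0$ --- is argued incorrectly. You claim that the intersection of the finitely many $K$-conjugates of an infinite abelian normal definable subgroup $A \lhd K^0$ ``remains infinite by dimension''. That is false: each conjugate has the same dimension as $A$, but dimension is not preserved under intersection, and the intersection can be finite. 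Concretely, take $K = \bigl(\SO_2(\R) \times \SO_2(\R)\bigr) \rtimes \Z/2\Z$ with the involution swapping the two factors, and $A = \SO_2(\R) \times \{e\} \lhd K^0$: the two $K$-conjugates of $A$ intersect trivially. (This $K$ is of course not semisimple --- $K^0$ itself is an infinite abelian normal subgroup --- but the example shows that your intersection argument establishes nothing.)

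The descent itself is true (it is asserted, without proof, at the start of the proof of Remark \ref{semisnosolv}), and it can be repaired in either of two standard ways. $(i)$ Replace the intersection by the \emph{product} of the $K$-conjugates of $A$: since each conjugate is normal in $K^0$, the product is a definable subgroup of $K^0$; it is infinite, normal in $K$ (conjugation permutes the factors), and solvable (a product of finitely many normal abelian subgroups), so $K$ fails to be semisimple by Remark \ref{semisnosolv}. $(ii)$ Use definable compactness: if $K^0$ were not semisimple, then by Fact \ref{decomp} we have $K^0 = Z(K^0)^0[K^0,K^0]$ with $[K^0,K^0]$ semisimple, so $Z(K^0)^0$ must be infinite; but $Z(K^0)^0$ is characteristic in $K^0$, hence an infinite abelian normal definable subgroup of $K$, contradicting semisimplicity of $K$. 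With either repair in place of your conjugate-intersection step, the rest of your proof is correct and coincides with the paper's.
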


\begin{proof}
By \ref{linexsplit} $G^0 = KN$, where $N$ is the maximal normal definable torsion-free subgroup of $G$ and $K$ is a semisimple definably compact group. Then all $0$-subgroups of $G$ are definably compact by \ref{0subctf}.
\end{proof}
%%%%%%%%%%%%
\begin{prop} \label{levigsuncomp}
Let $G$ be a definably connected extension of a definably compact group $K$ by a definable torsion-free group $N$. 
Then $G$ has definable Levi subgroups, all definably isomorphic to $[K, K]$. 
\end{prop}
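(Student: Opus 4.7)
The plan is to extract a definable Levi subgroup of $G$ by first isolating the semisimple part of $K$ and then lifting it through $\pi$ using the splitting theorem for extensions by definably compact semisimple groups already proved in this section.

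First I would observe that, since $G$ is definably connected, so is the quotient $K \cong G/N$. As $K$ is moreover definably compact, Fact \ref{decomp} gives $K = Z(K)^0 \cdot [K,K]$ where $[K,K]$ is a semisimple definable subgroup. In particular $Z(K)^0$ is the solvable radical of $K$. By \ref{radgsun} together with \ref{solvquozcomp}, the solvable radical $R$ of $G$ equals $\pi^{-1}(Z(K)^0)$, where $\pi \colon G \to K$ is the canonical projection.

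Next I would let $G_1 := \pi^{-1}([K,K])$, which is a definably connected extension
\[
1 \longrightarrow N \longrightarrow G_1 \stackrel{\pi}{\longrightarrow} [K,K] \longrightarrow 1
\]
of a semisimple definably compact group by the definable torsion-free group $N$. Applying Theorem \ref{linexsplit}, this sequence splits definably: there is a definable subgroup $S < G_1$ definably isomorphic to $[K,K]$, with $N \cap S = \{e\}$ and $G_1 = N S$. In particular $S$ is semisimple and definably connected.

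Then I would verify that $S$ is a definable Levi subgroup of $G$. Since $\pi(RS) = Z(K)^0 \cdot [K,K] = K$ and $\ker \pi = N \subseteq R$, we obtain $G = RS$; moreover $R \cap S$ injects via $\pi$ into the finite group $Z(K)^0 \cap [K,K]$ (finiteness follows because $K/Z(K)^0$ is semisimple, so its intersection with the semisimple $[K,K]$ has finite index inside $[K,K]$ only via a finite kernel), giving that $R \cap S$ is finite. Thus $G = RS$ is a definable Levi decomposition of $G$. Finally, the ``all definably isomorphic to $[K,K]$'' part follows from Proposition \ref{leviconj}: since all definable Levi subgroups of $G$ are conjugate and one of them (namely $S$) is definably isomorphic to $[K,K]$, so are all the others.

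The substantive work is already done in Theorem \ref{linexsplit} (the semisimple splitting) and in Fact \ref{decomp} (which produces $[K,K]$ inside $K$); the only genuine point to check carefully is that the lifted $S$ together with $R = \pi^{-1}(Z(K)^0)$ exhausts $G$ with finite intersection, which is a direct consequence of the Levi decomposition $K = Z(K)^0 [K,K]$ and the fact that $N \cap S = \{e\}$.
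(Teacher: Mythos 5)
Your proposal is correct and follows essentially the same route as the paper: decompose $K = Z(K)^0[K,K]$ via Fact \ref{decomp}, identify $R = \pi^{-1}(Z(K)^0)$ as the solvable radical, pull back $[K,K]$ to $G_1 = \pi^{-1}([K,K])$, split that extension definably via Theorem \ref{linexsplit} to get $S$, conclude $G = RS$, and invoke Proposition \ref{leviconj} for conjugacy. Your extra verification that $R \cap S$ is finite is harmless but redundant, since by the remark following the definition of definable Levi decomposition this is automatic (it is a solvable normal definable subgroup of the semisimple group $S$), and your parenthetical justification of it would be cleaner stated that way.
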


\begin{proof}
By \cite{hpp2}, $K = Z(K)^0 [K, K]$. Then the solvable radical $R$ of $G$ is
$\pi^{-1}(Z(K)^0)$, where $\pi \colon G \to G/N$ is the canonical projection (\ref{solvrad}).

Define $G_1 := \pi^{-1}([K, K])$. By \ref{linexsplit}, $N$ has a definable cofactor $S$ in $G_1$.
Thus $G = RG_1 = RS$, and $S$ is a definable Levi subgroup of $G$ definably isomorphic to $[K, K]$.

Since all definable Levi subgroups of $G$ are conjugate (\ref{leviconj}), the thesis follows.
\end{proof}

%%%%%%%%%%%%%%%%%%%%%%%%%%%%%%%%%%%%
\vs
\subsection{Maximal semisimple definably connected definably compact subgroups}

To semplify our statements, from now on we include the trivial group $G = \{e\}$ among semisimple definable groups. With this convention, we show now that every definable group $G$ has maximal semisimple definably connected definably compact subgroups, and they are all conjugate. It turns out that each of them is definably isomorphic by the canonical projection $\pi \colon G \to G/N$ to the derived subgroup of a maximal definably compact subgroup of $G/N$, where $N$ is the maximal normal definable torsion-free subgroup of $G$.

\begin{theo} \label{maxsemcomp}
Every definable group has maximal semisimple definably connected definably compact subgroups, and they are all conjugate to each other.
\end{theo}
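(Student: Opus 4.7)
\textit{Construction.} We may assume $G$ is definably connected, since any semisimple definably connected subgroup lies in $G^0$. Let $N$ be the maximal normal definable torsion-free subgroup of $G$ and $\pi \colon G \to G/N$ the canonical projection. The plan is to build a candidate $S$ as a definable Levi subgroup of the preimage of the derived subgroup of a maximal definably compact subgroup of $G/N$. By Theorem \ref{Iwagisuacca}, $G/N$ has a maximal definably compact (definably connected) subgroup $K$, and by Fact \ref{decomp} the derived subgroup $L := [K,K]$ is semisimple, definably connected and definably compact. Since $\pi^{-1}(L)$ is a definably connected extension of $L$ by the torsion-free group $N$, Proposition \ref{linexsplit} provides a definable cofactor $S$ of $N$ in $\pi^{-1}(L)$, definably isomorphic to $L$. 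Thus $S$ is a semisimple, definably connected, definably compact subgroup of $G$.

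\textit{Reduction to a semisimple part of $G/N$.} I would then show that an arbitrary semisimple definably connected definably compact subgroup $S' \leq G$ is contained in a conjugate of $S$. Since $N$ is torsion-free and $S'$ is definably compact, Remark \ref{duacomptf} gives $S' \cap N = \{e\}$, so $\pi(S')$ is semisimple, definably connected and definably compact in $G/N$. The solvable radical of $G/N$ is definably compact (by \ref{radgsun} and \ref{solvquozcomp}), so Proposition \ref{maxcomprad} applied to $G/N$ puts $\pi(S')$ inside some conjugate $K^{\bar g}$ of $K$. Because semisimple definably connected groups are perfect (\ref{semisperf}), $\pi(S') = [\pi(S'),\pi(S')] \subseteq [K^{\bar g}, K^{\bar g}] = L^{\bar g}$. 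Lifting $\bar g$ to $g \in G$ and setting $S_1 := g^{-1} S' g$, we obtain $S_1 \subseteq \pi^{-1}(L)$.

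\textit{Levi matching.} The remaining step is a Levi-conjugacy argument inside $N S_1 \subseteq \pi^{-1}(L)$. The subgroup $N S_1$ has $N$ as its solvable radical and $S_1$ as a definable Levi complement (semisimple, definably connected, meeting $N$ trivially). The intersection $S \cap N S_1$ also meets $N$ trivially (since $S$ does) and surjects onto $\pi(S_1)$: for every $l \in \pi(S_1) \subseteq L = \pi(S)$ there is $s \in S$ with $\pi(s) = l$, and $\pi^{-1}(\pi(S_1)) \cap \pi^{-1}(L) = N S_1$ forces $s \in S \cap N S_1$. Hence $S \cap N S_1$ is a second definable Levi complement of $N$ in $N S_1$, and Proposition \ref{leviconj} yields $h \in N S_1$ with $h S_1 h^{-1} = S \cap N S_1 \subseteq S$, whence $S' \subseteq S^{g h^{-1}}$. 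Applied to an arbitrary maximal such $S'$, this shows every maximal semisimple definably connected definably compact subgroup is a conjugate of $S$; applied to a putative $\widetilde{S} \supseteq S$ it forces $\widetilde{S} = S$ by dimension, so $S$ itself is maximal. The main obstacle is precisely this last matching step, which relies essentially on \ref{linexsplit} (to split $\pi^{-1}(L)$ and $N S_1$ as semidirect products with $N$) and \ref{leviconj} (to conjugate their Levi parts).
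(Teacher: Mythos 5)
Your proposal is correct and takes essentially the same route as the paper: the candidate $S$ is built exactly as in the paper's proof (a definable cofactor of $N$ in $\pi^{-1}([K,K])$ via \ref{linexsplit}, after \ref{Iwagisuacca} and \ref{decomp}), and conjugacy is obtained from the same two pillars, namely conjugacy of maximal definably compact subgroups of $G/N$ and conjugacy of definable Levi subgroups (\ref{leviconj}). The only (sound) difference is organizational: you prove the slightly stronger claim that \emph{every} semisimple definably connected definably compact subgroup lies in a conjugate of $S$, carrying out the Levi matching inside $NS_1$ (after checking $S \cap NS_1$ is a second Levi complement there) rather than, as the paper does, between two full cofactors inside $\pi^{-1}([K_2,K_2])$, and you then deduce maximality and conjugacy simultaneously from that containment.
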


\begin{proof}
Let $G$ be a definable group and let $N$ be its maximal normal definable torsion-free subgroup.
We can suppose $G$ definably connected.

By \ref{Iwagisuacca} $G/N$ has maximal definably compact subgroups, all definably connected and conjugate. Let $K_1$ be one of them. Then by \cite{hpp2}, $K_1 = Z(K_1)^0 [K_1, K_1]$ and $[K_1, K_1]$ is definable and semisimple. Note that
$[K_1, K_1]$ is the unique maximal semisimple definably connected definably compact subgroup of $K_1$.

Let $\pi \colon G \to G/N$ be the canonical projection and define $G_1 = \pi^{-1}([K_1, K_1])$. By \ref{linexsplit},
the definable exact sequence

\[
1\ \longrightarrow\ N \ \stackrel{i}{\longrightarrow}\ G_1 \stackrel{\pi_{| G_1}}{\longrightarrow}\ [K_1, K_1]\ \longrightarrow\ 1  \\
\] 

\vs \noindent
splits definably. If $S_1$ is a definable cofactor of $N$ in $G_1$ (and so also a Levi subgroup of $G_1$), we claim that $S_1$ is a maximal semisimple
definably connected definably compact subgroup of $G$. 

If not, let $S$ be a semisimple definably connected definably compact subgroup containing properly $S_1$. Since $S \cap N = \{e\}$,
it follows that $\pi(S)$ is a semisimple definably connected definably compact subgroup containing properly $[K_1, K_1]$, contradiction.

Let $S_2$ be another maximal semisimple definably connected definably compact subgroup of $G$. We want to show that $S_2$ is a conjugate of $S_1$. Again, since $S_2 \cap N = \{e\}$ it follows that $\pi(S_2)$ is a maximal semisimple definably connected definably compact subgroup of $G/N$. Let $K_2$ be a maximal definably compact subgroup containing $\pi(S_2)$. Then $\pi(S_2) = [K_2, K_2]$. Define $G_2 = \pi^{-1}([K_2, K_2])$.

Since $K_2$ is a conjugate of $K_1$, it follows that $[K_2, K_2] = [K_1, K_1]^{\bar{g}}$ for some $\bar{g} \in G/N$.  
Therefore for every $g \in \pi^{-1}(\bar{g})$, denoted by $a(g)$ the conjugation map given by $x \mapsto gxg^{-1}$, the following diagram commutes:

\[
\xymatrix{
S_1 \ar[d]^{\pi_{|_{S_1}}} \ar[r]^{a(g)}  & S_1^g \ar[d]^{\pi_{|_{S_1^g}}} \ar[r]^-{i} & G_2 = N \rtimes S_2  \ar[d]^{\pi_{|_{G_2}}}\\
[K_1, K_1]  \ar[r]^{a(\bar{g})} & [K_1, K_1]^{\bar{g}} \ar@{<->}[r]^{\id} & [K_2, K_2]
}
\] 

\vs
Thus $S_1^g$ is a Levi subgroup of $G_2$ and therefore a conjugate of $S_2$ in $G_2$. This proves that $S_1$ and $S_2$ are
conjugate in $G$.
\end{proof}

\begin{rem}\label{remmaxsem}
The proof above shows that if $N$ is the maximal normal definable torsion-free subgroup of a definable group $G$ and $\pi \colon G \to G/N$ is the canonical projection, then

\begin{enumerate}
\item[$(a)$] if $S$ is a maximal semisimple definably connected definably compact subgroup of $G$, then $\pi(S) = [K, K]$, where $K$ is a maximal definably compact subgroup of $G^0/N$ containing $\pi(S)$;

\vspace{0.1 cm}
\item[$(b)$] conversely, for every maximal definably compact subgroup $K$ of $G^0/N$ there is a maximal semisimple definably connected definably compact subgroup $S$ of $G$ such that $\pi(S) = [K, K]$.
\end{enumerate}
\end{rem}

%%%%%%%%%%%%%%%%%
\begin{cor}\label{sylinorm}
Let $G$ be a definable group. Then:

\begin{enumerate}

\item[$(i)$] for every maximal semisimple definably connected definably compact subgroup $S$ of $G$, there is a $0$-Sylow $A$ of $G$
such that $A \subset N_G(S)$;

\vspace{0.1cm}
\item[$(ii)$] for every $0$-Sylow $A$ of $G$ there is a maximal semisimple definably connected definably compact  subgroup $S$ of $G$
such that $A \subset N_G(S)$.

\end{enumerate}

\end{cor}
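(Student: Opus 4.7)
I reduce first to $G$ definably connected, since both maximal semisimple definably connected definably compact subgroups and $0$-Sylows of $G$ lie in $G^0$, and $N_{G^0}(S) \subseteq N_G(S)$; the torsion-free case is trivial ($S = A = \{e\}$), so assume $E(G) = 0$. My strategy is to prove (i) by setting up an application of Lemma \ref{binorm} in a suitably chosen intermediate subgroup, and to derive (ii) from (i) by conjugacy of $0$-Sylows.

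For (i), let $S$ be a given maximal semisimple definably connected definably compact subgroup and let $\pi \colon G \to G/N$ be the projection modulo the maximal normal definable torsion-free subgroup $N$. By Remark \ref{remmaxsem}$(a)$, there is a maximal definably compact subgroup $K$ of $G/N$ with $\pi(S) = [K, K]$. Set $G_0 := \pi^{-1}(K)$ and $G_1 := \pi^{-1}([K, K])$. Both are definably connected (preimages of definably connected groups under $\pi$ with definably connected kernel $N$), $N$ and $G_1$ are both normal in $G_0$ (the latter because $[K, K]$ is characteristic in $K$), and $E(G_0) = 0$ since $G_0$ contains the nontrivial $S$. Moreover $G_1/N \cong [K, K]$ is semisimple, so $N$ is the solvable radical of $G_1$, and by Theorem \ref{linexsplit} the subgroup $S$ is a definable cofactor (hence a definable Levi subgroup) of $N$ in $G_1$.

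To invoke Lemma \ref{binorm} inside $G_0$ with normal subgroups $N < G_1$ and cofactor $S$, I must check that every $G_0$-conjugate of $S$ is a $G_1$-conjugate of $S$. For $g \in G_0$, normality of $G_1$ in $G_0$ gives $S^g \subseteq G_1$; the subgroup $S^g \cap N$ is a normal solvable subgroup of the semisimple $S^g$, hence finite by \ref{semisnosolv}, and being inside the torsion-free $N$ it is trivial; and $\pi(N S^g) = [K, K]^{\pi(g)} = [K, K] = \pi(G_1)$ since $\pi(g) \in K$ normalizes $[K, K]$, whence $N S^g = G_1$. Therefore $S^g$ is also a definable Levi subgroup of $G_1$, and Proposition \ref{leviconj} yields $g_1 \in G_1$ with $S^g = S^{g_1}$. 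Lemma \ref{binorm} then produces a $0$-Sylow $A$ of $G_0$ contained in $N_{G_0}(S) \subseteq N_G(S)$. To see $A$ is a $0$-Sylow of $G$, write $G/N = K H$ with $K \cap H = \{e\}$ and $H$ definable torsion-free (Theorem \ref{Iwagisuacca}); then
\[
E(K) E(H) = E(G/N) = E(G/G_0)\, E(G_0/N) = E(G/G_0)\, E(K),
\]
so $E(G/G_0) = E(H) = \pm 1 \neq 0$, and Lemma \ref{charsyl} identifies $A$ as a $0$-Sylow of $G$.

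For (ii), given a $0$-Sylow $A$ of $G$, apply (i) to any maximal semisimple definably connected definably compact subgroup $S_0$ to obtain a $0$-Sylow $A_0 \subseteq N_G(S_0)$. By conjugacy of $0$-Sylows (\ref{maxSylconj}$(d)$) there is $g \in G$ with $A = A_0^g$, whence $A \subseteq N_G(S_0)^g = N_G(S_0^g)$ and $S_0^g$ is again a maximal semisimple definably connected definably compact subgroup. The main obstacle is the conjugacy hypothesis of Lemma \ref{binorm}: although $G_1$ need not be normal in all of $G$, it \emph{is} normal in the intermediate group $G_0 = \pi^{-1}(K)$, and inside $G_0$ every $G_0$-conjugate of the Levi $S$ remains a Levi of $G_1$, so Levi conjugacy applies.
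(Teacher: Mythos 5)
Your proposal is correct and follows essentially the same route as the paper: reduce to the definably connected case, use \ref{remmaxsem} to produce a maximal definably compact $K \leq G/N$ with $\pi(S) = [K,K]$, apply Lemma \ref{binorm} inside $G_2 = \pi^{-1}(K)$ with $N \lhd G_1 = \pi^{-1}([K,K])$ and cofactor $S$, and then use $E(G/G_2) = E(H) = \pm 1$ together with \ref{charsyl} to promote the resulting $0$-Sylow of $G_2$ to one of $G$, with (ii) obtained from (i) by conjugacy of $0$-Sylows and of maximal semisimple definably connected definably compact subgroups. The only difference is that you spell out explicitly the verification of \ref{binorm}'s conjugacy hypothesis (every $G_2$-conjugate of $S$ is a Levi subgroup of $G_1$, hence a $G_1$-conjugate by \ref{leviconj}), which the paper leaves implicit.
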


\begin{proof}
We can suppose $G$ definably connected.

Note that $(i) \Leftrightarrow (ii)$ since all $0$-Sylows are conjugate (\ref{maxSylconj}) and all maximal semisimple definably connected definably compact subgroups are conjugate as well (\ref{maxsemcomp}). Indeed for every $g \in G$, 
\[
A \subseteq N_G(S)\ \Leftrightarrow\, A^g \subset N_G(S^g).
\]

\vs 
We show as $(i)$ follows from \ref{maxsemcomp} and \ref{binorm}.

Let $N$ be the maximal normal definable torsion-free subgroup of $G$ and let \\ $\pi \colon G \to G/N$ be the canonical projection. Let $K$ be a maximal definably compact subgroup of $G/N$ containing $\pi(S)$, and define $G_1:= \pi^{-1}([K, K])$. Therefore $S$ is a definable cofactor of $N$ in $G_1$, and we can apply \ref{binorm} to find a $0$-Sylow of $G_2: = \pi^{-1}(K)$ in $N_G(S)$. Note that $E(G/G_2) = E(H) = \pm 1$ (where $H$ is a torsion-free definable complement of $K$ in $G/N$: \ref{Iwagisuacca}), so any $0$-Sylow of $G_2$ is a $0$-Sylow of $G$ as well (\ref{charsyl}). 
\end{proof} 
%%%%%%%%%%%%
\vs
\subsection{Definably connected extensions of a definably compact group by a definable torsion-free group} 

\begin{fact} \emph{(\cite[6.11]{zero}, \cite[1.2]{div}).} \label{compunmax}
Let $G$ be a definably connected definably compact group, and let $T$ be a maximal definable torus
of $G$ $($i.e. $T$ is a $0$-Sylow of $G)$. Then
\[
G\ =\ \bigcup_{g \in G}T^g.
\]

\end{fact}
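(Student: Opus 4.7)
The plan is to mirror the classical Lie-theoretic proof that every element of a compact connected Lie group lies in a maximal torus, replacing the differential-topological ingredients (degree theory, Lefschetz fixed-point theorem) with the o-minimal Euler characteristic and definable dimension. Set $U := \bigcup_{g \in G} T^g$, which is the image of the definable conjugation map $\phi \colon G \times T \to G$, $(g,t) \mapsto gtg^{-1}$. Since $G$ and $T$ are definably compact and $\phi$ is continuous, $U$ is definably compact and in particular closed in $G$. The goal is then to prove $U = G$ in two stages: first $\dim U = \dim G$, and then promote this to equality.

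For the dimension count, one computes the generic fibers of $\phi$. For a ``regular'' $t_0 \in T$ (one whose $G$-centralizer coincides with $T$, which should exist because $T$ is abelian and maximal among $0$-subgroups), the preimage $\phi^{-1}(g t_0 g^{-1})$ is bijective with a coset of $N_G(T)$. Hence $\dim U = \dim G + \dim T - \dim N_G(T)$. The next step is to show $N_G(T)^0 = T$, so that $\dim N_G(T) = \dim T$ and thus $\dim U = \dim G$. Here one uses that $T$ is a normal $0$-Sylow of $N_G(T)^0$ and that $E(G/T) \ne 0$ (by \ref{maxSylconj}(e)): if $N_G(T)^0 \supsetneq T$, then $N_G(T)^0/T$ is a definably connected group of positive dimension which, combined with multiplicativity of Euler characteristic and conjugacy of $0$-Sylows (\ref{maxSylconj}(d)), contradicts maximality of $T$.

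The delicate remaining task is to conclude $U = G$ from ``$U$ is closed of full dimension in the definably connected $G$''. This is not automatic: a closed subset of full dimension can still be proper. The extra input must come from the fact that $E(G/T) \ne 0$, together with the finiteness of the Weyl group $W := N_G(T)/T$ established above. One approach: the induced map $\bar\phi \colon (G/N_G(T)) \times T \to G$ has all fibers over $U$ of the same (finite) cardinality $|W|$, so by multiplicativity $|W| \cdot E(U) = E(G/N_G(T)) \cdot E(T)$; together with an analogous local computation over $G$, one extracts that the ``definable degree'' of $\bar\phi$ equals $\pm 1$, forcing surjectivity. Alternatively, one can use a Lefschetz-style argument: a generic $t \in T$ acts on $G/T$ by left translation, and the nonvanishing of $E(G/T)$ yields the required fixed points.

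The hard part is unquestionably this last step. In the Lie case it is a one-line invocation of degree theory; in the o-minimal setting one needs the cohomological/Euler-characteristic machinery for $0$-groups developed in \cite{zero}, or the divisibility/torus-structure results of \cite{div}. Everything before Step~4 is essentially bookkeeping with dimensions and the facts already recorded in the paper; the genuine obstruction is building (or citing) enough of an o-minimal ``degree'' theory to upgrade full-dimensional closedness to surjectivity.
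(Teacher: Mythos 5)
First, a point of comparison that matters here: the paper does not prove this statement at all --- it is imported as a Fact, with the proof delegated to \cite[6.11]{zero} and \cite[1.2]{div} --- so your attempt must be judged as a standalone proof, and as such it has genuine gaps. The parts that work: $U=\bigcup_{g\in G}T^g$ is definably compact and closed, and your finiteness of the Weyl group is correct, since $T$ is a normal $0$-Sylow of $N_G(T)^0$, so $E(N_G(T)^0/T)\neq 0$ by \ref{maxSylconj}$(e)$, whereas if $N_G(T)^0/T$ had positive dimension it would be an infinite definably compact definably connected group, hence would have torsion (\ref{compnoncomp}$(b)$) and Euler characteristic $0$ (\ref{euconn}), a contradiction. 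But already the dimension count rests on an unproved assertion: that ``regular'' elements $t_0\in T$ with $C_G(t_0)=T$ exist --- and in fact you need them to form a subset of $T$ of full dimension, since the count is a generic-fiber computation. In the Lie case this comes from topological generators of tori and the exponential map; over a non-archimedean real closed field $T$ has no dense finitely generated subgroup, the definable function $t\mapsto \dim C_G(t)$ need not a priori attain the value $\dim T$, and in the o\kern0.05em-\kern-0.05emminimal literature the existence of such elements is a consequence of the covering theorem rather than an ingredient of its proof. ``Should exist because $T$ is abelian and maximal'' is not an argument.

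Second, and decisively, both of your routes from ``closed of full dimension'' to ``all of $G$'' collapse. The Euler-characteristic bookkeeping is wrong on three counts: the map $\bar{\phi}\colon (G/N_G(T))\times T\to G$, $(gN_G(T),t)\mapsto gtg^{-1}$, is not even well defined (conjugation by $n\in N_G(T)$ moves $t$ within $T$; the classical map lives on $G/T\times T$); the fibers are not all of cardinality $|W|$ (over the identity the fiber of $G/T\times T\to G$ is the infinite set $G/T\times\{e\}$); and the identity $|W|\cdot E(U)=E(G/N_G(T))\cdot E(T)$ is vacuous anyway, because $E(T)=0$ --- $T$ is a $0$-group --- so nothing about surjectivity can be extracted from it. (In the classical argument the degree of $G/T\times T\to G$ is $|W|$, not $\pm 1$.) The Lefschetz alternative is misstated: what is needed is a fixed point of left translation on $G/T$ by an \emph{arbitrary} $g\in G$, since $g\cdot xT=xT$ if and only if $g\in T^x$, not by a generic $t\in T$; and an o\kern0.05em-\kern-0.05emminimal Lefschetz fixed-point theorem or degree theory strong enough to produce such fixed points (built on o\kern0.05em-\kern-0.05emminimal homology over real closed fields) is precisely the technical content of \cite[6.11]{zero} and \cite[1.2]{div}. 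So at the crucial step your proposal either makes a false computation or cites the theorem it is supposed to prove; the skeleton is a faithful copy of the Lie-theoretic proof, but the o\kern0.05em-\kern-0.05emminimal substance --- exactly what the cited authors supply --- is missing.
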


%%%%%%%%%%%%%
\begin{theo}\label{extcomp}
Let $G$ be a definably connected extension of a definably compact group $K$ by a  definable torsion-free group $N$. Then \\

\begin{enumerate}

\item[-] the definable exact sequence 

\[
1\ \longrightarrow\ N \ \stackrel{i}{\longrightarrow}\ G \stackrel{\pi}{\longrightarrow}\ K\ \longrightarrow\ 1  \\
\]

\vs \noindent splits abstractly, and it splits definably if and only if every $0$-subgroup of $G$ is definably compact;\\

\item[-] for every $0$-Sylow $A$ of $G$ and for every direct complement $T$ of $A \cap N$ in $A$ $($see \ref{factor}$)$, there is 
a cofactor $K_T$ of $N$ in $G$ such that

\[
K_T = \bigcup_{x \in K} T^x;
\]

\vs
\item[-] the derived subgroup $[K_T, K_T]$ of $K_T$ is definable and it is a maximal semisimple definably connected definably compact subgroup of $G$.

\end{enumerate}
\end{theo}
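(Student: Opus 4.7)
My plan is to prove first the definable-splitting direction of (1) explicitly, then handle (2) and (3), with the abstract splitting in (1) falling out of the general construction of $K_T$.

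For the definable equivalence in (1): $(\Rightarrow)$ If $N$ has a definable cofactor $K_0$ in $G$, then $K_0$ is definably isomorphic to $K$, hence definably compact; Lemma \ref{0subctf} then forces every $0$-subgroup of $G$ to be definably compact. $(\Leftarrow)$ Assume all $0$-subgroups of $G$ are definably compact. Pick a $0$-Sylow $A$ (now definably compact, so $A \cap N = \{e\}$) and, by Corollary \ref{sylinorm}, a maximal semisimple definably connected definably compact subgroup $S \subseteq G$ with $A \subseteq N_G(S)$. Set $K_T := AS$. This is a definable subgroup (since $A$ normalizes $S$), definably compact as the definable continuous image of $A \times S$ under multiplication, and $\pi(K_T) = \pi(A) \cdot [K,K] = K$, because $\pi(A)$ is a maximal definable torus of $K$ (Lemma \ref{syltor}) and every maximal torus of $K$ contains $Z(K)^0$, so $\pi(A) \cdot [K,K] \supseteq Z(K)^0 \cdot [K,K] = K$. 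The intersection $K_T \cap N$ is a definable torsion-free subgroup of a definably compact group, hence trivial by Remark \ref{duacomptf}, so $K_T$ is a definable cofactor.

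For arbitrary $T$ in (2), I take any $0$-Sylow $A$, any abstract direct complement $T$ of $A \cap N$ in $A$ (which exists by Remark \ref{factor} together with Corollary \ref{maxtfA}, since $A \cap N$ is the maximal definable torsion-free subgroup of $A$), and $S$ as above with $A \subseteq N_G(S)$. Set $K_T := TS$, an abstract subgroup because $T \subseteq A \subseteq N_G(S)$; surjectivity $\pi(K_T) = K$ is as before. The main obstacle is verifying $K_T \cap N = \{e\}$: if $ts \in N$ with $t \in T$, $s \in S$, then $\pi(t) = \pi(s^{-1}) \in \pi(A) \cap [K,K]$, and since both $\pi|_T$ and $\pi|_S$ are bijections onto their images, the lifts $t$ and $s^{-1}$ are uniquely determined from their common image, so we need them to coincide in $G$. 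I plan to secure this by adjusting $S$ within its conjugacy class, using the uniqueness up to conjugation of definable Levi subgroups of $G_1 := \pi^{-1}([K,K])$ given by Theorem \ref{linexsplit}, so as to arrange $S \cap AN \subseteq A$; then I choose $T$ to contain $S \cap A = S \cap AN$, which is possible because $(S \cap A) \cap N \subseteq S \cap N = \{e\}$ by Remark \ref{duacomptf}. Once $K_T \cap N = \{e\}$, $K_T$ is an abstract cofactor of $N$, which also yields the abstract splitting claim in (1).

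The identity $K_T = \bigcup_{x \in K_T} T^x$ in (2) then follows by transporting Fact \ref{compunmax} (applied to the definably compact definably connected group $K$ with maximal definable torus $\pi(A)$) through the abstract isomorphism $\pi|_{K_T} \colon K_T \to K$, under which $T$ corresponds to $\pi(A)$. For part (3), the derived subgroup $[K_T, K_T]$ projects isomorphically onto $[K, K]$ via $\pi$; since $S \subseteq K_T$ also satisfies $\pi(S) = [K, K]$ and $\pi|_{K_T}$ is injective, we conclude $[K_T, K_T] = S$, which is definable and, by the initial choice of $S$ via Corollary \ref{sylinorm} together with Theorem \ref{maxsemcomp}, a maximal semisimple definably connected definably compact subgroup of $G$. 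The principal difficulty throughout is the alignment argument in the second paragraph, namely matching the abstract complement $T$ to the specific definable subgroup $S$ along their common projection $\pi(A) \cap [K, K]$ by exploiting the $N$-conjugacy of definable Levi subgroups of $G_1$.
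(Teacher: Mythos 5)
Your construction is in essence the paper's: the same $S$ supplied by \ref{maxsemcomp} and \ref{sylinorm} with $A \subseteq N_G(S)$, the same candidate $K_T = TS$, surjectivity of $\pi_{|_{K_T}}$ because the maximal torus $\pi(A)$ contains $Z(K)^0$, the identification $[K_T,K_T]=S$, and the transport of \ref{compunmax} through $\pi_{|_{K_T}}$; your treatment of the first bullet via \ref{0subctf} is also the paper's. The sole divergence is the step you call the main obstacle, $K_T \cap N = \{e\}$, which the paper dismisses with ``it is easy to check that $\pi_{|_{K_T}}$ is an isomorphism'' for an \emph{arbitrary} complement $T$. Your caution is justified, and here you have in fact located a gap in the paper's own proof: $TS \cap N = \{e\}$ is equivalent to $T \cap NS \subseteq S$, and this fails for ``skew'' complements, so the second bullet is not true for every $T$. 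Concretely, let $G = G_0 \times \SO_3(\R)$, where $G_0$ is Strzebonski's two-dimensional $0$-group over the reals and $N = N_0$ is its one-dimensional torsion-free subgroup; let $A = G_0 \times T_1$ with $T_1$ a maximal torus of $\SO_3(\R)$ (a $0$-Sylow of $G$: by \ref{abconndec} and \ref{maxSylconj} the abelian group $A$ coincides with its unique $0$-Sylow, which must contain the $0$-subgroups $G_0$ and $T_1$, and $E(G/A) = E(\SO_3(\R)/T_1) \neq 0$), so $A \cap N = N_0$. The subgroup $S = \{e\} \times \SO_3(\R)$ is a maximal semisimple definably connected definably compact subgroup normalized by $A$ (indeed the only one, by \ref{semisperf}). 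Now fix a nontrivial abstract homomorphism $\phi \colon T_1 \to N_0$ (one exists: modulo torsion both groups are $\Q$-vector spaces of dimension continuum) and an abstract splitting $G_0 = N_0 \oplus C$, and set $T = \{(c+\phi(\tau),\tau) : c \in C,\ \tau \in T_1\}$. This is a direct complement of $A\cap N$ in $A$, yet $(\phi(\tau),\tau)\cdot(e,\tau^{-1}) = (\phi(\tau),e)$ is a nontrivial element of $TS \cap N$. Worse, \emph{no} cofactor of $N$ in $G$ contains this $T$: for any such $K'$, the projection to the abelian group $G_0$ kills commutators, so $[K',K'] \subseteq \{e\}\times\SO_3(\R)$, and since $\pi([K',K']) = [K,K]$ with $\pi$ injective there, $[K',K'] = S$; then $TS \subseteq K'$, and surjectivity of $\pi_{|_{TS}}$ together with $K' \cap N = \{e\}$ forces $K' = TS$, a contradiction. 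So your weakening of the quantifier from ``every'' to ``suitably chosen'' complements is not a defect of your proof but a necessary correction to the statement.

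Your own repair, however, has a gap at the alignment step. You propose to replace $S$ by a conjugate so as to get $S \cap AN \subseteq A$, citing the conjugacy of definable cofactors of $N$ in $G_1 = \pi^{-1}([K,K])$ from \ref{linexsplit}; but those conjugates are exactly the $S^n$ with $n \in N$, you never show that one of them has the desired property, and passing from $S$ to $S^n$ in general destroys $A \subseteq N_G(S)$, which you need afterwards both for $TS$ to be a subgroup and for the union-of-conjugates transport. Fortunately no adjustment is needed: for the $S$ given by \ref{sylinorm} one always has $S \cap AN = S \cap A$. Sketch: $(A\cap S)^0$ is a $0$-Sylow of $S$ (as in the proof of \ref{smdefK}, since $E(S/(A\cap S)) = E(AS/A) \neq 0$, $A$ being a $0$-Sylow of $AS$), so $\pi((A\cap S)^0)$ is a maximal definable torus of $[K,K]$ contained in $\pi(A)\cap[K,K]$; the group $\pi(A)\cap[K,K]$ is connected, because $\pi(A) = Z(K)^0\,(\pi(A)\cap[K,K])^0$ and the finite group $Z(K)^0\cap[K,K]$ is central in $[K,K]$, hence contained in every maximal definable torus of $[K,K]$ by \ref{compunmax}; therefore $\pi(A)\cap[K,K] = \pi((A\cap S)^0)$, so every $s \in S\cap AN$ satisfies $\pi(s) = \pi(a')$ for some $a' \in (A\cap S)^0$, and then $s = a'$ because $s(a')^{-1} \in S\cap N = \{e\}$. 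With this argument in place of your conjugation step, and with $T$ chosen to contain $S\cap A$ (possible by divisibility, as you say), your proof is complete: it gives the abstract splitting, the definable splitting criterion, and bullets two and three for such adapted complements $T$ --- which, by the example above, is the most that can be proved.
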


\begin{proof}
Let us consider the possible cases for $K$:\\

If \underline{$K$ is abelian}, then $G$ is solvable and the thesis follows by \ref{solvsempr}. In this case of course $K_T = T$ and $[K_T, K_T] = \{e\}$.\\

If \underline{$K$ is semisim}p\underline{le}, then the extension above splits definably (\ref{linexsplit}) and every $0$-subgroup of $G$ is definably compact (\ref{0subextsem}). Thus $A \cap N = \{e\}$.

Note that every maximal semisimple definably connected definably compact subgroup of $G$ is also a maximal definably compact subgroup of $G$.

Let $S$ be one of them such that $A \subseteq N_G(S)$
(\ref{sylinorm}$(ii)$). Therefore $AS$ is a definably compact subgroup of $G$. By maximality of $S$ it follows that $A \subset S$. By \ref{compunmax}, 
\[
K_A: = S = \bigcup_{x \in K_A} A^x. 
\]

\vs 
Moreover by \cite{hpp2} $K_A = [K_A, K_A]$.\\  
 
If \underline{$K$ is neither abelian nor semisim}p\underline{le}, then by \cite{hpp2} $K = Z(K)^0[K, K]$, where $[K,K]$ is definable and semisimple.

Let $A$ be a $0$-Sylow of $G$, and $T$ a direct complement of $A \cap N$ in $A$. We want to find a cofactor $K_T$ of $N$ in $G$
of the form

\[
K_T = \bigcup_{x \in K_T} T^x. 
\]

\vs \noindent
By \ref{maxsemcomp}, \ref{remmaxsem} and \ref{sylinorm} there is a maximal semisimple definably connected definably compact subgroup $S$ of $G$ definably isomorphic to $[K, K]$ (by $\pi_{|_S} \colon S \to [K, K]$) such that $A \subset N_G(S)$. In particular
$T \subset N_G(S)$ and therefore $K_T := TS$ is a subgroup. It is easy to check that $\pi_{|_{K_T}} \colon K_T \to K$ is an isomorphism, 
$[K_T, K_T] = S$, and $\pi(A) = \pi(T)$ is a maximal torus of $K$ (\ref{syltor}). By \ref{compunmax},
\[
K = \bigcup_{x \in K}\pi(T)^x,
\]

and therefore 

\[
K_T = \bigcup_{x \in K_T}T^x.
\]

If $A$ is definably compact (and therefore every $0$-subgroup of $G$ is), then $N \cap A = \{e\}$, and $K_T$ is definable. 

Otherwise,  
suppose that there is a definable cofactor of $N$ in $G$. Then all $0$-subgroups of $G$ are definably compact (\ref{0subctf}), in contradiction with the fact that $A$ is not.
\end{proof}
%%%%%%%%%%%%%%%%%%%%%%%%%%%%%
\subsection{An abstract decomposition}

As a consequence of previous results about splitting extensions we obtain the following abstract decomposition of definably connected groups:

\begin{theo} \label{decab}
Let $G$ be a definably connected group and $N$ its maximal normal definable torsion-free subgroup.\\

\begin{enumerate}
\item[$(a)$]
For every $0$-Sylow $A$ of $G$ and for every direct complement $T$ of $A \cap N$ in $A$, there are
a subgroup $K$ of $G$ and a maximal definable torsion-free subgroup $H$ of $G$
such that

\[
K = \bigcup_{x \in K} T^x, \quad G = KH, \quad K \cap H = \{e\}.
\]

\vs

The derived subgroup $[K, K]$ of $K$ is definable, and it is a maximal semisimple definably connected definably compact subgroup of $G$.\\

\item[$(b)$] For every maximal semisimple definably connected definably compact subgroup $S$ of $G$ there are a
subgroup $K$ of $G$ and a maximal definable torsion-free subgroup $H$ of $G$ such that

\[
S = [K, K], \qquad G = KH, \qquad K \cap H = \{e\}.
\]

\vs \noindent
The subgroup $K$ is the union of conjugates of a complement of $A \cap N$ in $A$, for some $0$-Sylow $A$ of $G$.\\

\item[$(c)$] In both $(a)$ and $(b)$ above the restriction to $K$ of the canonical projection $\pi \colon G \to G/N$ is an isomorphism with a maximal definably compact subgroup of $G/N$, and $K$ is definable if and only if $A$ is definably compact $($$\Leftrightarrow$ every $0$-subgroup of $G$ is definably compact$)$.

\end{enumerate}
\end{theo}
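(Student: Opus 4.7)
The plan is to view this theorem as essentially a packaging of two earlier results: the decomposition $G/N = K_1 H_1$ from Theorem \ref{Iwagisuacca}, and the splitting result \ref{extcomp} for definably connected extensions of a definably compact group by a torsion-free group. Once the choices of maximal definably compact subgroup of $G/N$, of $0$-Sylow of $G$, and of complement $T$ are threaded together consistently, the statement essentially assembles itself.

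For part $(a)$, starting from a $0$-Sylow $A$ of $G$ and a direct complement $T$ of $A \cap N$ in $A$, I would first invoke Lemma \ref{syltor}$(a)$ to note that $\pi(A)$ is a maximal torus of $G/N$, so being definably compact and definably connected it lies in some maximal definably compact subgroup $K_1$ of $G/N$ (which exists by Theorem \ref{Iwagisuacca}, together with a definable torsion-free complement $H_1$). I would then set $G_1 = \pi^{-1}(K_1)$; since $A \subseteq G_1$, maximality of $A$ among $0$-subgroups of $G$ forces it to be a $0$-Sylow of $G_1$ as well. Applying Theorem \ref{extcomp} to the extension $1 \to N \to G_1 \to K_1 \to 1$ then yields a cofactor $K$ of $N$ in $G_1$ satisfying $K = \bigcup_{x \in K} T^x$, with $[K,K]$ definable and maximal semisimple definably connected definably compact inside $G_1$. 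Finally, set $H = \pi^{-1}(H_1)$: the identity $E(H) = E(N) E(H_1) = \pm 1$ combined with Fact \ref{eu} makes $H$ torsion-free, and the argument of Corollary \ref{maxtorfreenon} gives maximality in $G$. The equalities $G = KH$ and $K \cap H = \{e\}$ follow by a short diagram chase from $\pi(K) = K_1$, $\pi(H) = H_1$, $K_1 \cap H_1 = \{e\}$, $N \subseteq H$, and $K \cap N = \{e\}$. To upgrade ``$[K,K]$ maximal in $G_1$'' to ``maximal in $G$'', I would observe that any semisimple definably connected definably compact $S \supseteq [K,K]$ has $S \cap N = \{e\}$, hence $\pi(S)$ is such a subgroup of $G/N$ containing $[K_1, K_1]$; Remark \ref{remmaxsem}, applied inside $G/N$ (where the maximal normal definable torsion-free subgroup is trivial), tells us the latter is already maximal of its kind in $G/N$, so $\pi(S) = [K_1, K_1]$, forcing $S \subseteq G_1$ and thus $S = [K,K]$.

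For part $(b)$ I would begin with the prescribed $S$ and use Corollary \ref{sylinorm}$(ii)$ to produce a $0$-Sylow $A$ of $G$ with $A \subseteq N_G(S)$; any complement $T$ of $A \cap N$ in $A$ then also normalises $S$. The key observation, which one reads off directly from the proof of Theorem \ref{extcomp}, is that the cofactor it constructs in this situation is \emph{precisely} $K = TS$, so $[K,K] = S$ holds by construction and the remaining conclusions follow as in part $(a)$. Part $(c)$ is then immediate: the restriction $\pi|_K \colon K \to K_1$ is an isomorphism because $K$ is an abstract cofactor of $N$ inside $G_1 = \pi^{-1}(K_1)$, and the equivalence ``$K$ definable $\Leftrightarrow$ $A$ definably compact $\Leftrightarrow$ every $0$-subgroup of $G$ is definably compact'' transfers verbatim from Theorem \ref{extcomp} via conjugacy of $0$-Sylows (\ref{maxSylconj}$(d)$).

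The point that will require the most care is verifying in part $(b)$ that the cofactor produced by the proof of \ref{extcomp}, applied with the given $S$ and a normalising $A$, really is the one with $[K,K] = S$ and not merely a conjugate. This amounts to re-inspecting the construction in \ref{extcomp} with the data of part $(b)$ in place of free choices, but nothing deeper is needed: everything else is assembly on top of \ref{Iwagisuacca}, \ref{extcomp}, \ref{syltor}, \ref{sylinorm}, and \ref{remmaxsem}.
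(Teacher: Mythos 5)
Your proposal is correct and follows essentially the same route as the paper's own proof: decompose $G/N = K_1H_1$ via \ref{Iwagisuacca}, pull back to $G_1 = \pi^{-1}(K_1)$, apply \ref{extcomp} there with the given $A$ and $T$, take $H = \pi^{-1}(H_1)$ (maximal by the argument of \ref{maxtorfreenon}), and handle $(b)$ and $(c)$ via \ref{remmaxsem}, \ref{sylinorm} and \ref{charsyl}/conjugacy of $0$-Sylows; in fact you make explicit a few points the paper leaves implicit, such as $A$ being a $0$-Sylow of $G_1$ and the need in $(b)$ to re-run the construction of \ref{extcomp} with the prescribed $S$ rather than an arbitrary one. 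The only slip is a citation label: in part $(b)$ you want \ref{sylinorm}$(i)$ (which produces $A$ from $S$), not \ref{sylinorm}$(ii)$.
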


\begin{proof} \hspace{7cm}
\begin{enumerate} 
\item[$(a)$] 
 
Let $K_1$ be a maximal definably compact subgroup of 
$G/N$ containing $\pi(A)$ and let $H_1$ be a definable torsion-free subgroup such that $G/N = K_1H_1$ (\ref{Iwagisuacca}). 

Let $G_1 = \pi^{-1}(K_1)$. Note that $A \subseteq G_1$. 
By \ref{extcomp} there is an abstract cofactor $K$ of $N$ in $G_1$ such that

\[
K = \bigcup_{x \in K} T^x,
\]

\vs \noindent
and $[K, K]$ is a maximal semisimple definably connected definably compact subgroup of $G_1$ (and of $G$ as well).

Let $H = \pi^{-1}(H_1)$; then $G = KH$ and $K \cap H = \{e\}$. As we showed in \ref{maxtorfreenon}, $H$ is a maximal definable torsion-free subgroup of $G$.\\  

\item[$(b)$] Let $K_1$ be a maximal definably compact subgroup of $G/N$ containing $\pi(S)$, and let $H_1$ be a torsion-free definable complement of $K_1$ in $G/N$ (\ref{Iwagisuacca}). 

Let $H = \pi^{-1}(H_1)$ and $G_1 = \pi^{-1}(K_1)$. Then $\pi(S) = [K_1, K_1]$ (\ref{remmaxsem}) and by Theorem \ref{extcomp} applied to $G_1$, for every $0$-Sylow $A$ of $G_1$ and for every cofactor $T$ of $A \cap N$ in $A$ there is an abstract
subgroup $K$ of $G$ with the required properties. Because
\[
E(G/G_1) = E(H) = \pm 1,
\]

\vs \noindent
$A$ is a $0$-Sylow of $G$ as well (\ref{charsyl}).\\

\item[$(c)$] In both cases $G = KH$, where $H = \pi^{-1}(H_1)$ for some maximal definable torsion-free
subgroup of $G/N$ such that $G/N = K_1H_1$, where $K_1$ is a maximal definably compact subgroup of $G/N$.
Since $K \cap N = \{e\}$, it follows that $\pi_{|_K} \colon K \to K_1$ is an isomorphism. 

By \ref{extcomp} applied to $G_1 : = \pi^{-1}(K_1) = KN$, $K$ is definable if and only if every $0$-subgroup
of $G_1$ is definably compact. Since $E(G/G_1) = E(H_1) = \pm 1$, every $0$-Sylow of $G_1$ is a $0$-Sylow of $G$
as well (\ref{charsyl}) and the proof is completed.
\end{enumerate}
\end{proof}

%%%%%%%%

\begin{cor} \label{torder}
Let $G$, $N$, $A$, $T$, $K$ be as in \ref{decab}$(a)$. Then

\[
K = T [K, K].
\]
\end{cor}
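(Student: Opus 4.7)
The plan is to push the identity $K = T[K,K]$ down to the quotient $G/N$ via the canonical projection $\pi$, verify it there using Fact~7.4, and lift back via the isomorphism $\pi_{|K}$ from \ref{decab}$(c)$.

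First I would record the easy inclusion: since $T \subseteq K$ (as $T = T^e$ is one of the conjugates in the union defining $K$) and $[K,K] \subseteq K$, we have $T[K,K] \subseteq K$. For the reverse inclusion, observe that by \ref{decab}$(c)$ the restriction $\pi_{|K}\colon K \to K_1$ is an isomorphism onto a maximal definably compact subgroup $K_1$ of $G/N$, so it suffices to prove
\[
K_1 \;=\; \pi(T)\cdot [K_1,K_1],
\]
because then $\pi_{|K}^{-1}(\pi(T)) = T$ (since $T \subseteq K$) and $\pi_{|K}^{-1}([K_1,K_1]) = [K,K]$, giving $K \subseteq T[K,K]$ by pulling back the factorisation.

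Next I would identify the two factors on the right. Since $A = (A\cap N)\cdot T$ and $A\cap N \subseteq N = \ker\pi$, we have $\pi(T)=\pi(A)$, which by Lemma~\ref{syltor}$(a)$ is a maximal torus of the definably compact group $K_1$. By Fact~\ref{decomp} applied to $K_1$,
\[
K_1 \;=\; Z(K_1)^0\cdot [K_1,K_1],
\]
where $[K_1,K_1]$ is a definable semisimple subgroup. It remains to show $Z(K_1)^0 \subseteq \pi(T)$. For this, note that $Z(K_1)^0\cdot\pi(T)$ is a definable abelian subgroup of the definably compact group $K_1$ (the product is abelian because $Z(K_1)^0$ is central and $\pi(T)$ is abelian), and it contains the maximal torus $\pi(T)$; by \ref{torusis0gr} and maximality of $\pi(T)$ as a $0$-Sylow we conclude $Z(K_1)^0\cdot\pi(T) = \pi(T)$, i.e. $Z(K_1)^0 \subseteq \pi(T)$. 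Therefore
\[
K_1 \;=\; Z(K_1)^0\cdot [K_1,K_1]\;\subseteq\; \pi(T)\cdot [K_1,K_1]\;\subseteq\; K_1,
\]
so equality holds, completing the argument after pulling back through the isomorphism.

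There is no real obstacle here: the whole corollary is essentially a translation of Hrushovski--Peterzil--Pillay's decomposition~\ref{decomp} of the definably compact group $K_1$ back to $K$ using the isomorphism of \ref{decab}$(c)$. The only small point worth isolating is the inclusion $Z(K_1)^0\subseteq \pi(T)$, but this is standard: normal tori lie in every maximal torus of a definably connected definably compact group, by the same abelianness-plus-maximality argument already used in the proof of \ref{linsplit}.
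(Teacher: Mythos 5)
Your proof is correct and takes essentially the same route as the paper's: transfer the problem to $K_1$ via the isomorphism $\pi_{|_K}$ of \ref{decab}$(c)$, apply the decomposition $K_1 = Z(K_1)^0[K_1,K_1]$ of Fact \ref{decomp}, identify $\pi(T)=\pi(A)$ as a maximal definable torus via \ref{syltor}$(a)$, show $Z(K_1)^0\subseteq\pi(T)$, and pull back. The only (immaterial) difference is in that last inclusion: the paper deduces it from conjugacy of maximal tori ($Z(K_1)^0$ is a normal definable torus, hence lies in every maximal one), while you observe that $Z(K_1)^0\pi(T)$ is an abelian, definably connected, definably compact --- hence by \ref{torusis0gr} a $0$- --- subgroup containing the $0$-Sylow $\pi(T)$, so it equals $\pi(T)$ by maximality; both arguments are valid.
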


\vs
\begin{proof}
Let $K_1$ be the maximal definably compact subgroup of $G/N$ such that 
\[
\pi_{|_K} \colon K \ \longrightarrow \ K_1
\]

\noindent
is an isomorphism \ref{decab}$(c)$. Then $T_1 : = \pi(A)$ is a maximal definable torus of $K_1$ (\ref{syltor}) and it contains $Z(K_1)^0$ (which is a normal definable torus, so it is contained in every maximal definable torus, because 
they are all conjugate to each other). 

Since $K_1 = Z(K_1)^0[K_1, K_1]$, it follows that $K_1 = T_1[K_1, K_1]$ (note that $T_1 = \pi(T)$)
and then $K = T[K, K]$. 
\end{proof}

 %%%%%%%%%%%%%%%%%%%%%%%%%%%%%%%%%%%%%%%%%%%%%%%%%%%%%%%%%%%%%%%%%%%%%%%%%%%%%%%%%%%%%%%%%%%%%%%%%%%%%%%%%%%%%%%%%%%%%%%%%%%%%%%%%%%%%%%%%%%%%%%%%%%%%%%%%%%%%%%%%%%%%%%%%%%%%%%%%%%%%%%%%%%%%%%%%%%%%%%%%%%%%%%%%%%%%%%%%%%%%%%%%%%%%%%%%%%%%%%%%%%%%%%%%%%%%%%%%%%%%%%%%%%%%%%%%%%%%%%%%%%%%%%%%%%%%%%%%%%%%%%%%%%%%%%%%%%%%%%%%%%%%%%%%%%%%%%%%%%%
\vs
\section{A definable Lie-like decomposition}

In this section we find a definable \ominimal analogue of the decomposition in \ref{complie} of connected real Lie groups, where maximal tori are replaced by maximal $0$-subgroups.

%%%%%%%%%%%%%%%%
\begin{prop} \label{smdefK}
Let $G$ be a definably connected group and $N$ its maximal normal definable torsion-free subgroup. Let $A$ be a $0$-Sylow of $G$, $T$
a cofactor of $A \cap N$ in $A$, and 
\[
G = KH
\] 

\vs \noindent
an abstract decomposition with $K$ and $H$ as in \ref{decab}$(a)$, i.e. $K = \bigcup_{x \in K}T^x$ and $K \cap H = \{e\}$.

Then the smallest definable subgroup $P$ of $G$ containing $K$ is such that:\\

\begin{enumerate}

\item
$P = AS$, where $S = [K, K]$ is a maximal semisimple definably connected definably compact subgroup of $G$;\\

\item
$P = \bigcup_{x \in P}A^x$;\\

\item
$A \cap N$ is the unique maximal definable torsion-free subgroup of $P$ and $P/(A \cap N)$ is definably compact;\\

\item $Z(P)^0 = C_P(S)^0 = (A \cap N) \times (\pi_{|_K})^{-1}(Z(\pi(K))^0)$ is the solvable radical of $P$
and $S = [K, K]$ is a definable Levi subgroup of $P$;\\

\item
$P = (A \cap N) \times K$;\\

\item
$Z(P) = (A \cap N) \times Z(K)$. \\

\end{enumerate}
\end{prop}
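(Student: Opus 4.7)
The plan is to establish (1) first, prove the pivotal fact that $A\cap N$ centralizes $P$, deduce (5), (2), (3) by direct computation, and finish with (4) and (6). For (1), I would argue that $AS$ is a definable subgroup (since $A$ normalizes $S$ by the construction in Theorem \ref{decab}) containing $K=TS$, so $P\subseteq AS$; the reverse inclusion $A\subseteq P$ follows by considering the definable subgroup $Q:=P\cap A$, which contains $T$, and observing that $A/Q$ is a quotient of the torsion-free group $A\cap N$ (since $A=(A\cap N)T\subseteq (A\cap N)Q$), hence torsion-free by Fact \ref{torfree}(d) with $E(A/Q)=\pm 1$ by Fact \ref{eu}; but $A$ is a $0$-group, which forces $Q=A$.

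The key step, which is also the main obstacle, is the centralization $A\cap N\subseteq Z(P)$. Since $A$ is abelian, it suffices to show that $S$ centralizes $A\cap N$. For $h\in A\cap N$ and $s\in S$, the commutator $[h,s]=hsh^{-1}s^{-1}$ lies in $S$ (since $h\in A$ normalizes $S$, so $hsh^{-1}\in S$) and also in $N$ (since $s$ normalizes the normal subgroup $N$ and $h^{-1}\in N$, so $sh^{-1}s^{-1}\in N$); hence $[h,s]\in S\cap N\subseteq K\cap N=\{e\}$, the last equality being part of the hypothesis of Theorem \ref{decab}(a). From centrality, (5) follows: $P=AS=(A\cap N)TS=(A\cap N)K$ with $(A\cap N)\cap K\subseteq N\cap K=\{e\}$, and the product is direct by centrality. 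Statement (2) follows because, by centrality, $(A\cap N)T^x=A^x$ for every $x\in K$, so $P=(A\cap N)\bigcup_{x\in K}T^x=\bigcup_{x\in K}A^x\subseteq\bigcup_{x\in P}A^x\subseteq P$. Statement (3) follows from (5) since $P/(A\cap N)\cong K\cong K_1:=\pi(K)$ is definably compact by \ref{decab}(c), letting Corollary \ref{quocomp} identify $A\cap N$ as the unique maximal definable torsion-free subgroup of $P$.

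For (4) and (6), I would set $Z_0:=(A\cap N)\cdot(\pi|_K)^{-1}(Z(K_1)^0)$ and observe that $Z_0=\pi^{-1}(Z(K_1)^0)\cap P$, so $Z_0$ is definable; it is definably connected as an extension via $\pi$ of two such groups. The second factor lies in $T\subseteq A$ (since the central torus $Z(K_1)^0$ is contained in the maximal torus $\pi(A)=\pi(T)$ of $K_1$) and in $Z(K)$, so it centralizes both $A$ and $K$, hence all of $P=AK$; combined with centrality of $A\cap N$, this gives $Z_0\subseteq Z(P)$. Writing $p=hk\in P$ with $h\in A\cap N$ (central) and $k\in K$, the condition $p\in Z(P)$ reduces to $k\in Z(K)$, yielding $Z(P)=(A\cap N)\times Z(K)$ (statement (6)) and $Z(P)^0=(A\cap N)\times Z(K)^0=Z_0$. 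The same analysis for $C_P(S)$ gives $C_P(S)^0=(A\cap N)\times C_K(S)^0$, and $C_K(S)^0=Z(K)^0$ follows from Fact \ref{decomp} applied to $K_1$ together with the finiteness of $Z(S)$ (since $S$ is semisimple), so $C_P(S)^0=Z_0$. Finally, $Z_0$ is the solvable radical since $P/Z_0\cong K_1/Z(K_1)^0$ is semisimple, and $S$ is a definable Levi subgroup because the dimension equality $\dim P=\dim(A\cap N)+\dim K_1=\dim Z_0+\dim S$ together with $Z_0\cap S\subseteq Z(S)$ finite yields $P=Z_0 S$.
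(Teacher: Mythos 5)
Your proposal is correct, and it genuinely reorganizes the argument relative to the paper's proof. The paper works by a three-way case analysis on $K_1=\pi(K)$ (abelian, semisimple, neither) and proves the items in the order $(1)$--$(6)$; in particular, for $(2)$ it shows that $(A\cap S)^0$ is a $0$-Sylow of the definably compact group $S$ and invokes Fact \ref{compunmax} to obtain $S=\bigcup_{x\in S}(A\cap S)^x$, and it only establishes the centrality of $A\cap N$ late, inside item $(4)$ (via the same commutator computation $asa^{-1}s^{-1}\in N\cap S=\{e\}$ that you use). You instead prove $A\cap N\subseteq Z(P)$ immediately after $(1)$ and make everything else a corollary: your proof of $(2)$ is the direct computation $P=(A\cap N)K=\bigcup_{x\in K}(A\cap N)T^x=\bigcup_{x\in K}A^x$, which exploits the hypothesis $K=\bigcup_{x\in K}T^x$, avoids \ref{compunmax} and the $0$-Sylow analysis of $S$ altogether, and requires no case distinction. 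What the paper's route buys is the extra fact $S=\bigcup_{x\in S}(A\cap S)^x$ and a derivation of $A\subseteq N_G(S)$ that does not lean on the construction behind \ref{decab}; note that your own argument for $A\subseteq P$ (the $Q=P\cap A$ computation with $E(A/Q)=\pm 1$) applied to the definable group $N_G(S)\supseteq T$ gives exactly this, so your appeal to ``by construction'' is avoidable.

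Two points need tightening. First, you use $K=TS$ in step $(1)$ without justification; this is Corollary \ref{torder} and should be cited. Second, and more substantively, in $(3)$ you infer definable compactness of $P/(A\cap N)$ from ``$P/(A\cap N)\cong K\cong K_1$''. As written these are abstract isomorphisms ($K$ is in general \emph{not} definable, which is the whole point of the proposition), and definable compactness is not preserved by abstract isomorphism, so this inference form is invalid. The conclusion is nevertheless correct and the repair is one line, and it is what the paper does: the composite isomorphism is the map induced by the definable homomorphism $\pi_{|_P}\colon P\to K_1$, which is surjective with kernel $P\cap N=A\cap N$; hence $P/(A\cap N)$ is \emph{definably} isomorphic to $K_1$, and \ref{quocomp} applies. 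With these two repairs your proof is complete.
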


\begin{proof}
Let $\pi \colon G \to G/N$ be the canonical projection. 
Define $G_1 := KN$ and observe that  $G_1 = \pi^{-1}(K_1)$, for some maximal definably compact subgroup $K_1$ of $G/N$ and 
\[
\pi_{|_K} \colon K \ \longrightarrow \ K_1
\]

\vs \noindent
is an isomorphism (\ref{decab}$(c)$). So $P \subseteq G_1$ and $\pi_{|_P} \colon P \to K_1$ is a definable surjective homomorphism.

We consider the possible cases for $K_1$. \\  

If $K_1$ is \underline{abelian}, then $K = T$ and we claim that the smallest definable subgroup $P$ of $G$ containing $T$ is $A$.

If not,  
$T \subseteq P \subsetneq A$. Since $A = T \times (A \cap N)$, it follows that $A/P$ is definably isomorphic
to $(A \cap N)/(P \cap N)$, so $E(A/P) = E((A \cap N)/(P \cap N)) = \pm 1$, in contradiction with the fact
that $A$ is a $0$-group.

The other statements follow trivially.\\

If $K_1$ is \underline{semisim}p\underline{le}, then $K$ is definable (\ref{linexsplit}), so $P = K$. In this case $A$ is definably compact (\ref{0subctf}) and $K = P$ is the union of conjugates of $A$ (\ref{decab}). By \cite{hpp2} $K = [K, K]$ and the other statements are obvious. \\

Suppose now $K_1$ is \underline{neither abelian nor semisim}p\underline{le}.

\begin{enumerate}

\item
We want to show that $P = AS$, where $S:= [K, K]$.

We claim that $A \subset N_G(S)$. Since $S \lhd K$ and $T \subset K$, of course $T \subset N_G(S)$.
As we showed above, the smallest definable subgroup containing $T$ is $A$, so $A \subset N_G(S)$ as well.
Therefore $AS$ is a definable subgroup and $P \subseteq AS$. 
On the other hand, by \ref{torder} $K = TS$, and since $A$ is the smallest definable subgroup containing $T$, it follows that $A \subset P$, so $AS \subseteq P$ and $AS = P$.\\

\item
We want to show that

\[
P\ =\ \bigcup_{x \in P}A^x.
\]  

\vs 
\begin{enumerate}

\item[$(\subseteq)$] Since $P = AS$ and $P$ is a subgroup, it is enough to verify that $S \subset \bigcup_{x \in P}A^x$. We claim that

\[
S\ =\ \bigcup_{x \in S}(A \cap S)^x.  
\]

\vs \noindent
Actually we can show that $(A \cap S)^0$ is a $0$-Sylow of $S$, from which the claim follows by \ref{compunmax}. Since $(A \cap S)^0$ is definably connected (obviously), definably compact (it is a definable subgroup of $S$ which is definably compact) and abelian (it is a subgroup of $A$ which is abelian), it follows that it is a definable torus (\ref{torusis0gr}) and then a $0$-group. It is also a $0$-Sylow of $S$ by \ref{maxSylconj}, because
 
\[
E(S/(A \cap S)^0) = E(S/(A \cap S)) E((A \cap S)/ (A \cap S)^0) \neq 0,
\]

\vs \noindent
since $S/(A \cap S)$ is in definable bijection with $P/A$ and $E(P/A) \neq 0$, being $A$ a $0$-Sylow. 

\vs
\item[$(\supseteq)$] $A \subseteq N_{G}(S)\ \ \Rightarrow\ A^y \subseteq AS = P \ \ \forall\, y \in S\ \Rightarrow\ A^x\ \subseteq P\ \forall\ x \in P$.\\
\end{enumerate}

\item $P = AS = (A \cap N)TS = (A \cap N)K$ (\ref{torder}). Therefore the kernel of the map
\[
\pi_{|_P} \colon P\ \longrightarrow\ K_1
\]  

\vs \noindent
is $A \cap N$. This shows that $P \cap N = A \cap N$ and thus $A \cap N \lhd P$. 
Since $P/(A \cap N)$ is definably compact, $A \cap N$ is the maximal (normal) definable torsion-free subgroup of $P$ (\ref{quocomp}).\\

\item 
Since $P/\pi^{-1}(Z(K_1))$ is semisimple (being definably isomorphic to a quotient of $[K_1, K_1]$),
it follows that $\pi^{-1}(Z(K_1))^0 = (A \cap N) \rtimes (\pi_{|_K})^{-1}(Z(K_1)^0)$ is the solvable radical of $P$
(\ref{solvrad}). Call it $R_P$.

We claim that $R_P = C_P(S)^0$. First we show that $A \cap N \subset C_P(S)$. 
Let $a \in A \cap N \subset N_G(S)$. For every $s \in S$, we have
$asa^{-1}s^{-1} \in N \cap S = \{e\}$. Therefore $R_P \subset C_P(S)^0$.

For every $x \in C_P(S)$, $\pi(x) \in C_{K_1}([K_1, K_1]) = Z(K_1)$. Thus $R_P = C_P(S)^0$.

We claim that actually $R_P \subset Z(P)$. Note that each $x \in R_P$ can be written as $x = a_1y$, with $a_1 \in A \cap N$ and $y \in Z(K)$, and each $p \in P$ can be written as $p = a_2ts$ (see $(3)$), with $a_2 \in A \cap N$, $t \in T$ and $s \in S$. So $xp = px$ for every $x \in R_P$ and every $p \in P$.

Therefore $R_P = Z(P)^0$ and $P = R_PS$, so $S$ is a Levi subgroup of $P$.\\ 

\item By $(4)$, $P = R_PS = (A \cap N)K$. We have seen in $(3)$ that $A \cap N$ is normal in $P$. Also $K$
is normal in $P$, since $A \cap N \subset Z(P)$.\\
 
\item It is immediate from $(4)$ and $(5)$.

\end{enumerate}
\end{proof}

%%%%%%%%%%%%%%%%%
\begin{theo} \label{lielike}
Let $G$ be a definably connected group and $N$ its maximal normal definable torsion-free subgroup. Let $A$ be a $0$-Sylow of $G$. Then there are definably connected subgroups $P, H < G$,
$H \supseteq N$ $($maximal$)$ torsion-free, such that

\[
G = PH  \qquad \mbox{and} \qquad P = \bigcup_{x \in P}A^x,
\]

\noindent
where $P \cap H = P \cap N = A \cap N$
is the maximal $($normal$)$ definable torsion-free subgroup of $P$.
\end{theo}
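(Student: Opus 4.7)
The plan is to consolidate the pieces already assembled in Theorem \ref{decab}(a) and Proposition \ref{smdefK} into the asserted statement. The subgroup $P$ will be the smallest definable subgroup containing the abstract cofactor $K$, and $H$ will be the one produced by the abstract decomposition.

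First I would fix a $0$-Sylow $A$ of $G$ and choose a (possibly non-definable) direct complement $T$ of $A\cap N$ in $A$, which exists by \ref{factor}. Applying Theorem \ref{decab}(a) with this data yields an abstract subgroup $K=\bigcup_{x\in K}T^{x}$ and a maximal definable torsion-free subgroup $H$ of $G$ with $G=KH$ and $K\cap H=\{e\}$. Since $N$ is a normal definable torsion-free subgroup and $H$ is maximal torsion-free, \ref{maxtorfreenon} gives $N\subseteq H$. I would then set $P$ to be the smallest definable subgroup of $G$ containing $K$, which is the object analyzed in Proposition \ref{smdefK}.

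Next I would translate the conclusions of Proposition \ref{smdefK} into the clauses of the theorem. From \ref{smdefK}(1) we have $P=AS$ with $S=[K,K]$ definably connected and semisimple; since $A$ is also definably connected (\ref{maxSylconj}(b)), $P$ is definably connected. The torsion-free subgroup $H$ is definably connected by \ref{torfree}(a). From $K\subseteq P$ and $G=KH$ we immediately get $G=PH$, and \ref{smdefK}(2) gives the required union-of-conjugates description $P=\bigcup_{x\in P}A^{x}$. Clause \ref{smdefK}(3) identifies $A\cap N$ as the maximal (normal) definable torsion-free subgroup of $P$.

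The only computation that is not directly quoted from \ref{smdefK} is the identification $P\cap H=P\cap N=A\cap N$, and this is where I would spend the real work. Using \ref{smdefK}(5) I write $P=(A\cap N)\times K$. Since $A\cap N\subseteq N\subseteq H$, we have $A\cap N\subseteq P\cap H$. Conversely, any $x\in P\cap H$ can be written uniquely as $x=ak$ with $a\in A\cap N$ and $k\in K$; then $k=a^{-1}x\in K\cap H=\{e\}$, giving $x=a\in A\cap N$. Hence $P\cap H=A\cap N$. For the equality $P\cap N=A\cap N$, note that $P\cap N$ is a normal definable torsion-free subgroup of $P$, so by \ref{smdefK}(3) it is contained in $A\cap N$; the reverse inclusion is obvious. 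I do not expect a serious obstacle: all the substantive work has already been carried out in \ref{decab} and \ref{smdefK}, and the theorem merely packages those results in the desired geometric form.
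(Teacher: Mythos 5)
Your proof is correct and takes essentially the same route as the paper: both obtain $K$ and $H$ from Theorem \ref{decab}$(a)$, let $P$ be the smallest definable subgroup containing $K$, and read the conclusions off Proposition \ref{smdefK}. The only cosmetic differences are that the paper deduces definable connectedness of $P$ from $P = \bigcup_{x \in P}A^x$ rather than from $P = AS$, and obtains $P \cap H = P \cap N = A \cap N$ directly from the maximality of $A \cap N$ among definable torsion-free subgroups of $P$ (\ref{smdefK}$(3)$) instead of via the direct product decomposition \ref{smdefK}$(5)$ (also, your inclusion $N \subseteq H$ is better justified by the construction $H = \pi^{-1}(H_1)$ in \ref{decab}$(a)$, or by the argument of \ref{unifree}, than by citing \ref{maxtorfreenon}).
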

 
\begin{proof} 
See \ref{smdefK}. Note that $P$ is definably connected since $A^x$ is definably connected
for every $x \in P$, and therefore $A^x \subset P^0$ for every $x \in P$.
Being torsion-free, also $H$ is definably connected (\ref{torfree}$(a)$).

Moreover $P \cap H = P \cap N$, since $P \cap N$ is the maximal definable torsion-free definable subgroup of $P$.
\end{proof}

%%%%%%%%%%%%%%%%%%%%%%%%%%%%%%%%%%%%%%%%%%%%%%%%%%%%%%%%%%%%%%%%%%%%%%%%%%%%%%%%%
%%%%%%%%%%%%%%%%%%%%%%%%%%%%%%%%%%%%%%%%%%%%%%%%%%%%%%%%%%%%%%%%%%%%%%%%%%%%%%%%%%%%%%%%%%%%%%%%%%%%%%%%%%%%%%%% %%%%%%%%%%%%%%%%%%%%%%%%%%%%%%%%%%%%%%%%%%%%%%%%%%%%%%%%%%%%%%%%%%%%%%%%%%%%%%%%%%%%%%%%%%%%%%%%%%%%%%%%%%%%%%%%%

\vs
\section{Homotopy equivalence}

In this last section we show (as a consequence of results in \cite{ps05} and in the previous sections)
that every definably connected group $G$ in an \ominimal expansion of a field is definably homotopy equivalent to any maximal definably compact subgroup of $G/N$, where $N$ is the maximal normal definable torsion-free subgroup of $G$ (\ref{homcompmax}).\\

 So let  $\M = \langle M, <, +, \cdot, \dots \rangle$ be an \ominimal expansion of a real closed field. Definable groups of this section are supposed to be definable in \M.

We recall first the definitions of definable homotopy and homotopy equivalence:

Let $X, Y$ be definable sets and $f,g
\colon X \to Y$ definable continuous maps. A {\bf definable
homotopy between} $f$ and $g$ is a definable continuous map $\mathcal{H} \colon
X \times [0, 1] \to Y$ such that $f(x) = \mathcal{H}(x,0)$ and
$g(x) = \mathcal{H}(x, 1)$ for every $x \in X$.\\
A definable set $X$ is called {\bf definably contractible} to the
point $\bar{x} \in X$ if there is a definable homotopy $\mathcal{H} \colon X
\times [0, 1] \to X$ between the identity map on $X$ and the map
$X \to X$ taking the constant value $\bar{x}$. \\

\noindent
Two definable sets
$X$ and $Y$ are {\bf definably homotopy equivalent} if there
are definable continuous maps $f \colon X \to Y$, $g \colon Y
\to X$ such that there exist definable homotopies $\mathcal{H}_X$ between $(f
\circ g) \colon X \to X$ and the identical map on $X$, and $\mathcal{H}_Y$
between $(g \circ f) \colon Y \to Y$ and the identical map on $Y$.

\begin{fact} \emph{(\cite[5.7, 5.1]{ps05}).}
\begin{enumerate}

\item[$(a)$] Every $n$-dimensional definable torsion-free group $H$ is definably homeomorphic to
$M^n$ $($and so definably contractible$)$.

\item[$(b)$] For every definably contractible definable subgroup $H$ of a definable group $G$, there is a definable continuous section $s \colon G/H \to G$. 

\end{enumerate}
\end{fact}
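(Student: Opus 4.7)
The plan is to prove $(b)$ first, independently, and then use it together with an induction on dimension to deduce $(a)$.

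For $(b)$, the strategy is to exploit the definable contractibility of $H$ to patch local sections of the projection $\pi \colon G \to G/H$ into a single global continuous definable section. First I would observe that $\pi$ is a principal $H$-bundle which is locally definably trivial: by definable choice in an o-minimal expansion of an ordered group, there is a definable set-theoretic section, and using homogeneity and o-minimal cell decomposition one obtains a definable partition $G/H = C_1 \sqcup \cdots \sqcup C_k$ into cells together with definable continuous local sections $s_i \colon C_i \to G$. The heart of the proof is the inductive gluing of these sections across cell boundaries. Given a definable continuous section $s_A$ on a closed subset $A \subset G/H$ and a local section $s_C$ over the interior of a cell $C$ whose frontier $\partial C$ lies in $A$, the mismatch $s_A \cdot s_C^{-1}|_{\partial C}$ is a definable continuous map $\partial C \to H$. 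Composing with a definable contraction $\mathcal{H} \colon H \times [0,1] \to H$ of $H$ to a point and interpolating along a definable collar of $\partial C$ inside $C$ produces a definable continuous modification of $s_C$ that matches $s_A$ on $\partial C$. Iterating over the cells in order of increasing dimension yields the desired global section.

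For $(a)$, I would induct on $n = \dim H$. In the base case $n = 1$, by Pillay's classification of one-dimensional definable groups in an o-minimal expansion of a real closed field, a one-dimensional definable torsion-free group is definably isomorphic to $(M, +)$, hence definably homeomorphic to $M$. For the inductive step, I would pick a normal definable subgroup $H_1 \lhd H$ of codimension $1$ using Fact \ref{torfree}$(c)$. By the inductive hypothesis, $H_1$ is definably homeomorphic to $M^{n-1}$ and hence definably contractible. By Fact \ref{torfree}$(d)$ the quotient $H/H_1$ is a one-dimensional definable torsion-free group, so by the base case it is definably homeomorphic to $M$. Applying $(b)$ to the contractible subgroup $H_1 \subset H$ gives a definable continuous section $s \colon H/H_1 \to H$, and then the map
\[
H_1 \times H/H_1 \longrightarrow H, \qquad (x, y) \longmapsto x \cdot s(y),
\]
is a definable homeomorphism with inverse $g \mapsto \bigl(g \cdot s(\pi(g))^{-1}, \pi(g)\bigr)$. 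This yields $H \cong M^{n-1} \times M = M^n$, closing the induction.

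The main obstacle is the gluing step in $(b)$: one must verify that the stratum-by-stratum extension of a partial section can be carried out definably, which requires a definable collar of $\partial C$ inside $C$ and an interpolation scheme compatible with limits at lower-dimensional strata. This is the o-minimal analogue of the classical obstruction-theoretic fact that a fiber bundle with contractible fibers over a CW complex admits a section, and the delicate point is ensuring that the definable contraction $\mathcal{H}$ can be transported to the fibers of $\pi$ continuously across cell closures without introducing pathologies at the boundary. There is no circularity between $(a)$ and $(b)$: $(b)$ is proved once and for all for abstractly contractible definable subgroups, and the contractibility needed at the inductive step of $(a)$ is supplied by the preceding stage of the same induction.
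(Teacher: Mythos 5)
The paper gives no proof of this Fact: it is imported verbatim from Peterzil--Starchenko \cite[5.1, 5.7]{ps05}, so the only meaningful comparison is with that cited development. Your overall architecture does mirror it: there too the section theorem (b) is the earlier result, and (a) is then obtained by induction on dimension using a codimension-one normal subgroup (\ref{torfree}$(c)$), the $1$-dimensional base case, and the product homeomorphism $H \cong H_1 \times H/H_1$ built from a section exactly as in your display (this last step is also Corollary \ref{homeo} of the present paper). Given (b), your deduction of (a) is correct, except that the base case needs repair: it is \emph{false} that every $1$-dimensional definable torsion-free group is definably isomorphic to $(M,+)$. In a pure real closed field the group $(M^{>0},\cdot)$ is $1$-dimensional and torsion-free, yet every semialgebraic homomorphism $(M,+)\to(M^{>0},\cdot)$ is trivial (a nontrivial one would grow exponentially along an arithmetic progression, contradicting polynomial bounds on semialgebraic functions), so no definable isomorphism exists. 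What is true, and all you need, is that such a group is definably \emph{homeomorphic} to $M$: it is definably connected (\ref{torfree}$(a)$), it is not definably compact (\ref{compnoncomp}$(b)$), and a $1$-dimensional definably connected, non-definably-compact definable group is definably homeomorphic to an interval, hence to $M$.

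The genuine gap is in (b), which is the heart of the matter. As written, your gluing step is not well-defined: $s_A$ takes values in the fibers over points of $\partial C$, while $s_C$ takes values in the fibers over points of the open cell $C$, so the expression $s_A \cdot s_C^{-1}|_{\partial C}$ compares elements lying in different cosets and is meaningless; worse, $s_C$ need not extend continuously to $\partial C$ at all. To give the mismatch a meaning you must first fix a definable local trivialization of $\pi$ over a neighborhood of the closed cell (local triviality everywhere, not just generically, does follow from your homogeneity remark, since $G$ acts transitively on $G/H$ by definable homeomorphisms), and to extend a section from the frontier across the closed cell you need the closed cell to be definably the cone over its frontier --- true for closed simplices via the o-minimal triangulation theorem, but not for general cells, whose frontier need not even be a union of cells of the decomposition. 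Carrying out this skeletal induction definably, including over a base $G/H$ that need not be definably compact, is precisely the content of \cite[5.1]{ps05}; your proposal names this as ``the delicate point'' but does not resolve it, so (b), and with it the whole Fact, remains unproved. Note also that for the induction in (a) you only ever invoke (b) when $G/H$ is $1$-dimensional, where the gluing involves only finitely many bad points; proving that special case directly would be a substantially lighter burden than the general statement you set out to prove.
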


\begin{cor}\label{homeo}
Let $G$ be a definable group and let $N$ be a normal definable torsion-free subgroup of $G$. Then $G$ is definably homeomorphic to $G/N \times N$.
\end{cor}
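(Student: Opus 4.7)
The plan is to produce an explicit definable homeomorphism using a definable continuous section of the quotient map $\pi \colon G \to G/N$.

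First, since $N$ is definable and torsion-free, fact $(a)$ (just stated) gives that $N$ is definably homeomorphic to $M^n$ with $n = \dim N$; in particular $N$ is definably contractible. Applying fact $(b)$ to the definable subgroup $N \lhd G$, we obtain a definable continuous section $s \colon G/N \to G$ of $\pi$, i.e.\ $\pi \circ s = \id_{G/N}$.

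Next, I would use $s$ to define
\[
\Phi \colon G/N \times N \longrightarrow G, \qquad (\bar{g}, n) \longmapsto s(\bar{g}) \cdot n,
\]
which is definable and continuous as a composition of definable continuous maps (multiplication in $G$ is definable and continuous by \cite{pi1}). Define also
\[
\Psi \colon G \longrightarrow G/N \times N, \qquad g \longmapsto \bigl(\pi(g),\ s(\pi(g))^{-1} g\bigr).
\]
One checks that $\Psi$ is well-defined: for every $g \in G$, $\pi\bigl(s(\pi(g))^{-1}g\bigr) = \pi(g)^{-1}\pi(g) = e$, so $s(\pi(g))^{-1}g \in N$. Both coordinates of $\Psi$ are definable and continuous, hence so is $\Psi$.

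Finally I would verify that $\Phi$ and $\Psi$ are mutually inverse. For $g \in G$,
\[
\Phi(\Psi(g)) = s(\pi(g)) \cdot s(\pi(g))^{-1} g = g,
\]
and for $(\bar{g}, n) \in G/N \times N$, using $\pi(n) = e$,
\[
\Psi(\Phi(\bar{g}, n)) = \bigl(\pi(s(\bar{g})n),\ s(\pi(s(\bar{g})n))^{-1} s(\bar{g}) n\bigr) = (\bar{g}, n).
\]
Thus $\Phi$ is a definable homeomorphism, completing the proof. There is no real obstacle here; the content of the statement lies entirely in the existence of the definable continuous section supplied by fact $(b)$, and the rest is the standard fiber-bundle trivialization argument adapted to the definable category.
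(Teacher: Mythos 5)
Your proof is correct and follows essentially the same route as the paper: both obtain a definable continuous section $s \colon G/N \to G$ from the contractibility of the torsion-free group $N$ (facts $(a)$ and $(b)$ of \cite{ps05}) and then write down the same pair of mutually inverse definable continuous maps $g \mapsto (\pi(g),\, s(\pi(g))^{-1}g)$ and $(\bar{g}, x) \mapsto s(\bar{g})x$. The only difference is that you spell out the well-definedness and the inverse verification, which the paper leaves implicit.
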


\begin{proof}
Let $s \colon G/N \to G$ be a definable continuous section
of the canonical projection $\pi \colon G \to G/N$. Then the map

\begin{align*}
G  \quad &\longrightarrow\quad  G/N \times N \\
g  \quad &\mapsto\quad   (\ \pi(g),\ s(\pi(g))^{-1}g\ )
\end{align*}

\vspace{0.3cm} \noindent is a definable continuous bijection with definable and continuous
inverse

\begin{align*}
G/N\ \times\ N  \quad &\longrightarrow \quad  \ G\\
(\ \pi(g)\ ,\ x\ )  \quad &\mapsto \quad   s(\pi(g))x.
\end{align*}

\end{proof}

%%%%%%%%
\begin{prop}\label{solvhom0syl}
Every solvable definably connected group is definably homotopy equivalent to any of its $0$-Sylow.
\end{prop}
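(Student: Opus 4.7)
The plan is to show that both $G$ and any $0$-Sylow $A$ of $G$ are definably homotopy equivalent to the common quotient $G/N$, where $N$ is the maximal normal definable torsion-free subgroup of $G$.

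First I would handle $G$. By Theorem \ref{solvquozcomp}, $G/N$ is definably compact. Applying Corollary \ref{homeo} to the normal definable torsion-free subgroup $N \lhd G$, I get a definable homeomorphism $G \cong G/N \times N$. Since $N$ is definable and torsion-free, by \cite[5.7]{ps05} it is definably homeomorphic to $M^{\dim N}$ and hence definably contractible. The obvious definable maps $G/N \hookrightarrow G/N \times N$ (at a chosen basepoint of $N$) and projection $G/N \times N \to G/N$ therefore form a definable homotopy equivalence between $G$ and $G/N$.

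Next I would run exactly the same argument inside $A$. By Theorem \ref{prodsolv} we have $G = AN$, so $A/(A \cap N) \cong AN/N = G/N$ definably. Because $A$ is a $0$-group it is abelian (\ref{maxSylconj}$(b)$), so $A \cap N$ is normal in $A$; and since $A \cap N$ is a subgroup of the torsion-free group $N$, it is itself definable and torsion-free. Applying Corollary \ref{homeo} now to $A \cap N \lhd A$ yields a definable homeomorphism $A \cong A/(A \cap N) \times (A \cap N)$, and as before $A \cap N$ is definably contractible by \cite[5.7]{ps05}. Therefore $A$ is definably homotopy equivalent to $A/(A \cap N) \cong G/N$.

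Chaining the two homotopy equivalences gives $G \simeq G/N \simeq A$, as required. There is no real obstacle here: everything reduces to the structural facts already proved in Sections 3 and 4 (namely \ref{solvquozcomp}, \ref{prodsolv}, together with the abelian-ness of $0$-groups) and to the definable contractibility of torsion-free groups from \cite{ps05}. The only care needed is to note that $A$ is abelian in order to apply \ref{homeo} inside $A$, and that $A \cap N$ is torsion-free (automatic, as a definable subgroup of $N$).
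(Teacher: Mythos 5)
Your proof is correct and follows essentially the same route as the paper: decompose $G$ as $G/N \times N$ via \ref{homeo}, decompose $A$ by its maximal definable torsion-free subgroup (which is exactly your $A \cap N$), and identify the two quotients with $G/N$ using $G = AN$ from \ref{prodsolv}. The extra details you supply (normality of $A \cap N$ in the abelian group $A$, and the definable isomorphism $A/(A\cap N) \cong G/N$) are exactly the points the paper leaves implicit, so there is nothing to correct.
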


\begin{proof}
Let $G$ be a solvable definably connected group and let $N$ be its maximal normal definable torsion-free subgroup. By \ref{homeo} $G$ is definably homeomorphic to $G/N\times N$. On the other hand
$G = AN$, for any $0$-Sylow $A$ of $G$ (\ref{prodsolv}). If $N_A$ is the maximal definable torsion-free subgroup of $A$, then again
$A$ is definably homeomorphic to $A/N_A \times N_A$. Because $N$, $N_A$ are definably contractible and $G/N$ and $A/N_A$ are definably isomorphic, we get that both $G$ and $A$ are definably homotopically equivalent to $G/N$.
\end{proof}

\begin{rem}
The proposition above can be viewed as an \ominimal analogue of the fact that avery connected solvable Lie group is homotopy equivalent
to any of its maximal torus.
\end{rem}
%%%%%%%%%%%%%%%%%%%%%%%%%%%%%%%%%%%%%%
\vs 
\begin{theo} \label{homeonocomp}
Let $G$ be a definably connected group, $N$ its maximal normal definable torsion-free subgroup and $K$ a maximal definably compact subgroup of $G/N$. Then $G$ is definably homeomorphic to $K \times M^s$, where $s \in \N$ is the maximal dimension of a definable torsion-free subgroup of $G$.  
\end{theo}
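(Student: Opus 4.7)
First I would apply Theorem \ref{Iwagisuacca} to exhibit a definable torsion-free subgroup $H_1$ of $G/N$ with $G/N = K H_1$ and $K \cap H_1 = \{e\}$. The preimage $\pi^{-1}(H_1)$ under the canonical projection $\pi \colon G \to G/N$ is then a definable torsion-free subgroup of $G$ of dimension $\dim N + \dim H_1$; the dimension-count argument in the proof of \ref{maxtorfreenon} shows that no definable torsion-free subgroup of $G$ can be strictly larger (if $\bar{H} \supsetneq \pi^{-1}(H_1)$ were torsion-free, then $\pi(\bar{H})$ would meet the definably compact $K$ nontrivially). Thus $s = \dim N + \dim H_1$.

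Next, Corollary \ref{homeo} provides a definable homeomorphism $G \cong (G/N) \times N$, and $N$ is definably homeomorphic to $M^{\dim N}$ by \cite[5.7]{ps05}. Hence the theorem reduces to exhibiting a definable homeomorphism $G/N \cong K \times H_1$, since $H_1$ is likewise definably homeomorphic to $M^{\dim H_1}$.

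I would obtain this homeomorphism by the same section-based recipe used in the proof of \ref{homeo}. Because $H_1$ is definably contractible, \cite[5.1]{ps05} yields a definable continuous section $\sigma \colon (G/N)/H_1 \to G/N$ of the canonical projection $\pi_1$ onto the coset space (which is definable by Edmundo's theorem, even though $H_1$ need not be normal). The formulas $g \mapsto (\pi_1(g), \sigma(\pi_1(g))^{-1} g)$ and $(\bar{g}, h) \mapsto \sigma(\bar{g}) h$ are mutually inverse definable continuous maps; the argument of \ref{homeo} does not actually use normality, only the elementary fact that left cosets are preserved under right multiplication by elements of the subgroup (since $H_1 h = H_1$ for every $h \in H_1$). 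Thus $G/N$ is definably homeomorphic to $(G/N)/H_1 \times H_1$.

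The main obstacle is then the identification of $(G/N)/H_1$ with $K$ as definable spaces. The composition of the inclusion $K \hookrightarrow G/N$ with $\pi_1$ is a continuous definable bijection (injectivity from $K \cap H_1 = \{e\}$, surjectivity from $G/N = K H_1$), and it is automatically a homeomorphism because $K$ is definably compact and $(G/N)/H_1$ is Hausdorff: the continuous definable image of any definably compact set is definably compact and therefore closed in a Hausdorff target, so a continuous definable bijection from a definably compact source to a Hausdorff definable space is closed and hence a homeomorphism. Chaining the three homeomorphisms yields $G \cong (G/N) \times N \cong K \times H_1 \times N \cong K \times M^{\dim H_1 + \dim N} = K \times M^s$, as required.
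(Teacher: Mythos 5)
Your proposal is correct and follows essentially the same route as the paper: decompose $G$ as $N \times (G/N)$ via \ref{homeo}, write $G/N = KH_1$ via \ref{Iwagisuacca}, conclude $G \cong K \times M^s$, and establish that $s$ is the maximal dimension of a definable torsion-free subgroup by the same dimension count used in \ref{maxtorfreenon}. The only difference is that you supply a careful justification of the step ``$G/N$ is definably homeomorphic to $K \times H_1$'' --- using the definable continuous section of \cite[5.1]{ps05} over the coset space $(G/N)/H_1$ and the definable-compactness argument identifying $(G/N)/H_1$ with $K$ --- a step the paper asserts without further comment.
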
 
 
\begin{proof}
By \ref{homeo}, $G$ is definably homeomorphic to $N \times G/N$. By \ref{Iwagisuacca}, $G/N = KH$, with $K$ definably compact and $H$ definable torsion-free subgroups. Thus $G/N$ is definably homeomorphic to $K \times H$ and $G$ is definably homeomorphic to $K \times M^s$, $s = \dim N + \dim H$. 

We claim that $s$ is the maximal dimension of a definable torsion-free subgroup of $G$. First note that
if $\pi \colon G \to G/N$ is the canonical projection, then $\pi^{-1}(H)$ is a definable torsion-free subgroup
with dimension equal to $s$.

Suppose $\dim H_1 > s$, for some $H_1$ definable torsion-free subgroup of $G$. Then $\pi(H_2) \cap K = \{e\}$ and $\dim \pi(H_2) + \dim K > \dim H + \dim K = \dim G/N$, contradiction.  
\end{proof}

\begin{cor}\label{homcompmax}
Every definably connected group $G$ is definably homotopy equivalent to each maximal definably compact subgroup of $G/N$, where $N$ is the maximal normal definable torsion-free subgroup of $G$.
\end{cor}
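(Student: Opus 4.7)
The plan is to deduce Corollary \ref{homcompmax} directly from Theorem \ref{homeonocomp} by showing that the factor $M^s$ is definably contractible, so that projection onto $K$ is a definable homotopy equivalence.

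First I would invoke Theorem \ref{homeonocomp} to obtain a definable homeomorphism $\varphi \colon G \to K \times M^s$, where $K$ is the chosen maximal definably compact subgroup of $G/N$ and $s$ is the maximal dimension of a definable torsion-free subgroup of $G$. Since definable homeomorphism implies definable homotopy equivalence, it suffices to show that $K \times M^s$ is definably homotopy equivalent to $K$.

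Next I would exhibit an explicit definable homotopy equivalence. Let $p \colon K \times M^s \to K$ be the projection and $i \colon K \to K \times M^s$ be the inclusion $k \mapsto (k, 0)$. Then $p \circ i = \id_K$, and the map
\[
\mathcal{H} \colon (K \times M^s) \times [0, 1] \to K \times M^s, \qquad \mathcal{H}((k, x), t) = (k, (1 - t)x),
\]
is a definable continuous homotopy between $\id_{K \times M^s}$ and $i \circ p$. This uses only the field structure of $M$ to form the scalar $(1-t)x$, and it is definable because addition and multiplication are definable. (Equivalently, one may invoke the fact quoted just before \ref{homeo} that every $s$-dimensional definable torsion-free group — in particular $(M^s, +)$ — is definably homeomorphic to $M^s$ and thus definably contractible, so the product $K \times M^s$ deformation retracts onto $K \times \{0\}$.)

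Composing with the definable homeomorphism $\varphi$ from Theorem \ref{homeonocomp}, I obtain definable continuous maps $G \to K$ and $K \to G$ whose compositions are definably homotopic to the respective identities. Since the argument applies to \emph{any} maximal definably compact subgroup $K$ of $G/N$ (all of which occur in a decomposition $G/N = KH$ with $H$ definable torsion-free by Theorem \ref{Iwagisuacca}), the corollary follows. There is no real obstacle here; the content is entirely in Theorem \ref{homeonocomp}, and the corollary is just the observation that contracting the Euclidean factor $M^s$ is a definable operation.
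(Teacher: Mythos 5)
Your proposal is correct and is exactly the argument the paper intends: the corollary is stated without proof as an immediate consequence of Theorem \ref{homeonocomp}, and your explicit linear contraction of the $M^s$ factor (valid since Section 11 assumes $\M$ expands a real closed field) is precisely the routine verification being left to the reader. Nothing is missing.
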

%%%%%%%%%%%%%%
\begin{cor}
A definably connected group is definably contractible if and only if it is torsion-free.
\end{cor}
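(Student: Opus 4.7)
For the easy direction $(\Leftarrow)$, I would simply invoke the fact already quoted in the excerpt (\cite[5.7]{ps05}): an $n$-dimensional definable torsion-free group is definably homeomorphic to $M^n$, hence definably contractible.

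For the non-trivial direction $(\Rightarrow)$, my plan is to combine Corollary~\ref{homcompmax} --- which says that $G$ is definably homotopy equivalent to a maximal definably compact subgroup $K$ of $G/N$ --- with the topological fact that a non-trivial definably compact definably connected group cannot be definably contractible. Since definable contractibility is preserved by definable homotopy equivalence (one transports a contracting homotopy along the equivalence maps in the standard way), $G$ being definably contractible forces $K$ to be definably contractible; granting the topological fact, this forces $K = \{e\}$. Then Theorem~\ref{homeonocomp} gives $G$ definably homeomorphic to $M^s$, so $E(G) = (-1)^s = \pm 1$ and $G$ is torsion-free by Fact~\ref{eu}.

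The hard part is therefore the topological fact that a non-trivial definably compact definably connected group $K$ cannot be definably contractible. By Facts~\ref{compnoncomp}$(b)$ and~\ref{euconn} such a $K$ has $E(K) = 0$, whereas a definably contractible set should have $E = 1$. The input one needs is that on definably compact definable sets the o-minimal Euler characteristic is a definable homotopy invariant; in an o-minimal expansion of a real closed field this follows from the identification of $E$ with the alternating sum of ranks of the o-minimal singular cohomology groups, together with the homotopy invariance of that cohomology. Alternatively one may appeal to the results of \cite{ba, bm} to replace $K$ by its canonical quotient $K/K^{00}$, a non-trivial compact connected Lie group, which fails to be contractible for classical reasons; either way the contradiction is obtained and the proof is complete.
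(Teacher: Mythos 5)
Your proposal is correct, and its reduction step is exactly the paper's: both use Corollary \ref{homcompmax} (plus the fact that definable contractibility passes along definable homotopy equivalences) to reduce everything to showing that a non-trivial definably compact definably connected group $K$ cannot be definably contractible, and both finish by observing that $K = \{e\}$ forces $G$ to be torsion-free (you make this last step explicit via \ref{homeonocomp} and $E(G) = (-1)^s$, which is fine; the paper leaves it implicit). Where you genuinely diverge is in how the compact case is killed. The paper disposes of it with a single citation: Theorem 3.7 of \cite{bmo}, which identifies the o-minimal homotopy groups of $K$ with those of the compact connected Lie group $K/K^{00}$; a definably contractible $K$ would then have a weakly contractible, hence trivial, Lie quotient, hence be trivial. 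Your primary argument is instead an Euler characteristic computation: $E(K) = 0$ by \ref{compnoncomp}$(b)$ and \ref{euconn}, against $E(K) = 1$ for a definably compact definably contractible set. This is valid, but note that it rests on an input the paper never states, namely that $E$ is a definable homotopy invariant \emph{on definably compact sets} (via triangulation and the identification of $E$ with the homological Euler characteristic in o-minimal singular (co)homology); the restriction to definably compact sets is essential, since $E(M) = -1$ even though $M$ is definably contractible, and you correctly flag this. What each approach buys: yours is more elementary and quantitative, avoiding the $K^{00}$ machinery entirely at the cost of invoking o-minimal homology theory; the paper's is a one-line appeal to a stronger theorem that also controls all higher homotopy groups. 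Your fallback route through $K/K^{00}$ is essentially the paper's argument, though the relevant reference is \cite{bmo} rather than \cite{ba, bm}.
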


\begin{proof}
The $(\Leftarrow)$ implication is \cite[5.7]{ps05}. For the other one, by \ref{homcompmax}
it is enough to show that a definably compact definably connected group is definably contractible if and only if it is trivial. This is a consequence of Theorem 3.7 in \cite{bmo} which proves that the homotopy groups of a definably compact definably connected group $K$ are isomorphic to the homotopy groups of the connected compact Lie group $K/K^{00}$.
\end{proof}

%%%%%%%%%%%%%%%%%%%%%%%%%%%%%%%%%%%%%%%%%%%%%%%%%%%%%%%%%%%%%%%%%%%%%%%%%%%%%%%%%%%%%%%%%%%%%%%%%%%%%%%
%%%%%%%%%%%%%%%%%%%%%%%%%%%%%%%%%%%%%%%%%%%%%%%%%%%%%%%%%%%%%%%%%%%%%%%%%%%%%%%%%%%%%%%%%%%%%%%%%%%%%%%
%%%%%%%%%%%%%%%%%%%%%%%%%%%%%%%%%%%%%%%%%%%%%%%%%%%%%%%%%%%%%%%%%%%%%%%%%%%%%%%%%%%%%%%%%%%%%%%%%%%%%%%
%%%%%%%%%%%%%%%%%%%%%%%%%%%%%%%%%%%%%%%%%%%%%%%%%%%%%%%%%%%%%%%%%%%%%%%%%%%%%%%%%%%%%%%%%%%%%%%%%%%%%%%
%%%%%%%%%%%%%%%%%%%%%%%%%%%%%%%%%%%%%%%%%%%%%%%%%%%%%%%%%%%%%%%%%%%%%%%%%%%%%%%%%%%%%%%%%%%%%%%%%%%%%%%
\vs
\section{Summary}  

In this last section we give a brief overview of the main results proved above, hoping to make
the picture more clear. \\

Many of the assertions of the following theorem were actually already known. We put them here
to stress the structure of a definable group in terms of certain particular normal (maximal) definable subgroups
and related quotients:

\begin{theo}\label{struc}
Let $G$ be a definable group. Then there exists
a sequence of characteristic $($i.e. invariant by definable automorphisms of $G$$)$ definable subgroups

\[
N\ \lhd\ R\ \lhd\ Z\ \lhd\ G^0\ \lhd\ G
\]
such that:
\begin{enumerate}

\vspace{0.1cm}
\item  $G^0$ is the definably connected component of the identity, 

\vspace{0.1cm}
\item  $Z$ is the maximal normal solvable definable subgroup of $G^0$,

\vspace{0.1cm}
\item  $R$ is the solvable radical of $G$ and $R = Z^0$, 

\vspace{0.1cm}
\item  $N$ is the maximal normal definable torsion-free subgroup of $G$,

\vspace{0.1cm}
\item  $G/G^0$ is finite,

\vspace{0.1cm}
\item  $G^0/Z$ is a direct product of definably simple groups, %because connected, semisimple with finite center

\vspace{0.1cm}
\item  $Z/R$ is the finite center of the semisimple definably connected group $G^0/R$,

\vspace{0.1cm}
\item  $R/N$ is a definable torus, and the solvable radical of $G/N$,

\vspace{0.1cm}
\item  $G^0/N = KH$, for some $H$ definable torsion-free and $K$ definably compact.
\end{enumerate}
\end{theo}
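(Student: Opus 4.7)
My plan is to define the four subgroups explicitly and then deduce each of the nine listed properties by assembling results proved in earlier sections. First, let $G^0$ be the definably connected component of the identity (giving (1) and the finiteness in (5) by \cite{pi1}), let $R$ be the solvable radical of $G$ (Fact \ref{solvrad}), let $N$ be the maximal normal definable torsion-free subgroup of $G$ (Theorem \ref{unifree}), and define $Z$ as the preimage in $G^0$ of the center $Z(G^0/R)$ under the canonical projection. Since $G^0/R$ is semisimple definably connected, its center is finite by Fact \ref{theosem}; hence $Z$ is a finite extension of $R$, so $Z$ is definable and solvable, and since $R$ is definably connected with the same dimension as $Z$, we obtain $R = Z^0$, establishing (3) and (7).

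Next I would handle (2) and (6). For (2), let $H$ be any normal solvable definable subgroup of $G^0$; then $HR$ is again normal and solvable, and its image $HR/R$ is a normal solvable definable subgroup of the semisimple group $G^0/R$, hence finite by Remark \ref{semisnosolv}. A finite normal subgroup of the definably connected group $G^0/R$ is central by Fact \ref{finsubincenter}, so $HR/R \subseteq Z/R$ and thus $H \subseteq Z$. For (6), the quotient $G^0/Z$ is isomorphic to $(G^0/R)/Z(G^0/R)$, a centerless semisimple definably connected group, which Fact \ref{theosem} presents as a direct product of definably simple groups. Item (4) is Theorem \ref{unifree} itself, and item (9) is Theorem \ref{Iwagisuacca} applied to $G^0$, once one observes (by the conjugation argument in Lemma \ref{tormaxnor}) that the maximal normal definable torsion-free subgroup of $G^0$ coincides with that of $G$.

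The most delicate item is (8). By Lemma \ref{radgsun}, $N \subseteq R$ and $R/N$ is the solvable radical of $G/N$. To apply Theorem \ref{solvquozcomp} to $R$ I must know that $N$ equals the maximal normal definable torsion-free subgroup $N_R$ of the solvable group $R$: the inclusion $N \subseteq N_R$ is automatic because $N$ is torsion-free and normal in $R$, and the reverse follows from Lemma \ref{tormaxnor}, which guarantees $N_R$ is normal in $G$, hence contained in $N$ by maximality. Theorem \ref{solvquozcomp} then yields that $R/N$ is definably compact; being solvable and definably connected it is abelian by \cite{ps00}, and being abelian, definably connected and definably compact it is a definable torus by Fact \ref{torusis0gr}. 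Finally, each of $G^0$, $N$, $R$, $Z$ is defined by an intrinsic uniqueness or extremal property and is therefore invariant under any definable automorphism of $G$, giving the characteristic assertion. The only genuinely new step beyond quotation is the identification $N = N_R$ in the proof of (8); everything else amounts to stitching the earlier theorems together in the right order.
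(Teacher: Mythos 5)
Your proof is correct and follows essentially the same route as the paper's own proof, which is likewise a list of citations (\ref{solvrad}, \ref{unifree}, \ref{theosem}, \ref{semisnosolv}, \ref{finsubincenter}, \ref{radgsun}, \ref{solvquozcomp}, \cite[5.4]{ps00}, \ref{maxcomprad}) together with the same center-preimage construction of $Z$; the details you add beyond quotation (that the maximal normal definable torsion-free subgroups of $R$ and of $G^0$ both coincide with $N$, via \ref{tormaxnor}) are exactly what the paper leaves implicit. One small repair: $Z$ is not solvable merely because it is a finite extension of the solvable group $R$ (solvable-by-finite groups need not be solvable), but because $Z/R = Z(G^0/R)$ is abelian, whence $[Z,Z] \subseteq R$ and solvability follows.
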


\begin{proof} \hspace{7cm}
 
$(1)$ and $(5)$ are \cite[2.12]{pi1}.

\vspace{0.1cm}
$(3)$ is \ref{solvrad}.

\vspace{0.1cm}
$(4)$ is \ref{unifree}.

\vspace{0.1cm}
$(2)$ and $(7)$: since $G^0/R$ is semisimple, it follows that its center is finite. Let $Z$ be its preimage
in $G^0$. Then $Z$ is solvable and we claim that it is the maximal normal solvable definable subgroup of $G^0$.
If not, let $Z \subsetneq Z_1 \lhd G^0$, with $Z_1$ definable and solvable. Then $Z_1/R \lhd G^0/R$ is solvable and by \ref{semisnosolv} it cannot be infinite, so $Z_1/R = Z/R$ because every finite normal subgroup
of a definably connected group is contained in its center (\ref{finsubincenter}).

\vspace{0.1cm}
$(6)$ is \cite[4.1]{pps1} (see \ref{theosem}).

\vspace{0.1cm}
$(8)$ see \ref{solvquozcomp}, \cite[5.4]{ps00} and \ref{radgsun}.

\vspace{0.1cm}
$(9)$ see \ref{maxcomprad}.
\end{proof}

\begin{cor}
Every definably connected group $G$ in an \ominimal \hspace{-0.1cm}expansion of a real closed field $\M = \langle M, <, +, \cdot, \dots \rangle$ is definably homeomorphic to

\[
K \times M^s,
\]

\vs \noindent
where $K$ is any maximal definably compact subgroup of $G/N$, and $s \in \N$ is the maximal dimension of a definable torsion-free subgroup of $G$.
\end{cor}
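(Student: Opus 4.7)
The plan is to derive this corollary almost immediately from Theorem \ref{homeonocomp}, which already asserts the desired homeomorphism for one particular maximal definably compact subgroup $K$ of $G/N$. To make it work for \emph{any} such $K$, I will use the fact (from Theorem \ref{Iwagisuacca}) that all maximal definably compact subgroups of $G/N$ are conjugate, hence definably homeomorphic.

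First I would recall the two ingredients. By Corollary \ref{homeo}, since $N$ is a normal definable torsion-free subgroup, $G$ is definably homeomorphic to $G/N \times N$, and by Fact \cite[5.7]{ps05} the torsion-free group $N$ is definably homeomorphic to $M^{\dim N}$. By Theorem \ref{Iwagisuacca}, given any maximal definably compact $K \leq G/N$, there is a definable torsion-free complement $H$ with $G/N = KH$ and $K \cap H = \{e\}$; the multiplication map $K \times H \to G/N$ is then a definable homeomorphism, and $H$ is in turn definably homeomorphic to $M^{\dim H}$. Combining these, $G$ is definably homeomorphic to $K \times M^{s}$ with $s = \dim N + \dim H$.

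Next I would verify that $s$ equals the maximum dimension of a definable torsion-free subgroup of $G$, and in particular that it does not depend on the choice of $K$ (or of $H$). The preimage $\pi^{-1}(H)$ under the canonical projection $\pi \colon G \to G/N$ has Euler characteristic $E(N) E(H) = \pm 1$, so it is torsion-free by Fact \ref{eu}, realizing dimension $s$. Conversely, if $H_1 \leq G$ is any definable torsion-free subgroup, then $\pi(H_1)$ is torsion-free (it is a definable quotient of $H_1$, cf.\ Fact \ref{torfree}$(d)$), so $\pi(H_1) \cap K = \{e\}$ by Remark \ref{duacomptf}, giving $\dim \pi(H_1) \leq \dim G/N - \dim K = \dim H$, and hence $\dim H_1 \leq \dim N + \dim H = s$.

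The main obstacle is essentially nil: the corollary is a restatement of Theorem \ref{homeonocomp} with the observation that the number $s$ is intrinsic and that the homeomorphism type is independent of the choice of $K$ (which follows from the conjugacy of maximal definably compact subgroups in $G/N$). No new argument is required beyond assembling \ref{homeo}, \ref{Iwagisuacca}, and \cite[5.7]{ps05}.
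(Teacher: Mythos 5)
Your proposal is correct and follows essentially the same route as the paper: the corollary is stated in the Summary section and its proof simply points back to Theorem \ref{homeonocomp}, whose argument is exactly what you reconstruct (combine \ref{homeo} with the decomposition $G/N = KH$ from \ref{Iwagisuacca}, contractibility of torsion-free groups from \cite{ps05}, and the dimension-count showing $s = \dim N + \dim H$ is the maximal dimension of a definable torsion-free subgroup, via $\pi^{-1}(H)$ and the intersection argument of \ref{duacomptf}). Your additional remarks — the Euler-characteristic justification that $\pi^{-1}(H)$ is torsion-free, and the observation that conjugacy of maximal definably compact subgroups makes the statement uniform in $K$ — are slightly more explicit than the paper's text but introduce nothing genuinely different.
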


\begin{proof}
It follows from \ref{struc}$(9)$ and \cite{ps05}. See \ref{homeonocomp}. 
\end{proof}

\begin{cor}
Every definable group has maximal definable torsion-free subgroups.
\end{cor}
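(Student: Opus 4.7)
The plan is to reduce to the definably connected case (since $G/G^0$ is finite by \cite[2.12]{pi1}) and then build an explicit maximal definable torsion-free subgroup from the Iwasawa-like decomposition of $G^0/N$.

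First, let $N$ be the maximal normal definable torsion-free subgroup of $G$, which exists by Theorem \ref{unifree}, and let $\pi \colon G \to G/N$ be the canonical projection. Applying Theorem \ref{struc}$(9)$ (equivalently \ref{Iwagisuacca}) to $G^0$, write $G^0/N = KH$, where $K$ is a definably compact subgroup and $H$ is a definable torsion-free subgroup with $K \cap H = \{e\}$. Set $H_0 := \pi^{-1}(H) \cap G^0$. Then $H_0$ fits into a definable exact sequence with kernel $N$ and quotient $H$, both torsion-free; by \ref{euprodsub} and \ref{eu}, $E(H_0) = E(N)\, E(H) = \pm 1$, so $H_0$ is torsion-free.

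The key step is to verify maximality of $H_0$. Suppose for contradiction that $H_1$ is a definable torsion-free subgroup of $G$ strictly containing $H_0$. By \ref{torfree}$(a)$, $H_1$ is definably connected and so $H_1 \subseteq G^0$; moreover, since torsion-free definable groups are definably connected, strict inclusion forces $\dim H_1 > \dim H_0 = \dim N + \dim H$. The image $\pi(H_1) \subseteq G^0/N$ is a definable quotient of a torsion-free group, hence torsion-free by \ref{torfree}$(d)$, and satisfies $\dim \pi(H_1) \geq \dim H_1 - \dim N > \dim H$. Then
\[
\dim \pi(H_1) + \dim K \,>\, \dim H + \dim K \,=\, \dim(G^0/N),
\]
which forces $\pi(H_1) \cap K$ to be infinite, contradicting \ref{duacomptf} (definably compact and definable torsion-free subgroups intersect trivially).

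I do not foresee a serious obstacle: once the Iwasawa-like decomposition of $G/N$ is in hand, the torsion-freeness of the lift is immediate from the Euler characteristic formula, and the maximality argument is a clean dimension count against the compact factor $K$. The only delicate point is making sure to work inside $G^0$ when passing to $\pi^{-1}(H)$, which is needed because the Iwasawa-like decomposition is stated for the definably connected component.
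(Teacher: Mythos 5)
Your proof is correct and follows essentially the same route as the paper's own argument (Corollary \ref{maxtorfreenon}): decompose $G^0/N = KH$ via \ref{Iwagisuacca}, lift $H$ to $\pi^{-1}(H)$, and obtain maximality by a dimension count that forces an infinite intersection of $\pi(H_1)$ with $K$, contradicting \ref{duacomptf}. The only differences are cosmetic: you spell out the Euler-characteristic verification that the lift is torsion-free and the bookkeeping inside $G^0$, both of which the paper leaves implicit.
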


\begin{proof}
Again it is a consequence of \ref{struc}$(9)$ and \cite{ps05}. See \ref{maxtorfreenon}.
\end{proof}

\vs
In terms of definable extensions, Theorem \ref{struc} turns into the following list of definable exact sequences:\\

\begin{enumerate}

\item[-]
Every definable group is a definable extension of a finite group by a definably connected group:

\[
1\ \longrightarrow\ G^0 \ \stackrel{i}{\longrightarrow}\ G\ \stackrel{\pi}{\longrightarrow}\ G/G^0\ \longrightarrow\ 1.  
\]

\vs \item[-]
Every definably connected group is a definable extension of a direct product of definably simple groups
by a solvable definable group:

\[
1\ \longrightarrow\ Z \ \stackrel{i}{\longrightarrow}\ G^0\ \stackrel{\pi}{\longrightarrow}\ G^0/Z\ \longrightarrow\ 1.  
\]

\vs \item[-]
Every definably connected group is a definable extension of a semisimple definably connected group by a solvable definably connected group: 

\[
1\ \longrightarrow\ R \ \stackrel{i}{\longrightarrow}\ G^0\ \stackrel{\pi}{\longrightarrow}\ G^0/R\ \longrightarrow\ 1.  
\] 

\vs \item[-]
Every definable group is a definable extension of a definable group with definably compact solvable radical by a definable torsion-free group: 

\[
1\ \longrightarrow\ N \ \stackrel{i}{\longrightarrow}\ G\ \stackrel{\pi}{\longrightarrow}\ G/N\ \longrightarrow\ 1.  
\]

\vs \item[-]
Every solvable definable group is a definable extension of a definably compact group by a definable torsion-free group:

\[
1\ \longrightarrow\ N \ \stackrel{i}{\longrightarrow}\ Z\ \stackrel{\pi}{\longrightarrow}\ Z/N\ \longrightarrow\ 1.  
\]

\vs \item[-]
Every solvable definably connected group is a definable extension of a definable torus by a   definable torsion-free group: 

\[
1\ \longrightarrow\ N \ \stackrel{i}{\longrightarrow}\ R\ \stackrel{\pi}{\longrightarrow}\ R/N\ \longrightarrow\ 1.  
\]

\end{enumerate}

\vs \noindent
We remark that all groups involved in the list are \textbf{maximal} with the mentioned properties.
%%%%%%%%%%%%%%%%%%%%%%%%%%%%%%%%%%%%%%%%%%%%%%%%%%%%%%%%%%%%%%%%%%%%%%%%%%%%%%%%%%%%%%%%%%%%%%%
\vs
\begin{theo}
Let $G$ be a definably simple group. Then 

\begin{enumerate}

\item[$(a)$]

there is a definable real closed field \Rs\ and some $m \in \N$ such that $G$ is definably isomorphic to a definable group $G_1 < \GL_{m}(\Rs)$, definable in \Rs, with the following properties:

\begin{enumerate}
\vspace{0.1cm}
\item[$(i)$] $G_1 = KH$, with $K = G_1 \cap O_{m}(\Rs)$ and $H = G_1 \cap T^+_{m}(\Rs)$;  
 
\vspace{0.1cm}
\item[$(ii)$] $H = AN$, with $A = G_1 \cap D^+_{m}(\Rs)$ and $N = G_1 \cap UT_{m}(\Rs)$; \\
\end{enumerate}

\item[$(b)$] $G$ has maximal definably compact subgroups, all definably connected and conjugate to each other, and every such a maximal definably compact subgroup has a definable torsion-free complement.

\end{enumerate}
\end{theo}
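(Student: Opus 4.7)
The plan is to combine a Lie-algebra based embedding with first-order transfer from $\R$ to an auxiliary real closed field $\Rs$. First, I would invoke Fact \ref{theosem} (of Peterzil--Pillay--Starchenko) to realise $G$, up to definable isomorphism, as a semialgebraic subgroup of some $\GL_n(\Rs)$. Its Lie algebra $\g$ (in the sense of \ref{linliealg}) is then a simple Lie algebra over $\Rs$, so it sits, up to $\Rs$-isomorphism, in a finite list $\g_1,\dots,\g_r$ of simple subalgebras of $M_n(\Rs)$, a list that can be described by a single first-order formula uniformly in the parameters of $\Rs_{\mathrm{alg}}$. For each $\g_i$ the classical Iwasawa splitting of semisimple Lie algebras gives a triple $(\mathfrak{k}_i,\la_i,\n_i)$ whose $\ad$-images are respectively skew-symmetric, diagonal, and nilpotent upper triangular; these conditions are first-order in the language of ordered fields, and, being true in $\R$ by \ref{iwasublie}, transfer to $\Rs$.

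Next I would replace $G$ by $\Aut_{\Rs}(\g)^0$, which by the proof of \cite[2.37]{pps1} is definably isomorphic to $G$ and is a semialgebraic linear group $G_1=G_1(\Rs)$ defined over $\Rs_{\mathrm{alg}}$ inside $\GL_m(\Rs)$ with $m=n^2$. The \emph{same} defining formula over $\R_{\mathrm{alg}}$ then yields a simple connected Lie group $G_1(\R)$, and the analytic subgroups attached to $\mathfrak{k}_i,\la_i,\n_i$ are literally $G_1(\R)\cap O_m(\R)$, $G_1(\R)\cap D_m^+(\R)$, $G_1(\R)\cap UT_m(\R)$ by the matrix conditions above. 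For this real group the statement that every element factors uniquely as $g=kan$ with $k,a,n$ in the three intersections and that $a$ and $n$ commute in the right order, is a first-order sentence in the language of ordered fields that holds in $\R$ by the classical Iwasawa decomposition \ref{iwasublie}; transferring back to $\Rs$ produces the desired decomposition $G_1=KH$, $H=AN$ of part (a).

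For part (b), set $K=G_1\cap O_m(\Rs)$ and $H=G_1\cap T_m^+(\Rs)$. Then $K$ is closed and bounded in $\Rs^{m^2}$ and hence definably compact by \cite{ps}, while $H$ is a group of upper-triangular matrices with positive diagonal entries and is torsion-free (its only torsion element would have eigenvalues that are roots of unity and positive, hence $1$, forcing the matrix to be unipotent and hence of infinite order unless trivial). By \ref{duacomptf} it follows at once that $K$ is a maximal definably compact subgroup and $H$ is a maximal definable torsion-free complement; definable connectedness of $K$ is inherited from $G_1$ and $H$ via $G_1=KH$, $K\cap H=\{e\}$, since $H$ is torsion-free hence definably connected (\ref{torfree}$(a)$), and $G_1$ is definably connected (it is definably isomorphic to the definably simple, hence definably connected $G$).

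Finally, to obtain conjugacy, let $C$ be any definably compact subgroup of $G_1$; by \cite[4.6]{pps02} it is semialgebraic over some parameters $y$, and the condition ``$C(y)$ is a closed and bounded subgroup of $G_1$'' is first-order in the language of ordered fields. In the real Lie group $G_1(\R)$ every compact subgroup is contained in a conjugate of $K(\R)$, so the sentence
\[
\forall y\ \bigl[\,C(y)\text{ is a closed bounded subgroup of }G_1 \;\Rightarrow\; \exists x\in G_1\ C(y)\subseteq K^x\,\bigr]
\]
holds in $\R$ and transfers to $\Rs$. Applying this to maximal $C$ gives conjugacy of all maximal definably compact subgroups and shows that each $K^x$ has the definable torsion-free complement $H^x$. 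The principal obstacle is writing down, as a single first-order formula over $\Rs_{\mathrm{alg}}$, the finite menu of Iwasawa data $(\g_i,\mathfrak{k}_i,\la_i,\n_i)$ and making sure everything needed (uniqueness of factorisation, commutation $AN=NA$, closed-boundedness of candidate compact subgroups, containment in $O_m, D_m^+, UT_m$) really is expressible in the language of ordered fields so that transfer between $\R$ and $\Rs$ is legitimate; once this is done the rest is bookkeeping.
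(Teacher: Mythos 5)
Your proposal is correct and follows essentially the same route as the paper's proof (Theorem \ref{declin} together with Corollary \ref{conjmax}): reduction to a semialgebraic linear group via \cite[4.1]{pps1}, realisation of $G$ as $\Aut_{\Rs}(\g)^0 = G_1(\Rs)$ defined over $\Rs_{alg}$, first-order transfer of the Iwasawa data $(\mathfrak{k}_i, \la_i, \n_i)$ and of the unique factorisation $g = kan$ between $\R$ and $\Rs$, and transfer of the conjugacy statement using the semialgebraicity of definably compact subgroups \cite[4.6]{pps02}. Your extra details, such as the eigenvalue argument showing $G_1 \cap T^+_m(\Rs)$ is torsion-free, are harmless elaborations of steps the paper leaves implicit.
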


\begin{proof}
See \ref{declin} and \ref{conjmax}.
\end{proof}
%%%%%%%%%%%%%%%%%%%%%%%%%%%%%%%%%%%%%%%%%%%%%%%%%%%%%%%%%%%%%%%%%%%%%%%%%%%%%%%%%%%%%%%%%%%%%%
\vs
\begin{prop}
Let $G$ be a definably connected group and let $N$ be its maximal normal definable torsion-free subgroup.

\begin{enumerate}
\vspace{0.1cm}
\item[$(a)$] If $G/Z(G)$ or $G/N$ is definably compact, then $G$ has a definable Levi decomposition.

\vspace{0.1cm}
\item[$(b)$] If $G$ has a definable Levi decomposition, then all definable Levi subgroups are conjugate to each other. 
\end{enumerate}
\end{prop}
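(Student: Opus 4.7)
The plan is to observe that this proposition is essentially a consolidation of earlier results in the paper, so the proof reduces to citing and slightly re-packaging them. Both parts have been handled individually in previous sections, so I would simply trace through the logical dependencies rather than develop new arguments.

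For part $(a)$, I would split into the two cases stated in the hypothesis. The case where $G/Z(G)$ is definably compact is exactly Proposition \ref{leviquozcomp}, which I can invoke verbatim: there it is shown that for such a $G$ with solvable radical $R$, one has $G = RS$ for some semisimple definably connected subgroup $S$, which is precisely what ``definable Levi decomposition'' means. For the case where $G/N$ is definably compact, I would invoke Proposition \ref{levigsuncomp}, which says that every definably connected extension of a definably compact group $K$ by a definable torsion-free group has definable Levi subgroups (all definably isomorphic to $[K,K]$); applied to the canonical projection $\pi \colon G \to G/N$ this gives the required decomposition $G = RS$.

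For part $(b)$, the conjugacy of definable Levi subgroups in any definably connected group that admits such a decomposition is exactly the content of Proposition \ref{leviconj}, so nothing further is required beyond a citation.

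Since neither assertion requires new work, the main ``obstacle'' is merely expository: one has to be careful that the two hypotheses in $(a)$ are handled by the correct earlier propositions (the $G/Z(G)$-compact case uses the Hrushovski--Peterzil--Pillay decomposition \ref{decomp} via \ref{leviquozcomp}, while the $G/N$-compact case uses the splitting machinery of Section \ref{splext} via \ref{levigsuncomp}), and that $(b)$ is invoked only under the standing assumption that a definable Levi decomposition exists, so that \ref{leviconj} is applicable.
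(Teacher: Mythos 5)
Your proposal is correct and matches the paper's own proof, which for part $(a)$ simply cites \ref{leviquozcomp} (the $G/Z(G)$ definably compact case, via \cite{hpp2}) and \ref{levigsuncomp} (the $G/N$ definably compact case), and for part $(b)$ cites \ref{leviconj}. Your identification of which earlier proposition handles which hypothesis, and of the applicability condition for \ref{leviconj}, is exactly the paper's reasoning.
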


\begin{proof} \hspace{7cm}
\begin{enumerate}
\item[$(a)$] See \cite{hpp2}, \ref{leviquozcomp} and \ref{levigsuncomp}.

\item[$(b)$] See \ref{leviconj}.
\end{enumerate}
\end{proof}
%%%%%%%%%%%%%%%%%%%%%%%%%%%%%%%%%%%%%%%%%%%%%%
\begin{theo}
Let $G$ be definably connected, $K$ definably compact, $N$ definable torsion-free groups. Then

\begin{enumerate}
\item[$(i)$] Every definable exact sequence

\[
1\ \longrightarrow\ K \ \stackrel{i}{\longrightarrow}\ G\ \stackrel{\pi}{\longrightarrow}\ N\ \longrightarrow\ 1  \\
\]

\vs \noindent
splits definably in a direct product. Namely $G$ is definably isomorphic to 

\[
K \times N.
\]

\vs
\item[$(ii)$] Every definable exact sequence

\[
1\ \longrightarrow\ N \ \stackrel{i}{\longrightarrow}\ G\ \stackrel{\pi}{\longrightarrow}\ K\ \longrightarrow\ 1  \\
\]

\vs \noindent
splits abstractly, i.e. $G$ is abstractly isomorphic to a semidirect product

\[
N \rtimes K.
\]

\vs \noindent
It splits definably if and only if every $0$-subgroup of $G$ is definably compact $($in particular, the last condition holds when $K$ is semisimple or $G$ is linear$)$.

\end{enumerate} 

\end{theo}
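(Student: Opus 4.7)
The plan is to assemble this theorem essentially by citing the splitting results established earlier in the paper; both parts are consolidations of theorems already proved, so the write-up should be short.

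For part $(i)$ I would invoke Theorem \ref{comp_tor} directly: that result shows every definable extension of a definable torsion-free group by a definably compact group splits definably in a direct product. Its proof runs by induction on $\dim N$, starting from Edmundo's $1$-dimensional case \cite[5.1]{solv} and using the codimension-$1$ normal subgroup guaranteed by Fact \ref{torfree}$(c)$, with normality in $G$ of the intermediate torsion-free direct factor provided by Lemma \ref{tormaxnor}. Nothing new is needed.

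For the abstract splitting in part $(ii)$ I would invoke Theorem \ref{extcomp}. The construction there picks a $0$-Sylow $A$ of $G$ and a (possibly non-definable) direct complement $T$ of $A \cap N$ in $A$, which exists because $A$ is abelian and definably connected, hence divisible (see Remark \ref{factor}). Using that some maximal semisimple definably connected definably compact subgroup $S$ of $G$ is normalized by $A$ by Corollary \ref{sylinorm}, one sets $K_T = TS$, verifies that $K_T = \bigcup_{x \in K_T} T^x$, that $\pi_{|_{K_T}}$ is an isomorphism onto $K$, and that $K_T \cap N = \{e\}$, which gives the abstract decomposition $G = N \rtimes K_T$.

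For the ``if and only if'' characterization of the definable splitting, the ``if'' direction is immediate from the construction above: when every $0$-subgroup of $G$ is definably compact, $A$ itself is definably compact, so $A \cap N = \{e\}$ by Remark \ref{duacomptf}, which forces $T = A$ and hence $K_T = AS$ to be a definable subgroup, i.e.\ a definable cofactor of $N$. The ``only if'' direction I would handle via Lemma \ref{0subctf}: any definable cofactor $K'$ of $N$ in $G$ yields $G = K'N$ as the product of a definably compact subgroup and a definable torsion-free one, so every $0$-subgroup of $G$ is definably compact. Finally, the parenthetical strengthenings are one-line citations: the semisimple case is Theorem \ref{linexsplit}, and the linear case is Proposition \ref{linsplit}. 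I do not anticipate a real obstacle, since every ingredient is already in place; the only care needed is to confirm that the conventions on $T$ and on $A \cap N$ are used consistently in the two directions of the equivalence.
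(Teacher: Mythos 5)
Your proposal is correct and takes essentially the same route as the paper: this summary theorem is proved there simply by citing Theorem \ref{comp_tor} for part $(i)$ and Theorem \ref{extcomp} for part $(ii)$, exactly as you do, and your re-exposition of the construction inside \ref{extcomp} (the cofactor $K_T = TS$ via \ref{sylinorm} and \ref{factor}, the ``if'' direction from $A \cap N = \{e\}$, and the ``only if'' direction from Lemma \ref{0subctf}) faithfully matches the paper's argument. The citations for the parenthetical cases (\ref{linexsplit} for $K$ semisimple, \ref{linsplit} for $G$ linear) are also the ones the paper relies on.
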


\begin{proof} \hspace{7cm}
\begin{itemize}
\item[$(i)$] If $\dim N = 1$, it is proved in \cite[5.1]{solv}. If $\dim N > 1$, see \ref{comp_tor}.\\

\item[$(ii)$] See \ref{extcomp}.
\end{itemize}
\end{proof}

%%%%%%%%%%%%%%%%%%%
\begin{theo}
Let $G$ be a definably connected group, $N$ its maximal normal definable torsion-free subgroup
and $\pi \colon G \to G/N$ the canonical projection. \\

$\bullet$ For every $0$-Sylow $A$ of $G$: \\

\begin{enumerate}

\item $A \cap N$ is the maximal definable torsion-free subgroup of $A$ and $\pi(A)$ is a maximal definable torus of $G/N$.\\

\item For every direct complement $T$ of $A \cap N$ in $A$, there is a subgroup $K$ of $G$
such that\\

\begin{enumerate}

\item
\[
K = \bigcup_{x \in K} T^x, \quad G = KH, \quad K \cap H = \{e\},
\]

\vs \noindent
where $H$ is a maximal definable torsion-free subgroup of $G$;\\

\item the derived subgroup $[K, K]$ of $K$ is definable, it is a maximal semisimple definably connected definably compact subgroup of $G$, and any maximal semisimple definably connected definably compact subgroup of $G$ is a conjugate of $[K, K]$;\\

\item the restriction to $K$ of the canonical projection $\pi \colon G \to G/N$ is an abstract isomorphism between $K$ and a maximal definably compact subgroup of $G/N$;\\ 

\item One can find such a $K$ definable if and only if $A$ is definably compact $($$\Leftrightarrow$ every $0$-subgroup of $G$ is definably compact$)$;\\

\item the smallest definable subgroup of $G$ containing $T$ is $A$, and the smallest definable subgroup $P$ containing $K$ is such that:

\begin{enumerate}
\vspace{0.1cm}
\item
\[
P = \bigcup_{x \in P}A^x;
\]

\vs
\item the maximal $($normal$)$ definable torsion-free subgroup of $P$ is
\[
P \cap H = P \cap N = A \cap N
\]

\noindent
and $P/(A \cap N)$ is definably compact;\\

\item
$P = AS$, where $S = [K, K]$; \\ 

\item $P = (A \cap N) \times K$;\\ 

\item $Z(P)^0 = C_P(S)^0 = (A \cap N) \times (\pi_{|_K})^{-1}(Z(\pi(K))^0)$ is the solvable radical of $P$,  
and $S$ is a definable Levi subgroup of $P$; \\

\item $Z(P) = (A \cap N) \times Z(K)$.\\

 \end{enumerate}
\end{enumerate}

\item $G$ is solvable $\ \Leftrightarrow\  P = A \ \Leftrightarrow\ K = T$. \\
\end{enumerate}

$\bullet \, $ For every maximal semisimple definably connected definably compact subgroup $S$ of $G$
there is a subgroup $K$ of $G$ with $S = [K, K]$, and there are a $0$-Sylow $A$ of $G$ and a direct complement $T$ of $A \cap N$ in $A$ such that $(a)$, $(c)$, $(d)$, $(e)$ above hold.
\end{theo}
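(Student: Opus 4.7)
The plan is to assemble the statement from results already proved, since this theorem is essentially a packaged summary in Section~12. I would go clause by clause and invoke the corresponding earlier proposition, so no substantial new argument is required. The only place where I expect real care to be needed is the equivalence $G$ solvable $\Leftrightarrow P = A \Leftrightarrow K = T$, which I will try to extract from the case analysis that drives Proposition~\ref{smdefK}; that is the spot I flag as the main obstacle.

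For the first bullet, Item~1 is exactly Corollary~\ref{maxtfA} (maximality of $A\cap N$ as a torsion-free subgroup of $A$, via the isomorphism $A/(A\cap N)\cong AN/N\hookrightarrow G/N$ and \ref{quocomp}) together with Lemma~\ref{syltor}(a) (image of a $0$-Sylow under $\pi$ is a maximal torus of $G/N$). The decomposition in Item~2(a) and the maximality of $H$ are Theorem~\ref{decab}(a) combined with Corollary~\ref{maxtorfreenon}. Item~2(b) is part of \ref{decab}(a) together with Theorem~\ref{maxsemcomp} for the ``any maximal semisimple definably connected definably compact subgroup is a conjugate'' claim. Items~2(c) and~2(d) are Theorem~\ref{decab}(c), where the parenthetical equivalence with the condition that every $0$-subgroup of $G$ is definably compact comes from Lemma~\ref{0subctf}. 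Item~2(e) is a direct repackaging of Proposition~\ref{smdefK}: its numbered conclusions (1)--(6) match (iii), (i), (ii), (v), (iv), (vi) respectively of the present statement, and nothing new is needed.

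The equivalence involving solvability is where I would slow down. If $G$ is solvable, Theorem~\ref{solvquozcomp} makes $G/N$ definably compact and solvable, hence abelian by \cite[5.4]{ps00}, so the maximal definably compact subgroup $K_1$ of $G/N$ is abelian; the proof of \ref{smdefK} then lands in its ``abelian'' branch, which explicitly gives $K = T$ and $P = A$. For the converses: Corollary~\ref{torder} gives $K = T\,[K,K]$, so $K = T$ forces $S := [K,K]$ trivial; while by part~(1) of \ref{smdefK} one has $P = AS$, so $P = A$ also forces $S$ trivial. Either condition therefore confines us to the abelian branch of the case analysis, and I plan to close the loop by arguing that in the remaining branches one has $P \supsetneq A$ (because $S \not\subseteq A$ by dimension, since $\pi$ restricted to $S$ is injective onto $[K_1,K_1]$) and $K \supsetneq T$ (for the same reason). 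This is the clause I want to verify most carefully.

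Finally, the second bullet (starting from a maximal semisimple definably connected definably compact $S$) is Theorem~\ref{decab}(b) together with Remark~\ref{remmaxsem}: one picks a maximal definably compact $K_1$ of $G^0/N$ containing $\pi(S)$, sets $G_1 := \pi^{-1}(K_1)$, and takes a $0$-Sylow $A$ of $G_1$, which is a $0$-Sylow of $G$ as well since $E(G/G_1) = E(H_1) = \pm 1$ by Lemma~\ref{charsyl}. One then lets $T$ be a direct complement of $A\cap N$ in $A$ (which exists by Remark~\ref{factor}); the existence and all listed properties of $K$ follow immediately from the first bullet applied to this $A$ and $T$, together with the identification $S = [K,K]$ supplied by \ref{remmaxsem}.
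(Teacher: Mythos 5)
Your clause-by-clause assembly coincides with the paper's own proof of this theorem, which is literally nothing more than the citation list ``See \ref{maxtfA}, \ref{syltor}, \ref{remmaxsem}, \ref{decab}, \ref{smdefK}'': item 1 is \ref{maxtfA} plus \ref{syltor}$(a)$, items 2$(a)$--$(d)$ are \ref{decab} (with \ref{maxsemcomp}, \ref{maxtorfreenon}, \ref{0subctf}), your permutation matching the conclusions $(1)$--$(6)$ of \ref{smdefK} with $(e)$(iii), (i), (ii), (v), (iv), (vi) is accurate, and the second bullet is indeed \ref{decab}$(b)$ together with \ref{remmaxsem} and \ref{charsyl}. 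Up to there the proposal is correct and is the paper's route.

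The problem is item 3, exactly the clause you flagged. What your argument actually proves is: $G$ solvable $\Rightarrow$ $K = T$ and $P = A$; and $K = T\ \Leftrightarrow\ P = A\ \Leftrightarrow\ S := [K,K] = \{e\}\ \Leftrightarrow\ $ the maximal definably compact subgroup $K_1 = \pi(K)$ of $G/N$ is abelian. Your plan to ``close the loop'' only shows that the non-abelian branches of \ref{smdefK} force $K \supsetneq T$ and $P \supsetneq A$; it silently identifies the abelian branch with solvability of $G$, i.e.\ it uses the implication ``$K_1$ abelian $\Rightarrow G$ solvable'', which is never proved and is in fact false. Take $G = \PSL_2(M)$ over a real closed field $M$: here $N = \{e\}$, the $0$-Sylows are the conjugates of the $1$-dimensional definably compact torus $\SO_2(M)/\{\pm I\}$, and that torus is itself a maximal definably compact subgroup of $G$ (\ref{conjmax}), so $A = T = K$ and $P = A$, while $G$ is definably simple and hence not solvable. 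So the missing implication cannot be supplied by any argument: the leftward arrows in item 3 are false as stated. To be fair, this is a defect of the paper rather than of your reconstruction --- the paper's bare citation list never addresses item 3 either, and the equivalence your argument does establish, namely $P = A \Leftrightarrow K = T \Leftrightarrow [K,K] = \{e\}$ (equivalently, $G$ has no nontrivial semisimple definably connected definably compact subgroup), is the statement that is actually true; solvability of $G$ implies it but is strictly stronger.
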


\begin{proof}
See \ref{maxtfA}, \ref{syltor}, \ref{remmaxsem}, \ref{decab}, \ref{smdefK}.
\end{proof}

\vs \noindent
We conclude with a little scheme about the parallelism between definable groups in \ominimal structures
and real Lie groups:

\vs \hspace{-1cm}
\begin{tabular}{ccc}
\textsc{Definable groups} &  & \textsc{Lie groups}\\
\\
definable homomorphisms & $\ \longleftrightarrow\ $ & smooth homomorphisms\\
\\
definable subgroups& $\ \longleftrightarrow\ $ &  closed subgroups  \\
\\
solvable radical  & $\ \longleftrightarrow\ $ & solvable radical \\
\\
 $0$-subgroups &$\longleftrightarrow$& tori\scshape  \\
\\
 $0$-Sylows &$\longleftrightarrow$& maximal tori\scshape  \\
\\
 max. subgr. which is union of $0$-Sylows &$\longleftrightarrow$& max. compact subgr. \\
$||$ & & $||$\\
union of the conj. of a $0$-Sylow&&union of the conj. of a max. torus 

\end{tabular}

%%%%%%%%%%%%%%%%%%%%%%%%%%%%%%%%%%%%%%%%%%%%%%%%%%%%%%%%%%%%%%%%%%%%%%%%%%%%%%%%%%%%%%%%%%%%%%%%%%%%%%%%%%%%%%
\vs 

\end{document}